\newtheorem{theorem}[equation]{Theorem}
\newtheorem{prop}[equation]{Proposition}
\newtheorem{lemma}[equation]{Lemma}
\newtheorem{cor}[equation]{Corollary}
\theoremstyle{definition}
\newtheorem{definition}[equation]{Definition}
\newtheorem{example}[equation]{Example}
\newtheorem{remark}[equation]{Remark}
\newtheorem*{theorem*}{Main Theorem}
\newcommand{\N}{\ensuremath{\mathbb{N}}}
\newcommand{\Z}{\ensuremath{\mathbb{Z}}}
\newcommand{\Q}{\ensuremath{\mathbb{Q}}}
\newcommand{\R}{\ensuremath{\mathbb{R}}}
\newcommand{\Pro}{\ensuremath{\mathbb{P}}}
\newcommand{\A}{\ensuremath{\mathbb{A}}}
\newcommand{\cX}{\ensuremath{\mathscr{X}}}
\newcommand{\cC}{\ensuremath{\mathscr{C}}}
\newcommand{\cE}{\ensuremath{\mathscr{E}}}
\newcommand{\cU}{\ensuremath{\mathscr{U}}}
\newcommand{\cV}{\ensuremath{\mathscr{V}}}
\newcommand{\cW}{\ensuremath{\mathscr{W}}}
\newcommand{\cY}{\ensuremath{\mathscr{Y}}}
\newcommand{\Spec}{\ensuremath{\mathrm{Spec}\,}}
\newcommand{\ord}{\mathrm{ord}}
\newcommand{\red}{\mathrm{red}}
\newcommand{\lcm}{\mathrm{lcm}}
\newcommand{\Hom}{\mathrm{Hom}}
\newcommand{\coker}{\mathrm{coker}}
\newcommand{\an}{\mathrm{an}}
\newcommand{\spe}{\mathrm{sp}}
\newcommand{\ur}{\mathrm{ur}}
\newcommand{\Sh}{\mathrm{Sh}}
\newcommand{\Sk}{\mathrm{Sk}}
\newcommand{\wt}{\mathrm{wt}}
\newcommand{\Sm}{\mathrm{Sm}}
\newcommand{\Leb}{\lambda}
\newcommand{\tdeg}{\mathrm{tdeg}}
\newcommand{\st}{\mathrm{s}}
\newcommand{\ordmin}{\mathrm{ord}_{\min}}
\newcommand{\form}{\theta}
\newcommand{\pr}{\mathrm{pr}}
\newcommand{\unif}{\varpi}
\newcommand{\gp}{\mathrm{gp}}
\numberwithin{equation}{subsection}
\author{Mattias Jonsson}
\address{Dept of Mathematics, University of Michigan, Ann Arbor, MI 48109-1043, USA}
\email{mattiasj@umich.edu}
\author{Johannes Nicaise}
\address{Imperial College,
Department of Mathematics, South Kensington Campus,
London SW72AZ, UK, and KU Leuven, Department of Mathematics, Celestijnenlaan 200B, 3001 Heverlee, Belgium.} \email{j.nicaise@imperial.ac.uk}
\begin{document}
\title[Convergence of $P$-adic pluricanonical measures]{Convergence of $P$-adic pluricanonical measures to Lebesgue measures on skeleta in Berkovich spaces}

\begin{abstract}
Let $K$ be a non-archimedean local field, $X$ a smooth and proper $K$-scheme, and fix a pluricanonical form on $X$. 
For every finite extension $K'$ of $K$, the pluricanonical form induces a measure on the $K'$-analytic manifold $X(K')$.
We prove that, when $K'$ runs through all finite tame extensions of $K$, suitable normalizations of the pushforwards of these measures to the Berkovich analytification of $X$ converge to a Lebesgue-type measure on the temperate part of the Kontsevich--Soibelman skeleton, assuming the existence of a strict normal crossings model for $X$. We also prove a similar result for all finite extensions $K'$ under the assumption that $X$ has a log smooth model. This is a non-archimedean counterpart of analogous results for volume forms on degenerating complex Calabi--Yau manifolds by Boucksom and the first-named author. Along the way, we develop a general theory of Lebesgue measures on Berkovich skeleta over discretely valued fields.
\end{abstract}

\maketitle

\section{Introduction}\label{sec:intro}
Let $K$ be a local field, $X$ a smooth and proper $K$-scheme of pure dimension, and $\form$ a pluricanonical form on $X$. A case of particular interest is when $X$ has trivial canonical bundle and $\form$ is a nonzero global section of the latter. The set of $K$-points on $X$ has a natural structure of a $K$-analytic manifold, and the pluricanonical form $\form$ induces a 
{\em pluricanonical measure} on $X(K)$ that we denote by $|\form|$ (see Section~\ref{ss:pluri}).

Now assume that $K$ is non-archimedean, that is, a field of Laurent series over a finite field, or a finite extension of $\Q_p$ for some prime $p$. For every finite extension $K'$ of $K$, there is a natural continuous map
$\pi_{K'}\colon X(K')\to X^{\an}$
from the $K'$-analytic manifold $X(K')$ to the Berkovich analytification $X^{\an}$ of $X$. The aim of this paper is to study the asymptotic behavior of the pushforward
\begin{equation*}
  m_{K'}:=(\pi_{K'})_*|\form\otimes_K K'|
\end{equation*}
of the measure $|\form\otimes_K K'|$ to $X^{\an}$ as $K'$ runs through
finite extensions of $K$.

Fix an algebraic closure $K^a$ of $K$, and let $\mathcal{E}^a_K$ be the set of finite extensions $K'$ of $K$ in $K^a$, ordered by inclusion.
Let $\mathcal{E}^{\ur}_K$ be the subset of $\mathcal{E}^a_K$
consisting of unramified extensions,
and let $K^{\ur}$ be the union of all extensions in $\mathcal{E}^{\ur}_K$ (that is, the maximal unramified extension of $K$ in $K^a$).
\begin{theorem*}
  Assume $X$ admits a log smooth model over the valuation ring $R$ in $K$. Then there exist Lebesgue-type measures $\Leb^a$ and $\Leb^{\ur}$ on $X^{\an}$,  and positive constants $c_{K'}^a$ and $c_{K'}^{\ur}$ for $K'$ in $\mathcal{E}^a_K$ and $\mathcal{E}^{\ur}_K$, respectively, such that
  \begin{equation*}
    \lim_{K'\in\mathcal{E}^a_K}c_{K'}^a\,m_{K'}=\Leb^a,
    \quad\text{and}\quad
    \lim_{K'\in\mathcal{E}^{\ur}_K}c^{\ur}_{K'}\,m_{K'}=\Leb^{\ur}
  \end{equation*}
  in the weak sense of positive Radon measures on $X^{\an}$.
\end{theorem*}
This result is an amalgam of Theorems~\ref{thm:shi} and~\ref{thm:padic}, where the reader can find more precise statements. For the sake of this introduction, we only make the following remarks.

First, the measure $\Leb^a$ is supported on the top-dimensional part of the {\em Kontsevich--Soibelman skeleton} $\Sk(X,\form)\subset X^{\an}$ of $(X,\form)$~\cite{KS,MuNi}, defined as the closure in $X^{\an}$ of the locus where a certain function, the \emph{weight function} $\wt_\theta$ attains its infimum, see Section~\ref{ss:KS}.
If $X$ has a log smooth (or merely log regular) model, then $\Sk(X,\theta)$ is nonempty and carries a natural piecewise integral affine structure, and thus a natural Lebesgue measure (see Section~\ref{ss:lebesgue}); the measure $\Leb^a$ equals the Lebesgue measure on the top dimensional part of $\Sk(X,\theta)$, suitably
weighted to make it well-behaved under tame finite extensions of the base field $K$; we call it the \emph{stable} Lebesgue measure. The constants $c_{K'}^a$ in the Main Theorem are given by
\begin{equation*}
  c_{K'}^a=\frac{q^{w[K':K]}}{e^d},
\end{equation*}
where $q$ is the cardinality of the residue field on $K$, $e=e(K'/K)$ is the ramification index, $w=\inf\wt_\theta$, and $d$ is the dimension of $\Sk(X,\theta)$.

Second, the unramified version of the convergence result in the Main Theorem is of course only interesting when $X(K^{\ur})\ne\emptyset$, but it holds without the assumption of the existence of a log smooth model of $X$. The measure $\Leb^{\ur}$ is a finite atomic measure supported on the \emph{Shilov boundary} $\Sh(X,\theta)$ of $(X,\theta)$, see Section~\ref{sec:shilov}.
We have
\begin{equation*}
  c_{K'}^{\ur}=q^{w^{\ur}[K':K]},
\end{equation*}
where $w^{\ur}$ is the minimum of $\wt_\theta$ over the points in $X^{\an}$ whose residue field is unramified over $K$.
If $X$ admits a log regular model, then $\Sh(X,\theta)$ is equal to
the set of integral points of $\Sk(X,\theta)$.

 It is generally believed that $X$ admits a log smooth model after base change to a finite extension of $K$. Without such a base change, one can only hope for the existence of a log regular model (this is equivalent to the existence of a strict normal crossings model, which would follow from embedded resolution of singularities over the excellent ring $R$). We establish a variant of our main theorem under this weaker assumption by considering tame extensions of $K$.
  Let $\mathcal{E}^t_K\subset\mathcal{E}^a_K$ be the set of tame finite extensions of $K$ in $K^a$.
Assuming that $X$ admits a log regular model over $R$, we show that
\begin{equation*}
  \lim_{K'\in\mathcal{E}^t_K}c_{K'}^t m_{K'}=\Leb^t,
\end{equation*}
where $\Leb^t$ is the restriction of $\Leb^a$ to the  closure $\Sk^t(X,\form)$ of the set of $\Z_{(p)}$-integral points of
$\Sk(X,\theta)$, and $c_{K'}^t$ is defined in the same way as $c^a_{K'}$, replacing $d$ by the dimension of $\Sk^t(X,\theta)$.
 When $X$ has semistable reduction, the measures $\Leb^a$ and $\Leb^t$ are both equal to (unweighted) Lebesgue measure on the top-dimensional part of $\Sk(X,\theta)$, whereas $\Leb^{\ur}$ is a sum of Dirac masses on the
integral points of $\Sk(X,\theta)$.
 A variant of our theorem for non-local fields is given in Theorem~\ref{thm:main}.

We deduce these results from a general study of Lebesgue measures on skeleta over arbitrary discretely valued fields, combined with the theory of weak N\'eron models and the Lang--Weil estimates. Our theory of Lebesgue measures on skeleta is of independent interest, and generalizes the natural metric on the dual reduction graph of a smooth and proper curve over $K$ (see Section~\ref{sec:curve}).

\subsection*{Example: the Tate elliptic curve}
Let us illustrate the Main Theorem for the example of the Tate elliptic curve given by
\begin{equation}\label{eq:Tate}
  X=\{xyz+2(x^3+y^3+z^3)=0\}\subset\Pro^2_{\Q_2}\tag{*}
\end{equation}
over $K=\Q_2$. This simple example illustrates some of the phenomena in the general case.

Equation~(*) defines a semistable model $\cX$ over $R=\Z_2$ with trivial relative canonical bundle $\omega_{\cX/R}$. Let $\form$ be a generator of
$\omega_{\cX/R}$.
 The Kontsevich-Soibelman skeleton $\Sk(X,\form)$ coincides with the skeleton $\Sk(\cX)$  of $\cX$ and is homeomorphic to a circle. The Shilov boundary $\Sh(X,\theta)$ consists of the three vertices of $\Sk(\cX)$,
 corresponding to the irreducible components of the special fiber of $\cX$. See Figure~\ref{F101}. We have a canonical continuous retraction
 $$\rho_{\cX}\colon X^{\an}\to \Sk(\cX)=\Sk(X,\theta).$$
The measure $m_K$ is supported on the Cantor set $X(K)\subset X^{\an}$, which is isomorphic as a $K$-analytic manifold to a union of three open unit balls in $K$ (the fibers of the three smooth $k$-rational points on $\cX_k$ under the reduction map $X(K)\to \cX_k(k)$). This isomorphism identifies  $|\theta|$ with the Haar measure with mass $1/2$ on each of these open balls.
 The pushforward $(\rho_{\cX})_*m_K$ is an atomic measure on $\Sh(X,\theta)$, giving mass $1/2$ to each point.
\begin{figure}[ht]
  \centering
  \begin{tikzpicture}
    \draw[thick,red] (0,0) ellipse (1.0cm and 1.0cm);

    \tikzmath{\scpar = 0.5;}
    \tikzmath{\dang = 60;}
    \tikzmath{\x1 = 0; \y1 =0;}
    \tikzmath{\scale=1; \ang=0; }
    \tikzmath{\dotsize=0.8; }

    \foreach \i in {0,1,2} {%
      \tikzmath{\ang=90+ \i*120;}
      \tikzmath{\x1 = \x1+\scale*cos(\ang); \y1 = \y1+\scale*sin(\ang);}
      \draw [shift={(\x1,\y1)}, scale=\scale, rotate=\ang](0,0) .. controls (1/3,-0.1) and (2/3,0.1) .. (1,0);
      \node at (\x1,\y1) [circle,fill,inner sep=1.0pt]{};
      \foreach \j in {-1,1} {%
        \tikzmath{\x1 = \x1+\scale*cos(\ang); \y1 = \y1+\scale*sin(\ang);}
        \tikzmath{\scale=\scale*\scpar;}
        \tikzmath{\ang=\ang+ \j*\dang;}
        \draw [shift={(\x1,\y1)}, scale=\scale, rotate=\ang](0,0) .. controls (1/3,-0.1) and (2/3,0.1) .. (1,0);
        \foreach \k in {-1,1} {%
          \tikzmath{\x1 = \x1+\scale*cos(\ang); \y1 = \y1+\scale*sin(\ang);}
          \tikzmath{\scale=\scale*\scpar;}
          \tikzmath{\ang=\ang+ \k*\dang;}
          \draw [shift={(\x1,\y1)}, scale=\scale, rotate=\ang](0,0) .. controls (1/3,-0.1) and (2/3,0.1) .. (1,0);
          \foreach \l in {-1,1} {%
            \tikzmath{\x1 = \x1+\scale*cos(\ang); \y1 = \y1+\scale*sin(\ang);}
            \tikzmath{\scale=\scale*\scpar;}
            \tikzmath{\ang=\ang+ \l*\dang;}
            \draw [shift={(\x1,\y1)}, scale=\scale, rotate=\ang](0,0) .. controls (1/3,-0.1) and (2/3,0.1) .. (1,0);
            \tikzmath{\x2=\x1+\scale*cos(\ang); \y2=\y1+\scale*sin(\ang);}
            \node at (\x2,\y2) [circle,fill,inner sep=\dotsize]{};
          }
        }
      }
    }
  \end{tikzpicture}
  \caption{The picture shows the Berkovich analytification $X^{\an}$ of the Tate elliptic curve $X$ over $\Q_2$ defined by~\eqref{eq:Tate}. The circle is the Kontsevich-Soibelman skeleton $\Sk(X,\theta)$, and the three vertices on the circle form the Shilov boundary $\Sh(X,\theta)$. The measure $m_K$ induced by $|\form|$ is supported on the Cantor set formed by the endpoints.}\label{F101}
\end{figure}
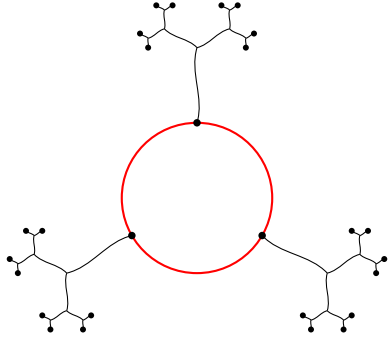

\medskip
Now consider a finite unramified extension $K'/K$. The image of $X(K')$ in $X^{\an}$ is a Cantor set that still retracts to $\Sh(X,\theta)$, so $r_*m_{K'}$ is still a finite atomic measure supported on $\Sh(X,\theta)$, see Figure~\ref{F102}.
However, any connected component of $X^{\an}\setminus\Sh(X,\theta)$ only carries a small fraction of the total mass of $m_{K'}$.
This explains why the limit measure $\Leb^{\ur}$ in the Main Theorem is a sum of Dirac masses at the three points in the Shilov boundary of $(X,\theta)$.
\begin{figure}[ht]
  \centering
  \begin{tikzpicture}
    \draw[thick,red] (0,0) ellipse (1.0cm and 1.0cm);

    \tikzmath{\scpar = 0.4;}
    \tikzmath{\dang = 100;}
    \tikzmath{\x1 = 0; \y1 =0;}
    \tikzmath{\scale=1; \ang=0; }
    \tikzmath{\dotsize=0.7; }

    \foreach \i in {0,1,2} {%
      \tikzmath{\ang=90+ \i*120;}
      \tikzmath{\x1 = \x1+\scale*cos(\ang); \y1 = \y1+\scale*sin(\ang);}
      \node at (\x1,\y1) [circle,fill,inner sep=1.0pt]{};
      \foreach \j in {-1,1} {%
        \tikzmath{\ang=\ang+ \j*50;}
        \draw [shift={(\x1,\y1)}, scale=\scale, rotate=\ang](0,0) .. controls (1/3,-0.1) and (2/3,0.1) .. (1,0);
        \foreach \k in {-1,0,1} {%
          \tikzmath{\x1 = \x1+\scale*cos(\ang); \y1 = \y1+\scale*sin(\ang);}
          \tikzmath{\scale=\scale*\scpar;}
          \tikzmath{\ang=\ang+ \k*\dang;}
          \draw [shift={(\x1,\y1)}, scale=\scale, rotate=\ang](0,0) .. controls (1/3,-0.1) and (2/3,0.1) .. (1,0);
          \foreach \l in {-1,0,1} {%
            \tikzmath{\x1 = \x1+\scale*cos(\ang); \y1 = \y1+\scale*sin(\ang);}
            \tikzmath{\scale=\scale*\scpar;}
            \tikzmath{\ang=\ang+ \l*\dang;}
            \draw [shift={(\x1,\y1)}, scale=\scale, rotate=\ang](0,0) .. controls (1/3,-0.1) and (2/3,0.1) .. (1,0);
            \foreach \m in {-1,0,1} {%
              \tikzmath{\x1 = \x1+\scale*cos(\ang); \y1 = \y1+\scale*sin(\ang);}
              \tikzmath{\scale=\scale*\scpar;}
              \tikzmath{\ang=\ang+ \m*\dang;}
              \draw [shift={(\x1,\y1)}, scale=\scale, rotate=\ang](0,0) .. controls (1/3,-0.1) and (2/3,0.1) .. (1,0);
              \tikzmath{\x2=\x1+\scale*cos(\ang); \y2=\y1+\scale*sin(\ang);}
              \node at (\x2,\y2) [circle,fill,inner sep=\dotsize]{};
            }
          }
        }
      }
    }
  \end{tikzpicture}
  \caption{The endpoints in the picture is a Cantor set that equals the support of the measure $m_{K'}$ on $X^{\an}$, where $K'$ is an unramified extension of $\Q_2$ of degree two.}\label{F102}
\end{figure}
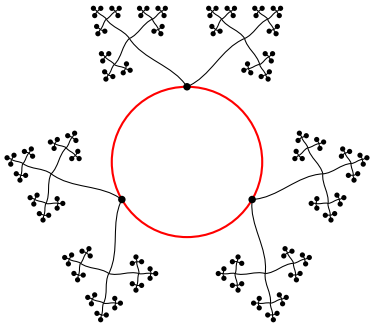

\medskip
Finally consider a ramified extension $K'/K$.
The image of $X(K')$ in $X^{\an}$ is then a Cantor set that retracts to a finite  but possibly large subset of $\Sk(X,\theta)$, see Figure~\ref{F103}. Any connected component of $X^{\an}\setminus\Sk(X,\theta)$ carries only a small fraction of the mass of $m_{K'}$ if $[K':K]$ is large.
This explains why the limit measure $\Leb^a$ in the Main Theorem is equal to Lebesgue measure on the circle $\Sk(X,\theta)$.

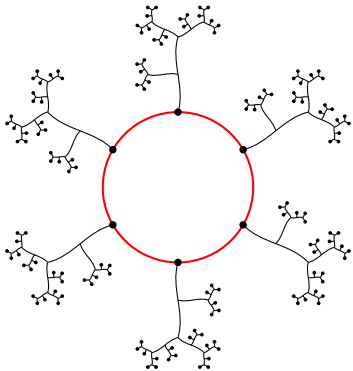
\begin{figure}[ht]
  \centering
  \begin{tikzpicture}
    \draw[thick, red] (0,0) ellipse (1.0cm and 1.0cm);

    \tikzmath{\scpar = 0.4;}
    \tikzmath{\dang = 60;}
    \tikzmath{\x1 = 0; \y1 =0;}
    \tikzmath{\scale=1; \ang=0; }
    \tikzmath{\newbrpar=0.5; }
    \tikzmath{\dotsize=0.5; }

    \foreach \i in {0,1,2,3,4,5} {
      \tikzmath{\ang=90+ \i*60;}
      \tikzmath{\x1 = \x1+\scale*cos(\ang); \y1 = \y1+\scale*sin(\ang);}
      \node at (\x1,\y1) [circle,fill,inner sep=1.0pt]{};
      \draw [shift={(\x1,\y1)}, scale=\scale, rotate=\ang](0,0) .. controls (1/3,-0.1) and (2/3,0.1) .. (1,0);
      \foreach \j in {-1,1} {
        \tikzmath{\x1 = \x1+\scale*cos(\ang); \y1 = \y1+\scale*sin(\ang);}
        \tikzmath{\scale=\scale*\scpar;}
        \tikzmath{\ang=\ang+ \j*\dang;}
        \draw [shift={(\x1,\y1)}, scale=\scale, rotate=\ang](0,0) .. controls (1/3,-0.1) and (2/3,0.1) .. (1,0);
        \foreach \k in {-1,1} {
          \tikzmath{\x1 = \x1+\scale*cos(\ang); \y1 = \y1+\scale*sin(\ang);}
          \tikzmath{\scale=\scale*\scpar;}
          \tikzmath{\ang=\ang+ \k*\dang;}
          \draw [shift={(\x1,\y1)}, scale=\scale, rotate=\ang](0,0) .. controls (1/3,-0.1) and (2/3,0.1) .. (1,0);
          \foreach \l in {-1,1} {
            \tikzmath{\x1 = \x1+\scale*cos(\ang); \y1 = \y1+\scale*sin(\ang);}
            \tikzmath{\scale=\scale*\scpar;}
            \tikzmath{\ang=\ang+ \l*\dang;}
            \draw [shift={(\x1,\y1)}, scale=\scale, rotate=\ang](0,0) .. controls (1/3,-0.1) and (2/3,0.1) .. (1,0);
            \tikzmath{\x2=\x1+\scale*cos(\ang); \y2=\y1+\scale*sin(\ang);}
            \node at (\x2,\y2) [circle,fill,inner sep=\dotsize]{};
          } 
          \tikzmath{\x1 = \x1+\newbrpar*\scale*cos(\ang); \y1 = \y1+\newbrpar*\scale*sin(\ang);}
          \tikzmath{\scale=\scale*\scpar;}
          \tikzmath{\ang=\ang+ 90;}
          \draw [shift={(\x1,\y1)}, scale=\scale, rotate=\ang](0,0) .. controls (1/3,-0.1) and (2/3,0.1) .. (1,0);
          \tikzmath{\x2=\x1+\scale*cos(\ang); \y2=\y1+\scale*sin(\ang);}
          \node at (\x2,\y2) [circle,fill,inner sep=\dotsize]{};
        } 
        \tikzmath{\x1 = \x1+0.5*\scale*cos(\ang); \y1 = \y1+0.5*\scale*sin(\ang);}
        \tikzmath{\scale=\scale*\scpar;}
        \tikzmath{\ang=\ang+ 90;}
        \draw [shift={(\x1,\y1)}, scale=\scale, rotate=\ang](0,0) .. controls (1/3,-0.1) and (2/3,0.1) .. (1,0);
        \foreach \l in {-1,1} {
          \tikzmath{\x1 = \x1+\scale*cos(\ang); \y1 = \y1+\scale*sin(\ang);}
          \tikzmath{\scale=\scale*\scpar;}
          \tikzmath{\ang=\ang+ \l*\dang;}
          \draw [shift={(\x1,\y1)}, scale=\scale, rotate=\ang](0,0) .. controls (1/3,-0.1) and (2/3,0.1) .. (1,0);
          \tikzmath{\x2=\x1+\scale*cos(\ang); \y2=\y1+\scale*sin(\ang);}
          \node at (\x2,\y2) [circle,fill,inner sep=\dotsize]{};
        } 
        \tikzmath{\x1 = \x1+\newbrpar*\scale*cos(\ang); \y1 = \y1+\newbrpar*\scale*sin(\ang);}
        \tikzmath{\scale=\scale*\scpar;}
        \tikzmath{\ang=\ang+ 90;}
        \draw [shift={(\x1,\y1)}, scale=\scale, rotate=\ang](0,0) .. controls (1/3,-0.1) and (2/3,0.1) .. (1,0);
        \tikzmath{\x2=\x1+\scale*cos(\ang); \y2=\y1+\scale*sin(\ang);}
        \node at (\x2,\y2) [circle,fill,inner sep=\dotsize]{};
      } 
      \tikzmath{\x1 = \x1+\newbrpar*\scale*cos(\ang); \y1 = \y1+\newbrpar*\scale*sin(\ang);}
      \tikzmath{\scale=\scale*\scpar;}
      \tikzmath{\ang=\ang+ 90;}
      \draw [shift={(\x1,\y1)}, scale=\scale, rotate=\ang](0,0) .. controls (1/3,-0.1) and (2/3,0.1) .. (1,0);

      \foreach \k in {-1,1} {
        \tikzmath{\x1 = \x1+\scale*cos(\ang); \y1 = \y1+\scale*sin(\ang);}
        \tikzmath{\scale=\scale*\scpar;}
        \tikzmath{\ang=\ang+ \k*\dang;}
        \draw [shift={(\x1,\y1)}, scale=\scale, rotate=\ang](0,0) .. controls (1/3,-0.1) and (2/3,0.1) .. (1,0);
        \foreach \l in {-1,1} {
          \tikzmath{\x1 = \x1+\scale*cos(\ang); \y1 = \y1+\scale*sin(\ang);}
          \tikzmath{\scale=\scale*\scpar;}
          \tikzmath{\ang=\ang+ \l*\dang;}
          \draw [shift={(\x1,\y1)}, scale=\scale, rotate=\ang](0,0) .. controls (1/3,-0.1) and (2/3,0.1) .. (1,0);
          \tikzmath{\x2=\x1+\scale*cos(\ang); \y2=\y1+\scale*sin(\ang);}
          \node at (\x2,\y2) [circle,fill,inner sep=\dotsize]{};
        } 
        \tikzmath{\x1 = \x1+\newbrpar*\scale*cos(\ang); \y1 = \y1+\newbrpar*\scale*sin(\ang);}
        \tikzmath{\scale=\scale*\scpar;}
        \tikzmath{\ang=\ang+ 90;}
        \draw [shift={(\x1,\y1)}, scale=\scale, rotate=\ang](0,0) .. controls (1/3,-0.1) and (2/3,0.1) .. (1,0);
        \tikzmath{\x2=\x1+\scale*cos(\ang); \y2=\y1+\scale*sin(\ang);}
        \node at (\x2,\y2) [circle,fill,inner sep=\dotsize]{};
      }
    }
  \end{tikzpicture}
    \caption{The endpoints in the picture is a Cantor set that equals the support of the measure $m_{K'}$ on $X^{\an}$, where $K'$ is a ramified extension of $\Q_2$ of degree two.}\label{F103}
\end{figure}

\subsection*{Related results}

\subsubsection*{The Lang--Weil estimates}
Our convergence results can be viewed as a non-archimedean analog of the Lang--Weil estimates
for varieties over finite fields. Let $F$ be a finite field, and let $X$ be an integral proper $F$-scheme, of dimension $n$. We endow $F$ with its trivial absolute value and denote by $X^{\an}$ the analytification of $X$. Let $\eta$ be the point of $X^{\an}$ corresponding to the trivial absolute value on the function field of $X$; this point plays the role of the skeleton of $X$.
 We denote by $q$ the cardinality of $F$, and we fix an algebraic closure $F^a$ of $F$. It follows easily from the Lang--Weil estimates that, as $F'$ ranges through the finite extensions of $F$ in $F^a$, ordered by inclusion, the pushforward to $X^{\an}$ of the normalized counting measure
$$q^{-n[F'\colon F]}\sum_{x\in X(F')}\delta_x$$
 on $X(F')$ converges to a weighted Dirac measure supported at $\eta$. The mass of this measure
 is equal to the number of geometric irreducible components of $X$. We will prove this result as a toy example in Section~\ref{sec:langweil}. In fact, the Lang--Weil estimates are a crucial ingredient for our convergence results over non-archimedean local fields; the passage from non-archimedean local measures to point counting over finite fields occurs through the theory of weak N\'eron models (see Section~\ref{sec:shilov}).

\subsubsection*{Convergence of complex volume forms}
An important motivation for this paper are analogous results on the convergence of volume forms on degenerating families $X\to\mathbb{D}^*$ of complex Calabi--Yau manifolds, obtained by S\'ebastien Boucksom and the first-named author in~\cite{BJ}. They can be interpreted as a  measure-theoretic version of a fundamental conjecture of Kontsevich and Soibelman~\cite{KS} on the collapse of Ricci-flat metrics in projective maximally degenerating families, motivated by mirror symmetry. In~\cite{BJ}, the limit measure lives on the Kontsevich-Soibelman skeleton, and the convergence takes place in a suitable hybrid space obtained from $X$ by inserting a Berkovich space as central fiber.
In the present work, the convergence takes place directly in a Berkovich space.

\subsubsection*{Igusa zeta functions and motivic measures}
If $K$ is a complete discretely valued field with infinite residue field, then $K$ is not locally compact and no longer carries a Haar measure. The natural generalization of Haar measures in this context is the theory of motivic integration as developed in~\cite{loeser-sebag}. The behavior of the motivic volume under finite extensions of $K$ is encoded in Denef and Loeser's motivic Igusa zeta function {\em via} its interpretation in~\cite{nicaise-sebag}. If $K$ has equal characteristic zero, the leading asymptotic term of the motivic zeta functions of Calabi-Yau varieties was studied in~\cite{halle-nicaise}, and the relations with the Kontsevich--Soibelman skeleton are highlighted in~\cite[3.2.3]{halle-nicaise}. These results should be viewed as a geometric version  of our convergence statements for non-archimedean local fields.
 Closely related results over $p$-adic fields had already appeared in the literature in a somewhat different setting, especially in the work of Chambert-Loir and Tschinkel~\cite{CLT}.

 \subsubsection*{Philippon's work}
 It was pointed out to us by Antoine Chambert-Loir that an early trace of our main results can already been found in Philippon's paper~\cite{philippon}. Let $K$ be a $p$-adic field, and denote by $q$ the cardinality of its residue field.
  For every finite extension $K'$ of $K$, denote by $\mu_{K'}$ the Haar measure on the closed unit polydisc in $K^n$.
  Then the formula at the bottom of page 1049 in~\cite{philippon} implies that, as $K'$ ranges through the finite extensions of $K$ in a fixed algebraic closure of $K$, the pushforward of the measure $\mu_K'$ to $\A^{n,\an}_K$ converges to the Dirac measure
  supported at the Gauss point of the closed unit polydisc in $\A^{n,\an}_K$.

\subsection*{Structure of the paper}
Section~\ref{sec:prelim} contains some preliminary results and facts about  Kontsevich--Soibelman skeleta and weak N\'eron models. In Section~\ref{sec:log}, we collect auxiliary results on logarithmic models to be used in the remainder of the paper.
The technical heart of the paper consists of Sections~\ref{sec:skeleta} and~\ref{sec:lebesgue}, where we define Lebesgue measures on Berkovich skeleta and study their properties under finite extensions of the base field $K$.
For technical reasons, it is convenient to work not only with strict normal crossings models, but, more generally, with log regular models, building upon the theory developed in~\cite{MuNi} and~\cite{BrMa}.
In Section~\ref{sec:padic}, we prove our convergence theorem for measures associated with pluricanonical forms over non-archimedean local fields (Theorems~\ref{thm:shi} and~\ref{thm:padic}). These provide a precise version of the Main Theorem above.
Finally, in Section~\ref{sec:convshilov} we prove an equidistribution result for Shilov boundary points associated with weak N\'eron models (Theorem~\ref{thm:main}), which can be viewed as a version of our main theorem over discretely valued fields that are not necessarily locally compact.

\subsection*{Acknowledgements} The origin of this paper is a question by Antoine Chambert-Loir, raised in a discussion with S\'ebastien Boucksom and the first-named author. We are grateful to him for sharing this question, and to S\'ebastien Boucksom and Joe Rabinoff for fruitful discussions.
Johannes Nicaise is supported by long term structural funding (Methusalem
grant) of the Flemish Government. Mattias Jonsson is supported by
The second author was partially supported by NSF grant DMS-1600011
and the United States---Israel Binational Science Foundation.
We thank these institutions as well as the Simons Foundation for organizing the symposium where this work first originated.

\section{Preliminaries}\label{sec:prelim}
\subsection{Notation and conventions}
We denote by $R$ a complete discrete valuation ring, with quotient field $K$ and residue field $k$. We denote by $p$ the characteristic exponent of $k$ and we choose a uniformizer $\unif$ in $R$. We also fix an algebraic closure $K^a$ of $K$, with valuation ring $R^a$, and we denote by $R^{\ur}$ the maximal unramified extension of $R$ in $R^{a}$.
 The residue field of $R^{\ur}$ is a separable closure of $k$, which we denote by $k^s$. We write $K^{\ur}$ for the quotient field of $R^{\ur}$.
  If $k$ is finite, then we will denote its cardinality
 by $q$.

  A finite extension $K'$ of $K$ is called {\em tame} (or {\em tamely ramified}) if the ramification degree $e(K'/K)$ of $K'$ over $K$ is prime to $p$, and the residue field of
  $K'$ is a separable extension of $k$. The union of all tame finite extensions of $K$ in $K^a$ is called the tame closure of $K$ in $K^a$, and denoted by $K^t$.

 We denote by $v_K$ the discrete valuation on $K$, normalized such that $v_K(\unif)=1$. We define a non-archimedean absolute value $|\cdot|_K$ on $K$ by setting
 $|a|_K=\varepsilon^{v_K(a)}$ for every $a\in K^{\times}$, where $\varepsilon$ is a fixed element in $(0,1)$. If $k$ is finite, we will take $\varepsilon=1/q$. Whenever $K'$ is a finite extension of $K$, we denote by $v_{K'}$ the unique valuation on $K'$ that extends $v_K$. The value group of $v_{K'}$ is equal to $(1/e)\Z$, where $e=e(K'/K)$ is the ramification index of $K'$ over $K$.
 We also endow $K'$ with the absolute value $|\cdot|_{K'}=\varepsilon^{v_{K'}(\cdot)}$; this is the unique absolute value that extends $|\cdot|_K$.

 For every $K$-scheme $X$ of finite type, we denote by $X^{\an}$ the Berkovich analytification of $X$~\cite{berk}. For every point $x$ on $X^{\an}$, we denote by $\mathscr{H}(x)$ the residue field of $X^{\an}$ at $x$. This is a complete valued field extension of $K$, whose residue field is denoted by  $\widetilde{\mathscr{H}}(x)$.

 If $X$ is a separated $K$-scheme of finite type, then an $R$-model for $X$ is a flat separated $R$-scheme of finite type $\cX$, endowed with an isomorphism of $K$-schemes $\cX\otimes_R K\to X$. Note that we do not impose any properness condition on $X$ or $\cX$. We denote by $\widehat{\cX}$ the formal $\unif$-adic completion of $\cX$ and by
 $\widehat{\cX}_\eta$ its generic fiber; this is a compact analytic domain in $X^{\an}$, and it is equal to $X^{\an}$ if and only if $\cX$ is proper over $R$. We denote by $$\spe_{\cX}:\widehat{\cX}_\eta\to \cX_k$$ the specialization morphism (also called reduction map) associated with $\cX$.

 If $X$ is regular and  proper over $K$, then a proper $R$-model $\cX$ of $X$ is called an $snc$-model if $\cX$ is regular and $\cX_k$ is a divisor with strict normal crossings (not necessarily reduced). The existence of $snc$-models is known when $k$ has characteristic zero (by Hironaka's resolution of singularities), and in arbitrary characteristic when the dimension of $X$ is at most $2$~\cite{CJS, cossart-piltant}.

 For the basic theory of piecewise integral affine structures, we refer to
 \cite[\S1]{BerkContractII}. We will use additive, rather than multiplicative, notation;
 for every integer $e>0$,
 what we call piecewise integral $(1/e)\Z$-affine spaces are called $R_S$-piecewise linear spaces in \cite[\S1]{BerkContractII}, with $R=\sqrt[e]{|K^{\times}|}$ and
 $S=\Z$.  Thus the piecewise models of
 piecewise integral $(1/e)\Z$-affine spaces are rational polytopes $P$ in $\R^n$ endowed with the group of functions of the form
 $$P\to \R\colon (x_1,\ldots,x_n)\mapsto a_1x_1+\ldots+a_nx_n+b$$ where the coefficients $a_i$ are integers and the constant term $b$ lies in $(1/e)\Z$.
   On any piecewise integral $(1/e)\Z$-affine space, it makes sense to speak of $(1/e')\Z$-integral points whenever $e'$ is an integer multiple of $e$: these are the points where every piecewise integral $(1/e)\Z$-affine function takes a value in $(1/e')\Z$.

 Whenever $K'$ is a finite extension of $K$ of ramification index $e$, we will also use the terminology {\em integral $K'$-affine}
 instead of integral $(1/e)\Z$-affine (in Berkovich's notation, this would be $|(K')^{\times}|_{\Z}$-affine).
 This will make it easier to keep track of the behavior of these structures under extensions of $K$.       Our primary examples of
  piecewise integral $K'$-affine spaces will be skeleta of log regular models of regular proper $K'$-schemes (see Section \ref{sec:skeleta}).

 \subsection{The Kontsevich--Soibelman skeleton}\label{ss:KS}
 Let $X$ be a connected smooth and proper $K$-scheme, and let $\form$ be a non-zero $m$-canonical form on $X$, for some positive integer $m$.
 It is explained in~\cite{MuNi} how one can attach to $\form$ a canonical subspace of $X^{\an}$, called the {\em Kontsevich--Soibelman skeleton} of the pair $(X,\form)$.
  Such an object first appeared in the  work of Kontsevich and Soibelman on the non-archimedean SYZ fibration~\cite{KS}. Let us briefly recall the construction. For every normal $R$-model $\cX$ of $X$ and every irreducible component $E$ of $\cX_k$, the fiber of the specialization map $\spe_{\cX}$ over the generic point $\xi$ of $E$ contains a unique point $x$. This point  is called the {\em divisorial point} associated with $\cX$ and $E$. If we denote by $N$ the multiplicity of $E$ in $\cX_k$, then  $x$ corresponds to the divisorial valuation $(1/N)\mathrm{ord}_E$ on the function field of $X$.

  The {\em weight} of $\form$ at $x$ is defined in the following way. Locally around $\xi$, the model $\cX$ is regular, so that we can consider the relative canonical line bundle $\omega_{\cX/R}$. We view $\form$ as a rational section of the logarithmic relative $m$-canonical bundle
  $\omega_{\cX/R}(\cX_{k,\red}-\cX_k)^{\otimes m}$ and we denote the associated Cartier divisor by $\mathrm{div}_{\cX}(\form)$. If we denote by $w$ the multiplicity of $\mathrm{div}_{\cX}(\form)$ at $\xi$, then the weight  of $\form$ at $x$ is defined as
  $$\wt_{\form}(x)=\frac{w}{mN}.$$
 The minimal weight $\wt_{\min}(X,\form)$ of $\form$ on $X$ is defined as
the infimum of the weights $\wt_{\form}(x)$ where $x$ runs through the set of divisorial points on $X^{\an}$, and the Kontsevich--Soibelman skeleton $\Sk(X,\form)$
 is the closure in $X^{\an}$ of the set of divisorial points $x$ such that $\wt_{\form}(x)=\wt_{\min}(X,\form)$. The {\em essential skeleton} $\Sk(X)$ is the union of the skeleta $\Sk(X,\form)$ over all non-zero
 pluricanonical forms $\form$ on $X$.

\begin{remark}\label{rema:weight}
In the definition of the weight function in~\cite{MuNi}, the reference line bundle was taken to be $\omega_{\cX/R}(\cX_{k,\red})^{\otimes m}$, rather than $\omega_{\cX/R}(\cX_{k,\red}-\cX_k)^{\otimes m}$, and the factor $m$ was not included in the denominator. Thus the weight function in~\cite{MuNi} is given  by the formula $\mathrm{wt}^{\mathrm{MN}}_{\form}=m(\wt_{\form}+1)$, so that the definition of the Kontsevich--Soibelman skeleton is not affected. The reason for our different choice of reference bundle is that it agrees with the relative canonical bundle for smooth models and behaves better under ramified extensions of $R$. The extra factor $m$ in the denominator appears naturally when we consider $p$-adic measures associated with $m$-canonical forms (see Section~\ref{ss:pluri}). If $k$ has characteristic zero, the weight function $\wt_{\form}$ is related to Temkin's metric $\|\form\|$ on $X^{\an}$ by the formula
$$m\cdot \wt_{\form}=-\log_{\varepsilon}\|\form\|,$$ see Theorem 8.3.3 in~\cite{temkin}. If $k$ has positive characteristic, an additional term appears in the comparison statement; this is caused by issues of wild ramification.
\end{remark}

 Without any assumption on the existence of resolutions of singularities, it is not known whether $\wt_{\form}$ is bounded below and $\Sk(X,\form)$ is non-empty.
 However, Theorem 4.7.5 in \cite{MuNi} provides an explicit description of $\Sk(X,\form)$ if we assume that $X$ has an $snc$-model $\cX$ over $R$.
 In that case, we can compute $\wt_{\min}(X,\form)$ by taking the minimum of the weights $\wt_{\form}(x)$ where $x$ runs through the finite set of divisorial points associated with the prime components of $\cX_k$. We write $$\cX_k=\sum_{i\in I}N_i E_i, \quad \mathrm{div}_{\cX}(\form)=\sum_{i\in I}w_i E_i + H$$
 with $H$ an effective Cartier divisor that is horizontal, that is, whose support does not contain any of the components $E_i$ of $\cX_k$. Then we have
 $$\wt_{\min}(X,\form)=\min \{\frac{w_i}{mN_i}\,|\,i\in I\}.$$
    Moreover, we can identify $\Sk(X,\form)$ with the union of the so-called $\form$-{\em essential} faces of the Berkovich skeleton $\Sk(\cX)$ of $\cX$.
  This Berkovich skeleton is a subspace of $X^{\an}$ that is canonically homeomorphic to the dual intersection complex of $\cX_k$, and a face $\sigma$ is called $\form$-essential if it satisfies the following two conditions:
  \begin{itemize}
  \item for every vertex $v$ of $\sigma$, we have $\wt_{\form}(v)=\wt_{\min}(X,\form)$ (recall that the vertices of $\Sk(\cX)$ are precisely the divisorial points associated with the prime components of $\cX_k$);

  \item the stratum of $\cX_k$ corresponding to $\sigma$ is not contained in the horizontal part $H$ of the Cartier divisor $\mathrm{div}_{\cX}(\form)$.
  \end{itemize}
  In particular, $\Sk(X,\form)$ is a non-empty compact subspace of $X^{\an}$. Likewise, $\Sk(X)$ is a compact subspace of $X^{\an}$, and it is non-empty if and only if $X$ has nonnegative Kodaira dimension.
  It is proven in~\cite[3.3.5]{NiXu} and~\cite[Thm.24]{KNX} that $\Sk(X)$ is a strong deformation retract of $X^{\an}$ if $k$ has characteristic zero, $X$ is projective, and the canonical line bundle $\omega_{X/K}$ of $X$ is semi-ample.

\subsection{Weak N\'eron models and the Shilov boundary}\label{sec:shilov}
 Let $X$ be a smooth and proper $K$-scheme of pure dimension.

 \begin{definition}
 A {\em weak N\'eron model} of $X$ is a smooth $R$-model $\cU$ of $X$ such that the map $\cU(R^{\ur})\to X(K^{\ur})$ is bijective.
  \end{definition}

  The $R$-scheme $\cU$ is not necessarily proper, but the definition expresses that it satisfies the valuative criterion for properness at least for unramified extensions of $R$. It is proven in~\cite{BLR} that a weak N\'eron model always exists, without any restriction on the characteristic of $k$; however, it is far from unique. Given any proper $R$-model $\cX$ of $X$, we can always find a morphism $\cY\to \cX$ of $R$-models that is a composition of blow-ups at centers supported in the special fiber and such that every $R^{\ur}$-valued point on $\cY$ factors through the $R$-smooth locus $\Sm(\cY)$ of $\cY$, by~\cite[3.4/2]{BLR}. Then $\Sm(\cY)$ is a weak N\'eron model of $X$.  Such a morphism $\cY\to \cX$ is called a {\em N\'eron smoothening} of $\cX$.

   Let $\form$ be an $m$-canonical form on $X$, for some positive integer $m$, and assume that $\form$ is not identically zero on any connected component of $X$.
  For every weak N\'eron model $\cU$ of $X$ and every connected component $C$ of $\cU_k$, we set
  $$\ord_C(\form)=\frac{w}{m}$$ where $w$ is the unique integer such that $\unif^{-w} \form$ extends to a generator of $\omega^{\otimes m}_{\cU/R}$ at the generic point of $C$.
   If we denote by $x$ the divisorial point on $X^{\an}$ associated with $C$, then $\ord_C(\form)=\wt_{\form}(x)$, because $C$ has multiplicity $1$ in $\cU_k$.

\begin{definition}\label{defi:shilov}
Let $\cU$ be a weak N\'eron model of $X$. We set $$\ordmin(\cU,\form)=\min\{\mathrm{ord}_C(\form)\,|\,C\in \pi_0(\cU_k)\}.$$
The {\em Shilov boundary} of $(\cU,\form)$ is the set of divisorial points in $X^{\an}$ associated with the connected components $C$ of $\cU_k$ for which $\mathrm{ord}_C(\form)=\ordmin(\cU,\form)$. We denote this set by $\Sh(\cU,\form)$.
\end{definition}
Note that $\Sh(\cU,\form)$ is empty if $X(K^{\ur})$ is empty; in that case, every weak N\'eron model $\cU$ of $X$ has empty special fiber, and $\ordmin(\cU,\form)=-\infty$.

\begin{prop}\label{prop:indep}
The value $\ordmin(\cU,\form)$ and the subset $\Sh(\cU,\form)$ of $X^{\an}$ only depend on the pair $(X,\form)$, and not on the choice of a weak N\'eron model $\cU$.
\end{prop}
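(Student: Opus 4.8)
The plan is to reduce the proposition to a comparison of two weak N\'eron models, one of which maps to a proper model containing the other, and then to exploit that a proper birational morphism onto a normal scheme is an isomorphism in codimension one. We may assume $X$ is connected (treat connected components separately) and $X(K^{\ur})\neq\emptyset$; in the remaining case every weak N\'eron model of $X$ has empty special fibre, so $\ordmin=-\infty$ and $\Sh=\emptyset$, and there is nothing to prove. Given two weak N\'eron models $\cU_1,\cU_2$, I would first realise each $\cU_i$ as a dense open subscheme of a proper $R$-model $\cX_i$ of $X$ via Nagata compactification (the generic fibre is forced to equal $X$ because $X$ is proper over $K$), then take the proper $R$-model $\cY$ obtained as the closure of the diagonal in $\cX_1\times_R\cX_2$, so that $\cY$ carries proper $R$-morphisms to $\cX_1$ and $\cX_2$ that are isomorphisms over $K$, and finally apply the N\'eron smoothening of~\cite[3.4/2]{BLR} to $\cY$, producing an $R$-model $\cZ\to\cY$ (a composition of blow-ups with centres in the special fibre) for which $\cV:=\Sm(\cZ)$ is a weak N\'eron model of $X$. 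Composing, $\cV$ comes with morphisms $\cV\to\cX_i$ that are isomorphisms over $K$, so it is enough to prove the statement for a single morphism $f\colon\cV\to\cX$ of $R$-models, an isomorphism over $K$, with $\cU\subseteq\cX$ open and $\cU,\cV$ both weak N\'eron models; applying the result to $\cU=\cU_1$ and to $\cU=\cU_2$ then finishes the proof.

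The second ingredient is a numerical reformulation of $\ordmin$. For a smooth $R$-model $\cW$ of $X$, a finite unramified extension $K'/K$ with valuation ring $R'$, and a point $y\in X(K')$, the unique extension $\tilde{y}_{\cW}\colon\Spec R'\to\cW$ produces an $R'$-lattice $L_\cW(y):=\tilde{y}_{\cW}^{*}\,\omega_{\cW/R}^{\otimes m}$ in the one-dimensional $K'$-vector space $y^{*}\omega_{X/K}^{\otimes m}$; let $\nu_\cW(\form,y)\in\Z$ be the unique integer with $y^{*}\form\in\unif^{\nu}L_\cW(y)\setminus\unif^{\nu+1}L_\cW(y)$. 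Directly from the definition of $\ord_C(\form)$ one checks that $\nu_\cW(\form,y)/m\geq\ord_{C}(\form)$, where $C$ is the component of $\cW_k$ containing the reduction of $y$, with equality as soon as this reduction lies in the dense open locus of $C$ on which $\unif^{-m\,\ord_C(\form)}\form$ generates $\omega_{\cW/R}^{\otimes m}$. Picking a closed point in the generic locus of a component of $\cW_k$ realising the minimum (such a point is defined over a finite separable extension of $k$, since $\cW_k$ is $k$-smooth) then gives
$$\ordmin(\cW,\form)=\min\bigl\{\,\nu_\cW(\form,y)/m \ :\ K'/K\ \text{finite unramified},\ y\in X(K')\,\bigr\}.$$

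Now $f$ restricts to a proper birational morphism $f_0\colon V_0:=f^{-1}(\cU)\to\cU$ of smooth $R$-schemes, so $f_0^{*}\omega_{\cU/R}\hookrightarrow\omega_{V_0/R}$ is an inclusion of line bundles with effective Cartier cokernel $D$. Pulling back along $\tilde{y}_{\cV}$ — which factors through $V_0$ and satisfies $f_0\circ\tilde{y}_{\cV}=\tilde{y}_{\cU}$ because $\cX$ is separated — gives $L_\cU(y)\subseteq L_\cV(y)$, hence $\nu_\cV(\form,y)\geq\nu_\cU(\form,y)$ for all $y$, and therefore $\ordmin(\cV,\form)\geq\ordmin(\cU,\form)$. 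The reverse inequality is where the argument has real content, and I expect it to be the main obstacle. The point is that a proper birational morphism onto a normal noetherian scheme is an isomorphism in codimension one (by the valuative criterion over the discrete valuation rings $\mathcal{O}_{\cU,\eta}$ at the codimension-one points $\eta$); since $\cU$ is smooth, hence normal, $f_0$ is an isomorphism over an open subset of $\cU$ whose complement has codimension $\geq 2$, in particular over a neighbourhood of the generic point $\xi_C$ of every component $C$ of $\cU_k$, so near $f_0^{-1}(\xi_C)$ the divisor $D$ does not appear. Taking $C$ with $\ord_C(\form)=\ordmin(\cU,\form)$ and a general closed point in its generic locus, its lift $y\in X(K')$ reduces in $\cV$ to a general point of $C':=\overline{f_0^{-1}(\xi_C)}$, which avoids $D$ and lies in the generic locus of $C'$; then $\ord_{C'}(\form)=\nu_\cV(\form,y)/m=\nu_\cU(\form,y)/m=\ordmin(\cU,\form)$, so $\ordmin(\cV,\form)\leq\ordmin(\cU,\form)$, and equality holds.

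Finally, the same local analysis gives the equality of Shilov boundaries. Because $f_0$ is an isomorphism near $\xi_C$, the divisorial valuations $\ord_C$ and $\ord_{C'}$ (with $C'=\overline{f_0^{-1}(\xi_C)}$) coincide on the function field of $X$, so they define the same point $x_C=x_{C'}$ of $X^{\an}$; combined with the previous paragraph this shows $\Sh(\cU,\form)\subseteq\Sh(\cV,\form)$. Conversely, for a component $C'$ of $\cV_k$ with $\ord_{C'}(\form)=\ordmin$, a general point $y\in X(K')$ reducing into the generic locus of $C'$ reduces in $\cU$ into a component $C$ with $\nu_\cU(\form,y)=\nu_\cV(\form,y)=m\,\ordmin$, which forces $\ord_C(\form)=\ordmin$ and, once more by the codimension-one property, $C'=\overline{f_0^{-1}(\xi_C)}$ and $x_{C'}=x_C\in\Sh(\cU,\form)$. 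Hence $\Sh(\cU,\form)=\Sh(\cV,\form)$, which concludes the argument.
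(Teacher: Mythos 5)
Your overall architecture is close to the paper's (dominate both models by a third weak N\'eron model, compare canonical sheaves, get one inequality for free), and several pieces are correct: the lattice-theoretic reformulation of $\ordmin$ via unramified points, and the inequality $\ordmin(\cV,\form)\geq\ordmin(\cU,\form)$ coming from $f_0^{*}\omega_{\cU/R}\hookrightarrow\omega_{V_0/R}$. The gap is at the central step, the reverse inequality: you assert that $f_0\colon V_0=f^{-1}(\cU)\to\cU$ is \emph{proper} and deduce that it is an isomorphism in codimension one over $\cU$. But $\cV=\Sm(\cZ)$ is only an open subscheme of the proper modification $\cZ$ of $\cY$, and passing to the smooth locus destroys properness; $V_0\to\cU$ has no reason to satisfy the valuative criterion for discrete valuation rings such as $\mathcal{O}_{\cU,\xi_C}$, because the N\'eron smoothening only guarantees that $R^{\ur}$-valued points of $\cZ$ factor through $\Sm(\cZ)$, not that arbitrary DVR-valued points (with large residue fields, like $k(C)$) do. Consequently your argument does not even show that $\xi_C$ lies in the image of $V_0$, let alone that $f_0$ is an isomorphism near it; this surjectivity-plus-local-isomorphism statement at the codimension-one points of $\cU$ is precisely the real content of the proposition, and it has been assumed rather than proved. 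Your last paragraph inherits the same dependence: the identifications $C'=\overline{f_0^{-1}(\xi_C)}$ and $x_{C'}=x_C$ rest on the same unproved local isomorphism.

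The statement you need is true, but the proof must go through the weak N\'eron property rather than properness, which is how the paper argues: since $\cU_k$ is smooth over $k$, its $k^s$-points are dense; each lifts by Hensel's lemma to a point of $\cU(R^{\ur})$, and the bijections $\cU(R^{\ur})\to X(K^{\ur})\leftarrow\cV(R^{\ur})$ show these points also reduce into $\cV_k$. Hence $\cV_k\to\cU_k$ is dominant, every generic point of $\cU_k$ lifts to a generic point of $\cV_k$ (by equality of dimensions), and at such a point the morphism is quasi-finite and birational onto the normal scheme $\cU$, hence an open immersion by Zariski's Main Theorem; the comparison of $\omega^{\otimes m}$-stalks then gives equality of the orders and of the divisorial points there. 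With this substitute for your properness claim, the remainder of your computation with the lattices $L_{\cU}(y)\subseteq L_{\cV}(y)$ does go through. (A minor remark: the Nagata compactifications are unnecessary; the paper takes the schematic closure of the diagonal in $\cU_1\times_R\cU_2$, which maps directly to both models, but this is a matter of economy, not correctness.)
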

\begin{proof}
Let $\cU$ and $\cU'$ be weak N\'eron models for $X$. Then we can dominate both of them by a third weak N\'eron model $\cV$ by taking
 the schematic closure of the diagonal embedding of $X$ in $\cU\times_R \cU'$, and applying a N\'eron smoothening. We denote the morphism $\cV\to \cU$ by $h$.
     By Zariski's Main Theorem, $h$ is quasi-finite locally around a point $\xi$ of $\cV_k$ if and only if it is an open immersion locally around $\xi$, because $h_K$ is an isomorphism, $\cU$ is normal, and $\cV$ is $R$-flat. Thus if $C$ is a connected component of $\cV_k$, then either $h$ is an open immersion around the generic point of $C$, or $h(C)$ does not contain any generic point of $\cU_k$.

  For every generic point $\xi$ of $\cV_k$, we have that $(h^*\omega^{\otimes m}_{\cU/R})_{\xi}$ is a submodule of $(\omega^{\otimes m}_{\cV/R})_{\xi}$, and they are equal if and only if
  $h$ is \'etale at $\xi$.
  Thus if $C$ is a connected component of $\cV_k$ and $C'$ is the unique connected component of $\cU_k$ that contains $h(C)$, then
   $\ord_{C}\form\geq \ord_{C'}\form$, and equality holds if and only if $h$ is an open immersion locally around the generic point of $C$.
    We also know that $h_k:\cV_k\to \cU_k$ is dominant, because every $k^s$-rational point on $\cU_k$ can be lifted to an element of $\cU(R^{\ur})=\cV(R^{\ur})$ by the infinitesimal lifting criterion of smoothness, and $k^s$-rational points are dense in $\cU_k$ because $\cU_k$ is smooth over $k$.
     Therefore, every generic point of $\cU_k$ lifts to a generic point of $\cV_k$.
    It follows that $\ordmin(\cU,\form)=\ordmin(\cV,\form)$
    and $\Sh(\cU,\form)=\Sh(\cV,\form)$.
\end{proof}

In view of Proposition~\ref{prop:indep}, we will henceforth write $\ordmin(X,\form)$ and $\Sh(X,\form)$ instead of  $\ordmin(\cU,\form)$ and $\Sh(\cU,\form)$, and we call
$\Sh(X,\form)\subset X^{\an}$ the Shilov boundary of the pair $(X,\form)$.

\section{Logarithmic models}\label{sec:log}
We assume that the reader has a basic familiarity with logarithmic geometry. For a general introduction to log schemes, we refer to~\cite{kato-log}. The theory of regular log schemes is due to Kato~\cite{kato}; the properties and constructions that will be used below are summarized in Section 3 of~\cite{logzeta}.  All log structures in this paper are defined with respect to the Zariski topology.
\subsection{Log regular and log smooth models}
Let $\cX$ be a separated flat $R$-scheme of finite type. We set $S=\Spec R$. The {\em standard log structure} on $S$ is the divisorial log structure induced by the unique closed point of $S$. Likewise, the standard log structure on $\cX$ is the divisorial log structure induced by the special fiber $\cX_k$.

We say that $\cX$ is {\em log regular} if the scheme $\cX$ endowed with its standard log structure
 is log regular in the sense of~\cite{kato}. Intuitively, this property expresses that the pair $(\cX,\cX_{k,\red})$ is toroidal, but the notion of log regularity is more flexible (in particular, it also applies to the case where $R$ has mixed characteristic).  A {\em stratum} of $\cX_k$ is a connected component of an intersection of irreducible components of $\cX_k$.
 We say that $\cX$ is {\em log smooth} over $S$ if the morphism $\cX\to S$ is log smooth if we endow $\cX$ and
 $S$ with their standard log structures.
  If $\cX$ is log smooth over $S$ then it is log regular~\cite[8.2]{kato}; if $k$ has characteristic zero, then both notions are equivalent~\cite[3.6.1]{logzeta}.
 We will repeatedly make use of the fact that log regular schemes are normal~\cite[4.1]{kato}.

\begin{example}
Let $X$ be a smooth proper $K$-scheme. Every $snc$-model $\cX$ of $X$ is log regular. If, moreover, the multiplicities of the components of $\cX_k$ are prime to $p$ and the residue field $k$ is perfect, then $\cX$ is also log smooth over $S$ (but this is not a necessary condition for log smoothness). In particular, every $snc$-model is log smooth if $k$ has characteristic zero.
\end{example}

   If a smooth proper $K$-scheme $X$ has a log regular proper $R$-model, then we can also find an $snc$-model for $X$, by toroidal resolution of singularities for log regular schemes~\cite{kato}.
    Thus the existence of log regular proper models is known in the same cases where $snc$-models are known to exist: when $k$ has characteristic zero, and when $k$ has positive characteristic and the dimension of $X$ is at most $2$.
  If $X$ has a log smooth proper $R$-model, then the wild inertia acts trivially on the $\ell$-adic cohomology of $X$~\cite[0.1.1]{nakayama-nearby}.
 The converse implication holds if $X$ is a curve of genus at least $2$ or an abelian variety of arbitrary dimension~\cite{saito-sstable, stix, bellardini-smeets}. No similar criterion is known in general, but it is expected that there always exists a finite extension $K'$ of $K$ such that $X\otimes_K K'$ has a log smooth proper $R$-model. This is equivalent to the existence of a {\em semistable} $snc$-model, that is, an $snc$-model with reduced special fiber, over some finite extension of $R$, by~\cite{saito-sstable}.

 \subsection{Behavior under normalized base change}
 \begin{prop}\label{prop:logreg}
  Let $\cX$ be a log regular separated flat $R$-scheme of finite type.
  Let $K'$ be a finite extension of $K$, and let $R'$ be the integral closure of $R$ in $K'$. Denote by $\cX'$ the normalization of $\cX\otimes_R R'$.
  \begin{itemize}
  \item[(i)]
    If $\cX$ is log smooth over $S$, then $\cX'$ is log smooth over $S'=\Spec(R')$ (in particular, it is log regular).
  \item[(ii)]
    If $K'$ is tamely ramified over $K$ and $\cX$ is log regular, then the $R'$-scheme $\cX'$ is log regular.
  \end{itemize}
\end{prop}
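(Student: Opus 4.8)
The plan is to reduce both parts to a local computation on toric models and then to appeal to the good behaviour of log smoothness and log regularity under the relevant base change. First I would make the standard reductions: since $R$ is excellent, so is $\cX$, hence formation of the normalization $\cX'$ commutes with completion along $\cX_k$ and with strict étale morphisms; and both log smoothness of $\cX\to S$ and log regularity of $\cX$ may be checked étale-locally and on completions (Kato's structure theorem, see \cite{kato} and the summary in \cite[\S3]{logzeta}). This lets me replace $\cX$, strict-étale-locally around a point of $\cX_k$, by a toric model $\cX_P:=\Spec\big(\Z[P]\otimes_{\Z[\N]}R\big)$ with $P$ a sharp fs monoid, the chart $\N\to P$ sending $1$ to the image $p_0$ of $\unif$, and with the standard log structure of $\cX$ identified with the pullback of the canonical log structure of $\cX_P$. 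Here $P^{\gp}$ is automatically torsion-free (a sharp saturated monoid has torsion-free groupification), and $\cX$ is log smooth over $S$ exactly when, in addition, the torsion of $\coker(\Z\xrightarrow{p_0}P^{\gp})$ has order prime to $p$.

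Next I would compute the normalized base change on such a model. After replacing $S'$ by a strict étale cover I may assume $\unif=\unif'^{\,e}$ for a uniformizer $\unif'$ of $R'$, where $e=e(K'/K)$; then $S'\to S$ is given on charts by multiplication by $e$ on $\N$, so $\cX_P\otimes_RR'=\Spec\big(\Z[P]\otimes_{\Z[\N],\,\cdot e}R'\big)$, and a direct manipulation of monoid algebras identifies its normalization with
\[\cX_P':=\Spec\big(\Z[Q^{\mathrm{sat}}]\otimes_{\Z[\N]}R'\big)=\cX_P\times_S^{\mathrm{fs}}S',\]
where $Q$ is the integralization of the pushout $P\oplus_{\N,\,\cdot e}\N$ and $Q^{\mathrm{sat}}$ its saturation — saturating the monoid is exactly what repairs the failure of normality. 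Since $P^{\gp}$ is torsion-free, the torsion of $Q^{\gp}$ is cyclic of order dividing $e$. In case (ii) this is prime to $p$ because $K'/K$ is tame, so the corresponding finite group algebra over $R'$ is étale, and $\cX_P'$ is again — after a further strict étale base change extracting its idempotents — a toric model over $S'$ with sharp fs chart, hence log regular. In case (i) the torsion of $\coker(\Z\xrightarrow{p_0}P^{\gp})$ is prime to $p$ by hypothesis, and one checks that the analogous condition for the chart $\N\xrightarrow{q_0}(Q^{\mathrm{sat}})^{\gp}$ (with $q_0$ the class of $\unif'$) is inherited — this amounts to the stability of log smoothness under arbitrary fs base change, applied to $\cX_P\to S$ — so $\cX_P'\to S'$ is log smooth. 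Conceptually, for (ii) one may also note that a tame $S'\to S$ is log étale, hence $\cX'\to\cX$ is log smooth, and a scheme log smooth over a log regular base is log regular \cite{kato}.

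These local models glue: by construction the standard (divisorial) log structure on $\cX'$ coincides everywhere with the pullback of the toric log structure, so $\cX'$ is log smooth over $S'$ in case (i) and log regular in case (ii). Finally $\cX'$ is flat over $R'$ because it is reduced and each of its irreducible components dominates $\Spec R'$: the scheme $\cX\otimes_RR'$ is $R'$-flat, hence has no component supported in its special fibre, and this is preserved under normalization.

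I expect the main obstacle to be the log-structure bookkeeping rather than the scheme-theoretic content, namely the verification that the saturated pushout monoid $Q^{\mathrm{sat}}$ is \emph{exactly} the monoid of the divisorial log structure of the normalization of $\cX_P\otimes_RR'$ — equivalently that this normalization is log regular over $S'$ with sharp chart $Q^{\mathrm{sat}}$ and $\N\to Q^{\mathrm{sat}}$ local. This is also the point where the hypotheses are indispensable: the identification ``underlying scheme of $\cX_P\times_S^{\mathrm{fs}}S'$ $=$ normalization of $\cX_P\otimes_RR'$'' can fail once $Q^{\gp}$ acquires $p$-torsion, which is exactly what the tameness assumption in (ii) (and log smoothness in (i)) prevents; so the condition $p\nmid e$ enters essentially, not merely cosmetically. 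Most of the required log-geometric input — Kato's structure theory and the behaviour of log smoothness and log regularity under log smooth (resp.\ fs) base change — is already available in \cite{kato} and \cite[\S3]{logzeta}, so the write-up should consist mainly in assembling these and carrying out the monoid-algebra computation above.
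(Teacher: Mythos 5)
Your strategy is essentially the paper's: read the tameness hypothesis as log \'etaleness of $S'\to S$, form the fine and saturated base change $\cY=\cX\times_S^{\mathrm{fs}}S'$, use stability of log smoothness under fs base change (case (i)), respectively log smoothness of $\cY\to\cX$ (case (ii)), to conclude that $\cY$ is log regular, and then identify $\cY$ with the normalization $\cX'$ equipped with its standard log structure. The paper does this abstractly, citing the proof of Proposition 3.7.1 in \cite{logzeta} for the identification of the underlying scheme of $\cY$ with the normalization of $\cX\otimes_R R'$, and \cite[11.6]{kato} for the fact that on a log regular scheme the log structure is the divisorial log structure attached to its non-triviality locus (here $\cY_k$), which is what finally gives $\cY=\cX'$ as log schemes. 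Your saturated-pushout computation with $Q^{\mathrm{sat}}$ is the local content behind the first citation, so at the level of ideas the two arguments coincide; yours is more self-contained, the paper's is shorter.

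Two points in your sketch need repair before it is a proof. First, ``replace $\cX$ strict-\'etale-locally by a toric model'' is an overstatement: Kato's theory gives, \'etale locally, a strict chart morphism $\cX\to\Spec\bigl(\Z[P]\otimes_{\Z[\N]}R\bigr)$, which is smooth when $\cX\to S$ is log smooth but is not an isomorphism, and for a merely log regular $\cX$ in mixed characteristic (your case (ii)) it need not be flat at all --- there one only controls completed local rings via Kato's structure theorem. So transferring your monoid computation from the toric model to $\cX$ requires an extra argument: in case (i) a smooth-base-change argument (the fibre product with the normalized toric base change is smooth over a normal scheme, hence normal, finite over $\cX\otimes_R R'$ and an isomorphism on generic fibres, hence equals the normalization); in case (ii) it is cleaner to drop the local models altogether and obtain normality of $\cY$ from its log regularity (log \'etale over log regular), which is exactly what the paper does. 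Second, the step you yourself flag as the main obstacle --- that the fs-base-change log structure on $\cY$ agrees with the standard divisorial log structure of $\cX'$ --- is never actually closed in your proposal; it is settled by \cite[11.6]{kato}: on a log regular log scheme the log structure is the divisorial one attached to the locus where it is non-trivial, and that locus is $\cY_k$ because the log structure is trivial on the generic fibre. With these two repairs your argument becomes a correct, more computational rendering of the paper's proof.
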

\begin{proof}
  We set $S'=\Spec(R')$ and we  endow $S$, $S'$ and $\cX$ with their standard log structures. The condition that $K'$ is tamely ramified over $K$ is equivalent to the property that $S'$ is log \'etale over $S$.
  Log smoothness is preserved by base change in the category of fine and saturated (fs) log schemes; thus the fs base change $\cY$ of $\cX$ to $S'$ is log smooth over $S'$ (if $\cX$ is log smooth over $S$) or over $\cX$ (if $K'$ is tamely ramified over $K$). In both cases, $\cY$ is log regular, since it is log smooth over a log regular scheme.
   Now it follows from the proof of Proposition 3.7.1 in~\cite{logzeta} that the underlying scheme of $\cY$ is the normalization of $\cX\otimes_R R'$.
  By~\cite[11.6]{kato}, log regularity also implies that the locus in $\cY$ where the log structure is not trivial is a divisor $D$, and that the log structure on $\cY$ is the divisorial log structure with respect to $D$. In our case, $D=\cY_k$, so that $\cY=\cX'$ with the standard log structure.
\end{proof}

The following result describes how the multiplicities of the components in the special fiber of a log regular model behave under normalized base change.
\begin{prop}\label{prop:saturated}
Let $\cX$ be a log regular separated flat $R$-scheme of finite type.
 Let $K'$ be a finite extension of $K$ of ramification degree $e$. Assume either that $\cX$ is log smooth over $S$, or that $K'$ is a tame extension of $K$.
 Denote by $R'$ the integral closure of $R$ in $K'$, and by $\cX'$ the normalization of $\cX\otimes_R R'$.

 Let $E$ be an irreducible component of $\cX_k$, and denote its multiplicity in $\cX_k$ by $N$.
 Then the multiplicity of the special fiber of $\cX'$ along each component dominating $E$ is equal to $N/\gcd(e,N)$.
\end{prop}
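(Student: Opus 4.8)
The plan is to localize at the generic point of $E$ and reduce the statement to a computation with characteristic monoids. Let $\xi$ be the generic point of $E$ and put $A=\mathcal{O}_{\cX,\xi}$. Since $\cX$ is $R$-flat, $\cX_k$ is a Cartier divisor, so $A$ has dimension one; since $\cX$ is log regular it is normal, so $A$ is a discrete valuation ring. Write $s$ for a uniformizer of $A$. Endowing $\cX$ with its standard log structure, the stalk of the characteristic monoid $\overline{\mathcal{M}}_{\cX,\xi}$ is $\N$, generated by the class of $s$, and the map $\overline{\mathcal{M}}_{S}\to\overline{\mathcal{M}}_{\cX,\xi}$ induced by $\cX\to S$, a homomorphism $\N\to\N$, is multiplication by $N$, since the multiplicity of $E$ in $\cX_k$ is by definition $v_A(\unif)=N$. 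Likewise, with the standard log structure on $S'=\Spec R'$ we have $\overline{\mathcal{M}}_{S'}\cong\N$, generated by the class of a uniformizer $\unif'$ of $R'$, and $\overline{\mathcal{M}}_{S}\to\overline{\mathcal{M}}_{S'}$ is multiplication by $e$ because $v_{R'}(\unif)=e$.

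By Proposition~\ref{prop:logreg} and its proof, $\cX'$ is the underlying scheme of the base change $\cY$ of $\cX$ to $S'$ in the category of fine and saturated (fs) log schemes, and $\cY$ is again log regular (indeed log smooth over $S'$ in case~(i)). Let $\xi'$ be any point of $\cX'$ over $\xi$; it is the generic point of a component $E'$ of $\cX'_k$ dominating $E$, and every such component arises in this way. The local ring $\mathcal{O}_{\cX',\xi'}$ is again a discrete valuation ring, so $\overline{\mathcal{M}}_{\cX',\xi'}\cong\N$, and the multiplicity of $E'$ in $\cX'_k$ is exactly the integer by which the generator of $\overline{\mathcal{M}}_{S'}$ is multiplied under $\overline{\mathcal{M}}_{S'}\to\overline{\mathcal{M}}_{\cX',\xi'}$. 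By functoriality of fs base change, $\overline{\mathcal{M}}_{\cX',\xi'}$ is computed from the fs pushout of $\overline{\mathcal{M}}_{\cX,\xi}\leftarrow\overline{\mathcal{M}}_{S}\to\overline{\mathcal{M}}_{S'}$; since $\mathcal{O}_{\cX',\xi'}$ is a discrete valuation ring, $\overline{\mathcal{M}}_{\cX',\xi'}$ must be the maximal torsion-free quotient $\N$ of that pushout, and $\overline{\mathcal{M}}_{S'}\to\overline{\mathcal{M}}_{\cX',\xi'}$ is the induced composite.

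Thus it remains to compute the pushout. Its associated group is $\Z^2/\Z\cdot(N,-e)$, where the two coordinates record the coefficients of the classes of $s$ and of $\unif'$. Put $d=\gcd(N,e)$ and write $N=dN'$, $e=de'$ with $\gcd(N',e')=1$. Then $(N',-e')$ is primitive, so the saturation of $\Z\cdot(N,-e)$ in $\Z^2$ is $\Z\cdot(N',-e')$, and the maximal torsion-free quotient is $\Z^2/\Z\cdot(N',-e')\cong\Z$ via $(a,b)\mapsto e'a+N'b$. Under this map the class of $\unif'$, namely $(0,1)$, is sent to $N'$, so $\overline{\mathcal{M}}_{S'}\to\overline{\mathcal{M}}_{\cX',\xi'}$ is multiplication by $N'$; hence the multiplicity of $E'$ equals $N'=N/\gcd(N,e)$, as claimed. (The $d$-torsion subgroup of the pushout group reflects the fact that $\cX\otimes_R R'$ acquires several branches along $E$, which is exactly why the normalization is needed; this is consistent with, but not required for, the statement.)

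The step that really needs care is the identification in the second paragraph of $\overline{\mathcal{M}}_{\cX',\xi'}$ with the free quotient of the fs pushout, together with the assertion that $\overline{\mathcal{M}}_{S'}\to\overline{\mathcal{M}}_{\cX',\xi'}$ is the induced map; this rests on Kato's structure theory of log regular schemes (\cite{kato}, \cite{logzeta}) and on the compatibility of characteristic monoids with fs base change, as already used in the proof of Proposition~\ref{prop:logreg} --- in case~(i) via the stability of log smoothness under fs base change, and in case~(ii) via the fact that $S'\to S$ is log \'etale because $K'/K$ is tame. Granting this, the remainder is elementary.
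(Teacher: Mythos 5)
Your proof is correct and follows essentially the same route as the paper: identify $\cX'$ with the fine and saturated base change of $\cX$ to $S'$ (via Proposition~\ref{prop:logreg}) and compute the characteristic monoid at a point over the generic point of $E$ from the fs pushout of characteristic monoids, reading the multiplicity off the structural morphism from $\overline{\mathcal{M}}_{S'}$; your computation with $\Z^2/\Z\cdot(N,-e)$ and its maximal torsion-free quotient is the paper's explicit saturation $(1/\lcm(e,N))\N\oplus(\Z/\gcd(e,N)\Z)$ in different coordinates. The step you flag as delicate --- that the characteristic monoid at $\xi'$ is the fs pushout modulo its torsion units, with the structural map the induced one --- is exactly what the paper secures by citing \cite[2.1.1]{nakayama} (noting its validity for Zariski log structures), rather than anything contained in the proof of Proposition~\ref{prop:logreg} itself.
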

\begin{proof}
 We set $S'=\Spec R'$.
We have seen in the proof of Proposition~\ref{prop:logreg} that the normalization morphism $\cX'\to \cX\otimes_R R'$ coincides with the saturation morphism of
the fibered product $\cX\times_S S'$ in the category of log schemes, where $S$, $S'$ and $\cX$ are endowed with their standard log structures. We can determine the multiplicities of the components in the special fiber of the saturation of $\cX\otimes_R R'$ by means of a local computation on characteristic monoids.

For every log scheme $\cY$, we denote by $\mathcal{C}_{\cY}=\mathcal{M}_{\cY}/\mathcal{O}^{\times}_{\cY}$ its sheaf of characteristic monoids. At every point $\zeta$ of $\cX_k$, the morphism of log schemes
$\cX\to S$ gives rise to a structural morphism of characteristic monoids $\mathcal{C}_{S,s}=\N\to \mathcal{C}_{\cX,\zeta}$, where $s$ denotes the closed point of $S$. The analogous observation applies to log schemes over $S'$. The morphism of characteristic monoids associated with $S'\to S$ is the inclusion map $\N\to (1/e)\N$.

 Now let $\zeta$ be the generic point of $E$, and let $\xi$ be any point of $\cX\otimes_R R'$ lying above $\zeta$.
 Then the characteristic monoid
 $\mathcal{C}_{\cX\otimes_R R',\xi}$ is given by the coproduct
 $$\mathcal{C}_{\cX,\zeta}\oplus_{\N}(1/e)\N$$ modulo its submonoid of units (the proof of~\cite[2.1.1]{nakayama} remains valid for log structures on the Zariski site).
  The monoid $\mathcal{C}_{\cX,\zeta}$ is equal to $(1/N)\N$, and its structural morphism is the inclusion map $\N\to (1/N)\N$. The saturation of the monoid $(1/N)\N\oplus_{\N}(1/e)\N$ is isomorphic to $(1/\lcm(e,N))\N\oplus (\Z/\gcd(e,N)\Z)$, so that the characteristic monoid at any point $\xi'$ of
$\cX'$ lying above $\xi$ is given by $(1/\lcm(e,N))\N$, with structural morphism $$(1/e)\N\to (1/\lcm(e,N))\N\colon 1/e\mapsto 1/e.$$ This means that the multiplicity of the special fiber of $\cX'$ at $\xi'$ is equal to $\lcm(e,N)/e=N/\gcd(e,N)$.
\end{proof}
\begin{cor}\label{coro:ppower}
  Let $\cX$ be a log regular separated flat $R$-scheme of finite type.
  Then there exists a tame finite extension $K'$ of $K$, with valuation ring $R'$, such that
  the normalization $\cX'$ of $\cX\otimes_R R'$ has the following properties: the multiplicity of $\cX'_k$ along each of its
 irreducible components is a power of $p$, and every stratum of $\cX'_k$ is geometrically connected.
\end{cor}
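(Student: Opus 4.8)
The plan is to obtain $K'$ as a compositum of two tame extensions: first a totally ramified one $K_1/K$ that turns every multiplicity into a power of $p$, and then an unramified one $K'/K_1$ (i.e.\ an enlargement of the residue field) that forces all strata to be geometrically connected. The first step is a direct application of Proposition~\ref{prop:saturated}, and the second is a Galois-orbit finiteness argument.

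\emph{Step 1: making multiplicities $p$-powers.} Write $\cX_k=\sum_{i\in I}N_iE_i$ and factor $N_i=p^{a_i}m_i$ with $p\nmid m_i$. Let $e=\lcm_{i\in I}m_i$, a positive integer prime to $p$, and set $K_1=K(\unif^{1/e})$; since $T^e-\unif$ is Eisenstein, $K_1/K$ is totally ramified of degree $e$, and it is tame because $p\nmid e$. Let $R_1$ be its valuation ring and $\cX_1$ the normalization of $\cX\otimes_RR_1$. By Proposition~\ref{prop:logreg}(ii), $\cX_1$ is again log regular, and by Proposition~\ref{prop:saturated} the multiplicity of $(\cX_1)_k$ along any component dominating $E_i$ equals $N_i/\gcd(e,N_i)$. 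As $p\nmid e$ we have $\gcd(e,N_i)=\gcd(e,m_i)=m_i$ (using $m_i\mid e$), so this multiplicity is $p^{a_i}$. Hence every component of $(\cX_1)_k$ has $p$-power multiplicity; moreover, by the same proposition this property is preserved under any further tame base change, since $\gcd(e',p^{a})=1$ whenever $p\nmid e'$.

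\emph{Step 2: making strata geometrically connected.} Now work over $R_1$ and put $Z=(\cX_1)_k$, a $k$-scheme of finite type. Fix a separable closure $k^s$ of $k$. The set of irreducible components of $Z\otimes_kk^s$ is finite and is permuted continuously by $\Gal(k^s/k)$, so each is defined over a finite separable subextension of $k^s/k$; likewise, for every subset $T$ of these components, the finitely many connected components of $\bigcap_{E\in T}E$ are each defined over a finite separable extension of $k$. Choose a finite separable extension $k_2/k$ large enough that every irreducible component of $Z\otimes_kk^s$ and every connected component of every intersection of them is defined over $k_2$ and stays geometrically irreducible, resp.\ geometrically connected, over $k_2$. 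Let $K'/K_1$ be the unramified extension with residue field $k_2$, and $R'$ its valuation ring. Since $R_1\to R'$ is finite \'etale, $\cX_1\otimes_{R_1}R'$ is normal, and since normalization commutes with this \'etale base change (and $\cX\otimes_RR'=(\cX\otimes_RR_1)\otimes_{R_1}R'$), it is the normalization $\cX'$ of $\cX\otimes_RR'$. Thus $\cX'_k=Z\otimes_kk_2$: its irreducible components are geometrically irreducible and correspond bijectively to those of $Z\otimes_kk^s$, and every stratum of $\cX'_k$ — a connected component of an intersection of irreducible components — is geometrically connected (geometric connectedness over $k_2$ may be tested over $k^s$, as purely inseparable extensions do not affect connectedness).

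\emph{Conclusion and main difficulty.} With $K'$ as above, $e(K'/K)=e$ is prime to $p$ and $k_2/k$ is separable, so $K'/K$ is tame; $\cX'$ is log regular; its special fiber has all multiplicities equal to powers of $p$ by Step~1 (the unramified step leaves multiplicities unchanged, by Proposition~\ref{prop:saturated}), and all strata geometrically connected by Step~2. The genuine content is packaged entirely in Propositions~\ref{prop:logreg} and~\ref{prop:saturated} together with the routine Galois-descent argument; the only point demanding care is the order of the two base changes and the identification $\cX'=\cX_1\otimes_{R_1}R'$. This is why the ramified extension must come first — it may genuinely change the scheme (a normalization is needed), but does so while the residue field is still $k$ — and the subsequent residue-field enlargement is \'etale, hence cannot destroy normality, so the descent argument applies verbatim to the already-normalized model $\cX_1$.
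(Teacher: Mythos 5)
Your argument is correct and follows essentially the same route as the paper: a totally ramified tame extension of degree the prime-to-$p$ part of the lcm of the multiplicities (handled via Propositions~\ref{prop:logreg} and~\ref{prop:saturated}), followed by an unramified extension enlarging the residue field, with the observation that the \'etale base change preserves normality so no further normalization is needed. The only cosmetic difference is that you spell out the Galois-orbit argument for choosing the residue extension, which the paper leaves implicit.
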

\begin{proof}
Let $e$ be the least common multiple of the multiplicities of the irreducible components in $\cX_k$, and let $e'$ be its prime-to-$p$ part. Let $L$ be a totally ramified extension of $K$ of degree $e'$, with valuation ring $R_L$. Then the normalization $\cY$ of $\cX\otimes_R R_L$ is log regular and the multiplicities of the components of its special fiber are powers of $p$, by Proposition~\ref{prop:saturated}. Let $k'$ be a finite separable extension of $k$ such that the connected components of $U\times_k k'$ are geometrically connected for every stratum $U$ of $\cY_k$, and let $K'$ be the unique unramified extension of $L$ with residue field $k'$. We denote by $R'$ the valuation ring of $K'$. The scheme $\cY\otimes_{R_L} R'$ is normal, because it is \'etale over the normal scheme $\cY$. Thus $\cY\otimes_{R_L} R'$ is canonically isomorphic to $\cX'$. By our choice of $K'$, the multiplicity of $\cX'_k$ along each of its irreducible components is a power of $p$, and every stratum of $\cX'_k$ is geometrically connected.
\end{proof}
The same argument also gives
\begin{cor}\label{coro:reduced}
  Let $\cX$ be a log smooth separated flat $R$-scheme of finite type.
  Then there exists a finite extension $K'$ of $K$, with valuation ring $R'$, such that
  the normalization $\cX'$ of $\cX\otimes_R R'$ has the following properties: $\cX'_k$ is reduced and every stratum of $\cX'_k$ is geometrically connected.
\end{cor}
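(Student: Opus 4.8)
The plan is to imitate the proof of Corollary~\ref{coro:ppower}, taking advantage of the fact that for a \emph{log smooth} model both Proposition~\ref{prop:logreg} and Proposition~\ref{prop:saturated} apply in their first case, which imposes no tameness restriction on the base extension. This means that one may afford to kill the \emph{entire} multiplicity of every component of $\cX_k$, not merely its prime-to-$p$ part, so that the special fiber becomes reduced rather than just $p$-power multiplicity.

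First I would let $e$ be the least common multiple of the multiplicities of the irreducible components of $\cX_k$, and choose a totally ramified finite extension $L/K$ of ramification index $e$; such an extension always exists over a complete discretely valued field, for instance $L=K(\unif^{1/e})$, the polynomial $X^e-\unif$ being Eisenstein at $\unif$ and hence irreducible and totally ramifying in every characteristic (separability is irrelevant here). Write $R_L$ for its valuation ring and $\cY$ for the normalization of $\cX\otimes_R R_L$. By Proposition~\ref{prop:logreg}(i), $\cY$ is log smooth over $\Spec R_L$, in particular log regular and normal. By Proposition~\ref{prop:saturated}, applied in the case where $\cX$ is log smooth over $S$, each irreducible component $E$ of $\cX_k$ of multiplicity $N$ gives rise to components of $\cY_k$ of multiplicity $N/\gcd(e,N)$; since $N$ divides $e$ this equals $1$. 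As every component of $\cY_k$ dominates some such $E$, the special fiber $\cY_k$ is reduced.

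It remains to make the strata geometrically connected by an unramified twist, exactly as at the end of the proof of Corollary~\ref{coro:ppower}: choose a finite separable extension $k'$ of the residue field $k$ of $L$ such that for every stratum $U$ of $\cY_k$ the connected components of $U\times_k k'$ are geometrically connected (there are finitely many strata, and this is achieved over a suitable finite separable extension by a standard argument), let $K'$ be the unramified extension of $L$ with residue field $k'$, and let $R'$ be its valuation ring. Then $\cY\otimes_{R_L}R'$ is \'etale over the normal scheme $\cY$, hence normal, hence canonically identified with $\cX'$, and $\cX'_k=\cY_k\otimes_k k'$. Since $k'/k$ is separable and $\cY_k$ is reduced, $\cX'_k$ is reduced, and by construction every stratum of $\cX'_k$ is geometrically connected.

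I do not expect a genuine obstacle: the substantive content is already packaged in Propositions~\ref{prop:logreg} and~\ref{prop:saturated}, and everything else is bookkeeping that parallels Corollary~\ref{coro:ppower}. The two points deserving a moment's care are the existence of a totally ramified extension of the prescribed degree $e$ (handled by the Eisenstein polynomial above, uniformly in the characteristic) and the preservation of reducedness of $\cY_k$ under the separable residue extension (immediate, since reducedness of a scheme of finite type over a field is preserved by separable, in particular \'etale, field extensions). If one prefers, the unramified twist can be performed before the ramified one, the order being immaterial.
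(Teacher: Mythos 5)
Your proposal is correct and is essentially the paper's own argument: the paper proves this corollary by literally repeating the proof of Corollary~\ref{coro:ppower} with $e'$ replaced by $e$, which is exactly what you do, using the log smooth cases of Propositions~\ref{prop:logreg} and~\ref{prop:saturated} to pass to a totally ramified extension of degree $e$ (making the special fiber reduced) and then an unramified extension to make the strata geometrically connected. The extra details you supply (existence of the Eisenstein extension, preservation of reducedness under separable residue extension) are fine and consistent with the paper.
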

\begin{proof}
We can simply copy the proof of Corollary \ref{coro:ppower}, replacing $e'$ by $e$.
\end{proof}

\subsection{Logarithmic differential forms}\label{ss:logdiff}
 Let $\cX$ be a log regular separated flat $R$-scheme of finite type.
We denote by $\Omega^{\mathrm{log}}_{\cX/R}$ the sheaf of logarithmic differential forms on $\cX$ relative to $S=\Spec(R)$, where both $\cX$ and $S$ are equipped with their standard log structures. Since the log structure is trivial on $\cX\otimes_R K$, the restriction of $\Omega^{\mathrm{log}}_{\cX/R}$ to $\cX\otimes_R K$ is canonically isomorphic to the usual sheaf of differentials $\Omega_{\cX\otimes_R K/K}$.

 If $\cX$ is log smooth over $S$, then $\Omega^{\mathrm{log}}_{\cX/R}$ is locally free, and we denote its determinant line bundle by
 $\omega_{\cX/R}^{\mathrm{log}}$.
 If $\cX$ is only log regular, we define $\omega_{\cX/R}^{\mathrm{log}}$ in the following way. Denote by $\cX_{\mathrm{reg}}$ the regular locus of $\cX$; this is a dense open subscheme of $\cX$, and its complement has codimension at least two, because $\cX$ is normal. Since coherent sheaves on regular schemes are perfect, we can define the determinant of $\Omega^{\mathrm{log}}_{\cX_{\mathrm{reg}}/R}$ as in~\cite{knudsen-mumford}, by locally taking a finite resolution by free coherent sheaves of finite rank, and taking the alternating product of their determinants. The result is a line bundle $\omega_{\cX_{\mathrm{reg}}/R}^{\mathrm{log}}$ on $\cX_{\mathrm{reg}}$, and we define $\omega^{\mathrm{log}}_{\cX/R}$ to be the pushforward of this line bundle to $\cX$. This is a reflexive sheaf on $\cX$. On the other hand, we can also forget the log structures and consider the usual relative canonical sheaf $\omega_{\cX/R}$ (the pushforward of the determinant of
 $\Omega_{\cX_{\mathrm{reg}}/R}$). The following proposition collects the basic properties of logarithmic canonical sheaves that we will need; we include detailed proofs because we have not been able to find them in the literature.

 \begin{prop}\label{prop:can}
 Assume that $k$ is perfect.
 Let $\cX$ be a log regular separated flat $R$-scheme of finite type.
\begin{enumerate}
\item \label{it:logcan} Denote by $i\colon \cX\otimes_R K\to \cX$ the inclusion map. We have
$$\omega^{\log}_{\cX/R}=\omega_{\cX/R}(\cX_{k,\red}-\cX_k)$$
as submodules of $i_\ast \omega_{\cX\otimes_R K/K}$.
\item \label{it:lineb} The coherent sheaf $\omega^{\log}_{\cX/R}$ is a line bundle on $\cX$.
\item \label{it:loget}
Let $h\colon \cY\to \cX$ be a morphism of log schemes associated with a (not necessarily proper) subdivision of the fan of $\cX$~\cite[9.9]{kato}. Then
$$\omega^{\log}_{\cY/R}=h^{\ast}\omega^{\mathrm{log}}_{\cX/R}$$
as submodules of $j_\ast \omega_{\cY\otimes_R K/K}$, where $j$ denotes the inclusion map $$\cY\otimes_R K\to \cY.$$
\item \label{it:canbc} Let $K'$ be a finite extension of $K$, and let $R'$ be the integral closure of $R$ in $K'$.
 Set $S'=\Spec(R')$ and let $\cX'$ be the normalization of $\cX\otimes_R R'$. We denote by $g\colon \cX'\to \cX$ the projection morphism.
Assume either that $\cX$ is log smooth over $S$ or that $K'$ is tamely ramified over $K$.
 Then $$\omega^{\mathrm{log}}_{\cX'/R'}=g^{\ast}\omega^{\mathrm{log}}_{\cX/R}$$
 as submodules of $(i\otimes_K K')_\ast \omega_{\cX'\otimes_{R'} K'/K'}$.
\end{enumerate}
\end{prop}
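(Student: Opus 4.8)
The plan is to treat all four statements by a single device --- reduction to a computation in codimension one --- and to invoke Kato's local structure theory for log regular schemes only where a genuine computation is unavoidable. In each statement two subsheaves of a fixed sheaf of rational relative pluricanonical forms are compared, and both are \emph{reflexive}: $\omega^{\mathrm{log}}_{\cX/R}$ and $\omega_{\cX/R}$ are by construction pushforwards from the regular locus $\cX_{\mathrm{reg}}$ of line bundles, the complement $\cX\smallsetminus\cX_{\mathrm{reg}}$ has codimension $\geq 2$ because log regular schemes are normal \cite[4.1]{kato}, and twisting by $\cX_{k,\red}-\cX_k$ preserves reflexivity since $\cX_k=\mathrm{div}(\unif)$ is Cartier. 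As a reflexive sheaf on a normal scheme is recovered from its restriction to any open whose complement has codimension $\geq 2$, it suffices in every case to verify the asserted identity at the generic points of the irreducible components of $\cX_k$; over $\cX\otimes_R K$ the log structures are trivial and all four statements are immediate there.

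That $\omega^{\mathrm{log}}_{\cX/R}$ is a line bundle I would deduce from Kato's local description of log regular schemes \cite{kato} (summarized in \cite[\S3]{logzeta}): \'etale-locally $\cX$ is modelled on the monoid-algebra scheme attached to its characteristic monoid, on which $\Omega^{\mathrm{log}}_{\cX/R}$, and hence its Knudsen--Mumford determinant $\omega^{\mathrm{log}}_{\cX/R}$ \cite{knudsen-mumford}, can be written down explicitly; the determinant is invertible even where $\Omega^{\mathrm{log}}_{\cX/R}$ fails to be locally free (which happens precisely when $p$ divides a multiplicity of $\cX_k$, but does not affect the determinant). For the identity $\omega^{\mathrm{log}}_{\cX/R}=\omega_{\cX/R}(\cX_{k,\red}-\cX_k)$ I would work, as the reduction allows, at the generic point $\xi$ of a component $E$ of multiplicity $N$: there $\mathcal{O}_{\cX,\xi}$ is a discrete valuation ring with a uniformizer $t$ cutting out $E$ and $\unif=u\,t^N$ for a unit $u$, and a direct computation with logarithmic differentials then exhibits the generator of $\omega^{\mathrm{log}}_{\cX/R}$ at $\xi$ as $t^{\,N-1}$ times the generator of $\omega_{\cX/R}$; ranging over all components produces the twist by $\cX_{k,\red}-\cX_k=\sum_i(1-N_i)E_i$. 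In the $snc$ case this is the classical comparison of $d\log x_1\wedge\cdots\wedge d\log x_r\wedge dx_{r+1}\wedge\cdots$ with $dx_1\wedge\cdots$, after eliminating one logarithmic generator by means of $d\log\unif=0$, and the general log regular case reduces to it on Kato's charts.

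For a (not necessarily proper) subdivision $h\colon\cY\to\cX$ of the fan of $\cX$, the crucial input is that $h$ is log \'etale \cite[9.9]{kato}; consequently $h^{\ast}\Omega^{\mathrm{log}}_{\cX/R}\to\Omega^{\mathrm{log}}_{\cY/R}$ is an isomorphism (its cokernel $\Omega^{\mathrm{log}}_{\cY/\cX}$ vanishes, and it is injective by log smoothness), and passing to Knudsen--Mumford determinants (which commute with pullback of perfect complexes, in particular on the regular loci) yields $\omega^{\mathrm{log}}_{\cY/R}=h^{\ast}\omega^{\mathrm{log}}_{\cX/R}$; since $h$ is an isomorphism over $K$ this identifies both sides inside $j_{\ast}\omega_{\cY\otimes_R K/K}$. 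For the normalized base change along a finite extension $K'/K$ (tame, or with $\cX$ log smooth over $S$), recall from the proof of Proposition~\ref{prop:logreg} that $\cX'$ is the underlying scheme of the fibered product of $\cX$ with $S'=\Spec R'$ in the category of fs log schemes, and that $\cX'$ is again log regular (log smooth over $S'$ in the second case). Logarithmic differentials are compatible with fs base change --- the saturation step leaves the groups associated with the characteristic monoids unchanged, hence leaves $\Omega^{\mathrm{log}}$ unaffected --- so $\Omega^{\mathrm{log}}_{\cX'/R'}=g^{\ast}\Omega^{\mathrm{log}}_{\cX/R}$; taking determinants gives $\omega^{\mathrm{log}}_{\cX'/R'}=g^{\ast}\omega^{\mathrm{log}}_{\cX/R}$, and the identification inside $(i\otimes_K K')_{\ast}\omega_{\cX'\otimes_{R'}K'/K'}$ is again clear over $K'$. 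Alternatively one may check it directly at the generic points of $\cX'_k$ using the monoid computation of Proposition~\ref{prop:saturated}, and noting that the torsion subgroup $\Z/\gcd(e,N)\Z$ it produces is invisible to the determinant line bundle.

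I expect the real work to be the line bundle property and the identity $\Omega^{\mathrm{log}}_{\cX'/R'}=g^{\ast}\Omega^{\mathrm{log}}_{\cX/R}$ in the merely log regular (tame) case: both force one to use Kato's structure theory rather than a naive top-exterior-power computation, precisely because $\Omega^{\mathrm{log}}_{\cX/R}$ is not locally free once $p$ divides a multiplicity of $\cX_k$, and the recurring mechanism making everything work is that the $p$-torsion appearing in characteristic monoids disappears on passing to the determinant. The hypothesis that $k$ is perfect enters through the good behaviour of $\omega_{\cX/R}$ and of the strata of $\cX_k$ under these operations.
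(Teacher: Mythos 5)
Your overall architecture is close to the paper's: for \eqref{it:logcan} the paper also uses normality plus reflexivity to discard a codimension~$\geq 2$ locus (it restricts to the open where $\cX$ is regular and $\cX_k$ is snc and cites Kato--Saito [5.3.4], rather than recomputing at the DVRs of the components), for \eqref{it:loget} it also starts from log \'etaleness of $h$, and for \eqref{it:canbc} it uses exactly your fs-base-change compatibility of $\Omega^{\mathrm{log}}$ followed by a check at the generic points of $\cX'_k$. The genuine gap is in \eqref{it:lineb}, precisely the step you flag as "the real work" but do not supply. In mixed characteristic a log regular scheme is \emph{not} \'etale-locally (smooth over) a monoid algebra over $R$ -- that would essentially force log smoothness over $S$, and then $\Omega^{\mathrm{log}}_{\cX/R}$ would be locally free and \eqref{it:lineb} trivial; the merely log regular (wild) case is exactly where this fails. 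Moreover, at the non-regular points of $\cX$ the Knudsen--Mumford determinant of $\Omega^{\mathrm{log}}$ is not even defined until one knows the sheaf is \emph{perfect} there, and this perfectness is the missing idea. The paper obtains it by choosing a section of $\mathcal{M}_{\cX,\xi}\to\mathcal{C}_{\cX,\xi}$, realizing a neighborhood of $\xi$ as a \emph{regular} closed subscheme of $\cW=\Spec R[\mathcal{C}_{\cX,\xi}][T_1,\ldots,T_r]$ (which is log smooth over $\Spec R$ with \emph{trivial} log structure; regularity of the immersion is Kato [4.2]), so that the conormal sequence $0\to J/J^2\to \Omega^{\mathrm{log}}_{\cW}\otimes\mathcal{O}_{\cX}\to\Omega^{\mathrm{log}}_{\cX}\to 0$ is a two-term free resolution; Kato--Saito [5.3.4] then converts the resulting line bundle into $\omega^{\mathrm{log}}_{\cX/R}$. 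Your sentence "the determinant is invertible even where $\Omega^{\mathrm{log}}$ fails to be locally free" is the conclusion to be proved, not an argument for it.

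A secondary gap, at the very point the paper is careful about: Knudsen--Mumford determinants do \emph{not} commute with arbitrary non-flat pullback in the underived sense, and neither $h$ in \eqref{it:loget} nor $g$ in \eqref{it:canbc} is flat. The paper handles $h$ by pulling back the resolution above and checking it stays exact (because $h$ is an isomorphism over $K$ and $\cY$ is $R$-flat), and handles $g$ by using \eqref{it:lineb} and reflexivity to reduce to the generic points of $\cX'_k$, where $g$ \emph{is} flat and determinants do commute. Your parenthetical "in particular on the regular loci" does not rescue the argument for $h$, since $h(\cY_{\mathrm{reg}})$ need not lie in $\cX_{\mathrm{reg}}$; your alternative for \eqref{it:canbc} (check at generic points of $\cX'_k$) is sound once you add the observation that $g$ is flat there, which is cleaner than appealing to torsion in the monoid computation. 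Finally, in \eqref{it:logcan} the elimination of $d\log t$ via $d\log\unif=0$ only works when the multiplicity $N$ is prime to $p$; for $p\mid N$ you again need the determinant-of-a-presentation bookkeeping (or the Kato--Saito citation), so the "classical comparison" must be phrased at that level to cover all components.
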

\begin{proof}
\eqref{it:logcan} Since $\cX$ is normal and both sheaves are reflexive, it suffices to prove the equality on the maximal open subscheme $\cU$ of $\cX$ such that $\cU$ is regular and $\cU_k$ is a divisor with strict normal crossings (note that the complement of $\cU$ in $\cX$ has codimension at least two, since $\cU$ contains the generic fiber $\cX\otimes_R K$ and the generic points of $\cX_k$). On the open $\cU$, the equality follows from~\cite[5.3.4]{kato-saito} by taking determinants.

\eqref{it:lineb}
Let $S=\Spec(R)$.
 To avoid ambiguities, we denote by $S_{\mathrm{tr}}$ and $S_{\mathrm{st}}$ the scheme $S=\Spec(R)$ endowed with its {\em trivial}, resp.~{\em standard}, log structure.
   The result in~\cite[5.3.4]{kato-saito} implies that, on the open $\cU$, the determinant of the sheaf of log differentials $\Omega^{\mathrm{log}}_{\cU/S_{\mathrm{tr}}}$ is isomorphic to the determinant of the sheaf $\Omega^{\mathrm{log}}_{\cU/S_{\mathrm{st}}}$, where $\cU$ carries the standard log structure in both cases. Thus it suffices to prove that the former determinant extends to a line bundle on $\cX$.

 This property can be checked locally around closed points $\xi$ of $\cX_k$;
  it is enough to prove that the $\mathcal{O}_{\cX,\xi}$-module $\Omega^{\mathrm{log}}_{\cX/S_{\mathrm{tr}},\xi}$ has a finite resolution by free modules of finite rank. Our proof is similar to that of Corollary 5.3.2 in~\cite{kato-saito}.
  We fix a section of the projection morphism $$\mathcal{M}_{\cX,\xi}\to \mathcal{C}_{\cX,\xi}=\mathcal{M}_{\cX,\xi}/\mathcal{O}^{\times}_{\cX,\xi}$$ so that we can view the characteristic monoid $\mathcal{C}_{\cX,\xi}$ as a submonoid of the multiplicative monoid $\mathcal{O}_{\cX,\xi}$. Such a section exists because $\mathcal{C}_{\cX,\xi}$ is integral and its groupification $\mathcal{C}^{\mathrm{gp}}_{\cX,\xi}$ is a free $\Z$-module. We can extend this section to a surjective morphism of $R$-algebras
  $$R[\mathcal{C}_{\cX,\xi}][T_1,\ldots,T_r]\to \mathcal{O}(\cV)$$ where $\cV$ is an affine open neighborhood of $\xi$ in $\cX$.
   We set $$\cW=\Spec R[\mathcal{C}_{\cX,\xi}][T_1,\ldots,T_r]$$ and we endow it with the log structure induced by the monoid $\mathcal{C}_{\cX,\xi}$.
   Then $\cW$ is log smooth over $S_{\mathrm{tr}}$. Shrinking $\cV$ around $\xi$, we can arrange that $\cV$ is a strict logarithmic subscheme of $\cW$.
   Since $\cW$ and $\cV$ are both log regular, the immersion $\cV\to \cW$ is regular, by~\cite[4.2]{kato}. If we denote by $J$ the defining ideal of $\cV$ in $\mathcal{O}_{\cW}$ at the point $\xi$, then we have a short exact sequence
\begin{equation}\label{eq:res}
0\to J/J^2 \to \Omega^{\mathrm{log}}_{\cW/S_{\mathrm{tr}},\xi}\otimes_{\mathcal{O}_{\cW,\xi} } \mathcal{O}_{\cX,\xi}\to \Omega^{\mathrm{log}}_{\cX/S_{\mathrm{tr}},\xi} \to 0.
\end{equation}
Since $\cW$ is log smooth over $S_{\mathrm{tr}}$ and $J$ is generated by a regular sequence, this short exact sequence is a resolution of $\Omega^{\mathrm{log}}_{\cX/S_{\mathrm{tr}},\xi}$ by free modules of finite rank.

\eqref{it:loget} We continue to use the notation from the proof of~\eqref{it:lineb}. By~\cite[5.3.4]{kato-saito}, we have
$$\omega^{\mathrm{log}}_{\cX/S_{\mathrm{st}}}\cong \det(\Omega^{\mathrm{log}}_{\cX,S_{\mathrm{tr}}})\otimes \mathcal{O}_{\cX}(\cX_k)$$ as subsheaves of
 $i_\ast \omega_{\cX\otimes_R K/K}$, and the analogous statement holds on $\cY$. Thus it suffices to prove the result for $\det(\Omega^{\mathrm{log}}_{\cX,S_{\mathrm{tr}}})$.
 We have a natural isomorphism $\Omega^{\mathrm{log}}_{\cY/S_{\mathrm{tr}}}\cong h^{\ast}\Omega^{\mathrm{log}}_{\cX/S_{\mathrm{tr}}}$ because $h$ is log \'etale.
  Taking determinants of perfect complexes does not commute with non-flat base change, in general, but here we can make use of the local free resolution in~\eqref{eq:res}.
  The pullback through $h$ of~\eqref{eq:res} is still exact, because $h$ is flat over $\cX_K$ (even an isomorphism) and $\cY$ is $R$-flat.
 Taking determinants, we get that $$\det(\Omega^{\mathrm{log}}_{\cY/S_{\mathrm{tr}}})=h^{\ast}\det(\Omega^{\mathrm{log}}_{\cX/S_{\mathrm{tr}}}).$$

\eqref{it:canbc} We have seen in the proof of Proposition~\ref{prop:logreg} that $\cX'$ with its standard log structure is the
 base change of $\cX$ to $S'$ in the category of fine and saturated log schemes. Since log differentials are compatible with fine and saturated base change, the natural morphism
$$g^{\ast} \Omega^{\mathrm{log}}_{\cX/R}\to \Omega^{\mathrm{log}}_{\cX'/R'}$$ is an isomorphism.
 The scheme $\cX'$ is normal and the sheaves we want to compare are reflexive (even line bundles, by~\eqref{it:lineb}), so that it suffices to check the equality at the generic points of $\cX'_k$. At these points, the morphism $g$ is flat, so that the result follows from the compatibility of determinants with flat base change.
\end{proof}

\section{Skeleta of log regular models}\label{sec:skeleta}
Let $X$ be a smooth and proper $K$-scheme. In this section we study the skeleton $\Sk(\cX)\subset X^{\an}$ associated to a log regular $R$-model $\cX$ of $X$,
and the Kontsevich--Soibelman skeleton $\Sk(X,\theta)$ associated to a pluricanonical form on $X$. In particular, we analyze the behavior of these skeleta under finite ground field extension.
\subsection{Skeleta of log regular models}\label{ss:logskeleta}
  Let $\cX$ be a log regular $R$-model of $X$ (where $\cX$ carries the standard log structure). It is explained in ~\cite[\S3]{BrMa} how one can attach to this model a skeleton $\Sk(\cX)$ in $X^{\an}$, generalizing the construction of the skeleton of an $snc$-model in~\cite[\S3]{MuNi}. Let us briefly summarize the construction.
 Recall that a {\em stratum} of $\cX_k$ is a connected component of an intersection of irreducible components of $\cX_k$.
The {\em Kato fan} of $\cX$
is the subspace $F(\cX)$ of $\cX$ consisting of the generic points of $\cX$ and the generic points of all the strata of $\cX_k$, endowed with the restriction of the sheaf of characteristic  monoids $\mathcal{C}_{\cX}=\mathcal{M}_{\cX}/\mathcal{O}^{\times}_{\cX}$ of the log scheme $\cX$. We will denote this restriction by $\mathcal{C}_{F(\cX)}$. The stalks of this sheaf are
 sharp finitely generated saturated integral monoids; equivalently, monoids of integral points in strictly convex  rational polyhedral cones in $\R^n$.
 Since $\mathcal{M}_{\cX}$ is the sheaf of regular functions on $\cX$ that are invertible on $X$,
 we can interpret the stalk $\mathcal{C}_{\cX,\xi}$ at a point $\xi$ of $F(\cX)$ as the monoid of effective Cartier divisors on $\Spec \mathcal{O}_{\cX,\xi}$ that are supported on the special fiber $(\Spec \mathcal{O}_{\cX,\xi})_k$.
   The morphism $\cX\to S$ induces for every point $\xi$ of $F(\cX)\cap \cX_k$ a morphism of characteristic monoids $\N\to \mathcal{C}_{F(\cX),\xi}$ that maps $1$ to the residue class $\overline{\unif}$ of the uniformizer $\unif$ in $\mathcal{C}_{F(\cX),\xi}$ (equivalently, we can think of $\overline{\unif}$ as the Cartier divisor $(\Spec \mathcal{O}_{\cX,\xi})_k$ on $\Spec \mathcal{O}_{\cX,\xi}$). We call this morphism the structural morphism of the monoid $\mathcal{C}_{F(\cX),\xi}$.

 When $\xi$ and $\eta$ are points in $F(\cX)$ such that $\xi$ lies in the closure of $\eta$, the cospecialization map $\mathcal{C}_{F(\cX),\xi}\to \mathcal{C}_{F(\cX),\eta}$
  induces a morphism of dual real cones $$\Hom(\mathcal{C}_{F(\cX),\eta},\R_{\geq 0})\to \Hom(\mathcal{C}_{F(\cX),\xi},\R_{\geq 0}) $$
 that identifies $\Hom(\mathcal{C}_{F(\cX),\eta},\R_{\geq 0})$ with a face of $\Hom(\mathcal{C}_{F(\cX),\xi},\R_{\geq 0})$. Gluing the dual real cones according to these face maps, we
 get a cone complex with integral affine structure~\cite{KKMS}.
 The skeleton $\Sk(\cX)$ is the polyhedral complex
  consisting of the elements $\alpha$ in the dual cones $\Hom(\mathcal{C}_{F(\cX),\xi},\R_{\geq 0})$ such that
  $\alpha(\overline{\unif})=1$ (note that this set is empty if $\xi$ is not contained in $\cX_k$, since $\overline{\unif}=0$ in that case). The faces of $\Sk(\cX)$ carry a canonical integral $K$-affine structure; the integral $K$-affine functions on the face
  $$\{\alpha\in \Hom(\mathcal{C}_{F(\cX),\xi},\R_{\geq 0})\,|\,\alpha(\overline{\unif})=1\}$$
  are the functions of the form
$$\alpha\mapsto \alpha^{\gp}(u)+ v$$
where $u\in \mathcal{C}^{\gp}_{F(\cX),\xi}$ and $v\in \log_{\varepsilon}|K^{\times}|=\Z$, and where $\alpha^{\gp}$ is the group homomorphism $\mathcal{C}^{\gp}_{F(\cX),\xi}\to \R$ induced by $\alpha$.
  For every integer $d\geq 0$, we denote by $\Sk^{(d)}(\cX)$ the $d$-skeleton of $\Sk(\cX)$, that is, the union of the faces of dimension at most $d$.

\begin{example}\label{exam:snc}
If $\cX$ is regular and $\cX_k$ has strict normal crossings, then the characteristic monoid $\mathcal{C}_{F(\cX),\xi}$ at a point $\xi$ of $F(\cX)$ is isomorphic to $\N^J$, where $J$ is the set of irreducible components of $\cX_k$ passing through $\xi$. If we denote by $N_j$ the multiplicity of the $j$-th component, for every $j$ in $J$, then the condition
$\alpha(\overline{\unif})=1$ on the dual cone $\Hom(\mathcal{C}_{F(\cX),\xi},\R_{\geq 0})=\R^J_{\geq 0}$ defines the simplex
$$\{u\in \R^J_{\geq 0}\,|\,\sum_{j\in J}N_ju_j=1\}.$$ In this case, the construction of $\Sk(\cX)$ is equivalent to the one in~\cite[\S3]{MuNi}.
\end{example}

There is a canonical embedding of $\Sk(\cX)$ into $\widehat{\cX}_{\eta}$, and a canonical continuous retraction
$$\rho_{\cX}\colon \widehat{\cX}_{\eta}\to \Sk(\cX).$$
The skeleton $\Sk(\cX)$ is made up of points on $X^{\an}$ corresponding to valuations that are {\em monomial} with respect to the model $\cX$; these valuations are constructed explicitly in~\cite[3.2.10]{BrMa}.
 The following proposition provides a convenient criterion for such points, as well as a characterization of the retraction map $\rho_{\cX}$. We recall that, since the log structure on $\cX$ is the divisorial log structure induced by the special fiber, the monoid $\mathcal{M}_{\cX,\xi}$ is the multiplicative monoid of elements in $\mathcal{O}_{\cX,\xi}$ that are invertible in $\mathcal{O}_{\cX,\xi}\otimes_R K$, for every point $\xi$ in $\cX_k$.

\begin{prop}\label{prop:maxmon}
Let $\cX$ be a log regular $R$-model of $X$. Then a point $x$ of $\widehat{\cX}_{\eta}$ lies in the skeleton $\Sk(\cX)$ if and only if
$|f(x)|\geq |f(y)|$ for every $f$ in $\mathcal{O}_{\cX,\spe_{\cX}(x)}$ and for every $y$ in $\widehat{\cX}_{\eta}$ satisfying the following properties:
\begin{enumerate}
\item the point $\spe_{\cX}(x)$ lies in the closure of $\{\spe_{\cX}(y)\}$;
\item we have $|g(x)|=|g(y)|$ for every $g$ in $\mathcal{M}_{\cX,\spe_{\cX}(x)}$.
\end{enumerate}
If $x$ lies in $\Sk(\cX)$ and we set $\xi=\spe_{\cX}(x)$, then the value group $|\mathscr{H}(x)^{\times}|$ is the submonoid of $(\R_{>0},\cdot)$ consisting of the elements $|f(x)|$ with $f$ in $\mathcal{M}_{\cX,\xi}$, and the residue field of $\mathscr{H}(x)$ is a purely transcendental extension of the residue field $k(\xi)$ of $\cX$ at $\xi$.

Moreover, for every point $z$ of $\widehat{\cX}_{\eta}$, the point $\rho_{\cX}(z)$ is the unique point $w$ in $\Sk(\cX)$ such that $\spe_{\cX}(z)$ lies in the closure of $\{\spe_{\cX}(w)\}$ and $|g(z)|=|g(w)|$ for every $g$ in the monoid $\mathcal{M}_{\cX,\spe_{\cX}(z)}$.
\end{prop}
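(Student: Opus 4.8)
The plan is to deduce the statement from the explicit local descriptions underlying the construction of $\Sk(\cX)$ and $\rho_{\cX}$. Recall from~\cite[3.2.10]{BrMa} that a point of $\Sk(\cX)$ has the form $x_\alpha$, where $\xi$ is the generic point of a stratum of $\cX_k$ and $\alpha$ lies in the associated face of $\Sk(\cX)$; by Kato's structure theorem~\cite{kato} the completed local ring $\widehat{\mathcal{O}}_{\cX,\xi}$ is a power series ring in the monomials $\chi^m$ ($m\in\mathcal{C}_{F(\cX),\xi}$, with $\unif$ of class $\overline\unif$) over a coefficient ring whose reductions fill $k(\xi)$, and $x_\alpha$ is the monomial valuation $\sum_m c_m\chi^m\mapsto\min\{\alpha^{\gp}(m):c_m\neq 0\}$; moreover $\spe_{\cX}(x_\alpha)$ is the generic point of the stratum whose face contains $\alpha$ in its relative interior, $|g(x_\alpha)|=\varepsilon^{\alpha^{\gp}(\overline g)}$ for every $g\in\mathcal{M}_{\cX,\spe_{\cX}(x_\alpha)}$, and $\rho_{\cX}(z)$ is the point $x_\alpha$ for which $\spe_{\cX}(\rho_{\cX}(z))$ is the generic point of the minimal stratum through $\spe_{\cX}(z)$ and $\alpha^{\gp}(\overline g)=\log_{\varepsilon}|g(z)|$ for all $g\in\mathcal{M}_{\cX,\spe_{\cX}(z)}$. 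Granting this, the two assertions about $\mathscr{H}(x)$ for $x=x_\alpha$ are just the standard description of the value group and residue field of a monomial valuation: the value group is generated by the $|\chi^m(x)|$, and $\widetilde{\mathscr{H}}(x)$ is generated over $k(\xi)$ by the images of the unit monomials $\chi^u$ ($u\in\ker\alpha^{\gp}$), which are algebraically independent; I would only sketch these.

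Next I would prove the characterization of $\rho_{\cX}$. Existence is the construction just recalled. For uniqueness, let $w=x_\alpha\in\Sk(\cX)$ with $\spe_{\cX}(w)=\eta_\sigma$ the generic point of a stratum $\sigma$ satisfy both conditions relative to $z$. Condition~(1) forces every component of $\cX_k$ through $\eta_\sigma$ to pass through $\spe_{\cX}(z)$; conversely, if $E_i$ is a component through $\spe_{\cX}(z)$ with local equation $h_i$, then $h_i\in\mathcal{M}_{\cX,\spe_{\cX}(z)}$ and $|h_i(z)|<1$ (since $\spe_{\cX}(z)$ is the center of $z$), so condition~(2) gives $|h_i(w)|<1$ and hence $E_i$ passes through $\eta_\sigma$. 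Thus $\sigma$ is the minimal stratum through $\spe_{\cX}(z)$, after which condition~(2) determines $\alpha^{\gp}$ on the generating set $\{\overline g:g\in\mathcal{M}_{\cX,\spe_{\cX}(z)}\}$ of $\mathcal{C}^{\gp}_{F(\cX),\eta_\sigma}$, hence determines $\alpha$; so $w=\rho_{\cX}(z)$.

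For the ``iff'' assertion I would argue as follows. Forward direction: if $x=x_\alpha\in\Sk(\cX)$ and $\xi=\spe_{\cX}(x)$, then $\xi$ is a stratum generic point and $\alpha^{\gp}(m)>0$ for $m\in\mathcal{C}_{F(\cX),\xi}\setminus\{0\}$, so $\alpha^{\gp}(m)\to\infty$ over $\mathcal{C}_{F(\cX),\xi}$. Given $f\in\mathcal{O}_{\cX,\xi}$ and $y$ as in (1)--(2), expand $f=\sum_m c_m\chi^m$ with the $\chi^m$ chosen in $\mathcal{M}_{\cX,\xi}$; by~(1), $\mathcal{O}_{\cX,\xi}\subseteq\mathcal{O}_{\cX,\spe_{\cX}(y)}$, so $v_y(c_m)\geq 0$, and since $\mathfrak{m}_\xi$ is generated by finitely many of the $\chi^m$ the homomorphism $\mathcal{O}_{\cX,\xi}\to\mathscr{H}(y)$ is continuous for the $\mathfrak{m}_\xi$-adic topology, hence extends to $\widehat{\mathcal{O}}_{\cX,\xi}$ and the expansion represents $f$ in $\mathscr{H}(y)$; by~(2), $v_y(\chi^m)=\log_{\varepsilon}|\chi^m(x)|=\alpha^{\gp}(m)$, whence $v_y(f)\geq\inf_m\alpha^{\gp}(m)=v_x(f)$, i.e.\ $|f(y)|\leq|f(x)|$. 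Converse: suppose $x$ has the maximality property; set $\xi=\spe_{\cX}(x)$, let $\eta_0$ be the generic point of the minimal stratum through $\xi$, and put $w=\rho_{\cX}(x)$. By construction $w$ satisfies (1)--(2) relative to $x$, so the maximality property gives $|f(x)|\geq|f(w)|$ on $\mathcal{O}_{\cX,\xi}$. If $\xi\neq\eta_0$, I would pick $f\in\mathfrak{m}_\xi$ outside the prime ideal of $\eta_0$: then $|f(x)|<1$ ($\xi$ is the center of $v_x$) while $|f(w)|=1$ ($f$ is a unit in $\mathcal{O}_{\cX,\eta_0}$ and $\eta_0=\spe_{\cX}(w)$ is the center of $v_w$), contradicting the previous inequality; hence $\xi=\eta_0$ is a stratum generic point. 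Then $x$ also satisfies (1)--(2) relative to $w$, so the forward direction applied to $w\in\Sk(\cX)$ gives $|f(w)|\geq|f(x)|$ on $\mathcal{O}_{\cX,\xi}$; combining, $|f(x)|=|f(w)|$ on $\mathcal{O}_{\cX,\xi}$ and therefore on its fraction field $K(X)$, so $x=w\in\Sk(\cX)$.

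I expect the main obstacle to be the forward implication, specifically identifying $v_x$ with the monomial valuation and checking that the (generally infinite) monomial expansion of an arbitrary $f\in\mathcal{O}_{\cX,\xi}$ genuinely computes $v_y(f)$ in $\mathscr{H}(y)$: this is precisely where log regularity (through Kato's structure theorem) and the assumption that $\xi$ is a stratum generic point enter, and it also explains why the converse must first rule out that $\spe_{\cX}(x)$ is a non-generic point of its stratum before the monomial machinery can be applied.
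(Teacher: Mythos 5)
Your proposal is correct in substance, but it takes a different route from the paper: the paper's own proof is essentially a citation, observing that the maximality criterion is a reformulation of Proposition 3.2.10 in \cite{BrMa} (whose proof extends from valuations to semivaluations), that the statements about $|\mathscr{H}(x)^{\times}|$ and $\widetilde{\mathscr{H}}(x)$ follow from the definition of monomial valuations, and that the description of $\rho_{\cX}$ follows from its definition in \S3.3 of \cite{BrMa}. You instead unpack the construction and reprove these facts directly from Kato's structure theorem for log regular schemes: monomial expansions in $\widehat{\mathcal{O}}_{\cX,\xi}$ give the forward inequality, the center/closure conditions pin down the minimal stratum, and the cospecialization isomorphism $\mathcal{C}_{\cX,\spe_{\cX}(z)}\to\mathcal{C}_{\cX,\eta_\sigma}$ (which the paper also uses, in Lemma \ref{lemm:tameretract}) pins down $\alpha$, hence the uniqueness of $w$; your two-sided comparison of $x$ with $\rho_{\cX}(x)$ for the converse is exactly the standard mechanism underlying the cited result. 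What your route buys is self-containedness and an explicit treatment of semivaluations; what the paper's route buys is brevity, since all the real work is delegated to \cite{BrMa} (and, behind it, \cite{MuNi}). Three points in your sketch deserve care, all repairable: (i) in mixed characteristic $\widehat{\mathcal{O}}_{\cX,\xi}\cong C[[P]]/(\theta)$ with $\theta$ of constant term $p$, so the expansion is not unique and ``$\min\{\alpha^{\gp}(m):c_m\neq 0\}$'' only computes $v_x$ for an admissible expansion (coefficients units or zero) as in \cite{MuNi}/\cite{BrMa} --- you flag this, and since you invoke their construction it is fine; (ii) you should first extend $v_y$ to $\widehat{\mathcal{O}}_{\cX,\xi}$ by continuity (using that $\mathfrak m_\xi$ is generated by finitely many monomials with $|\chi^{m_i}(y)|<1$) and only then assert $v_y(c_m)\geq 0$, since the coefficients live in the completion, not in $\mathcal{O}_{\cX,\xi}$; (iii) in the last step of the converse, $x$ need not be supported at a generic point of $X$, so rather than passing to ``the fraction field $K(X)$'' you should conclude $x=w$ from equality of the seminorms on $\mathcal{O}_{\cX,\xi}$ (equivalently on an affine chart $A$ with $A\otimes_RK=A[1/\unif]$), which already determines the point of $\widehat{\cX}_\eta$.
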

\begin{proof}
The criterion for $x$ to lie in $\Sk(\cX)$ is a reformulation of Proposition 3.2.10 in~\cite{BrMa} (the proof remains valid if the valuation $w$ considered there is replaced by a semivaluation). The assertions about the value group and residue field of $\mathscr{H}(x)$ follow immediately from the definition of monomial valuations.
 The description of the retraction map $\rho_{\cX}$ follows from the definition of $\rho_{\cX}$ in \S3.3 of~\cite{BrMa}.
\end{proof}

 \begin{prop}\label{prop:compat1}
  Let $\cX$ and $\cY$ be log regular proper $R$-models of $X$ such that $\cY$ dominates $\cX$, {\em i.e.}, there exists a morphism of $R$-schemes $\cY\to \cX$ that restricts to the identity on the generic fiber $X$. Then $\Sk(\cX)\subset\Sk(\cY)$, and for every integer $d\geq 0$, the $d$-skeleton of $\Sk(\cX)$ is contained in the $d$-skeleton of $\Sk(\cY)$.
 \end{prop}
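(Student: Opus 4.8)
The plan is to deduce the statement from the characterisation of $\Sk(\cX)$ in Proposition~\ref{prop:maxmon}, phrased in terms of the valuations underlying the points of $X^{\an}$. Write $h\colon\cY\to\cX$ for the given morphism of $R$-models. Since $\cX$ and $\cY$ are proper, $X^{\an}$ is the analytic generic fibre of each, the reduction maps satisfy $\spe_{\cX}=h_k\circ\spe_{\cY}$, the morphism $h$ is a morphism of log schemes for the standard log structures (so $h^{\#}$ maps $\mathcal{M}_{\cX,h(\zeta)}$ into $\mathcal{M}_{\cY,\zeta}$ for every $\zeta$ in $\cY_k$), and $|(h^{\#}g)(z)|=|g(z)|$ for every $z\in X^{\an}$ and every such $g$, because $h$ is an isomorphism on generic fibres. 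Fix $x\in\Sk(\cX)$, set $\xi=\spe_{\cX}(x)$ and $\eta=\spe_{\cY}(x)$, so that $h(\eta)=\xi$, and let $v_x$ be the valuation of $x$; recall $\xi\in F(\cX)\cap\cX_k$.

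Suppose for the moment that $\eta\in F(\cY)\cap\cY_k$, and let $v_\alpha$ be the monomial valuation of $\cY$ at $\eta$ with $v_\alpha|_{\mathcal{M}_{\cY,\eta}}=v_x|_{\mathcal{M}_{\cY,\eta}}$; by construction $v_\alpha$ is the valuation of a point of $\Sk(\cY)$. The explicit local description of monomial valuations in~\cite[3.2.10]{BrMa} shows that $v_\alpha\leq v$ for every valuation $v$ that is centred at $\eta$ on $\cY$ and restricts to $v_x|_{\mathcal{M}_{\cY,\eta}}$ on $\mathcal{M}_{\cY,\eta}$; applying this with $v=v_x$ (which is centred at $\eta$ and has the right restriction, and which extends continuously to $\widehat{\mathcal{O}}_{\cY,\eta}$ because it is centred at $\eta$) gives $v_x\geq v_\alpha$ on $\widehat{\mathcal{O}}_{\cY,\eta}$, in particular on the subring $\mathcal{O}_{\cX,\xi}\subseteq\mathcal{O}_{\cY,\eta}$. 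On the other hand, $v_\alpha$ reduces to $\eta$ on $\cY$, hence to $h(\eta)=\xi$ on $\cX$, and $v_\alpha(g)=v_\alpha(h^{\#}g)=v_x(h^{\#}g)=v_x(g)$ for $g\in\mathcal{M}_{\cX,\xi}$; thus $v_\alpha$ is admissible as a point $y$ in the maximality criterion of Proposition~\ref{prop:maxmon} for $x$ and $\cX$, which therefore gives $|f(x)|\geq|f(v_\alpha)|$, i.e.\ $v_x(f)\leq v_\alpha(f)$, for all $f\in\mathcal{O}_{\cX,\xi}$. Combining the two inequalities, $v_x=v_\alpha$ on $\mathcal{O}_{\cX,\xi}$, hence on its fraction field $K(X)$, so $x$ is the point of $\Sk(\cY)$ with valuation $v_\alpha$; this proves $\Sk(\cX)\subseteq\Sk(\cY)$.

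The step I expect to be the main difficulty is justifying that $\eta=\spe_{\cY}(x)$ is a generic point of a stratum of $\cY_k$, which is what makes $v_\alpha$ available. This says that the centre on any log regular proper model of a valuation that is monomial with respect to $\cX$ is again a stratum generic point; I would prove it either by a local dimension count at $\eta$ (ruling out transverse parameters in $\mathcal{O}_{\cY,\eta}$ by comparing with $\mathrm{trdeg}(\widetilde{\mathscr{H}}(x)/k(\eta))$ and using the description of $\widetilde{\mathscr{H}}(x)$ and $|\mathscr{H}(x)^{\times}|$ in Proposition~\ref{prop:maxmon}), or by replacing $\cY$ by a toroidal resolution, which leaves $\Sk(\cY)$ unchanged because Kato's resolution of log regular schemes is log \'etale. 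Finally, the statement for $d$-skeleta follows from the inclusion just proved together with the compatibility of the piecewise integral affine structures: the integral $K$-affine functions on a face of $\Sk(\cX)$ come from $\mathcal{M}_{\cX,\xi}$, and via $h^{\#}$ they extend to integral $K$-affine functions coming from $\mathcal{M}_{\cY,\eta}$, so each face of $\Sk(\cX)$ is a union of faces of $\Sk(\cY)$; as a polytope of dimension at most $d$ cannot be subdivided into faces of dimension exceeding $d$, we get $\Sk^{(d)}(\cX)\subseteq\Sk^{(d)}(\cY)$.
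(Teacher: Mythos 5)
Your core mechanism is sound and close in spirit to the paper's: both arguments play the minimality of monomial valuations (\cite[3.2.10]{BrMa}, repackaged in Proposition~\ref{prop:maxmon}) against the maximality criterion for $x\in\Sk(\cX)$, and your verification that the candidate point is an admissible competitor over $\cX$ is correct. The genuine gap is exactly the step you flag: you need $\eta=\spe_{\cY}(x)$ to be a point of the fan $F(\cY)$ before the monomial valuation $v_\alpha$ at $\eta$ exists, and neither of your proposed repairs clearly closes it. The Abhyankar-type dimension count is inconclusive: for any point $x$ of $\Sk(\cX)$ one has $\mathrm{ratrank}\,|\mathscr{H}(x)^\times|+\mathrm{trdeg}(\widetilde{\mathscr{H}}(x)/k(\eta))=\dim\mathcal{O}_{\cY,\eta}$ at \emph{every} center $\eta$ on \emph{every} model (Abhyankar valuations stay Abhyankar), so the equality cannot distinguish stratum generic points from other points; to rule out ``transverse parameters'' you would need finer information about the values and residues of $v_x$ on elements outside $\mathcal{M}_{\cY,\eta}$, which is not supplied. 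Replacing $\cY$ by a toroidal resolution $\cY'$ does leave $\Sk(\cY')=\Sk(\cY)$ as subsets of $X^{\an}$, but it merely relocates the problem: you still have to show that $\spe_{\cY'}(x)$ is a stratum generic point of the snc model $\cY'$, and no argument is given for that either.

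The fix --- and the point where the paper's proof differs decisively from yours --- is not to insist on working at $\eta$ itself. The paper compares $x$ with $y=\rho_{\cY}(x)$, which by the last part of Proposition~\ref{prop:maxmon} is the monomial point at the generic point of the stratum whose closure contains $\eta$; this point is centered at a fan point \emph{by construction}, satisfies $|g(x)|=|g(y)|$ for all $g\in\mathcal{M}_{\cY,\eta}$, and its center on $\cX$ generizes $\xi$, so it is an admissible competitor for $x$ over $\cX$; the two inequalities then give $x=y\in\Sk(\cY)$ exactly as in your computation, and your intermediate claim ($\eta\in F(\cY)$) comes out only a posteriori, as a consequence rather than an input. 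I would also tighten the last step: your assertion that each face of $\Sk(\cX)$ is a union of faces of $\Sk(\cY)$ is true but not justified by the remark that affine functions extend; the paper instead reads off the $d$-skeleton directly from the codimension of $\overline{\{\spe(x)\}}$ in the special fiber, which drops under $\cY\to\cX$ because the morphism is proper and commutes with specialization --- a cleaner route you could adopt verbatim.
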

\begin{proof}
  We first prove the inclusion $\Sk(\cX)\subset \Sk(\cY)$.
  Let $x$ be a point in $\Sk(\cX)$ and
 set $y=\rho_{\cY}(x)$; then we need to show that $x=y$. It is enough to prove that $\rho_{\cX}(x)=y$, because $x$ lies in $\Sk(\cX)$ so that $\rho_{\cX}(x)=x$.
  By the definition of the retraction map $\rho_{\cY}$, we know that $\spe_{\cY}(x)$ lies in the closure of $\{\spe_{\cY}(y)\}$
  and that $|g(x)|=|g(y)|$ for every element $g$ in $\mathcal{M}_{\cY,\spe_{\cY}(x)}$.
  Thus $\spe_{\cX}(x)$ lies in the closure of $\{\spe_{\cX}(y)\}$, because the specialization maps commute with the morphism $\cY\to \cX$, and
   $|g(x)|=|g(y)|$ for every element $g$ in $\mathcal{M}_{\cX,\spe_{\cX}(x)}\subset \mathcal{M}_{\cY,\spe_{\cY}(x)}$.
   By the maximality property in Proposition~\ref{prop:maxmon}, it suffices to show that $|f(x)|\leq |f(y)|$ for every $f$ in $\mathcal{O}_{\cX,\spe_{\cX}(x)}$.
  Since $f$ also lies in $\mathcal{O}_{\cY,\spe_{\cY}(x)}$, this inequality follows from Proposition~\ref{prop:maxmon}, applied to the model $\cY$.

 Now, we prove the inclusion of $d$-skeleta.
  The points in the $d$-skeleton of $\Sk(\cX)$ are precisely the points $x$ in $\Sk(\cX)$ such that the closure of $\spe_{\cX}(x)$ has codimension at most $d$ in $\cX_k$; the analogous property holds for $\cY$.
  Since the morphism $\cY\to \cX$ commutes with the specialization maps and $\Sk(\cX)\subset \Sk(\cY)$, the $d$-skeleton of $\Sk(\cX)$ is contained in that of $\Sk(\cY)$.
\end{proof}

\subsection{Behavior under ground field extension}
Now consider a finite extension $K'$ of $K$. Set $X':=X\otimes_KK'$, and let
$R'$ be the integral closure of $R$ in $K'$. We shall study the behavior of skeleta under
the canonical surjective map
\begin{equation*}
  \mathrm{pr}\colon (X')^{\an}\to X^{\an}.
\end{equation*}
\begin{prop}\label{prop:compat2}
Let $\cX$ be a log regular proper $R$-model of $X$, and let $\cX'$ be the normalization of $\cX\otimes_R R'$. Assume that $\cX$ is log smooth over $R$, or that $K'$ is tamely ramified over $K$.
Then $\cX'$ is a log regular proper $R'$-model of $X'$,
$\mathrm{pr}^{-1}(\Sk(\cX))=\Sk(\cX')$,
and the restriction of $\mathrm{pr}$ to any face of $\Sk(\cX')$ is a homeomorphism onto a face of $\Sk(\cX)$.
 \end{prop}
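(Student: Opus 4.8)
The plan is to treat claim~(1) separately --- it is essentially formal --- and then to compare the two skeleta in two steps: a combinatorial computation yielding the facewise-homeomorphism property together with the inclusion $\Sk(\cX')\subseteq\mathrm{pr}^{-1}(\Sk(\cX))$, followed by the reverse inclusion $\mathrm{pr}^{-1}(\Sk(\cX))\subseteq\Sk(\cX')$. Claim~(1): by Proposition~\ref{prop:logreg}, $\cX'$ is log regular (and log smooth over $S'$ when $\cX$ is log smooth over $S$); it is proper over $R'$ since normalization is finite and $\cX\otimes_RR'$ is proper over $R'$, and its generic fibre is the normalization of $X'=X\otimes_KK'$, hence $X'$ itself as $X'$ is $K'$-smooth; so $\cX'$ is a log regular proper $R'$-model of $X'$.

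For the combinatorial core I would proceed as follows. The projection $g\colon\cX'\to\cX$ is finite and surjective, and it sends the Kato fan $F(\cX')$ into $F(\cX)$, carrying the generic point $\xi'$ of a stratum of $\cX'_k$ to the generic point $\xi=g(\xi')$ of a stratum of $\cX_k$. By the local computation performed in the proof of Proposition~\ref{prop:saturated} (compatibility of characteristic monoids with fine saturated base change), $\mathcal{C}_{F(\cX'),\xi'}$ is the sharp monoid associated with the saturation of the pushout $\mathcal{C}_{F(\cX),\xi}\oplus_{\N}(1/e)\N$, where $\N\to(1/e)\N$ is the inclusion; in particular there is a natural morphism $\mathcal{C}_{F(\cX),\xi}\to\mathcal{C}_{F(\cX'),\xi'}$ under which the class of $\unif$ becomes $e$ times the class of a uniformizer $\unif'$ of $R'$ (reflecting $\unif=(\unif')^{e}\cdot(\text{unit})$ in $R'$). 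Applying $\Hom(-,\R_{\geq0})$ annihilates the finite torsion introduced by saturation and turns the pushout into a fibre product, so it induces an isomorphism of dual cones $\Hom(\mathcal{C}_{F(\cX'),\xi'},\R_{\geq0})\xrightarrow{\ \sim\ }\Hom(\mathcal{C}_{F(\cX),\xi},\R_{\geq0})$, under which the affine slice defining the $\xi'$-face of $\Sk(\cX')$ corresponds to the one defining the $\xi$-face of $\Sk(\cX)$ (the normalizations being chosen so that $|\cdot|_{K'}$ restricts to $|\cdot|_K$). These isomorphisms are compatible with the cospecialization and face maps, hence glue to a continuous piecewise-affine map $\Sk(\cX')\to\Sk(\cX)$ that is a homeomorphism on every face, and whose image contains every face of $\Sk(\cX)$ (as $g$ is surjective and compatible with the stratifications, each stratum of $\cX_k$ is dominated by one of $\cX'_k$). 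It then remains to identify this map with $\mathrm{pr}|_{\Sk(\cX')}$: by Proposition~\ref{prop:maxmon} the points of $\Sk(\cX')$ are precisely the $\cX'$-monomial semivaluations, and one must show that restricting such a semivaluation along $g$ --- that is, applying $\mathrm{pr}$ --- again gives a monomial semivaluation, namely the $\cX$-monomial point attached to the corresponding element of $\Hom(\mathcal{C}_{F(\cX),\xi},\R_{\geq0})$. Granting this, we obtain the facewise-homeomorphism statement, $\mathrm{pr}(\Sk(\cX'))=\Sk(\cX)$, and $\Sk(\cX')\subseteq\mathrm{pr}^{-1}(\Sk(\cX))$.

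For the reverse inclusion I would reduce to the case that $K'/K$ is finite Galois. When $K'/K$ is tame it is separable, so one may pass directly to its Galois closure; in general (the log smooth case) one first factors $K'/K$ through its maximal separable subextension $K'_{s}$, observes that $\mathrm{pr}_{K'/K'_{s}}$ is a homeomorphism (purely inseparable base change) and that $\cX'$ is the normalized model over $R'$ obtained from $\cX'_{s}$, the normalization of $\cX\otimes_{R}R'_{s}$, which is log smooth over $R'_{s}$ by Proposition~\ref{prop:logreg}(i); combined with the combinatorial part applied to $K'/K'_{s}$ this reduces to the separable extension $K'_{s}/K$. For a separable $K'_{s}/K$, pass to its Galois closure $K''/K$ and let $\cX''$ be the normalization of $\cX\otimes_{R}R''$: the relevant hypotheses propagate (a Galois closure of a tame extension is tame and subextensions of tame extensions are tame, while $\cX$ and $\cX'_{s}$ stay log smooth over any extension), so the combinatorial part applies to $K''/K'_{s}$, and using $\mathrm{pr}_{K''/K}=\mathrm{pr}_{K'_{s}/K}\circ\mathrm{pr}_{K''/K'_{s}}$ and the surjectivity of $\mathrm{pr}_{K''/K'_{s}}$, the statement for $K'_{s}/K$ follows from the one for $K''/K$. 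Finally, in the Galois case $\mathrm{pr}\colon(X')^{\an}\to X^{\an}$ is a topological quotient map for the action of $G=\mathrm{Gal}(K'/K)$, with fibres the $G$-orbits; since $\cX'$ is constructed $G$-equivariantly from $\cX\otimes_{R}R'$, the subset $\Sk(\cX')$ is $G$-stable, and therefore
\begin{equation*}
  \mathrm{pr}^{-1}(\Sk(\cX))=\mathrm{pr}^{-1}\bigl(\mathrm{pr}(\Sk(\cX'))\bigr)=G\cdot\Sk(\cX')=\Sk(\cX'),
\end{equation*}
where the first equality uses $\mathrm{pr}(\Sk(\cX'))=\Sk(\cX)$ from the combinatorial part.

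I expect the main obstacle to be the combinatorial core, and within it the identification of $\mathrm{pr}|_{\Sk(\cX')}$ with the map of cone complexes induced by $g$ --- equivalently, the assertion that $\mathrm{pr}$ carries $\cX'$-monomial semivaluations to $\cX$-monomial ones, compatibly with the monoid identifications above. Everything else is either formal, a transcription of the monoid computations already carried out in Propositions~\ref{prop:saturated} and~\ref{prop:can}, or a standard property of base change in Berkovich geometry.
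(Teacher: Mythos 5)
Your proposal is correct and follows essentially the same route as the paper: log regularity and properness via Proposition~\ref{prop:logreg}, the local identification of $\mathcal{C}_{F(\cX'),\xi'}$ with the torsion-free quotient of the saturated pushout $\mathcal{C}_{F(\cX),\xi}\oplus_{\N}(1/e)\N$, dualized (restriction followed by division by $e$, using unique divisibility in $\R_{\geq 0}$) to give the facewise homeomorphism and $\pr(\Sk(\cX'))=\Sk(\cX)$, and then reduction through the purely inseparable layer to a Galois extension and the orbit/quotient argument for $\pr^{-1}(\Sk(\cX))\subset\Sk(\cX')$. The compatibility you single out as the main obstacle---that $\pr$ sends the monomial point attached to $\alpha'$ to the monomial point attached to its restriction---is exactly the step the paper also treats as immediate, asserting that the natural embeddings of the two skeleta induce the commutative square of dual cones (it follows readily from the explicit construction of monomial valuations and the characterization in Proposition~\ref{prop:maxmon}), and your stratum-domination claim is packaged in the paper as the statement that $F(\cX')$ is the inverse image of $F(\cX)$.
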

 \begin{proof} That $\cX'$ is log regular follows from Proposition~\ref{prop:logreg}.
  The underlying set of the fan $F(\cX')$ is the inverse image of the underlying set of  $F(\cX)$ under the projection morphism $\cX'\to \cX$.
  Let $\xi'$ be a point of $F(\cX')$ and denote by $\xi$ its image in $\cX$.
   Let $M$ be the saturation of the coproduct $\mathcal{C}_{F(\cX),\xi}\oplus_{\N}(1/e)\N$, where $e=e(K'/K)$ is the ramification index.
   Then
 the characteristic monoid at $\xi'$ is canonically isomorphic to the quotient of $M$ by its torsion subgroup $M_{\mathrm{tor}}$
   (the proof of~\cite[2.1.1]{nakayama} remains valid for log structures on the Zariski site).
   The natural embeddings of the skeleta $\Sk(\cX)$ and $\Sk(\cX')$ into $X^{\an}$ and $(X')^{\an}$ induce a commutative square
  \[ \xymatrix{
\{\alpha'\in \Hom(M/M_{\mathrm{tor}},\R_{\geq 0})\,|\,\alpha'(\overline{\unif})=e\} \ar[d]_{\mathrm{pr}_{\xi'}} \ar[r] &(X')^{\an} \ar[d]^{\mathrm{pr}}
\\ \{\alpha\in \Hom(\mathcal{C}_{F(\cX),\xi},\R_{\geq 0})\,|\,\alpha(\overline{\unif})=1 \}  \ar[r] & X^{\an} }
\]
   where the map $\mathrm{pr}_{\xi'}$ is defined by restricting $\alpha'$ to $\mathcal{C}_{F(\cX),\xi}$ and dividing the resulting map by $e$.  The map $\mathrm{pr}_{\xi'}$ is a homeomorphism, because every element in the monoid $\R_{\geq 0}$ is uniquely divisible by $e$.
   This shows that $\pr(\Sk(\cX'))=\Sk(\cX)$, and that
    $\mathrm{pr}$ maps the face of $\Sk(\cX')$ corresponding to $\xi'$ homeomorphically onto the face of $\Sk(\cX)$ corresponding to $\xi$.


  It remains to check that $\pr^{-1}(\Sk(\cX))\subset\Sk(\cX')$. This is clear when $K'/K$ is purely inseparable, since $\pr$ is then a homeomorphism.
  Thus we may suppose that $K'$ is separable over $K$. Replacing $K'$ by a finite extension, we can further reduce to the case where $K'$ is a Galois extension of $K$. It follows directly from the definition of the skeleton $\Sk(\cX')$ that it is stable under the action of the Galois group $\mathrm{Gal}(K'/K)$ on $(X')^{\an}$. Since this action is transitive on the fibers of $\pr$, the surjectivity of $\Sk(\cX')\to \Sk(\cX)$ implies that
$\pr^{-1}(\Sk(\cX))\subset\Sk(\cX')$.
\end{proof}
 \begin{prop}\label{prop:compat5}
   In the setting of Proposition~\ref{prop:compat2}, assume that the strata of $\cX_k$ are geometrically connected over $k$, and suppose that $\cX$ is log smooth over $S$ and $\cX_k$ is reduced, or that $K'$ is tamely ramified over $K$ and the multiplicity of each component in $\cX_k$ is a power of $p$. Then $\mathrm{pr}$ induces a homeomorphism $\Sk(\cX')\to \Sk(\cX)$.
 \end{prop}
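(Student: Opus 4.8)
The plan is to bootstrap from Proposition~\ref{prop:compat2}. By that proposition $\cX'$ is a log regular proper $R'$-model of $X'$, the map $\mathrm{pr}$ restricts to a continuous surjection $\Sk(\cX')\to\Sk(\cX)$, and this restriction carries each face of $\Sk(\cX')$ homeomorphically onto a face of $\Sk(\cX)$. Since $\Sk(\cX')$ is compact and $\Sk(\cX)$ is Hausdorff, it then suffices to prove that $\mathrm{pr}$ is \emph{injective} on $\Sk(\cX')$. I would first isolate the following formal reduction: if the induced map $\tau'\mapsto\mathrm{pr}(\tau')$ from faces of $\Sk(\cX')$ to faces of $\Sk(\cX)$ is injective (hence bijective, by Proposition~\ref{prop:compat2}), then $\mathrm{pr}$ is injective on $\Sk(\cX')$. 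Indeed, if $x'_1,x'_2\in\Sk(\cX')$ satisfy $\mathrm{pr}(x'_1)=\mathrm{pr}(x'_2)=:x$, let $\tau'_i$ be the unique face of $\Sk(\cX')$ whose relative interior contains $x'_i$; then $\mathrm{pr}(\tau'_i)$ is a face of $\Sk(\cX)$ whose relative interior contains $x$ (a homeomorphism of polytopes identifies relative interiors), so by uniqueness of such a face $\mathrm{pr}(\tau'_1)=\mathrm{pr}(\tau'_2)$; injectivity on faces gives $\tau'_1=\tau'_2$, and then $x'_1=x'_2$ since $\mathrm{pr}$ is injective on that face.

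It remains to prove the injectivity on faces. The faces of $\Sk(\cX)$ and $\Sk(\cX')$ correspond, compatibly with $\mathrm{pr}$, to the strata of $\cX_k$ and $\cX'_k$ --- equivalently to the points of the Kato fans lying in the special fibers --- so I must show that every stratum $Z$ of $\cX_k$, with generic point $\xi$, is dominated by exactly one stratum of $\cX'_k$, i.e.\ that $\xi$ has exactly one preimage in $F(\cX')$ (surjectivity of the correspondence on strata being part of Proposition~\ref{prop:compat2}). The number of preimages of $\xi$ in $F(\cX')$ is governed by two phenomena: a possible splitting of $Z$ coming from the residue field extension, and the possible appearance of several branches of the scheme $\cX\otimes_R R'$ over the point lying above $\xi$, caused by the ramification of $K'/K$ and the subsequent normalization. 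The first cannot occur because $Z$ is, by hypothesis, geometrically connected over $k$. The second is controlled, exactly as in the proof of Proposition~\ref{prop:saturated}, by the torsion subgroup of the saturation $M$ of the pushout $\mathcal{C}_{F(\cX),\xi}\oplus_{\N}(1/e)\N$, $e=e(K'/K)$: this torsion is a product of cyclic groups $\Z/\gcd(N,e)\Z$, one for each irreducible component of $\cX_k$ through $\xi$ of multiplicity $N$. Under the hypotheses of the proposition each such $\gcd$ equals $1$ --- either because $\cX_k$ is reduced, so $N=1$, or because $K'$ is tamely ramified, so $p\nmid e$, while $N$ is a power of $p$. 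Hence $M$ is torsion free, no extra branch is created, and $\xi$ has a unique preimage in $F(\cX')$, which completes the argument.

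The step I expect to require the most care is the last one: making rigorous that vanishing of the torsion of $M$ together with geometric connectedness of the strata really does force a unique stratum of $\cX'_k$ over each stratum of $\cX_k$ --- that is, translating the monoid-theoretic statement into a statement about branches of $\cX\otimes_R R'$ --- using the description of $\cX'$ as the fine-and-saturated base change of $\cX$ established in the proofs of Propositions~\ref{prop:logreg} and~\ref{prop:compat2}. These two hypotheses are, of course, precisely what Corollaries~\ref{coro:ppower} and~\ref{coro:reduced} make achievable after a preliminary base change. An alternative route that avoids the explicit branch count is to reduce first to the case where $K'/K$ is finite Galois --- which is harmless, since the hypotheses on $\cX$ are intrinsic to $\cX$ and ``tamely ramified'', resp.\ ``log smooth'', is inherited by a Galois closure --- in which case $\mathrm{pr}$ realizes $\Sk(\cX)$ as the quotient of $\Sk(\cX')$ by $G=\Gal(K'/K)$, so that $\mathrm{pr}|_{\Sk(\cX')}$ is a homeomorphism precisely when $G$ acts trivially on $\Sk(\cX')$; one then checks that $G$ fixes each face (again using that there is one stratum of $\cX'_k$ over each stratum of $\cX_k$) and acts trivially on its integral affine structure, an element of $G$ changing a uniformizer of $R'$ only by a unit and fixing the classes in the characteristic monoids of all functions pulled back from $\cX$.
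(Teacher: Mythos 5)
Your topological reductions are fine (continuous bijection from a compact space to a Hausdorff space; injectivity on the skeleton follows from injectivity of the induced map on faces, via relative interiors; faces correspond to the points of the Kato fans in the special fibers), and the overall shape of your argument is the same as the paper's: after using geometric connectedness of the strata to dispose of the residue-field part of $K'/K$, everything reduces to showing that each stratum of $\cX_k$ carries exactly one stratum of $\cX'_k$ above it. The genuine gap is in the step you yourself flag as delicate and then do not carry out: you never prove that vanishing of the torsion of the saturated pushout forces the normalization $h\colon \cX'\to\cX\otimes_R R'$ to create no new branches over a stratum. The computation you appeal to (Proposition~\ref{prop:saturated}) only identifies the characteristic monoid at an \emph{arbitrary} point of $\cX'$ lying above $\xi$; it never counts how many such points there are, and in general the fiber of a saturation morphism can split into several branches (governed by characters of the torsion group and by roots of unity in the residue field). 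The implication ``torsion free $\Rightarrow$ unique preimage of $\xi$ in $F(\cX')$'' is exactly the nontrivial content, and it is precisely where the paper invokes external input: by the argument of Lemma~4.1.2 and Proposition~2.2.2(3) of~\cite{logzeta}, the saturation morphism is an isomorphism over each stratum once the multiplicities of the components of $\cX_k$ are prime to $e(K'/K)$. Without that (or an equivalent local chart computation with monoid algebras over the stratum), your proof is incomplete; and your alternative Galois-quotient route does not avoid the issue, since the assertion that $\Gal(K'/K)$ fixes each face of $\Sk(\cX')$ again presupposes the unique-stratum statement.

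A further inaccuracy sits at the same spot: the torsion subgroup of the saturation $M$ of $\mathcal{C}_{F(\cX),\xi}\oplus_{\N}(1/e)\N$ is \emph{not} a product of groups $\Z/\gcd(N,e)\Z$ indexed by the components through $\xi$. Since $M^{\gp}\cong\bigl(\mathcal{C}^{\gp}_{F(\cX),\xi}\oplus\Z\bigr)/\langle(\overline{\unif},-e)\rangle$, the torsion is cyclic of order $\gcd(\rho,e)$, where $\rho$ is the root index of $\cX$ at $\xi$; one only has $\rho\mid N_j$ for every component through $\xi$ (and $\rho$ can be strictly smaller than $\gcd_j N_j$, as in the paper's example $uv=\unif^2$). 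Under the stated hypotheses $\gcd(\rho,e)=1$, so the conclusion you want is still true, but the erroneous structural claim is a symptom of the missing local analysis at deeper strata, where $\mathcal{C}_{F(\cX),\xi}$ need not be free and the passage from the monoid computation to a statement about branches of $\cX\otimes_R R'$ has to be justified.
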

 \begin{proof}
      By our assumption that the strata of $\cX_k$ are geometrically connected, the conclusion holds when $K'$ is unramified over $K$; thus we may assume that $K'$ is totally ramified over $K$.
    It suffices to show that for every stratum $E$ of
   $\cX_k$,
   the normalization morphism $h\colon \cX'\to \cX\otimes_R R'$ induces an isomorphism $h^{-1}(E)_{\red}\to E$.
   We have already argued in the proof of Proposition~\ref{prop:logreg} that the normalization morphism $h$ is the saturation morphism for the log scheme $\cX\otimes_R R'$ with its divisorial log structure induced by the special fiber.
    By the same argument as in the proof of Lemma 4.1.2 in~\cite{logzeta}, Proposition 2.2.2(3) in~\cite{logzeta} implies that this saturation morphism $h$ is an isomorphism over each stratum, because the multiplicities of the components in $\cX_k$ are prime to the ramification degree of $K'$ over $K$.
    %
  \end{proof}

\subsection{The temperate part of the skeleton}\label{ss:tamepart}
Let $X$ be a smooth and proper $K$-scheme.
\begin{definition}
 We say that a divisorial point $x\in X^{\an}$ is {\em tame} if the field
$\mathscr{H}(x)$ is tamely ramified over $K$; that is, the ramification index of $\mathscr{H}(x)$ over $K$ is prime to $p$, and the residue field of $\mathscr{H}(x)$ is a separable extension of $k$.
 Otherwise, we say that $x$ is {\em wild}.
We define the {\em temperate part} $X^t$ of $X$ to be the closure of the set of tame divisorial points in $X^{\an}$.
\end{definition}

 If $x$ is the divisorial point associated with a normal $R$-model $\cX$ of $X$ and a prime component $E$ of $\cX_k$, then the ramification index of $\mathscr{H}(x)$ over $K$ equals the multiplicity $N$ of $\cX_k$ along $E$, and the residue field of
  $\mathscr{H}(x)$ is $k$-isomorphic to the function field $k(E)$ of $E$. Thus $x$ is tame if and only if $N$ is prime to $p$ and $k(E)$ is separable over $k$.
  If $k$ has characteristic zero, then all divisorial points are tame. Since the set of divisorial points is dense in $X^{\an}$ by \cite[2.4.9]{MuNi}, we then have $X^t=X$.

  We call $X^t$ the temperate, rather than tame, part of $X^{\an}$ because it may contain wild divisorial points; see Example \ref{exam:temperate}.

\begin{definition}
If $\cX$ is a log regular $R$-model of $X$, then we define the temperate part of $\Sk(\cX)$
 as the closure of the set of tame divisorial points in $\Sk(\cX)$, and we denote it by $\Sk^t(\cX)$.
\end{definition}

\begin{lemma}\label{lemm:tameretract}
Let $\cX$ be a log regular proper $R$-model of $X$. Let $x$ be a point of $\widehat{\cX}_\eta$ such that either $x$ is a tame divisorial point, or $\mathscr{H}(x)$ is a tame finite extension of $K$. Then $x'=\rho_{\cX}(x)$ is a tame divisorial point on $\Sk(\cX)$.
\end{lemma}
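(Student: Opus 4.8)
Throughout, set $\zeta:=\spe_{\cX}(x)$, $w:=\rho_{\cX}(x)$ and $\xi:=\spe_{\cX}(w)$. In both cases of the hypothesis the residue field $\widetilde{\mathscr{H}}(x)$ is separable over $k$ and the index $e(x):=[\,|\mathscr{H}(x)^{\times}|:|K^{\times}|\,]$ is finite and prime to $p$: in the tame divisorial case $e(x)$ is the multiplicity of the corresponding component of a model and $\widetilde{\mathscr{H}}(x)$ is its (separable) function field, while in the other case $e(x)$ is the ramification index of the tame extension $\mathscr{H}(x)/K$. The strategy is to pin down $w$ explicitly and then read off from Proposition~\ref{prop:maxmon} that $w$ is a divisorial point whose ramification index over $K$ is prime to $p$ and whose residue field is separable over $k$; by the definitions in Section~\ref{ss:tamepart} this is exactly the assertion.

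First I would identify $w$. Since $x$ specializes to $\zeta$, every element of $\mathfrak{m}_{\cX,\zeta}$ has absolute value $<1$ at $x$ and every unit of $\mathcal{O}_{\cX,\zeta}$ has absolute value $1$ at $x$; hence $\overline{g}\mapsto-\log_{\varepsilon}|g(x)|$ is a well-defined monoid homomorphism $\alpha_x\colon\mathcal{C}_{\cX,\zeta}\to\R_{\geq0}$ with $\alpha_x(\overline{\unif})=1$, and it is strictly positive on $\mathcal{C}_{\cX,\zeta}\setminus\{0\}$ because the generators of this sharp monoid are non-units, hence lie in $\mathfrak{m}_{\cX,\zeta}$. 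As the characteristic monoid is constant along the stratum through $\zeta$, and $\overline{\unif}$ lies in the relative interior of $\mathcal{C}_{\cX,\zeta}$, the point $\alpha_x$ lies in the relative interior of the face $\sigma$ of $\Sk(\cX)$ attached to the generic point $\xi_\zeta$ of the smallest stratum $\Sigma_{\zeta}$ of $\cX_k$ containing $\zeta$; moreover $\alpha_x$ specializes to $\xi_\zeta$, whose closure contains $\zeta$, and $|g(x)|=|g(\alpha_x)|$ for all $g\in\mathcal{M}_{\cX,\zeta}$. By the characterization of the retraction in Proposition~\ref{prop:maxmon} this forces $w=\alpha_x$ and $\xi=\xi_\zeta$.

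Now Proposition~\ref{prop:maxmon} shows that $|\mathscr{H}(w)^{\times}|$ is the group generated by the values $|g(w)|=|g(x)|$, $g\in\mathcal{M}_{\cX,\xi}$; it is therefore a finitely generated subgroup of $(\R_{>0},\cdot)$ of rational rank one, contained in $|\mathscr{H}(x)^{\times}|$ and containing $|K^{\times}|$. Consequently $[\,|\mathscr{H}(w)^{\times}|:|K^{\times}|\,]$ divides $e(x)$ and is prime to $p$, and, $w$ being a monomial (hence Abhyankar) point with value group of rational rank one, $w$ is a divisorial point lying on $\Sk(\cX)$ (see Section~\ref{ss:KS} and \cite{MuNi,BrMa}). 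It remains to check that $\widetilde{\mathscr{H}}(w)$ is separable over $k$. By Proposition~\ref{prop:maxmon}, $\widetilde{\mathscr{H}}(w)$ is a purely transcendental extension of $k(\xi)$, and a purely transcendental extension of a field $L$ is separable over $k$ iff $L$ is; so it suffices to prove $k(\xi)/k$ separable. The specialization map embeds $k(\zeta)$ into $\widetilde{\mathscr{H}}(x)$, which is separable over $k$, and a subextension of a separable extension is separable, so $k(\zeta)/k$ is separable. By log regularity of $\cX$ (Kato's structure theorem for log regular local rings, \cite{kato}) the stratum closure $\overline{\{\xi\}}=\overline{\Sigma_{\zeta}}$ is integral and its local ring at $\zeta$ is regular; a regular local ring essentially of finite type over $k$ with residue field separable over $k$ is geometrically regular over $k$, hence $\overline{\Sigma_{\zeta}}$ is smooth over $k$ at $\zeta$, therefore smooth on a dense open, so $k(\xi)$ is separable over $k$. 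Thus $x'=w$ is a tame divisorial point on $\Sk(\cX)$.

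The delicate point is the last one: one must deduce separability of the function field of the stratum $\Sigma_{\zeta}$ from separability of $\widetilde{\mathscr{H}}(x)$ over $k$, and this is precisely where one needs the structure theory of log regular (rather than merely $snc$) models, together with the fact — visible in the argument above — that $\zeta$ need not be a closed point, which happens exactly when $x$ is divisorial rather than a rigid point.
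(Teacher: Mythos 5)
Your proof is correct and takes essentially the same route as the paper's: identify $x'=\rho_{\cX}(x)$ via Proposition~\ref{prop:maxmon}, use the constancy of the characteristic monoid along the stratum through $\spe_{\cX}(x)$ to see that the value group of $\mathscr{H}(x')$ is contained in that of $\mathscr{H}(x)$ (hence $x'$ is divisorial with prime-to-$p$ ramification index), and deduce separability of $k(\xi)$ from regularity of the stratum together with separability of $k(\zeta)\subset\widetilde{\mathscr{H}}(x)$, then conclude via the purely transcendental description of $\widetilde{\mathscr{H}}(x')$. The only cosmetic point is that the equality $|g(x')|=|g(x)|$ is literally available only for $g\in\mathcal{M}_{\cX,\zeta}$; for $g\in\mathcal{M}_{\cX,\xi}$ one should write $g$ as an element of $\mathcal{M}_{\cX,\zeta}$ times a unit at $\xi$ (exactly the cospecialization isomorphism you already invoked), which is how the paper phrases this step.
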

\begin{proof}
 By the definition of the retraction $\rho_{\cX}$, we know that $\xi'=\spe_{\cX}(x')$ is the generic point of the  unique stratum $U$ of $\cX_k$ containing $\xi=\spe_{\cX}(x)$, and that $|f(x)|=|f(x')|$ for every element $f$ of
 $\mathcal{M}_{\cX,\xi}$. Since $\cX$ is log regular, the stratum $U$ is regular, and the residue field $\kappa(\xi)$ of $U$ at $\xi$ is separable over $k$ because it is contained in the residue field of $\mathscr{H}(x)$. Thus $U$ is smooth over $k$ at $\xi$, and, therefore, also at the generic point $\xi'$, so that $k(\xi')$ is separable over $k$. The residue field of $\mathscr{H}(x')$ is a purely transcendental extension of $k(\xi')$ by Proposition \ref{prop:maxmon}, and thus still separable over $k$.

A point of $\Sk(\cX)$ is divisorial if and only if the value group of its residue field is discrete \cite[2.4.8]{MuNi}. Log regularity of $\cX$ implies that the cospecialization map $\mathcal{C}_{\cX,\xi}\to \mathcal{C}_{\cX,\xi'}$ is an isomorphism~\cite[3.2.1]{logzeta}. This means that we can write every element $g$ of $\mathcal{M}_{\cX,\xi'}$ as the product of an element $f$ in $\mathcal{M}_{\cX,\xi}$ and a unit in $\mathcal{M}_{\cX,\xi'}$; then $|g(x')|=|f(x')|=|f(x)|$. It follows that the value group of $\mathscr{H}(x')$ is contained in that of $\mathscr{H}(x)$, so that $x'$ is divisorial and tame.
\end{proof}

\begin{prop}\label{prop:tamesk}
If $\cX$ is a log regular proper $R$-model of $X$, then $\Sk^t(\cX)=\Sk(\cX)\cap X^t$.
\end{prop}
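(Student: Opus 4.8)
The plan is to prove the two inclusions separately; the substantive one reduces immediately to Lemma~\ref{lemm:tameretract} via continuity of the retraction $\rho_{\cX}$.

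For the inclusion $\Sk^t(\cX)\subseteq\Sk(\cX)\cap X^t$, I would simply observe that every tame divisorial point lying on $\Sk(\cX)$ is in particular a tame divisorial point of $X^{\an}$. Hence the set of tame divisorial points of $\Sk(\cX)$ is contained both in $\Sk(\cX)$ and in the set of tame divisorial points of $X^{\an}$. Since $\cX$ is proper, $\Sk(\cX)$ is a compact, hence closed, subspace of $X^{\an}$; taking closures in $X^{\an}$ therefore gives $\Sk^t(\cX)\subseteq\Sk(\cX)\cap X^t$.

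For the reverse inclusion, let $x\in\Sk(\cX)\cap X^t$. Since $\cX$ is proper we have $\widehat{\cX}_\eta=X^{\an}$, and since $x\in X^t$ there is a net $(x_i)$ of tame divisorial points of $X^{\an}$ converging to $x$. The retraction $\rho_{\cX}\colon X^{\an}\to\Sk(\cX)$ is continuous and fixes $x$ because $x\in\Sk(\cX)$, so $\rho_{\cX}(x_i)\to x$. By Lemma~\ref{lemm:tameretract}, each $\rho_{\cX}(x_i)$ is a tame divisorial point lying on $\Sk(\cX)$. Thus $x$ lies in the closure of the set of tame divisorial points of $\Sk(\cX)$, that is, $x\in\Sk^t(\cX)$, which completes the proof.

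There is no real obstacle once Lemma~\ref{lemm:tameretract} is available; the only points deserving a moment's care are that $\widehat{\cX}_\eta=X^{\an}$ (properness of $\cX$), that $\Sk(\cX)$ is closed in $X^{\an}$, and that one should argue with nets rather than sequences since $X^{\an}$ need not be first countable — but none of these affects the structure of the argument, which is essentially a one-line application of continuity of $\rho_{\cX}$ together with the tameness-preservation property of the retraction.
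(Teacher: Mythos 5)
Your proposal is correct and is essentially the paper's argument: the paper also deduces the nontrivial inclusion from Lemma~\ref{lemm:tameretract} together with continuity of $\rho_{\cX}$, only phrased via the open set $\rho_{\cX}^{-1}(\Sk(\cX)\setminus\Sk^t(\cX))$ containing no tame divisorial points rather than via a net of tame divisorial points retracting into $\Sk^t(\cX)$. The two formulations are interchangeable, so there is nothing to add.
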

\begin{proof}
It is obvious that $\Sk^t(\cX)\subset \Sk(\cX)\cap X^t$.
 To prove the converse inclusion, we observe that the open subset $A=\rho_{\cX}^{-1}(\Sk(\cX)\setminus \Sk^t(\cX))$ of $X^{\an}$ contains $\Sk(\cX)\setminus \Sk^t(\cX)$ and does not contain any tame divisorial points by  Lemma \ref{lemm:tameretract}. Thus $A$, and therefore $\Sk(\cX)\setminus \Sk^t(\cX)$, are disjoint from $X^t$.
 \end{proof}

\begin{prop}\label{prop:nonempty}
Let $\cX$ be a log regular proper $R$-model of $X$. Then $\Sk^t(\cX)$ is non-empty if and only if $X$ has a $K^t$-rational point.
\end{prop}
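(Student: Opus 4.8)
The plan is to prove the two implications separately; the direction $\Leftarrow$ is a short retraction argument, whereas $\Rightarrow$ is the substantial one and requires first modifying the model and then passing to a tame extension.

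For the direction $\Leftarrow$, suppose $X$ has a point over some tame finite extension $K'/K$, with valuation ring $R'$. By Proposition~\ref{prop:logreg}(ii) the normalization $\cX'$ of $\cX\otimes_RR'$ is again a log regular proper $R'$-model, now of $X':=X\otimes_KK'$. The chosen point determines a rigid point $x\in(X')^{\an}=\widehat{\cX'}_\eta$ with $\mathscr{H}(x)=K'$, and I would invoke Lemma~\ref{lemm:tameretract} over the base $R'$ to see that $\rho_{\cX'}(x)$ is a divisorial point of $\Sk(\cX')$ whose residue field is tamely ramified over $K'$, hence, by transitivity of tameness, over $K$ as well. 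Proposition~\ref{prop:compat2} identifies $\Sk(\cX')$ with $\mathrm{pr}^{-1}(\Sk(\cX))$, so $\mathrm{pr}(\rho_{\cX'}(x))$ lies in $\Sk(\cX)$; and since the residue field of $\mathrm{pr}(\rho_{\cX'}(x))$ embeds isometrically into that of $\rho_{\cX'}(x)$, its value group is still discrete and its residue field still separable over $k$, so $\mathrm{pr}(\rho_{\cX'}(x))$ is a tame divisorial point. Hence $\Sk^t(\cX)\ne\emptyset$.

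For the direction $\Rightarrow$, assume $\Sk^t(\cX)\ne\emptyset$ and fix a tame divisorial point $x\in\Sk(\cX)$. Because $x$ is divisorial it lies on a rational ray of the cone complex of the Kato fan of $\cX$, so after a suitable proper subdivision of that fan I obtain a log regular proper $R$-model $\cX_1$ dominating $\cX$ in which $x$ is a vertex of $\Sk(\cX_1)$, corresponding to a prime component $E$ of $(\cX_1)_k$ of some multiplicity $N$. Making $x$ into a vertex is the crucial move: then $N$ equals the ramification index of $\mathscr{H}(x)$ over $K$, which is prime to $p$ exactly because $x$ is tame, and by Proposition~\ref{prop:maxmon} the function field $k(E)$ is a subextension of the residue field of $\mathscr{H}(x)$ and hence separable over $k$. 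I would then set $K'=K(\unif^{1/N})$, a totally tamely ramified extension (using $\gcd(N,p)=1$), and let $\cX_2$ be the normalization of $\cX_1\otimes_RR'$; by Propositions~\ref{prop:logreg}(ii) and~\ref{prop:saturated} this is a log regular proper $R'$-model whose special fiber has multiplicity $1$ along every component lying over $E$. A local computation at the generic point of $E$ — where $\cX_1$ is a discrete valuation ring and the normalized base change amounts to adjoining an $N$-th root of a unit, which is \'etale since $\gcd(N,p)=1$ — shows in addition that the function field of such a component $E'$ is separable over $k$. As strata of log regular schemes are regular, $E'$ is regular; combined with multiplicity $1$ and separability of $k(E')$ this makes $\cX_2\to\Spec R'$ smooth at the generic point of $E'$, hence along a dense open subset $V\subseteq E'$. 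Choosing a closed point $v\in V$ and letting $K''$ be the unramified extension of $K'$ with residue field $k(v)$ — still a tame extension of $K$ — I obtain a $k(v)$-rational point in the smooth locus of $\cX_2\otimes_{R'}R''$ over $R''$, which lifts by Hensel's lemma to an $R''$-point whose generic fiber is a $K''$-point of $X$; as $K''\subseteq K^t$, this shows $X(K^t)\ne\emptyset$.

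The heart of the matter, and the main obstacle, is the reduction to a multiplicity-one component in the direction $\Rightarrow$. One cannot base-change $\cX$ directly, because the tame divisorial point $x$ may sit in the relative interior of a higher-dimensional face of $\Sk(\cX)$ adjacent to components whose multiplicity is divisible by $p$, so that killing those multiplicities would force a wildly ramified extension. The way around this is to subdivide first: once $x$ has been turned into a vertex, the new component at $x$ automatically has multiplicity prime to $p$ — this is precisely what tameness of $x$ buys — after which the tame base change $K(\unif^{1/N})$, together with Proposition~\ref{prop:saturated}, yields a component along which the model is smooth over the new base. A secondary bookkeeping point is to check that the auxiliary extensions $K'$ and $K''$ really are tame over $K$, which again follows from transitivity of tameness; the same transitivity underlies the retraction argument for $\Leftarrow$.
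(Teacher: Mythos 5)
Your overall strategy is essentially the paper's: for the hard direction you reduce, by a tame base change of ramification index $N$, to a component of multiplicity one whose function field is separable over the residue field, and then produce a point over an unramified extension via a smooth point of the special fiber and Hensel's lemma; for the converse you use the retraction Lemma~\ref{lemm:tameretract}. (For the converse the detour through $R'$ is unnecessary: the image in $X^{\an}$ of a $K'$-rational point, $K'/K$ tame finite, already has $\mathscr{H}(x)$ equal to a subfield of $K'$, hence a tame finite extension of $K$, so the lemma applies directly over $R$ -- this is exactly what the paper does. Likewise, your reordering -- subdividing the Kato fan first so that $x$ becomes a vertex, then base changing -- is a legitimate variant of the paper's terser reduction, and the subdivision step is available via Kato's toroidal machinery used elsewhere in the paper.)

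There are, however, two points to tighten. The genuine one is the last step: for an arbitrary closed point $v$ of $V$ there need not exist an unramified extension $K''$ of $K'$ with residue field $k(v)$, because $k(v)$ can fail to be separable over $k'$ when $k$ is imperfect (the proposition makes no perfectness assumption on $k$), and without separability of the residue extension $K''/K$ would in any case not be tame. You must choose $v$ with $k(v)$ separable over $k'$; such points are dense in $V$ precisely because $V$ is smooth over $k'$, and this is how the paper argues, taking a smooth $k^s$-rational point of the special fiber and lifting it by Hensel's lemma to a point whose residue field is unramified over the base. Secondly, the assertion ``strata of log regular schemes are regular, hence $E'$ is regular'' is both unnecessary and, as a global statement, unjustified: log regularity gives regularity of a closed stratum only at points where it is the minimal stratum, and an irreducible component of the special fiber of a log regular (even log smooth) model can be singular along deeper strata. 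Fortunately you only need the generic point of $E'$: there $\mathcal{O}_{\cX_2,\eta_{E'}}$ is a discrete valuation ring in which a uniformizer of $R'$ is a local uniformizer (multiplicity one) and the residue field $k(E')$ is separable over $k'$, which already gives smoothness of $\cX_2\to\Spec R'$ at $\eta_{E'}$ and hence on a dense open subset of $E'$, so the rest of your argument goes through once the choice of $v$ is corrected.
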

\begin{proof}
 If $X$ has a $K^t$-rational point, then the image of this point under $\rho_{\cX}$ is a tame divisorial point on $\Sk(\cX)$ by Lemma \ref{lemm:tameretract}, so that $\Sk^t(\cX)$ is non-empty.
   Conversely, assume that $\Sk^t(\cX)$ is non-empty. Then
  $X^{\an}$ contains a tame divisorial point $x$.
   We need to prove that $X$ has a $K^t$-rational point. Let $N$ be the ramification index of $\mathscr{H}(x)$ over $K$. By our assumption that $x$ is tame, we know that $N$ is prime to $p$. We can apply a base change to a tamely ramified extension of $K$ of ramification index $N$ to reduce to the case where $N=1$.
      Then the divisorial point $x$ is associated with
   a regular $R$-model $\cY$ of $X$ and a prime component $E$ of $\cY_k$ of multiplicity one.
   Since $x$ is tame, the function field $k(E)$ is separable over $k$, so $E$ has a smooth $k^s$-rational point $\xi$; then $\cY$ is smooth over $R$ at $\xi$, and Hensel's lemma implies that $\xi$ lifts to a closed point $y$ of $X$ whose residue field is unramified over $K$.
   \end{proof}

\begin{prop}\label{prop:tame}
 Assume that $p\geq 2$.
Let $\cX$ be a log regular $R$-model of $X$, and let $x$ be a divisorial point on $\Sk(\cX)$. Then
$x$ is tame if and only if the residue field of $\cX$ at $\spe_{\cX}(x)$ is a separable extension of $k$ and $x$ is $\Z_{(p)}$-integral with respect to the canonical integral $K$-affine structure on the faces of $\Sk(\cX)$.
\end{prop}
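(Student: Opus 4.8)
The plan is to pin down the local picture of $\Sk(\cX)$ at $x$ and then read off both the ramification index and the residue field of $\mathscr{H}(x)$ from the same combinatorial datum. Write $\xi=\spe_{\cX}(x)$; by the construction of $\Sk(\cX)$ in Section~\ref{ss:logskeleta}, $\xi$ is the generic point of a stratum of $\cX_k$ (a point of $F(\cX)\cap\cX_k$), and $x$ lies in the relative interior of the face $\sigma=\{\alpha\in\Hom(\mathcal{C}_{F(\cX),\xi},\R_{\geq 0})\mid\alpha(\overline{\unif})=1\}$ of $\Sk(\cX)$; let $\alpha$ be the element of this face corresponding to $x$. By Proposition~\ref{prop:maxmon}, the value group of $\mathscr{H}(x)$ (written additively) is the subgroup $\Gamma:=\alpha^{\gp}(\mathcal{C}^{\gp}_{F(\cX),\xi})$ of $\R$, and the residue field $\widetilde{\mathscr{H}}(x)$ is a purely transcendental extension of the residue field $k(\xi)$ of $\cX$ at $\xi$. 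Since $1=\alpha(\overline{\unif})\in\Gamma$, the group $\Gamma$ contains $\log_\varepsilon|K^\times|=\Z$; and since $x$ is divisorial, $\Gamma$ is discrete by~\cite[2.4.8]{MuNi}. Hence $\Gamma=\tfrac1e\Z$ where $e:=[\Gamma:\Z]$ is exactly the ramification index of $\mathscr{H}(x)$ over $K$.

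Next I treat the two tameness conditions in turn. Because a purely transcendental extension is separable and separability of field extensions is preserved under towers and passes to subextensions, $\widetilde{\mathscr{H}}(x)$ is separable over $k$ if and only if $k(\xi)$ is — that is, if and only if the residue field of $\cX$ at $\spe_{\cX}(x)$ is separable over $k$, which is the first condition in the statement. For the ramification index: by the description of the canonical integral $K$-affine structure on $\sigma$ in Section~\ref{ss:logskeleta}, the set of values taken at $x=\alpha$ by integral $K$-affine functions is $\{\alpha^{\gp}(u)+v\mid u\in\mathcal{C}^{\gp}_{F(\cX),\xi},\ v\in\Z\}$, which equals $\Gamma+\Z=\Gamma=\tfrac1e\Z$ since $\Z\subseteq\Gamma$. (This lattice of values is unchanged if we pass to a larger face of $\Sk(\cX)$ through $x$, as integral affine functions restrict compatibly along the face maps, so it is harmless to compute on the minimal face $\sigma$.) Therefore $x$ is $\Z_{(p)}$-integral if and only if $\tfrac1e\Z\subseteq\Z_{(p)}$, i.e.\ $\tfrac1e\in\Z_{(p)}$, i.e.\ $p\nmid e$; here the hypothesis $p\geq 2$ is used so that $\Z_{(p)}$ is a proper subring of $\Q$.

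Combining the two equivalences, $x$ is tame $\iff$ $e$ is prime to $p$ and $\widetilde{\mathscr{H}}(x)$ is separable over $k$ $\iff$ $x$ is $\Z_{(p)}$-integral and the residue field of $\cX$ at $\spe_{\cX}(x)$ is separable over $k$, which is precisely the assertion. There is no serious obstacle here; the one point that needs care is recognizing that the value group of $\mathscr{H}(x)$ and the lattice of values of integral $K$-affine functions at $x$ are one and the same group $\alpha^{\gp}(\mathcal{C}^{\gp}_{F(\cX),\xi})$, after which the result is pure bookkeeping with the definitions.
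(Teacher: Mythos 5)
Your proof is correct and takes essentially the same route as the paper's: both rest on Proposition~\ref{prop:maxmon} to identify the value group of $\mathscr{H}(x)$ with the group generated by the values $|f(x)|$, $f\in\mathcal{M}_{\cX,\spe_{\cX}(x)}$ (equivalently $\alpha^{\gp}(\mathcal{C}^{\gp}_{F(\cX),\xi})$), so that $\Z_{(p)}$-integrality on the face through $x$ is exactly the condition that the ramification index is prime to $p$, while the residue-field condition follows from $\widetilde{\mathscr{H}}(x)$ being purely transcendental over $k(\xi)$. Your explicit appeals to discreteness of the value group at divisorial points and to the tower/subextension behavior of separability merely spell out steps the paper leaves implicit.
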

\begin{proof}
 We set $\xi=\spe_{\cX}(x)$. Then $\xi$ is a point of $F(\cX)$, and the corresponding face of $\Sk(\cX)$ is the unique face $\sigma$ that contains $x$ in its relative interior. By the definition of the integral $K$-affine structure on $\sigma$, the point $x$ is $\Z_{(p)}$-integral if and only if $\log |f(x)|/\log |\unif|$ lies in $\Z_{(p)}$ for every element $f$ of $\mathcal{M}_{\cX,\xi}$. By Proposition \ref{prop:maxmon}, the value group $|\mathscr{H}(x)^{\times}|$ is generated by the submonoid of $(\R_{>0},\cdot)$ consisting of the elements of the form $|f(x)|$ with $f$ in $\mathcal{M}_{\cX,\xi}$.
 Thus the ramification index of $\mathscr{H}(x)$ over $K$, which is by definition the index of the subgroup $|\unif|^{\Z}$ of $|\mathscr{H}(x)^{\times}|$, is prime to $p$ if and only if $x$ is $\Z_{(p)}$-integral.
 The residue field of $\mathscr{H}(x)$ is a purely transcendental extension of $k(\xi)$ by Proposition \ref{prop:maxmon}, and hence separable over $k$ if and only if $k(\xi)$ is separable over $k$.
  \end{proof}

 We will use Proposition~\ref{prop:tame} to give a more explicit description of the temperate part of the skeleton of a log regular model.
 Let $\cX$ be a log regular $R$-model of $X$, and let $\xi$ be a point of the fan $F(\cX)$. Following~\cite{logzeta}, we define the {\em root index} of $\cX$ at $\xi$ to be the smallest positive integer $\rho$ such that the residue class $\overline{\unif}$ of $\unif$
 in the characteristic monoid $\mathcal{C}_{\cX,x}$ is divisible by $\rho$.

\begin{example}
Let $\cX$ be a regular $R$-model for $X$ such that $\cX_k$ has strict normal crossings, and let $\xi$ be a point of $F(\cX)$.
 As we have seen in Example~\ref{exam:snc}, the characteristic monoid $\mathcal{C}_{F(\cX),\xi}$ is isomorphic to $\N^J$, where $J$ is the set of irreducible components of $\cX_k$ passing through $\xi$. If we denote by $N_j$ the multiplicity of the $j$-th component, for every $j$ in $J$, then  $\overline{\unif}=(N_j)_{j\in J}$.
 Thus the root index of $\cX$ at $\xi$ is the greatest common divisor of the multiplicities $N_j,\,j\in J$.

 This description of the root index does not hold for all log regular proper $R$-models $\cX$. For instance, assume that $\cX$ contains an open subscheme isomorphic to
 $$\cU=\Spec R[u,v]/(uv-\varpi^2)$$ and let $\xi$ be the origin of $\cU_k$.
 Then the characteristic monoid
 $\mathcal{C}_{\cX,\xi}$ is isomorphic to the submonoid of $\N^2$ generated by $(1,0)$ and $(1,2)$, and the residue class of $\unif$ equals $(1,1)$. Thus the root index of $\cX$ at $\xi$ equals $1$, even though the two components of $\cU_k$ that pass through $\xi$ both have multiplicity $2$ (we find these multiplicities by evaluating the primitive generators of the one-dimensional faces of the dual monoid  $\mathcal{C}^{\vee}_{\cX,\xi}$ at the element $\overline{\unif}=(1,1)$).
   \end{example}

\begin{prop}\label{prop:tamecrit}
 Let $\cX$ be a log regular $R$-model of $X$. Let $\xi$ be a point of $F(\cX)$, and let $\sigma$ be the corresponding face of
 $\Sk(\cX)$. We denote by $\rho$ and $k(\xi)$ the root index and the residue field of $\cX$ at $\xi$, respectively.

  If $\rho$ is prime to $p$ and $k(\xi)$ is separable over $k$, then the tame divisorial points are dense in $\sigma$. Otherwise, $\sigma$ does not contain any tame divisorial point.
\end{prop}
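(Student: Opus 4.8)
The plan is to reduce the statement to Proposition~\ref{prop:tame}, after treating the trivial case $p=1$. If $p=1$, every divisorial point of $\Sk(\cX)$ is tame and both conditions ``$\rho$ prime to $p$'' and ``$k(\xi)$ separable over $k$'' hold automatically, so one only needs that the rational points of $\sigma$ (those $\alpha$ with $\alpha^{\gp}(\mathcal{C}^{\gp}_{F(\cX),\xi})\subset\Q$) are dense in $\sigma$ and divisorial; density is clear, and a rational point has value group a finitely generated subgroup of $\Q$, hence discrete, so it is divisorial. From here on I assume $p\geq 2$.

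The central observation is that a divisorial point $x$ in the relative interior of $\sigma$ satisfies $\spe_{\cX}(x)=\xi$, so Proposition~\ref{prop:tame} applies directly: if $\alpha\in\Hom(\mathcal{C}_{F(\cX),\xi},\R_{\geq 0})$ is the element corresponding to $x$ and $\alpha^{\gp}$ its extension to $\mathcal{C}^{\gp}_{F(\cX),\xi}$, then, translating $\Z_{(p)}$-integrality through the integral $K$-affine structure of Section~\ref{ss:logskeleta}, $x$ is a tame divisorial point if and only if $k(\xi)$ is separable over $k$ and $\alpha^{\gp}(\mathcal{C}^{\gp}_{F(\cX),\xi})\subset\Z_{(p)}$; the last condition already forces $x$ to be divisorial, since by Proposition~\ref{prop:maxmon} the value group $|\mathscr{H}(x)^{\times}|$ is then a finitely generated subgroup of $\Q$, hence discrete.

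For the first alternative, suppose $\rho$ prime to $p$ and $k(\xi)$ separable over $k$. As $\mathcal{C}_{F(\cX),\xi}$ is saturated, $w:=\overline{\unif}/\rho$ lies in $\mathcal{C}_{F(\cX),\xi}$, and maximality of $\rho$ makes it a primitive element of the free group $\mathcal{C}^{\gp}_{F(\cX),\xi}$; extending $w$ to a basis presents $\sigma$ as a full-dimensional rational polytope inside the hyperplane $\{\alpha^{\gp}(w)=1/\rho\}$. Since $1/\rho\in\Z_{(p)}$, the $\Z_{(p)}$-integral points of $\sigma$ form a nonempty $\Z_{(p)}$-affine set, dense in the relative interior of $\sigma$ and hence in $\sigma$; by the previous paragraph these are all tame divisorial points. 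For the second alternative I distinguish two cases. If $k(\xi)$ is not separable over $k$, Proposition~\ref{prop:tame} immediately shows that no divisorial point $x$ with $\spe_{\cX}(x)=\xi$ is tame. If $p\mid\rho$, I claim more: \emph{no} divisorial point of $\sigma$ is tame. Such a point lies in the relative interior of a face of $\sigma$ corresponding to some $\eta\in F(\cX)$ with $\xi\in\overline{\{\eta\}}$; the cospecialization map $\mathcal{C}_{F(\cX),\xi}\to\mathcal{C}_{F(\cX),\eta}$ is compatible with the structural morphisms and so sends $\overline{\unif}$ to $\overline{\unif}$, and therefore carries any divisor of $\overline{\unif}$ to a divisor of $\overline{\unif}$ in the saturated monoid $\mathcal{C}_{F(\cX),\eta}$; hence the root index $\rho_\eta$ of $\cX$ at $\eta$ is a multiple of $\rho$, so $p\mid\rho_\eta$. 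Then $\overline{\unif}/\rho_\eta\in\mathcal{C}_{F(\cX),\eta}$ takes the value $1/\rho_\eta\notin\Z_{(p)}$ at every point of that face, so the face contains no $\Z_{(p)}$-integral point, and by Proposition~\ref{prop:tame} no tame divisorial point.

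I expect the delicate step to be the monotonicity of the root index under generization --- that divisibility of $\overline{\unif}$ is preserved by cospecialization into saturated characteristic monoids --- together with keeping careful track of the difference between $\sigma$ and its proper faces. (When $\rho$ is prime to $p$ but $k(\xi)$ is inseparable, a proper face of $\sigma$ may still carry tame divisorial points, so the second alternative is naturally understood for divisorial points $x$ with $\spe_{\cX}(x)=\xi$, i.e.\ in the relative interior of $\sigma$; when $\rho$ is not prime to $p$ the argument above rules out all of $\sigma$.) Granting these points, the rest is routine bookkeeping built on Proposition~\ref{prop:tame}.
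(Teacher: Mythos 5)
Your proof is correct and follows essentially the same route as the paper: both reduce via Proposition~\ref{prop:tame} to a statement about $\Z_{(p)}$-integral points of $\sigma$, and then present $\sigma$ inside a rational hyperplane (your $\{\alpha^{\gp}(w)=1/\rho\}$ with $w=\overline{\unif}/\rho$ primitive is the paper's isomorphism sending $\overline{\unif}$ to $(\rho,0,\ldots,0)$) and use density of $\Z_{(p)}$ in $\R$. Your additional care --- verifying that $\Z_{(p)}$-integral points are divisorial, treating boundary faces via cospecialization when $p\mid\rho$, and flagging that in the subcase ``$\rho$ prime to $p$ but $k(\xi)$ inseparable'' only points with $\spe_{\cX}(x)=\xi$ are controlled --- makes explicit points that the paper's proof leaves implicit (its reduction handles that subcase only through Proposition~\ref{prop:tame} applied at $\xi$), so this is a refinement rather than a different argument.
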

\begin{proof}
We may assume that $p\geq 2$, since otherwise the statement is trivial.
 By Proposition~\ref{prop:tame}, it suffices to prove the following claim: if $p$ divides $\rho$, then $\sigma$ does not contain any $\Z_{(p)}$-integral points; and if $p$ does not divide $\rho$, then the $\Z_{(p)}$-integral points are dense in $\sigma$.

   We denote by $d$ the dimension of $\sigma$, and by
  $\mathcal{C}^{\mathrm{gp}}_{\cX,\xi}$ the groupification of the characteristic monoid of $\mathcal{C}_{\cX,\xi}$.
       We can find an isomorphism $\mathcal{C}^{\mathrm{gp}}_{\cX,\xi}\to \Z^{d+1}$ that maps
   $\overline{\unif}$ to $(\rho,0,\ldots,0)$. The induced $\Z$-integral embedding
   $$\Hom(\mathcal{C}_{\cX,\xi}, \R_{\geq 0}) \to \R^{d+1}$$ identifies $\sigma$
 with a $d$-dimensional polytope $P$ in the hyperplane $H$ defined by $z_1=1/\rho$.
  If $\rho$ is divisible by $p$, then $P(\Z_{(p)})$ is empty; otherwise, $P(\Z_{(p)})$ is dense in $P$
  because $P$ has non-empty interior in $H$ and $\Z_{(p)}$ is dense in $\R$.
\end{proof}

\begin{example}\label{exam:temperate}
Assume that $k$ is algebraically closed and of characteristic $p=2$. Let $E$ be an elliptic curve over $K$ of Kodaira-N\'eron reduction type $I^{\ast}_r$, with $r\geq 0$, and let $\mathscr{E}$ be its minimal snc-model (see Figure \ref{fig:temperate}).  By Proposition \ref{prop:tamecrit}, the temperate part $\Sk^t(\mathscr{E})$ of $\Sk(\mathscr{E})$ is the union of the four closed edges in $\Sk(\mathscr{E})$ corresponding to the four singular points in $\mathscr{E}_{k}$ that are contained in a component of multiplicity one.
 Thus $\Sk^t(\mathscr{E})=\Sk(\mathscr{E})$ when $r=0$, and for all $r\geq 0$,
 $\Sk^t(\mathscr{E})$ contains wild vertices of $\Sk(\cE)$. These examples also illustrate that, in the set-up of Proposition \ref{prop:tamecrit},
   $\Sk^t(\cX)$ may contain faces of  $\Sk(\cX)$ corresponding to strata of $\cX_k$ where the root index is not prime to $p$ (in our example, the wild vertices in $\Sk^t(\mathscr{E})$).
\end{example}

\begin{figure}[ht]
  \includegraphics[width=8cm]{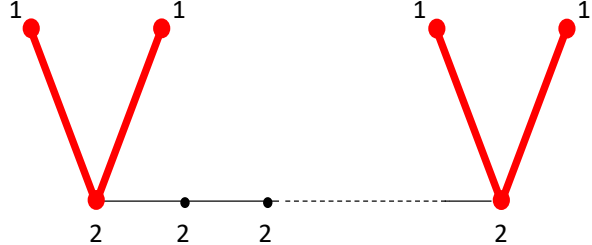}
  \caption{The skeleton of the minimal snc-model $\mathscr{E}$ of an elliptic curve of type $I^{\ast}_r,\,r\geq 0$.
  The vertices are labelled with the multiplicities of the corresponding components in $\mathscr{E}_k$. The number of vertices of multiplicity two is $r+1$ (if $r=0$, then the unique vertex of mutiplicity two has four adjacent edges). The bold part of the graph is the temperate part $\Sk^t(\mathscr{E})$ of the skeleton $\Sk(\mathscr{E})$ if $p=2$.  }
  \label{fig:temperate}
\end{figure}

\begin{prop}\label{prop:logsmoothsk}  Let $n$ be the dimension of $X$, and let $\cX$ be a log smooth $R$-model of $X$.
 Then every $n$-dimensional face of $\Sk(\cX)$ is contained in $\Sk^t(\cX)$.
\end{prop}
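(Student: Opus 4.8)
The plan is to deduce the statement from Proposition~\ref{prop:tamecrit}, so that it suffices to control the root index and the residue field at the point of the Kato fan corresponding to an $n$-dimensional face. We may assume $p\geq 2$, since otherwise every divisorial point is tame and there is nothing to prove. Let $\sigma$ be an $n$-dimensional face of $\Sk(\cX)$, and let $\xi$ be the point of $F(\cX)$ to which it corresponds. Since $\overline{\unif}\neq 0$ in $\mathcal{C}_{\cX,\xi}$ we have $\xi\in\cX_k$, and since the dimension of $\sigma$ equals $\mathrm{rk}\,\mathcal{C}_{\cX,\xi}-1$ we get $\mathrm{rk}\,\mathcal{C}_{\cX,\xi}=n+1$. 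By Proposition~\ref{prop:tamecrit} it is enough to prove that (a) the residue field $k(\xi)$ of $\cX$ at $\xi$ is separable over $k$, and (b) the root index $\rho$ of $\cX$ at $\xi$ is prime to $p$; for then the tame divisorial points are dense in $\sigma$, and since $\Sk^t(\cX)$ is closed and contains every tame divisorial point, $\sigma\subseteq\Sk^t(\cX)$.

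For (a), I would invoke Kato's dimension formula for log regular schemes~\cite{kato}: as $\xi$ is the generic point of a stratum of $\cX_k$, this gives $\dim\mathcal{O}_{\cX,\xi}=\mathrm{rk}\,\mathcal{C}_{\cX,\xi}=n+1=\dim\cX$, so $\xi$ is a closed point of $\cX$ and the stratum of $\cX_k$ through $\xi$ is zero-dimensional. By the local structure theory of log smooth morphisms~\cite{kato-log,kato}, this stratum is \'etale over $\Spec k$, and therefore $k(\xi)$ is a finite separable extension of $k$.

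For (b), which I expect to be the crux, the key input is that log smoothness of $\cX$ over $S$ forces the torsion subgroup of $\coker(\Z\to\mathcal{C}^{\gp}_{\cX,\xi})$ --- where $\Z\to\mathcal{C}^{\gp}_{\cX,\xi}$, $1\mapsto\overline{\unif}$, is the groupification of the structural morphism $\N=\mathcal{C}_{S,s}\to\mathcal{C}_{\cX,\xi}$ --- to have order prime to $p$. This is part of Kato's chart criterion for log smoothness~\cite{kato-log}, applied to the chart $\N\to\mathcal{C}_{\cX,\xi}$ (one may take $\mathcal{C}_{\cX,\xi}$ itself as a chart near $\xi$, cf.\ the construction in the proof of Proposition~\ref{prop:can}\eqref{it:lineb}). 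Granting this, the argument concludes as follows: writing $\rho'$ for the content of $\overline{\unif}$ in the free $\Z$-module $\mathcal{C}^{\gp}_{\cX,\xi}$, i.e.\ the largest integer with $\overline{\unif}\in\rho'\,\mathcal{C}^{\gp}_{\cX,\xi}$, the cokernel $\coker(\Z\to\mathcal{C}^{\gp}_{\cX,\xi})$ is the direct sum of a free abelian group and a cyclic group of order $\rho'$, so $\rho'$ is prime to $p$; and since $\overline{\unif}\in\rho\,\mathcal{C}_{\cX,\xi}\subseteq\rho\,\mathcal{C}^{\gp}_{\cX,\xi}$, the root index $\rho$ divides $\rho'$, hence is prime to $p$.

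The only genuine obstacle I foresee is bibliographic: pinning down the precise references for Kato's log smoothness criterion in the form that bounds the cokernel torsion away from $p$, and for the smoothness over $k$ of the zero-dimensional strata of a log smooth model. I would also note that the argument never uses that $\sigma$ is top-dimensional; together with the smoothness over $k$ of all strata of $\cX_k$, it shows that in fact $\Sk^t(\cX)=\Sk(\cX)$ whenever $\cX$ is log smooth over $R$, the stated proposition being the case of $n$-dimensional faces.
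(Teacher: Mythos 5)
Your reduction to Proposition~\ref{prop:tamecrit}, and the dimension count identifying $\xi$ as a closed point with $\mathrm{rk}\,\mathcal{C}_{\cX,\xi}=n+1$, match the paper. The genuine gap is in step (b). Kato's chart criterion is an \emph{existence} statement: log smoothness guarantees that \'etale locally there is \emph{some} chart $\N\to P$ with the torsion part of $\coker(\Z\to P^{\gp})$ of order prime to $p$ and with $\cX\to S\otimes_{\Z[\N]}\Z[P]$ smooth. It does not assert that the specific chart furnished by the characteristic monoid $\mathcal{C}_{\cX,\xi}$ satisfies these conditions, and the statement you extract from it --- that log smoothness forces the torsion of $\coker(\Z\to\mathcal{C}^{\gp}_{\cX,\xi})$ to be prime to $p$ --- is simply false at general points of the Kato fan. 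For example, $\cX=\Spec R[x,y]/(x^py-\unif)$ is log smooth over $S$ (take the chart $\N\to\N^2$, $1\mapsto(p,1)$, a primitive vector), yet at the generic point $\xi$ of the component $\{x=0\}$ one has $\mathcal{C}_{\cX,\xi}=\N$ and $\overline{\unif}=p$, so the root index equals $p$ and the cokernel has torsion of order $p$. This also refutes your closing claim that the argument yields $\Sk^t(\cX)=\Sk(\cX)$ for every log smooth model; the paper's discussion immediately after the proposition (log smooth proper models of genus-one curves with irreducible special fiber of multiplicity divisible by $p$ and empty temperate skeleton) gives proper counterexamples as well. The fact that your argument ``never uses that $\sigma$ is top-dimensional'' is precisely the symptom of the gap.

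What is missing is the step where top-dimensionality is used to compare the chart $P$ provided by Kato's criterion with $\mathcal{C}_{\cX,\xi}$: at a zero-dimensional stratum, $\dim\mathcal{C}_{\cX,\xi}=n+1$, while smoothness of $\cX\to S\otimes_{\Z[\N]}\Z[P]$ forces $\dim P\leq n+1$; since $P\to\mathcal{C}_{\cX,\xi}$ is surjective, this gives $\dim P=n+1$ and an isomorphism $P/P_{\mathrm{tors}}\to\mathcal{C}_{\cX,\xi}$. Only then does the prime-to-$p$ condition on $\coker(\Z\to P^{\gp})$ transfer to $\coker(\Z\to\mathcal{C}^{\gp}_{\cX,\xi})$, and the same count shows that $\cX\to S\otimes_{\Z[\N]}\Z[P]$ is \'etale, which is also what actually proves the separability of $k(\xi)$ in your step (a) (I am not aware of a quotable ``local structure'' statement that does this for you off the shelf, so (a) is really the same argument). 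In short, the difficulty you dismissed as bibliographic is the mathematical heart of the proof, and as written the key assertion in (b) does not hold without it.
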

\begin{proof}
We endow $\cX$ and $S=\Spec R$ with their standard log structures. We may assume that
 $X$ is of pure dimension $n$, since the connected components of lower dimension never contribute
 $n$-dimensional faces to the skeleton.
 Then the $n$-dimensional faces of $\Sk(\cX)$ correspond bijectively to the zero-dimensional strata of $\cX_k$.
 Let $\xi$ be a zero-dimensional stratum of $\cX_k$.
 The assertion is local for the \'etale topology on $\cX$. Thus by Kato's criterion for logarithmic smoothness (see~\cite[3.5]{kato-log} and~\cite[4.1]{kato-def}),
 we may assume that the morphism of log schemes $\cX\to S$ has a chart of the form
$$\begin{CD}
\cX@>>> \Spec \Z[P]
\\ @VVV @VVV
\\ S @>>> \Spec \Z[\N]
\end{CD}$$
 satisfying the following properties:
\begin{enumerate}
\item the chart $S\to \Spec \Z[\N]$ is induced by the morphism of monoids $\N\to (S,\cdot)$ that maps $1$ to $\unif$;
\item the right vertical morphism is induced by an injective morphism of fine and saturated monoids $\N\to P$ such that the order of the torsion part of $\coker(\Z\to P^{\mathrm{gp}})$ is prime to $p$;
\item the induced morphism $\cX\to S\otimes_{\Z[\N]}\Z[P]$ is smooth.
\end{enumerate}
 Since $\xi$ is a zero-dimensional stratum and $\cX$ is log regular and of dimension $n+1$, the characteristic monoid
$\mathcal{C}_{\cX,\xi}$ has dimension $n+1$.
  The fact that $\cX\to S\otimes_{\Z[\N]}\Z[P]$ is smooth implies that the dimension of $P$ is at most $n+1$.
  Since $\cX\to \Spec \Z[P]$ is a chart, the induced morphism
 $P\to \mathcal{C}_{\cX,x}$ is surjective.
    Therefore, $P/P_{\mathrm{tors}}\to \mathcal{C}_{\cX,x}$ is an isomorphism, where $P_{\mathrm{tors}}$ denotes the monoid of torsion elements in $P$.

     This implies that $P$ has dimension $n+1$, so that $\cX\to S\otimes_{\Z[\N]}\Z[P]$ is \'etale, and the residue field $k(\xi)$ of $\cX$ at $\xi$ is separable over $k$.
  It also follows that the torsion part of the cokernel of the morphism
  $\Z\to \mathcal{C}^{\mathrm{gp}}_{\cX,x}$ that sends $1$ to $\overline{\unif}$ has order prime to $p$. This order is precisely the root index of $\cX$ at $\xi$, because $\mathcal{C}_{\cX,x}$ is saturated. Thus the result follows from Proposition~\ref{prop:tamecrit}.
\end{proof}

\begin{example}
Assume that $k$ is perfect. Let $C$ be a geometrically connected, smooth and proper curve over $K$, and assume that $C$ has  a log smooth proper $R$-model $\cC$.  Proposition~\ref{prop:logsmoothsk} implies that
  $\Sk^t(\cC)$ contains all the edges of $\Sk(\cC)$.
    Thus either $\Sk^t(\cC)$ is empty and $\Sk(\cC)$ is a point, or $\Sk^t(\cC)$ is equal to $\Sk(\cC)$.
    In the former case, $\cC$ is an $snc$-model of $C$, the special fiber $\cC_k$ is irreducible, and the characteristic of $k$ divides the multiplicity of $\cC_k$.

 If $\Sk^t(\cC)$ is empty, then $C(K^t)$ is empty, by Proposition~\ref{prop:nonempty}.
 Theorem 1.1 in~\cite{mitsui-smeets} implies that, in this case, the $\ell$-adic Euler characteristic of $C$ vanishes; in other words, $C$ has genus one.
   It is shown in in~\cite{mitsui-smeets} that there exists a geometrically connected, smooth and proper curve $C$ over a complete discretely valued field $K$ of residue characteristic $p>0$ with algebraically closed residue field such that $C(K^t)$ is
 empty and $C$ has a log smooth proper $R$-model $\cC$.

  Thus there exist curves $C$ with a proper log smooth $R$-model $\cC$ such that $\Sk^t(\cC)$ is empty, but $\Sk^t(\cC)$ is non-empty whenever $C$ is a curve of genus $g\neq 1$ and $\cC$ is a proper log smooth $R$-model of $C$.
 Theorem 1.1 in~\cite{mitsui-smeets} also applies in higher dimensions:
 if $X$ is a geometrically connected smooth proper $K$-scheme with non-vanishing $\ell$-adic Euler characteristic, and $\cX$ is a log smooth proper $R$-model of $X$, then $\Sk^t(\cX)$ is non-empty.

  We do not know any example where $X$ is a geometrically connected smooth proper $K$-scheme,
  $X(K^t)$ is non-empty, and $X$ has a log smooth model $\cX$ for which $\Sk^t(\cX)$ is different from $\Sk(\cX)$.
 \end{example}

 \subsection{Weight functions and log regular models}
Let $X$ be a smooth and proper $K$-scheme of pure dimension, and let $\form$ be an $m$-canonical form on $X$, for some $m>0$, such that $\form$ is not identically zero on any connected component of $X$. We will explain how the calculation of the weight function $\wt_{\form}$ on skeleta can be extended from $snc$-models to log regular models, and deduce some useful properties of the Kontsevich--Soibelman skeleton $\Sk(X,\form)$. Whenever $\cX$ is a log regular proper $R$-model of $X$, we view $\form$ as a rational section of the line bundle $(\omega^{\mathrm{log}}_{\cX/R})^{\otimes m}$ defined in Section \ref{ss:logdiff}, and we denote by $\mathrm{div}_{\cX}(\form)$ the associated Cartier divisor on $\cX$.

\begin{prop}\label{prop:logweight} Assume that $k$ is perfect.
Let $\cX$ be a log regular proper $R$-model of $X$, and let $x$ be a divisorial point contained in the skeleton $\Sk(\cX)$.
 Let $f$ be a local equation at $\spe_{\cX}(x)$ for $\mathrm{div}_{\cX}(\form)$. Then
 $$\wt_{\form}(x)=\frac{\log |f(x)|}{m\cdot \log |\unif|}$$ where $\unif$ is any uniformizer in $R$.
\end{prop}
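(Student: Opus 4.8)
The plan is to reduce the computation of the weight function on a log regular model to the already-known formula on an $snc$-model, using the compatibility results established earlier. Concretely, I would proceed as follows. First, recall that $\wt_\form(x)$ is defined intrinsically in terms of divisorial valuations (Section~\ref{ss:KS}), so it does not depend on the choice of model; what we must show is that the quantity $\log|f(x)|/(m\log|\unif|)$ computed on $\cX$ agrees with it. Since $x$ is a divisorial point on $\Sk(\cX)$, by toroidal resolution of singularities for log regular schemes (cited after Proposition~\ref{prop:can}) we may pass to a subdivision of the fan of $\cX$, equivalently a log \'etale morphism $h\colon\cY\to\cX$ with $\cY$ an $snc$-model, such that $\xi':=\spe_{\cY}(x)$ is the generic point of a prime component $E$ of $\cY_k$ and $x$ is the associated divisorial point. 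By Proposition~\ref{prop:compat1} (or directly the construction of subdivisions), $\Sk(\cX)\subset\Sk(\cY)$ and $x$ still lies on $\Sk(\cY)$.

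Next I would invoke Proposition~\ref{prop:can}\eqref{it:loget}: for the subdivision morphism $h$ we have $\omega^{\log}_{\cY/R}=h^\ast\omega^{\log}_{\cX/R}$ as submodules of $j_\ast\omega_{\cY\otimes_R K/K}$, and hence, taking $m$-th tensor powers, $(\omega^{\log}_{\cY/R})^{\otimes m}=h^\ast(\omega^{\log}_{\cX/R})^{\otimes m}$. Therefore if $f$ is a local equation at $\spe_\cX(x)$ for $\mathrm{div}_\cX(\form)$, its pullback $h^\ast f$ is a local equation at $\xi'$ for $\mathrm{div}_\cY(\form)$. In particular, if $w$ denotes the multiplicity of $\mathrm{div}_\cY(\form)$ along $E$ and $N$ the multiplicity of $E$ in $\cY_k$, then on the $snc$-model $\cY$ the classical definition of the weight (Section~\ref{ss:KS}, the $snc$ case, and Remark~\ref{rema:weight}) gives $\wt_\form(x)=w/(mN)$, where the reference bundle $\omega_{\cY/R}(\cY_{k,\red}-\cY_k)^{\otimes m}$ coincides with $(\omega^{\log}_{\cY/R})^{\otimes m}$ by Proposition~\ref{prop:can}\eqref{it:logcan}.

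It then remains to compute $\log|f(x)|/\log|\unif|$ directly. Since $x$ is the divisorial point attached to $\cY$ and $E$, the semivaluation $x$ corresponds to $(1/N)\ord_E$ on the function field of $X$; writing $h^\ast f = u\cdot t^w$ locally at $\xi'$ with $t$ a local equation for $E$ and $u$ a unit, we get $\log|f(x)|/\log|\unif| = (1/N)\ord_E(h^\ast f)\cdot \log|\unif|/\log|\unif|$. Here the normalization $x(\unif)=1$ (i.e.\ $|\unif(x)|=|\unif|$) holds because $\Sk(\cX)$ sits over the point $\alpha(\overline\unif)=1$ in the dual cone, so $\ord_E(\unif)=N$ and hence $v_x(\unif)=1$; thus $\log|f(x)|/\log|\unif| = w/N$, and dividing by $m$ yields $w/(mN)=\wt_\form(x)$. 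I would phrase the last paragraph more carefully using Proposition~\ref{prop:maxmon} to identify $|f(x)|$ with the value of the monomial valuation on $f$, but the content is this monomial bookkeeping.

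The main obstacle is the second step: making precise that a local equation for $\mathrm{div}_\cX(\form)$ pulls back to a local equation for $\mathrm{div}_\cY(\form)$, which requires the identification of $\omega^{\log}$ under subdivisions from Proposition~\ref{prop:can}\eqref{it:loget} together with the fact that $h$ is an isomorphism over the generic fibre (so that $\form$, viewed as a rational section, has a well-defined divisor on both models and these divisors pull back to one another). Everything else is either the intrinsic independence of $\wt_\form$ from the model, the classical $snc$ formula, or the elementary valuation computation.
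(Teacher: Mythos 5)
Your proposal is correct and follows essentially the same route as the paper: reduce to an $snc$-model by a toroidal resolution coming from a regular proper subdivision of the fan, and use Proposition~\ref{prop:can}\eqref{it:logcan} and \eqref{it:loget} to identify the reference bundle and to pull back $\mathrm{div}_{\cX}(\form)$ so that the local equation is preserved. The only difference is in the endgame: the paper simply cites \cite[4.3.7]{MuNi}, which computes the weight at every point of the skeleton of an $snc$-model (so no special choice of resolution is needed), whereas you arrange for $x$ to become the divisorial point of a component of $\cY_k$ --- which requires the standard but unstated fact that a regular subdivision can be chosen containing the rational ray determined by $x$ --- and then carry out the valuation computation by hand.
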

\begin{proof} We have $\omega^{\mathrm{log}}_{\cX/R}=\omega_{\cX/R}(\cX_{k,\red}-\cX_k)$ by Proposition~\ref{prop:can}\eqref{it:logcan}.
 Thus if $\cX$ is an $snc$-model, the result follows from~\cite[4.3.7]{MuNi}, modulo the renormalization explained in Remark~\ref{rema:weight}.
  We can reduce to that case by means of a toroidal resolution associated with a regular proper subdivision of the fan of $\cX$, using Proposition~\ref{prop:can}\eqref{it:loget}.
\end{proof}

\begin{remark}
In~\cite{MuNi}, the definition of the weight function was extended to all monomial points in $X^{\an}$ (and then further to $X^{\an}$ by means of an approximation procedure, assuming resolution of singularities). The formula in Proposition~\ref{prop:logweight} is valid for all the points in $\Sk(\cX)$, by the same proof.
\end{remark}

Let $\cX$ be a log regular proper $R$-model of $X$. We write $$\cX_k=\sum_{i\in I}N_iE_i$$ where the $E_i$ are the prime components of $\cX_k$, and the $N_i$ are their multiplicities. For every $i\in I$, we denote by $w_i$ the multiplicity of $E_i$ in $\mathrm{div}_{\cX}(\form)$.
 Let $\xi$ be a point of the fan $F(\cX)$ contained in $\cX_k$; thus $\xi$ is the generic point of a stratum of $\cX_k$.
 We say that $\xi$ is {\em $\form$-essential} if the following two properties are satisfied.
\begin{enumerate}
\item For every $j\in I$ such that $\xi$ is contained in $E_j$, we have $$\frac{w_j}{N_j}=\min\{\frac{w_i}{N_i}\,|\,i\in I\}.$$
\item The point $\xi$ is not contained in the Zariski closure of the zero locus of $\form$ on $X$.
\end{enumerate}
 This definition generalizes the one for $snc$-models in~\cite[4.7.4]{MuNi}. We say that a face of $\Sk(\cX)$ is $\form$-essential if the corresponding point of $F(\cX)$ is $\form$-essential.

\begin{prop}\label{prop:sklogmodel}
Assume that $k$ is perfect.  Let $\cX$ be a log regular proper $R$-model for $X$.
The Kontsevich--Soibelman skeleton $\Sk(X,\form)$ is the union of the $\form$-essential faces of $\Sk(\cX)$.
 Moreover, using the notation introduced above, the minimal value of $\wt_{\form}$ on the set of divisorial points in $X^{\an}$ is equal to $$\min\{\frac{w_i}{mN_i}\,|\,i\in I\}.$$
\end{prop}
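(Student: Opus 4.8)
\emph{Plan.} The idea is to reduce to the case of an $snc$-model, where both assertions are contained in Theorem~4.7.5 of~\cite{MuNi} (recalled in Section~\ref{ss:KS}), and then to transport them to the log regular model $\cX$ by means of a single auxiliary $\Q$-divisor on $\cX$. First I would pick a regular proper subdivision of the fan of $\cX$ and let $h\colon\cY\to\cX$ be the associated toroidal modification, exactly as in the proof of Proposition~\ref{prop:logweight}; then $\cY$ is an $snc$-model of $X$ dominating $\cX$, the cone complex of $\cY$ refines that of $\cX$, so by Proposition~\ref{prop:compat1} (and the fact that a subdivision of the fan subdivides the skeleton) the skeleta $\Sk(\cX)$ and $\Sk(\cY)$ have the same underlying subspace of $X^{\an}$, with $\Sk(\cY)$ a polyhedral refinement of $\Sk(\cX)$. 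Moreover $\omega^{\mathrm{log}}_{\cY/R}=h^{\ast}\omega^{\mathrm{log}}_{\cX/R}$ by Proposition~\ref{prop:can}\eqref{it:loget}, so $\mathrm{div}_{\cY}(\form)=h^{\ast}\mathrm{div}_{\cX}(\form)$, and by Proposition~\ref{prop:logweight} the weight functions attached to $\cX$ and to $\cY$ coincide on $\Sk(\cX)=\Sk(\cY)$. From~\cite[4.7.5]{MuNi} applied to $\cY$ we may therefore assume known: $\Sk(X,\form)$ is the union of the $\form$-essential faces of $\Sk(\cY)$ (in particular $\Sk(X,\form)\subseteq\Sk(\cY)=\Sk(\cX)$), and $\wt_{\min}(X,\form)=\min\{w'_l/(mN'_l)\}$ over the prime components $E'_l$ of $\cY_k$.

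\emph{The auxiliary divisor.} Set $\mu=\min\{w_i/(mN_i)\mid i\in I\}$ and consider on $\cX$ the $\Q$-Cartier $\Q$-divisor
\[
  D\;=\;\mathrm{div}_{\cX}(\form)-m\mu\,\cX_k\;=\;\sum_{i\in I}(w_i-m\mu N_i)\,E_i\;+\;H,
\]
where $H$ is the closure in $\cX$ of the zero divisor of $\form$ on $X$; it is effective, since the vertical coefficients $w_i-m\mu N_i$ are $\geq 0$ by the definition of $\mu$ and $H\geq 0$, and $\mathrm{Supp}(D)=\bigl(\bigcup_{w_i>m\mu N_i}E_i\bigr)\cup H$. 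The crucial point is that, for every $x\in\Sk(\cX)$ with reduction $\xi=\spe_{\cX}(x)$, Proposition~\ref{prop:logweight} gives $\wt_{\form}(x)-\mu=\log|f_D(x)|/(m\log|\unif|)$, where $f_D\in\mathcal O_{\cX,\xi}$ is a local equation at $\xi$ for a suitable integral multiple of $D$; and that $|f_D(x)|\leq 1$, with equality if and only if $\xi\notin\mathrm{Supp}(D)$. Since $D$ is effective the inequality needs no comment; the equivalence rests on the observation that $\xi$ is the generic point of a stratum of the log regular scheme $\cX$, so that by the local structure theorem for log regular schemes~\cite{kato} the completion $\widehat{\mathcal O}_{\cX,\xi}$ is the completed monoid algebra $k(\xi)\llbr\mathcal C_{\cX,\xi}\rrbr$; thus any $g\in\mathcal O_{\cX,\xi}$ with $g(\xi)=0$ is a sum of monomials coming from $\mathcal C_{\cX,\xi}\setminus\{0\}$, and since $x$ lies in the relative interior of the face of $\Sk(\cX)$ attached to $\xi$ (equivalently, $\spe_{\cX}(x)=\xi$) the corresponding semivaluation $-\log|\,\cdot\,(x)|$ is strictly positive on $\mathcal C_{\cX,\xi}\setminus\{0\}$, giving $|g(x)|<1$, whereas $|g(x)|=1$ when $g$ is a unit at $\xi$. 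As $\log|\unif|<0$, this yields $\wt_{\form}(x)\geq\mu$ on $\Sk(\cX)$, with $\wt_{\form}(x)=\mu$ precisely when $\xi\notin\mathrm{Supp}(D)$.

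\emph{Conclusion.} Comparing supports, a face of $\Sk(\cX)$ with generic point $\xi$ is $\form$-essential exactly when $\xi\notin\mathrm{Supp}(D)$: condition~(2) reads $\xi\notin H$, and condition~(1) reads $w_i=m\mu N_i$ for each $E_i\ni\xi$, i.e.\ $\xi\notin E_i$ whenever $w_i>m\mu N_i$. Hence by the previous step $\{x\in\Sk(\cX):\wt_{\form}(x)=\mu\}$ is precisely the union $\Sigma$ of the $\form$-essential faces of $\Sk(\cX)$; this set is already closed, because $\mathrm{Supp}(D)$ is closed, so the $\form$-essential faces form a subcomplex (a face of a $\form$-essential face is again $\form$-essential). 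The rational---hence divisorial---points are dense in every face, so the divisorial points of $\Sk(\cX)$ with weight $\mu$ are dense in $\Sigma$; because $\Sk(X,\form)\subseteq\Sk(\cX)$, these are exactly the divisorial points $x$ of $X^{\an}$ with $\wt_{\form}(x)=\wt_{\min}(X,\form)$ as soon as one knows $\wt_{\min}(X,\form)=\mu$. That last equality follows from the $\cY$-formula $\wt_{\min}(X,\form)=\min\{w'_l/(mN'_l)\}$ together with the fact that $w'_l-m\mu N'_l$ is the coefficient of $E'_l$ in the pullback $h^{\ast}D$ of the effective $\Q$-divisor $D$, hence $\geq 0$, with equality for the strict transforms of those $E_i$ achieving the minimum $\mu$. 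Therefore $\Sk(X,\form)$, being the closure of $\{x\in\Sk(\cX)\text{ divisorial}:\wt_{\form}(x)=\mu\}$, equals $\overline\Sigma=\Sigma$, and $\wt_{\min}(X,\form)=\mu$; these are the two assertions.

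\emph{Main obstacle.} The delicate point is the local claim in the second paragraph: that a regular function vanishing at the generic point of a stratum of $\cX$ has absolute value $<1$ at every point of the corresponding face of the skeleton. This is where log regularity enters in an essential way---through Kato's local structure theorem and the explicit description of the monomial semivaluations parametrized by faces of $\Sk(\cX)$ in~\cite{BrMa}---and everything else is bookkeeping around the effective $\Q$-divisor $D$ and the reduction to $snc$-models.
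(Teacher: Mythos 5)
Your proof is correct and takes essentially the same route as the paper: reduce to the $snc$ case via a toroidal resolution $\cY\to\cX$ and~\cite[4.7.5]{MuNi}, then transport the conclusion back to $\cX$ using Propositions~\ref{prop:can}\eqref{it:loget} and~\ref{prop:logweight}; your auxiliary effective divisor $D=\mathrm{div}_{\cX}(\form)-m\mu\,\cX_k$ is just an explicit bookkeeping of the comparison of $\form$-essential faces that the paper states ``follows directly.'' Two cosmetic remarks: if $f_D$ is a local equation of an integral multiple $ND$, the displayed identity should carry the extra factor $N$ (harmless for the sign conclusions), and the strict inequality $|g(x)|<1$ for $g\in\mathfrak{m}_{\spe_{\cX}(x)}$ is immediate from the definition of the specialization map, so the appeal to Kato's local structure theorem is not needed.
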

\begin{proof}
   Let $\cY\to \cX$ be a toroidal resolution of $\cX$, associated with a regular proper subdivision of the fan $F(\cX)$.
 Then $\Sk(X,\form)$ is the union of the $\form$-essential faces of $\Sk(\cY)$, by~\cite[4.7.5]{MuNi}.
 Thus, it suffices to prove that a face of $\Sk(\cY)$ is $\form$-essential if and only if it is contained in a $\form$-essential face of $\Sk(\cX)$.
 This follows directly from Propositions~\ref{prop:can}\eqref{it:loget} and~\ref{prop:logweight}. In particular, $\wt_{\form}$ reaches its minimal value at a vertex of $\Sk(\cX)$; the value of $\wt_{\form}$ at the vertex corresponding to $E_i$ is precisely $w_i/mN_i$.
\end{proof}

\subsection{Behavior under ground field extension}
Consider a finite extension $K'/K$, of ramification index $e=e(K'/K)$. Denote by $R'$ the integral closure of $R$ in $K'$. Set $X'=X\otimes_KK'$, let
\begin{equation*}
  \mathrm{pr}\colon (X')^{\an}\to X^{\an}
\end{equation*}
be the canonical projection map, and let $\form'$ be the pullback of $\form$ to $X$.
\begin{prop}\label{prop:skbasech}
  We assume that $k$ is perfect.
\begin{enumerate}
\item \label{it:skbasech-tame} Assume that $K'$ is tame over $K$. Let $x$ be a divisorial point of $X^{\an}$ and $x'$ a point in $\mathrm{pr}^{-1}(x)$. Then $x'$ is divisorial, and
$\wt_{\form'}(x')=e\cdot \wt_{\form}(x)$.
\item \label{it:skbasech-logsm} Assume that $X$ has a log smooth proper $R$-model $\cX$. Then the normalization $\cX'$ of $\cX\otimes_R R'$ is a log smooth proper $R'$-model, and $$\wt_{\form'}=e\cdot \wt_{\form}\circ \mathrm{pr}$$ on $\Sk(\cX')$.
\end{enumerate}
In both cases, we have $\Sk(X',\form')=\pr^{-1}(\Sk(X,\form))$.
\end{prop}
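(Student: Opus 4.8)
The plan is to reduce both parts to one local input under normalized base change, namely the identity $\omega^{\mathrm{log}}_{\cX'/R'}=g^{\ast}\omega^{\mathrm{log}}_{\cX/R}$ of Proposition~\ref{prop:can}\eqref{it:canbc} (where $g\colon\cX'\to\cX$), together with the multiplicity rule of Proposition~\ref{prop:saturated}, and then to obtain the skeleton equality by two separate arguments in the two cases. For~\eqref{it:skbasech-tame}, I would start from a proper normal $R$-model $\cX$ of $X$ for which $x$ is the divisorial point attached to a component $E$ of $\cX_k$, of multiplicity $N$, and shrink around the generic point $\eta_E$ to an open $\cU$ that is $snc$, hence log regular; then $\cU'$, the normalization of $\cU\otimes_R R'$, is log regular by Proposition~\ref{prop:logreg}(ii), since $K'/K$ is tame. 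The first point to check is that every point of $\mathrm{pr}^{-1}(x)$ is the divisorial point of a component $E'$ of the special fibre of $\cU'$ dominating $E$: the centre on $\cU'$ of the corresponding valuation lies over $\eta_E$, the fibre of the finite morphism $\cU'\to\cU$ over $\eta_E$ is exactly the set of generic points of such components $E'$, and each fibre of $\mathrm{sp}_{\cU'}$ over a generic point of the special fibre is a single point; in particular $x'$ is divisorial, with discrete value group $\tfrac1{\lcm(e,N)}\Z$. Then, using $\mathrm{div}_{\cU'}(\form')=g^{\ast}\mathrm{div}_{\cU}(\form)$, the multiplicity $N/\gcd(e,N)$ of $E'$ (Proposition~\ref{prop:saturated}), and the elementary fact that $\mathcal O_{\cU,\eta_E}\hookrightarrow\mathcal O_{\cU',\eta_{E'}}$ has ramification index $e/\gcd(e,N)$ (compare the orders of $\unif$ on both sides), the defining formula $\wt_{\form}=w/(mN)$ from Section~\ref{ss:KS} gives $\wt_{\form'}(x')=e\,\wt_{\form}(x)$ after cancellation.

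For~\eqref{it:skbasech-logsm}, Proposition~\ref{prop:logreg}(i) shows $\cX'$ is log smooth over $R'$, and it is proper, being finite over the proper scheme $\cX\otimes_R R'$, with generic fibre $X'$, so it is a log smooth proper $R'$-model. For a point $x'\in\Sk(\cX')$ put $x=\mathrm{pr}(x')\in\Sk(\cX)$ (Proposition~\ref{prop:compat2}) and apply the weight formula of Proposition~\ref{prop:logweight} on both models: if $f$ is a local equation of $\mathrm{div}_{\cX}(\form)$ at $\mathrm{sp}_{\cX}(x)$, then $g^{\ast}f$ is one of $\mathrm{div}_{\cX'}(\form')$ at $\mathrm{sp}_{\cX'}(x')$, by Proposition~\ref{prop:can}\eqref{it:canbc}; since $|g^{\ast}f(x')|=|f(x)|$ and $\log|\unif'|=\tfrac1e\log|\unif|$, the two formulas yield $\wt_{\form'}(x')=e\,\wt_{\form}(x)$ directly on all of $\Sk(\cX')$.

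It remains to deduce $\Sk(X',\form')=\mathrm{pr}^{-1}(\Sk(X,\form))$. The map $\mathrm{pr}$ is finite, flat and surjective, hence open, so $\mathrm{pr}^{-1}(\overline S)=\overline{\mathrm{pr}^{-1}(S)}$ for every subset $S$. In case~\eqref{it:skbasech-tame}, statement~\eqref{it:skbasech-tame} read in both directions shows that $\mathrm{pr}$ maps the divisorial points of $X'$ onto those of $X$ — if $y'$ is divisorial then so is $\mathrm{pr}(y')$, because $\mathscr H(y')$ is finite over $\mathscr H(\mathrm{pr}(y'))$, and then~\eqref{it:skbasech-tame} applies — with $\wt_{\form'}=e\,(\wt_{\form}\circ\mathrm{pr})$; hence $\wt_{\min}(X',\form')=e\,\wt_{\min}(X,\form)$ (both possibly $-\infty$, in which case both skeleta are empty), the loci where $\wt_{\form}$, $\wt_{\form'}$ attain their infima correspond under $\mathrm{pr}^{-1}$, and passing to closures with the topological fact above gives the claim. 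In case~\eqref{it:skbasech-logsm}, I would instead invoke Proposition~\ref{prop:sklogmodel}: $\Sk(X,\form)$ and $\Sk(X',\form')$ are the unions of the $\form$- resp.\ $\form'$-essential faces of $\Sk(\cX)$ and $\Sk(\cX')$, and by Proposition~\ref{prop:compat2} $\mathrm{pr}$ carries $\Sk(\cX')$ onto $\Sk(\cX)$ sending each face homeomorphically onto a face, with $\mathrm{pr}^{-1}(\Sk(\cX))=\Sk(\cX')$; the weight identity just proved makes the vertex condition in the definition of $\form'$-essentiality of a face $\sigma'$ equivalent to the same condition for $\mathrm{pr}(\sigma')$, while $\mathrm{div}_{\cX'}(\form')=g^{\ast}\mathrm{div}_{\cX}(\form)$ forces $H'=g^{\ast}H$ on horizontal parts, so the non-horizontality condition transfers as well; therefore $\sigma'$ is $\form'$-essential if and only if $\mathrm{pr}(\sigma')$ is $\form$-essential, and the two subcomplexes match under $\mathrm{pr}$.

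The step demanding the most care is in fact already supplied by the earlier sections: the local comparison $\omega^{\mathrm{log}}_{\cX'/R'}=g^{\ast}\omega^{\mathrm{log}}_{\cX/R}$ is precisely Proposition~\ref{prop:can}\eqref{it:canbc}, and with it the weight identities are pure multiplicity and ramification bookkeeping. For the skeleton equality the genuinely delicate case is~\eqref{it:skbasech-tame}, where no global log regular model of $X$ need exist, so one cannot describe $\Sk(X,\form)$ as a union of faces and must argue straight from its definition as the closure of a weight-minimizing set of divisorial points; this obliges us to control \emph{all} divisorial points of $X$ and $X'$ simultaneously under $\mathrm{pr}$ and to use that $\mathrm{pr}$ is open, so that preimage commutes with closure.
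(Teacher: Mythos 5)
Your proposal is correct and follows essentially the same route as the paper: part~\eqref{it:skbasech-tame} by shrinking to a regular model around the generic point of $E$ and pulling back $\omega^{\log}$ via Proposition~\ref{prop:can}\eqref{it:canbc} together with the multiplicity rule of Proposition~\ref{prop:saturated}, part~\eqref{it:skbasech-logsm} via Propositions~\ref{prop:logreg}, \ref{prop:compat2}, \ref{prop:can}\eqref{it:canbc} and~\ref{prop:logweight}, and the skeleton identity via openness of $\mathrm{pr}$ in the tame case and the face description of Proposition~\ref{prop:sklogmodel} in the log smooth case. The only differences are expository: you spell out the multiplicity and ramification bookkeeping and the fact that images of divisorial points are divisorial (so that $\wt_{\min}$ scales by $e$), which the paper leaves implicit.
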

\begin{proof}
~\eqref{it:skbasech-tame} Let $\cX$ be a normal $R$-model of $X$ and let $E$ be a prime component of $\cX_k$ such that $x$ is the divisorial point associated with the pair $(\cX,E)$. Shrinking $\cX$ around the generic point of $E$, we may assume that $\cX$ and $E$ are regular, and that $\cX_k$ is irreducible. Then $\cX$ is log regular (with respect to its standard log structure).
 Denote by $\cX'$ the normalization of $\cX\otimes_R R'$. Then $x'$ is the divisorial point associated with $\cX'$ and some prime component $E'$ of $\cX'$ dominating $E$.
 Proposition~\ref{prop:can} implies that, locally around the generic point of $E'$, the line bundle $\omega_{\cX'/R'}(\cX'_k-\cX'_{k,\red})$ is the pullback of $\omega_{\cX/R}(\cX_k-\cX_{k,\red})$ to $\cX'$. It follows that $\wt_{\form'}(x')=e\cdot \wt_{\form}(x)$. In particular, $\wt_{\form'}$ is minimal at $x'$ if and only if
  $\wt_{\form}$ is minimal at $x$.
   By definition, $\Sk(X,\form)$ is the closure in $X^{\an}$ of the set of divisorial points where $\wt_{\form}$ is minimal, and the analogous statement holds for
   $\Sk(X',\form')$. Since $\mathrm{pr}$ is open (see~\cite[3.2.7]{BerkIHES}),
it follows that $\Sk(X',\form')$ is the inverse image of $\Sk(X,\form)$ in $(X')^{\an}$.

\eqref{it:skbasech-logsm}
Proposition~\ref{prop:logreg} shows that $\cX'$ is a proper log smooth $R'$-model of $X'$. We know by Proposition~\ref{prop:sklogmodel} that $\Sk(X,\form)$ is a union of faces of $\Sk(\cX)$, and
 $\Sk(X',\form')$  is a union of faces of $\Sk(\cX')$.
 Moreover, $\Sk(\cX')$ is the inverse image of $\Sk(\cX)$ under the projection morphism $\mathrm{pr}$, by Proposition~\ref{prop:compat2}.
  Propositions~\ref{prop:can}\eqref{it:canbc} and~\ref{prop:logweight} imply that
  $$\wt_{\form'}=e\cdot \wt_{\form}\circ \mathrm{pr}$$ on $\Sk(\cX')$.
   Thus the set of divisorial points of minimal weight in $\Sk(\cX')$ is the preimage of the set of divisorial points of minimal weight in $\Sk(\cX)$.
  It follows that $\Sk(X',\form')$ is the preimage of $\Sk(X,\form)$ in $(X')^{\an}$.
\end{proof}

 The last assertion of Proposition~\ref{prop:skbasech} is false without the assumption that $K'$ is tame over $K$ or $X$ has a log smooth model; let us illustrate this by means of an explicit example.

\begin{example}\label{exam:wild}
 Let $k$ be an algebraically closed field of characteristic $p=2$, let $R$ be the ring $W(k)$ of Witt vectors, and let $K$ be the quotient field of $R$.
 Let $E$ be an elliptic curve over $K$ of Kodaira-N\'eron reduction type $I_{r}^{\ast}$, with $r>0$, and let $\form$ be a volume form on $E$.
Let $\cE$ be the minimal $snc$-model of $E$. Using the triviality of $\omega_{\cE/R}$, one immediately checks that $\Sk(E,\form)$ is homeomorphic to $[0,1]$ (see Figure \ref{fig:wild}; this also follows from~\cite[3.3.13]{BaNi}).
 However, there are examples of such curves $E$ that acquire good reduction over a quadratic extension $K'$ of $K$; they have been classified in~\cite[2.8]{lor}. In those cases,   $\Sk(E',\form')$ is a point.
\end{example}

\begin{figure}[ht]
  \includegraphics[width=8cm]{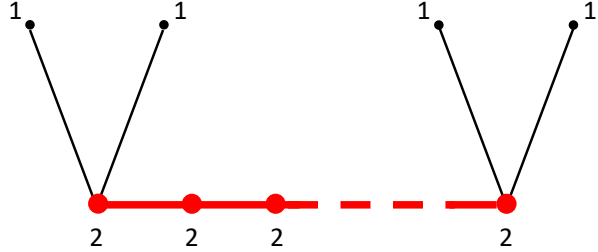}
  \caption{The skeleton of the minimal snc-model $\mathscr{E}$ of an elliptic curve $E$ of type $I^{\ast}_r$, $r>0$.
  The vertices are labelled with the multiplicities of the corresponding components in $\mathscr{E}_k$. The number of vertices of multiplicity two is $r+1$. The bold part of the graph is $\Sk(E,\form)$, for any volume form $\form$ on $E$.  }
  \label{fig:wild}
\end{figure}

\section{Lebesgue measures on skeleta}\label{sec:lebesgue}
In this section we study Lebesgue type measures on the skeleta of log regular models, induced by the canonical integral affine structures on the faces.
Throughout, $X$ is a smooth and proper $K$-scheme of pure dimension.
\subsection{Definitions}\label{ss:lebesgue}
Let $\cX$ be a log regular proper $R$-model for $X$,
$\xi$ the generic point of a stratum of $\cX_k$, and $\tau$ the corresponding face of $\Sk(\cX)$. Write $d$ for the dimension of $\tau$.
The face $\tau$ carries an integral $K$-affine structure induced by the characteristic monoid $\mathcal{C}_{F(\cX),\xi}$ (see Section \ref{ss:logskeleta}). This integral affine structure gives rise to a  measure on $\tau$: the real affine space $$\{\alpha\in \Hom(\mathcal{C}_{F(\cX),\xi},\R)\,|\,\alpha(\overline{\unif})=1\}$$ spanned by $\tau$ is a torsor under the action of
the real vector space  $$\{\alpha\in \Hom(\mathcal{C}_{F(\cX),\xi},\R)\,|\,\alpha(\overline{\unif})=0\}$$ by translation, and the latter space carries a canonical translation-invariant  measure
 $|dv_1\wedge \ldots \wedge dv_d|$ where $v_1,\ldots,v_d$ is any basis for the lattice
$$\{\alpha\in \Hom(\mathcal{C}_{F(\cX),\xi},\Z)\,|\,\alpha(\overline{\unif})=0\}.$$
 This measure induces a measure on $\tau$, which we call the integral Lebesgue measure (when $\tau$ has dimension $d=0$, we interpret this definition as the Dirac measure on $\tau$). Equivalently, the integral Lebesgue measure on $\tau$ is the limit of the discrete measures

 $$\frac{1}{e^d}\sum_{x\in \tau((1/e)\Z)}\delta_x$$ over all positive integers $e$, ordered by divisibility, where the sum is taken over the $(1/e)\Z$-integral points in $\tau$ and $\delta_x$ denotes the Dirac measure at $x$.

 For every nonnegative integer $d$, we denote by $\Leb^{(d)}_{\cX}$ the measure supported on the union of the $d$-dimensional faces in $\Sk(\cX)$ whose restriction
 to each $d$-dimensional face is the integral Lebesgue measure.

  \begin{prop}\label{prop:compat3}
  Let $\cX$ and $\cY$ be log regular proper $R$-models of $X$.
  Then, for every nonnegative integer $d$, the measures $\Leb^{(d)}_{\cX}$ and $\Leb^{(d)}_{\cY}$ coincide on the intersection of the $d$-skeleta of $\Sk(\cX)$ and $\Sk(\cY)$.
\end{prop}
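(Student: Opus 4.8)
The plan is to reduce the statement to a combinatorial comparison of the discrete measures that approximate the integral Lebesgue measures on the two skeleta, the only geometric ingredient being the intrinsic description of skeleton points in Proposition~\ref{prop:maxmon}.

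The point is that, although the integral $K$-affine structure on a face $\tau$ of $\Sk(\cX)$ is defined in terms of the model, the set of $(1/e)\Z$-integral points of $\tau$ is not. Indeed, if $x\in\tau$ and $\xi=\spe_{\cX}(x)$, then the integral $K$-affine functions on $\tau$ are the maps $\alpha\mapsto\alpha^{\gp}(u)+v$ with $u\in\mathcal{C}^{\gp}_{F(\cX),\xi}$ and $v\in\Z$, which evaluated at $x$ are precisely the numbers $\log_{\varepsilon}|f(x)|+v$ with $f\in\mathcal{M}_{\cX,\xi}$ and $v\in\Z$; since the elements $|f(x)|$ with $f\in\mathcal{M}_{\cX,\xi}$ generate the value group $|\mathscr{H}(x)^{\times}|$ by Proposition~\ref{prop:maxmon}, the point $x$ is a $(1/e)\Z$-integral point of $\tau$ if and only if $|\mathscr{H}(x)^{\times}|\subseteq\varepsilon^{(1/e)\Z}$, a condition intrinsic to the point $x\in X^{\an}$. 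The same applies to faces of $\Sk(\cY)$.

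Next I would fix $\varphi\in C_c(X^{\an})$, write $S=\Sk^{(d)}(\cX)\cap\Sk^{(d)}(\cY)$, and compute $\int\varphi\,d(\Leb^{(d)}_{\cX}|_S)$. Since $\Leb^{(d)}_{\cX}=\sum_{\tau}\mathrm{Leb}_{\tau}$, the sum running over the $d$-dimensional faces $\tau$ of $\Sk(\cX)$, and $\mathrm{Leb}_{\tau}$ is the weak limit of $e^{-d}\sum_{x\in\tau((1/e)\Z)}\delta_x$ as $e$ runs over the positive integers ordered by divisibility, and since the relative boundary of $\tau\cap\Sk^{(d)}(\cY)$ inside $\tau$ has dimension $<d$ and hence is $\mathrm{Leb}_{\tau}$-null, the portmanteau theorem gives
\[
 \int\varphi\,d\bigl(\Leb^{(d)}_{\cX}|_S\bigr)=\sum_{\tau}\,\lim_{e}\,\frac{1}{e^{d}}\sum_{x\in\tau((1/e)\Z)\cap\Sk^{(d)}(\cY)}\varphi(x).
\]
Interchanging the finite sum over $\tau$ with the limit, and then discarding the points that lie on the relative boundary of some $d$-face of $\Sk(\cX)$ or in $\Sk^{(d-1)}(\cY)$ — these fill up sets of dimension $<d$, hence number $O(e^{d-1})=o(e^{d})$ and contribute nothing in the limit — together with the intrinsic description of $(1/e)\Z$-integral points above, yields
\[
 \int\varphi\,d\bigl(\Leb^{(d)}_{\cX}|_S\bigr)=\lim_{e}\,\frac{1}{e^{d}}\sum_{x\in D_{e}}\varphi(x),
\]
where $D_{e}$ is the set of $x\in X^{\an}$ with $|\mathscr{H}(x)^{\times}|\subseteq\varepsilon^{(1/e)\Z}$ lying in the relative interior of some $d$-dimensional face of $\Sk(\cX)$ \emph{and} in the relative interior of some $d$-dimensional face of $\Sk(\cY)$. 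As $D_{e}$ is symmetric in $\cX$ and $\cY$, running the same computation for $\cY$ produces the same right-hand side, so $\Leb^{(d)}_{\cX}|_S=\Leb^{(d)}_{\cY}|_S$, which is the claim.

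I expect the delicate step to be this last bookkeeping: justifying that restricting the weak limits to the closed set $S$ commutes with passing to the limit, which rests entirely on the elementary fact that the various discrepancy loci (relative boundaries of $d$-faces, and $d$-skeleta with their top-dimensional faces deleted) have dimension strictly below $d$ and so are negligible both for the measures $\mathrm{Leb}_{\tau}$ and for the normalized lattice-point counting measures. I note that this approach requires neither a common dominating model of $\cX$ and $\cY$ nor resolution of singularities. When such a common dominating log regular model $\cZ$ is available, one may instead invoke Proposition~\ref{prop:compat1}: then $\Sk^{(d)}(\cX)\subseteq\Sk^{(d)}(\cZ)$, every $d$-face of $\Sk(\cX)$ is subdivided into $d$-faces of $\Sk(\cZ)$, the integral $K$-affine structures match (subdividing a rational polyhedral cone does not change its monoid of integral points, or again by the intrinsic description above), and hence $\Leb^{(d)}_{\cX}=\Leb^{(d)}_{\cZ}|_{\Sk^{(d)}(\cX)}$ and likewise for $\cY$, so the two agree on $S$.
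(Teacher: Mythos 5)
Your reduction to lattice-point counting is appealing, and your intrinsic characterization of the $(1/e)\Z$-integral points of a face through the value group $|\mathscr{H}(x)^{\times}|$ is essentially correct (modulo the small point that the integral $K$-affine functions involve the groupification $\mathcal{C}^{\gp}_{F(\cX),\xi}$, so their values at $x$ are differences $\log_{\varepsilon}|f(x)|-\log_{\varepsilon}|g(x)|+v$; the resulting criterion is the same, and it is the argument used in Proposition~\ref{prop:tame}). But there is a genuine gap at exactly the step you flag as delicate. To pass from $\mathrm{Leb}_{\tau}=\lim_e e^{-d}\sum_{x\in\tau((1/e)\Z)}\delta_x$ to the statement that the restriction of $\mathrm{Leb}_{\tau}$ to $A=\tau\cap\Sk^{(d)}(\cY)$ is computed by counting lattice points in $A$, you need $\mathrm{Leb}_{\tau}(\partial_{\tau}A)=0$ (and symmetrically on the faces of $\Sk(\cY)$). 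You assert that this relative boundary ``has dimension $<d$'', but nothing in your argument controls the set $\tau\cap\Sk(\cY)=\{x\in\tau:\rho_{\cY}(x)=x\}$: a priori it is merely a closed subset of the polytope $\tau$, and a closed subset can have a boundary of positive Lebesgue measure (a fat Cantor set is its own boundary), in which case normalized lattice-point counts on $A$ need not converge to $\mathrm{Leb}_{\tau}(A)$ and the symmetry argument collapses. The negligibility of the ``discrepancy loci'' is therefore not elementary bookkeeping; it requires knowing that the intersection of the two skeleta is piecewise ($K$-)affine, or at least Jordan measurable inside each face.

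That missing input is precisely what the paper's proof supplies: it observes that the retraction $\rho_{\cX}$ is piecewise integral $K$-affine on every face of $\Sk(\cY)$, and vice versa, so the two piecewise integral $K$-affine structures agree on $\Sk(\cX)\cap\Sk(\cY)$; the equality of the measures $\Leb^{(d)}_{\cX}$ and $\Leb^{(d)}_{\cY}$ on $\Sk^{(d)}(\cX)\cap\Sk^{(d)}(\cY)$ is then immediate, with no counting needed (alternatively, the paper reduces to $snc$-models by toroidal subdivision and invokes \cite[3.2.4]{MuNi}). Once that compatibility is known, your counting argument does go through, but it becomes redundant. Note also that your fallback via a common dominating log regular model $\cZ$ is not available in general: normalizing the closure of the diagonal in $\cX\times_R\cY$ yields a normal model, not a log regular one, and improving it to a log regular (or $snc$) model would require resolution of singularities over $R$, which the paper is careful not to assume; and even granting $\cZ$, the claim that each $d$-face of $\Sk(\cX)$ is subdivided by $d$-faces of $\Sk(\cZ)$ with the same integral structure is again the compatibility statement that has to be proved.
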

\begin{proof}

  It follows directly from the definitions that the retraction $$\rho_{\cX}\colon X^{\an}\to \Sk(\cX)$$ is piecewise $K$-affine
 on every face of $\Sk(\cY)\subset X^{\an}$. The analogous property holds when we swap $\cX$ and $\cY$. Thus the piecewise $K$-affine structures on $\Sk(\cX)$ and $\Sk(\cY)$ agree on the intersection $\Sk(\cX)\cap \Sk(\cY)$, so that the measures $\Leb_{\cX}^{(d)}$ and $\Leb_{\cY}^{(d)}$ coincide on $\Sk^{(d)}(\cX)\cap \Sk^{(d)}(\cY)$. Alternatively, we can reduce to the case where $\cX$ and $\cY$ are $snc$-models by means of regular proper subdivisions of their fans~\cite[10.4]{kato}, and apply~\cite[3.2.4]{MuNi}.
\end{proof}
\begin{example}\label{exam:torres}
Let $\cY\to \cX$ be a toroidal modification, corresponding to a proper subdivision of the fan of $\cX$~\cite[\S9]{kato}.
 Then the polyhedral complex $\Sk(\cY)$ is a subdivision of $\Sk(\cX)$, and both skeleta are equal as subsets of $X^{\an}$.
 If $d$ is the dimension of $\Sk(\cX)$, then $\Leb^{(d)}_{\cX}=\Leb^{(d)}_{\cY}$. We can always choose our subdivision of the fan of $\cX$ in such a way that $\cY$ is an $snc$-model of $X$~\cite[10.4]{kato}. In that case, we call $\cY\to \cX$ a {\em toroidal resolution} of $\cX$.
\end{example}
\subsection{Behavior under ground field extension}
Now consider a finite extension $K'$ of $K$, and set $X':=X\otimes_KK'$.
We have a canonical surjective map $\pr\colon (X')^{\an}\to X^{\an}$.
Let $\cX$ be a log regular proper $R$-model of $X$, and let $\cX'$ be the normalization of $\cX\otimes_R R'$, where $R'$ is the integral closure of $R$ in $K'$.
Assume that $\cX$ is log smooth over $S=\Spec\,R$, or that $K'$ is tamely ramified over $K$.
By Proposition~\ref{prop:logreg}, $\cX'$ is a log regular proper $R'$-model of $X'$. Further, Proposition~\ref{prop:compat2} shows that $\pr^{-1}(\Sk(\cX))=\Sk(\cX')$
and that $\pr$ maps any face $\tau'$ of $\Sk(\cX')$ homeomorphically onto a face $\tau$ of $\Sk(\cX)$.
\begin{prop}\label{prop:compat4}
  In the situation above, $\pr$ identifies the measures $\Leb^{(d)}_{\cX'}|_{\tau'}$
  and $e^d\cdot \Leb^{(d)}_{\cX}|_{\tau}$ for every $d\geq 0$,
  where $e=e(K'/K)$ is the ramification index.
\end{prop}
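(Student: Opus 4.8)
The plan is to reduce the assertion to a face-by-face statement and then extract it from the description of faces of skeleta via characteristic monoids, reusing the commutative square from the proof of Proposition~\ref{prop:compat2}.

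First I would fix a point $\xi'$ of $F(\cX')$, let $\xi\in F(\cX)$ be its image, and denote by $\tau'\subset\Sk(\cX')$ and $\tau\subset\Sk(\cX)$ the corresponding faces. Since $\pr$ restricts to a homeomorphism $\tau'\to\tau$ by Proposition~\ref{prop:compat2}, these faces have equal dimension $d'$, and $\Leb^{(d)}_{\cX'}|_{\tau'}$ and $\Leb^{(d)}_{\cX}|_{\tau}$ both vanish unless $d'=d$. So it suffices to prove, for $d'=d$, that $\pr$ transports the integral Lebesgue measure on $\tau'$ to $e^d$ times the one on $\tau$; summing over the $d$-dimensional faces then gives the statement for $\Leb^{(d)}$.

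Next I would unwind both integral affine structures. Write $P=\mathcal{C}_{F(\cX),\xi}$ and $A=P^{\gp}$, a free group of rank $d+1$ with structural element $a=\overline{\unif}$. By Proposition~\ref{prop:logreg} and the proof of Proposition~\ref{prop:compat2} (which uses \cite[2.1.1]{nakayama}), $\cX'$ is log regular and $\mathcal{C}_{F(\cX'),\xi'}$ is the quotient by its torsion of the saturation of the pushout $P\oplus_{\N}(1/e)\N$, where $\N\to P$ sends $1$ to $a$ and $\N\to(1/e)\N$ is the inclusion; its structural element is the class $b$ of $1/e$, and the natural map $\phi\colon P\to\mathcal{C}_{F(\cX'),\xi'}$ induced by $\cX'\to\cX$ satisfies $\phi(a)=eb$. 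On groupifications this yields $\mathcal{C}^{\gp}_{F(\cX'),\xi'}=\bigl((A\oplus\Z)/\langle(a,-e)\rangle\bigr)/(\text{torsion})$, with $b$ the class of $(0,1)$. As recalled in Section~\ref{ss:lebesgue}, the integral Lebesgue measure on a face of $\Sk(\cX)$ (resp.\ $\Sk(\cX')$) is the translation-invariant measure normalized by the lattice of $\Z$-valued homomorphisms of the ambient groupified characteristic monoid that annihilate the structural element. Since $\Hom(-,\Z)$ kills torsion, $\Hom(\mathcal{C}^{\gp}_{F(\cX'),\xi'},\Z)=\{(\psi,c)\in\Hom(A,\Z)\oplus\Z:\psi(a)=ec\}$, and the extra condition of annihilating $b$ forces $c=0$; hence the direction lattice of $\tau'$ is $\{(\psi,0):\psi(a)=0\}$, and precomposition with $\phi$, which sends $(\psi,c)$ to $\psi$, carries it \emph{isomorphically} onto the direction lattice $\{\psi\in\Hom(A,\Z):\psi(a)=0\}$ of $\tau$.

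Finally, in the commutative square of Proposition~\ref{prop:compat2} the restriction of $\pr$ to $\tau'$ is the map $\alpha'\mapsto\tfrac1e(\alpha'\circ\phi)$, whose linear part on direction spaces is $\tfrac1e$ times precomposition with $\phi$. Under the lattice identification of the previous step this linear part becomes multiplication by $1/e$ on a single $d$-dimensional real vector space equipped with a fixed lattice, so its determinant has absolute value $e^{-d}$; therefore $\pr$ pushes the integral Lebesgue measure on $\tau'$ forward to $e^d$ times the integral Lebesgue measure on $\tau$, as desired. The one genuine obstacle is bookkeeping: the factor $e$ and the structural elements $a$, $b$ must be tracked correctly through the pushout, since this is exactly where the exponent $d$ in $e^d$ comes from. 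Alternatively, one could reduce to the $snc$ case by choosing toroidal resolutions of $\cX$ and $\cX'$ (Example~\ref{exam:torres} together with Proposition~\ref{prop:compat3}) and quote the corresponding computation in \cite[\S3]{MuNi}, but the monoid calculation above seems the most direct route.
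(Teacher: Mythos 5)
Your argument is correct and is essentially the paper's proof written out in full: the paper likewise reduces to the map $\pr_{\xi'}$ on characteristic monoids from Proposition~\ref{prop:compat2} and observes that it is ``restriction followed by division by $e$'', which is exactly your computation that precomposition with $\phi$ identifies the direction lattices annihilating the structural elements, so the linear part is $e^{-1}$ times a lattice isomorphism and the measures scale by $e^d$. The only slip is the remark that both restricted measures vanish unless $d=\dim\tau'$ (for $d<\dim\tau'$ the closed face $\tau'$ does carry $\Leb^{(d)}$-mass on its $d$-dimensional faces), but this is harmless since that case follows by applying your top-dimensional computation to those sub-faces, which correspond under $\pr$ compatibly with inclusions.
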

\begin{proof} We use the notation of the proof of Proposition~\ref{prop:compat2}. The restriction of $\pr$ to $\tau'$ corresponds to the map $\pr_{\xi'}$ on characteristic monoids. The assertion now follows from the fact that $\pr_{\xi'}$ is given by restriction and then multiplication by $e^{-1}$.
\end{proof}
If $\pr$ maps $\Sk(\cX')$ homeomorphically onto $\Sk(\cX)$, we therefore have
that $\pr_*\Leb^{(d)}_{\cX'}=e^d\Leb^{(d)}_\cX$ for every $d$. This is the case
in the setting of Proposition~\ref{prop:compat5}, for example.
In general, we only have the inequality $\pr_*\Leb^{(d)}_{\cX'}\ge e^d\Leb^{(d)}_\cX$. Below we define a weighted Lebesgue measure that behaves better under ground field extension.

\subsection{Tame degree function}\label{ss:tamedeg}
In the setting of Proposition~\ref{prop:compat5}, if $\cX_k$ is not reduced or $k$ is not separably closed, then open faces of the skeleton $\Sk(\cX)$ may split into multiple copies by taking the inverse image
in $(X')^{\an}$. In order to encode the number of copies, we introduce the following invariant.
\begin{definition}
The \emph{tame degree function} $\tdeg_K\colon X^{\an}\to \N\cup \{\infty\}$ maps each point $x$ of $X^{\an}$ to the extension degree of the tame closure of $K$ in $\mathscr{H}(x)$.
\end{definition}

\begin{lemma}\label{lemma:tame}
Let $\cX$ be log smooth proper $R$-model for $X$. Then for every point $x$ of $\Sk(\cX)$, the tame closure of $K$ in $\mathscr{H}(x)$ is separably closed in $\mathscr{H}(x)$.
\end{lemma}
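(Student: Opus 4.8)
The plan is first to reformulate: write $L$ for the tame closure of $K$ in $\mathscr{H}(x)$ and $F$ for the separable algebraic closure of $K$ in $\mathscr{H}(x)$. The inclusion $L\subseteq F$ is clear, and since separability of algebraic extensions is transitive, ``$L$ is separably closed in $\mathscr{H}(x)$'' is equivalent to $L=F$; so it suffices to show that every finite separable subextension $M$ of $\mathscr{H}(x)/K$ is tamely ramified over $K$. Put $\xi=\spe_\cX(x)$. By Proposition~\ref{prop:maxmon} the residue field $\widetilde{\mathscr{H}}(x)$ is purely transcendental over $k(\xi)$. Running the chart argument of the proof of Proposition~\ref{prop:logsmoothsk} at the point $\xi$ of $F(\cX)$ — Kato's criterion for log smoothness is \'etale-local, and a strict smooth morphism preserves characteristic monoids, so the argument applies verbatim to an arbitrary stratum — shows that $k(\xi)$ is separable over $k$ and that the root index $\rho$ of $\cX$ at $\xi$, which (as $\mathcal{C}_{\cX,\xi}$ is saturated) equals the order of the torsion subgroup of $\coker(\Z\to\mathcal{C}^{\gp}_{\cX,\xi})$, $1\mapsto\overline\unif$, is prime to $p$. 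Hence $\widetilde M\subseteq\widetilde{\mathscr{H}}(x)$ is separable over $k$, and the only thing left to prove is $p\nmid e(M/K)$.

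Second, I would reduce to a totally wildly ramified situation. If some finite separable $M\subseteq\mathscr{H}(x)$ had $p\mid e(M/K)$, let $M_1$ be its maximal tamely ramified subextension; then $M_1/K$ is a finite tame extension inside $\mathscr{H}(x)$ and $M/M_1$ is totally wildly ramified of degree $p^a>1$. Let $R_1$ be the valuation ring of $M_1$ and $\cX_1$ the normalization of $\cX\otimes_R R_1$; by Proposition~\ref{prop:logreg}(i), $\cX_1$ is log smooth, and by Proposition~\ref{prop:compat2} the projection $\pr\colon\Sk(\cX_1)\to\Sk(\cX)$ restricts to homeomorphisms on faces. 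Since $M_1/K$ is separable and $M_1\hookrightarrow\mathscr{H}(x)$, the ring $M_1\otimes_K\mathscr{H}(x)$ has $\mathscr{H}(x)$ as a direct factor, so the fibre $\pr^{-1}(x)\subseteq\Sk(\cX_1)$ contains a point $x_1$ with $\mathscr{H}(x_1)=\mathscr{H}(x)$, compatibly with the embeddings of $M_1$. Replacing $K$, $\cX$, $x$ by $M_1$, $\cX_1$, $x_1$, we may assume that $M/K$ is a nontrivial finite separable extension with $e(M/K)=[M:K]=p^a$, and we must derive a contradiction from $M\subseteq\mathscr{H}(x)$.

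The heart of the argument is then a computation in a toric chart. Via the chart $P$ and the strict smooth morphism $\cX\to\cW:=\Spec\bigl(R[P]/(T^{e_P}-\unif)\bigr)$, where $\coker(\Z\to P^{\gp})$ has prime-to-$p$ torsion, one reduces to the case $\cX=\cW$: the smooth morphism leaves the value group unchanged by Proposition~\ref{prop:maxmon}, only enlarges the residue field by a separable purely transcendental extension, and adds no new separable algebraic extensions of $K$ beyond unramified (hence tame) ones. For $\cW$, $\mathscr{H}(x)$ is the completion of $K[P^{\gp}]/(T^{e_P}-\unif)$ at the monomial valuation $\alpha$: its value group is $\Gamma=\alpha^{\gp}(P^{\gp})$, its residue field is the function field $k(\Lambda_0)$ of the torus $\Spec k[\Lambda_0]$ with $\Lambda_0=\ker(\alpha^{\gp})\cap P^{\gp}$, and every element of value $\gamma\in\Gamma$ equals $T^m$ (for any $m\in P^{\gp}$ with $\alpha^{\gp}(m)=\gamma$) times a unit. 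Take a uniformizer $\theta$ of $M$, so $v_K(\theta)=1/p^a$, write $\theta=T^m w$ with $\alpha^{\gp}(m)=1/p^a$ and $w$ a unit, and let $g(T)=T^{p^a}+a_{p^a-1}T^{p^a-1}+\dots+a_0$ be the Eisenstein minimal polynomial of $\theta$ over $K$. In $g(\theta)=0$, the terms of minimal valuation (namely $1$) are $\theta^{p^a}$ and the leading term of $a_0$; comparing them gives $\overline{\unif/\theta^{p^a}}=-\alpha_0^{-1}$ for some $\alpha_0\in k^\times$. On the other hand $\unif/\theta^{p^a}=T^{e_P-p^am}w^{-p^a}$ is a unit whose residue is the Laurent monomial $[\,e_P-p^am\,]\cdot\bar w^{-p^a}$ in $k(\Lambda_0)$, where $e_P-p^am\in\Lambda_0$. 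Thus $[\,e_P-p^am\,]\in k^\times\cdot(k(\Lambda_0)^\times)^p$ (using $a\ge 1$), and evaluating the order-of-vanishing valuations of $k(\Lambda_0)$ forces $e_P-p^am\in p\Lambda_0$, hence $e_P\in pP^{\gp}$. Writing $e_P=pw_0$ with $w_0\in P^{\gp}$ non-torsion (since $e_P$ is), the class of $w_0$ in $\coker(\Z\to P^{\gp})=P^{\gp}/\Z e_P$ is a nonzero $p$-torsion element, contradicting that this cokernel has no $p$-torsion.

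The main obstacle is the third step. Two aspects demand care: the reduction from $\cX$ to the toric model $\cW$, where one has to verify that a strict smooth morphism creates no new wildly ramified separable algebraic extension of $K$ in the completed residue field; and the combinatorial bookkeeping with $P^{\gp}$, $e_P$, $\Lambda_0$ — in particular the case where $k$ is imperfect, handled by separating the contribution of $k^\times/(k^\times)^p$ from that of $pP^{\gp}$ via the order-of-vanishing valuations of $k(\Lambda_0)$, which detect divisibility of monomial exponents by $p$. An alternative to this hands-on computation would be to invoke structural results on the completed residue fields of Berkovich points lying over good models (in the circle of ideas of \cite{temkin}), which express that $\mathscr{H}(x)/K$ is ``tamely ramified up to Frobenius''.
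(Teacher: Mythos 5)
Your overall architecture (reduce to a totally wildly ramified $M\subseteq\mathscr{H}(x)$ of degree $p^a$, then use a Kato chart and an Eisenstein/monomial residue computation to contradict the prime-to-$p$ torsion condition in Kato's criterion) is genuinely different from the paper's argument, and the final toric computation is sound. But the proof as written has a real gap exactly where you flag "the main obstacle": the reduction from $\cX$ to the chart model $\cW$. Your computation needs the wild extension $M$ (or at least the relation $\overline{\unif/\theta^{p^a}}\in k^\times\cdot(\text{$p$-th powers})$ with residues that are honest Laurent monomials in $k(\Lambda_0)$, and with the conclusion formulated in terms of $P^{\gp}$, not $\mathcal{C}_{\cX,\xi}^{\gp}$) to live over the toric model. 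The assertion that a strict smooth morphism "only enlarges the residue field by a separable purely transcendental extension, and adds no new separable algebraic extensions of $K$ beyond unramified ones" is not a routine verification: the residue extension $\widetilde{\mathscr{H}}(x)/\widetilde{\mathscr{H}}(x_W)$ is in general only separable and finitely generated (the image of $\xi$ in $\cW$ need not be a generic point of a stratum, and $k(\xi)/k(\eta)$ can be any smooth function field), new algebraic extensions of $K$ do appear, and the claim that none of them is wild is a statement of essentially the same nature and difficulty as the lemma itself (separable closedness of a subfield in a completed residue field). Leaving it to an appeal to "structural results in the circle of ideas of \cite{temkin}" means the key step is asserted rather than proved. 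Note also that you cannot sidestep the chart by running the residue computation directly with $\mathcal{C}_{\cX,\xi}$: at a non-closed point of the fan, $\overline{\unif}$ can perfectly well be divisible by $p$ in $\mathcal{C}_{\cX,\xi}^{\gp}$ for a log smooth model (e.g.\ the generic point of a component of multiplicity $p$ in a log smooth $snc$-model with two components of coprime multiplicities $p$ and $q$), so the contradiction must be extracted from $P$ and its torsion condition, which forces you through the unproved transfer step.

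Relatedly, your first step overreaches: the dimension count in the proof of Proposition~\ref{prop:logsmoothsk} is specific to zero-dimensional strata (it is what makes the chart \'etale), so it does not apply "verbatim" to an arbitrary $\xi\in F(\cX)$, and the conclusion that the root index at $\xi$ is prime to $p$ is false in general (same counterexample as above; this is also why $\Sk^t(\cX)\subsetneq\Sk(\cX)$ can happen for log smooth models). Fortunately you only use the separability of $k(\xi)$ over $k$, which is true, but it needs a different justification (strata of a log smooth model are smooth over $k$, being smooth over torus orbits of the chart model). For comparison, the paper avoids all local analysis of $\mathscr{H}(x)$: it reduces to $k$ separably closed, picks a finite extension $K'/K$ of ramification index divisible by all multiplicities of $\cX_k$ and linearly disjoint from the hypothetical wild extension $L$ (the existence of such $K'$ is the only delicate point, handled by producing infinitely many non-isomorphic degree-$p$ ramified extensions via Eisenstein polynomials and Kummer theory), and then uses Propositions~\ref{prop:saturated} and~\ref{prop:compat2} to see that after this base change the point of the skeleton no longer splits, so $K'$ is separably closed in $\mathscr{H}(x')$, contradicting $K'L\subseteq\mathscr{H}(x')$. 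If you want to salvage your local approach, the missing ingredient is precisely a proof that the monomial point of $\cX$ maps to the corresponding monomial point of the chart model with the same value group and with no new wild algebraic extensions of $K$ in the completed residue field.
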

\begin{proof}
 The result is trivial when $k$ has characteristic zero, so we may assume that $k$ has positive characteristic $p$. By base change to the completion of the maximal unramified extension of $K$, we can reduce to the case where $k$ is separably closed.
Let $x$ be a point of $\Sk(\cX)$ and assume that $K$ has a wild finite separable extension $L$ inside $\mathscr{H}(x)$. We will deduce a contradiction.

Replacing $K$ by a finite tame extension in $L$, we may assume that $[L\colon K]$ is a power of $p$. Let $e$ be the least common multiple of the multiplicities of the components in the special fiber $\cX_k$. We claim that there exists a finite extension $K'$ of $K$ of ramification degree divisible by $e$ that is linearly disjoint from $L$.
 Let $R'$ be the integral closure of $R$ in $K'$, and let $\cX'$ be the normalization of $\cX\otimes_R R'$. By Proposition~\ref{prop:saturated}, $\cX'$ is a log smooth proper $R'$-model of $X'=X\otimes_K K'$ with reduced special fiber.
 Proposition~\ref{prop:compat2} implies that for each point $x'$ of $(X')^{\an}$ that lies above $x$, the field $K'$ is separably closed in $\mathscr{H}(x')$, since the point $x'$ does not split under further extensions of $K'$. This contradicts the fact that the $K'$-field $K'L$ embeds into $\mathscr{H}(x')$.

 It remains to prove our claim. Since we can freely replace $K$ by a finite tame extension, we may assume that $e$ is a power of $p$ by Proposition \ref{prop:saturated}, and that $K$ contains all the $p$-th roots of unity in $K^a$. We may also assume that $K$ has characteristic zero, since otherwise, we can simply take $K'=K[x]/(x^e-\unif)$ for any uniformizer $\unif$ in $R$. This is a purely inseparable extension of $K$, and thus certainly linearly disjoint from
the separable extension $L$.

  So assume that $K$ has characteristic zero and contains $\mu_p(K^a)$, and that $e$ is a power of $p$. Since $L$ contains only finitely many extensions of $K$, it is enough to show that $K$ has infinitely many ramified extensions of degree $p$, up to $K$-isomorphism; the result then follows from induction on $e$. For every $\alpha$ in $R^{\times}$, we denote by $K_\alpha$ the splitting field of the Eisenstein polynomial $T^p-\unif-\alpha\unif^2$ in $K^a$. This is a totally ramified Galois extension of degree $p$.
 We will prove that, for all elements $\alpha$ and $\beta$ in $R^{\times}$ whose residues $\overline{\alpha}$ and $\overline{\beta}$ are distinct in $k^{\times}$, the fields $K_\alpha$ and $K_\beta$ are linearly disjoint over $K$ (note that $k$ is infinite because we have reduced to the case where it is separably closed). Assume that $K_\alpha=K_\beta$.
 Then, by Kummer theory, the element $$\frac{1+\alpha\unif}{1+\beta\unif}=1+(\alpha-\beta)\unif + \ldots $$ is a $p$-th power in $R$. The field $K$ is absolutely ramified because it contains $\mu_p(K^a)$. Thus the ring $R/(\unif^2)$ has characteristic $p$, and we find that $(\alpha-\beta)\unif$ is a $p$-th power in this ring. This is absurd, because $(\alpha-\beta)\unif$ is a uniformizer of $R$.
 \end{proof}

    \begin{prop}\label{prop:tamedeg}
    Let $\cX$ be a log regular proper $R$-model of $X$.
    \begin{enumerate}
    \item \label{it:fib} Let $K'$ be a finite extension of $K$.  Assume either that $\cX$ is log smooth over $S=\Spec\,R$ or that $K'$ is a tame extension of $K$. Let $x$ be a point in $\Sk(\cX)$ and denote by $F_x$ the fiber of $(X\otimes_K K')^{\an}$ above $x$. Then we have
        $$\tdeg_K(x)=\sum_{x'\in F_x}\tdeg_{K'}(x').$$
             \item \label{it:one} Assume that the multiplicity of each component of $\cX_k$ is a power of $p$, and that the strata of $\cX_k$ are geometrically connected over $k$. Then $\tdeg_K=1$ on $\Sk(\cX)$.
                         \item \label{it:split} There exists a tame finite extension $L$ of $K$ such that, for
        every tame finite extension $K'$ of $L$ and every point $x$ of $\Sk(\cX)$, the tame degree $\tdeg_K(x)$ at $x$ equals the number of points in $(X\otimes_K K')^{\an}$ lying above $x$.
    \item \label{it:const} The tame degree function $\tdeg_K$ is finite and constant on each open face of
    $\Sk(\cX)$.
    \end{enumerate}
    \end{prop}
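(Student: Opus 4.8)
We plan to prove \eqref{it:one} first, then \eqref{it:fib}, and finally to deduce \eqref{it:split} and \eqref{it:const} formally from these. The starting point throughout is the explicit description of $\mathscr H(x)$ for a point $x$ of $\Sk(\cX)$ furnished by Proposition~\ref{prop:maxmon}: if $\xi=\spe_{\cX}(x)\in F(\cX)$, then the residue field of $\mathscr H(x)$ is a purely transcendental extension of $k(\xi)$, and the value semigroup of $\mathscr H(x)$ is $\{|f(x)|\,:\,f\in\mathcal M_{\cX,\xi}\}$. Hence the maximal unramified subextension of $\mathscr H(x)/K$ has residue field equal to the relative separable algebraic closure $\kappa_0$ of $k$ in $k(\xi)$, and a finite tamely ramified subextension of $\mathscr H(x)/K$ amounts, up to the unramified twist by $\kappa_0$, to an element $\varpi\in\mathscr H(x)$ with $\varpi^e$ equal to a uniformizer times a unit whose residue lies in $\kappa_0$, for some integer $e$ prime to $p$. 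For \eqref{it:one}, geometric connectedness of the strata of $\cX_k$ forces $\kappa_0=k$, so $\mathscr H(x)/K$ has no nontrivial unramified subextension; and a computation with the monomial valuation shows that an element $\varpi$ as above forces $e$ to divide $\beta_i(\overline\unif)=N_i$ for every component $E_i$ of $\cX_k$ through $\xi$, where $\beta_i$ is the primitive generator of the corresponding ray of the dual cone $\Hom(\mathcal C_{F(\cX),\xi},\R_{\geq 0})$. Since the $N_i$ are powers of $p$, no such $\varpi$ exists with $e>1$ prime to $p$, whence $\tdeg_K\equiv 1$ on $\Sk(\cX)$.

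For \eqref{it:fib}, since the identity to be proven is stable under composition of base changes and the hypotheses are preserved by Proposition~\ref{prop:logreg}, we may treat separately the purely inseparable case (trivial: $\mathrm{pr}$ is a homeomorphism and tame closures are unchanged) and the separable case; so assume $K'/K$ separable. Then $\mathscr H(x)\otimes_KK'=\prod_{x'\in F_x}\mathscr H(x')$, with $F_x\subseteq\Sk(\cX')$ by Proposition~\ref{prop:compat2}, and the claim is precisely that forming the maximal tame subextension $T(-)$ commutes with this product, i.e.\ $[T(x):K]=\sum_{x'\in F_x}[T(x'):K']$. When $K'/K$ is tame one identifies $T(x')$ by applying Proposition~\ref{prop:maxmon} to both $\cX$ and $\cX'$: its unramified part is governed by the relative separable algebraic closure of $k$ in $k(\xi')$ and its ramified part by the local structure of $\cX'$ at $\xi'$, both of which are pinned down by Proposition~\ref{prop:saturated}; the identity then reduces to the elementary additivity of degrees for finite separable residue-field extensions (using that $\kappa_0$ is separably closed in $k(\xi)$, so that $\Spec(k(\xi)\otimes_kk')\to\Spec(\kappa_0\otimes_kk')$ is a bijection) together with a divisibility bookkeeping for the value groups. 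When instead $\cX$ is log smooth but $K'/K$ may be wildly ramified, one first uses Corollary~\ref{coro:reduced} to reduce, after a further finite extension, to the case of reduced special fibre with geometrically connected strata — where the skeleta are identified by Proposition~\ref{prop:compat5} and all tame degrees over the large field are $1$ by \eqref{it:one} — and then invokes Lemma~\ref{lemma:tame}, applied both to $\cX$ and to the log smooth model $\cX'$ of Proposition~\ref{prop:logreg}: the maximal tame subextension is separably closed in the ambient field, which is exactly the rigidity needed to see that no extra tame part appears under the wild extension, yielding the identity.

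Given \eqref{it:fib} and \eqref{it:one}, assertion \eqref{it:split} follows by choosing, via Corollary~\ref{coro:ppower}, a tame finite extension $L/K$ for which $\cX_L$ has $p$-power multiplicities and geometrically connected strata; these properties persist under any further tame extension by Proposition~\ref{prop:saturated}, so \eqref{it:one} gives $\tdeg_{K'}\equiv 1$ on $\Sk(\cX_{K'})$ for every tame $K'\supseteq L$, and \eqref{it:fib} applied to $L/K$ and to $K'/L$ yields $\tdeg_K(x)=\sum_{x'\text{ above }x}\tdeg_{K'}(x')=\#\{x'\text{ above }x\}$ for every $x\in\Sk(\cX)$. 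Finally \eqref{it:const} follows: fixing such an $L$ and one tame $K'\supseteq L$, Proposition~\ref{prop:compat2} shows that the preimage of an open face $\tau^{\circ}$ of $\Sk(\cX)$ in $\Sk(\cX_{K'})$ is a disjoint union of open faces each mapping bijectively onto $\tau^{\circ}$, so the number of points of $(X\otimes_KK')^{\an}$ above a point of $\tau^{\circ}$ is constant and bounded above by the finite number of faces of $\Sk(\cX_{K'})$; by \eqref{it:split} this number equals $\tdeg_K$ on $\tau^{\circ}$, giving both finiteness and constancy. The principal obstacle is \eqref{it:fib}, and within it the log smooth case with wildly ramified $K'$, where Lemma~\ref{lemma:tame} is the essential new ingredient; the remaining bookkeeping is combinatorial and controlled by Propositions~\ref{prop:maxmon} and~\ref{prop:saturated}.
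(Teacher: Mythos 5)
The genuine gap is in the tame case of part~\eqref{it:fib}. You propose to identify the tame closure $T(x')$ of $K'$ in $\mathscr{H}(x')$ from the residue field and the monoid-theoretic data supplied by Propositions~\ref{prop:maxmon} and~\ref{prop:saturated}, and then to obtain the additivity by ``additivity of residue degrees plus divisibility bookkeeping for the value groups''. This cannot work, because the ramified part of the tame closure is not determined by those invariants. Concretely, let $p$ be odd and compare two local models along the generic point of a multiplicity-two component $E$: $\unif=s^2$ and $\unif=s^2t$ (in the second case $\{t=0\}$ is a further component, irrelevant at the generic point of $E=\{s=0\}$). In both cases the divisorial point $x$ of $E$ has value group $\tfrac12\Z$ and residue field $k(t)$, and the characteristic monoid, the multiplicity and the root index agree; yet $K(\unif^{1/2})\subseteq\mathscr{H}(x)$ in the first case, while in the second no uniformizer $a\unif$ ($a\in R^{\times}$) is a square in $\mathscr{H}(x)$, since that would force $\bar a\,t$ to be a square in $k(t)$. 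So $\tdeg_K(x)$ equals $2$ in the first case and $1$ in the second, although all the data you propose to track coincide; in particular Proposition~\ref{prop:saturated} does not ``pin down'' $T(x')$, and no bookkeeping of residue degrees and value groups can establish $[T(x):K]=\sum_{x'\in F_x}[T(x'):K']$. The missing idea is the one the paper uses: let $L$ be the tame closure of $K$ in $\mathscr{H}(x)$ and write $L\otimes_KK'=\prod_iL_i'$; \emph{maximality} of $L$ (in the tame case) forces $\mathscr{H}(x)$ to be linearly disjoint from each $L_i'$ over $L$, so the points of $F_x$ correspond to the fields $\mathscr{H}(x)\otimes_LL_i'$ and the tame closure of $K'$ in each of them is exactly $L_i'$; additivity of degrees then follows at once.

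In the log smooth case you do reach for the right ingredient, Lemma~\ref{lemma:tame}: separable closedness of $L$ in $\mathscr{H}(x)$ is exactly what replaces maximality when $K'$ is wild, and this is how the paper argues — directly, without the preliminary passage through Corollary~\ref{coro:reduced} and Proposition~\ref{prop:compat5}, which in your plan threatens circularity, since the auxiliary extension is itself wild and you would be invoking~\eqref{it:fib} for it (or, after unwinding, proving the same linear-disjointness statement anyway). The rest of the proposal is sound: your monomial-valuation proof of~\eqref{it:one} is a legitimate alternative to the paper's shortcut, which simply notes (via the homeomorphism of skeleta under tame base change) that the fiber over $x$ is a single point for every finite tame $K'$, so that no nontrivial finite tame subextension of $K$ can sit inside $\mathscr{H}(x)$; note that what your computation naturally yields is that $e$ divides the root index at $\xi$, which in turn divides every $N_i$. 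Your deductions of~\eqref{it:split} and~\eqref{it:const} agree with the paper's.
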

    \begin{proof}
\eqref{it:fib} We may assume that $K'$ is separable over $K$, since purely inseparable extensions of the base field have no effect on the tame degree.
Denote by $L$ the tame closure of $K$ in $\mathscr{H}(x)$. Then $L\otimes_K K'$ is a product $L'_1\times \ldots \times L'_r$ of tame extensions of $K'$.
 Moreover, $\mathscr{H}(x)$ is linearly disjoint from $L'_i$ over $L$, for every $i$: if $K'$ is tame over $K$, this is a direct consequence of the definition of $L$; if
  $\cX$ is log smooth, it follows from the fact that $L$ is separably closed in $\mathscr{H}(x)$, by Lemma~\ref{lemma:tame}.
 Thus $F_x$ consists of precisely $r$ points, with residue fields $L'_i\otimes_L \mathscr{H}(x)$ for $i=1,\ldots,r$, and $L'_i$ is the tame closure of $K'$ in $L'_i\otimes_L \mathscr{H}(x)$. Now the result follows from the fact that the degree of $L$ over $K$ is the sum of the degrees of the fields $L'_i$ over $K'$.

~\eqref{it:one} Let $K'$ be a tame finite extension of $K$, and denote by $R'$ the integral closure of $R$ in $K'$. Let $\cX'$ be the normalization of $\cX\otimes_R R'$.
  By Proposition~\ref{prop:compat3}, the pullback to $\Sk(\cX)$ of the projection morphism $(X\otimes_K K')^{\an}\to X^{\an}$ is a homeomorphism
  $\Sk(\cX')\to \Sk(\cX)$. Thus when $x$ is a point of $\Sk(\cX)$, the fiber $F_x=\mathcal{M}(K'\otimes_K \mathscr{H}(x))$ is a point.
   It follows that $K$ has no non-trivial tame finite extensions in $\mathscr{H}(x)$.

~\eqref{it:split}
 By Corollary~\ref{coro:ppower}, we can find a tame finite extension $L$ of $K$ such that the normalized base change of $\cX$ satisfies the conditions of~\eqref{it:one}. Now the assertion
 follows immediately from~\eqref{it:fib} and~\eqref{it:one}.

~\eqref{it:const} Let $L$ be a field satisfying the properties in point~\eqref{it:split}.
  Denote by $R'$ the integral closure of $R$ in $L$. Let $\cX'$ be the normalization of $\cX\otimes_R R'$. By Proposition~\ref{prop:compat2}, the preimage under $$(X\otimes_K L)^{\an}\to X^{\an}$$ of each open face $\tau$ of $\Sk(\cX)$ is a disjoint union of homeomorphic copies of $\tau$. In particular, all the fibers over points in $\tau$ have the same cardinality. By point~\eqref{it:split}, this cardinality equals the value of the tame degree function on $\tau$.
\end{proof}

\subsection{Stable Lebesgue measures}
We now use the tame degree function to define weighted Lebesgue measures that behave well under base change.
\begin{definition}\label{def:stablemeas}
Let $\cX$ be a log regular proper $R$-model of $X$. For every nonnegative integer $d$, we define the
 stable measure on the $d$-skeleton of $\Sk(\cX)$ to be the measure
$$\Leb_{\cX}^{(d),\st}=\tdeg_K \cdot  \Leb_{\cX}^{(d)},$$
where $\tdeg_K$ is the tame degree function on $X^{\an}$.
\end{definition}
\begin{prop}\label{prop:stmeasure1}
For every log regular proper $R$-model $\cX$ of $X$ and every $d\geq 0$, we have
$$\Leb_{\cX}^{(d)} \leq \Leb_{\cX}^{(d),\st}.$$
 If $\cY$ is another log regular proper $R$-model of $X$, then $\Leb_{\cX}^{(d),\st}$ and $\Leb_{\cY}^{(d),\st}$ coincide on the intersection of the $d$-skeleta of $\Sk(\cX)$ and $\Sk(\cY)$. If the strata of $\cX_k$ are geometrically connected over $k$, and the multiplicity of each component in $\cX_k$ is a power of $p$, then $\Leb_{\cX}^{(d),\st}=\Leb_{\cX}^{(d)}$.
\end{prop}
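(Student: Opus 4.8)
The first and third assertions I would dispose of immediately. For every $x\in X^{\an}$ the tame closure of $K$ inside $\mathscr{H}(x)$ contains $K$, so $\tdeg_K(x)\geq 1$; since $\Leb_{\cX}^{(d)}$ is a positive measure, multiplying it by $\tdeg_K$ can only increase it, giving $\Leb_{\cX}^{(d)}\leq\Leb_{\cX}^{(d),\st}$. For the third assertion, the hypotheses are precisely those of Proposition~\ref{prop:tamedeg}\eqref{it:one}, which yields $\tdeg_K\equiv 1$ on $\Sk(\cX)$, hence on the support of $\Leb_{\cX}^{(d)}$, so that $\Leb_{\cX}^{(d),\st}=\Leb_{\cX}^{(d)}$. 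In both cases one should first note that $\tdeg_K$ is a legitimate integrand: by Proposition~\ref{prop:tamedeg}\eqref{it:const} it is finite and constant on each open face of $\Sk(\cX)$, so it is Borel on $\Sk^{(d)}(\cX)$, and the product $\tdeg_K\cdot\Leb_{\cX}^{(d)}$ is a genuine (indeed finite) positive Radon measure.

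The real content is the compatibility of the stable measures attached to two log regular proper $R$-models $\cX$ and $\cY$, and the plan is to reduce this to Proposition~\ref{prop:compat3}. The point I would stress is that the tame degree function $\tdeg_K\colon X^{\an}\to\N\cup\{\infty\}$ is intrinsic to $X^{\an}$: it is defined through the residue fields $\mathscr{H}(x)$ alone, with no reference to an $R$-model. Consequently one and the same function $\tdeg_K$ is being used on $\Sk^{(d)}(\cX)\cap\Sk^{(d)}(\cY)$, whether we regard this set as sitting in $\Sk(\cX)$ or in $\Sk(\cY)$; and by Proposition~\ref{prop:tamedeg}\eqref{it:const} this function is finite and constant on the open faces of both polyhedral complexes, hence Borel measurable with respect to their common subdivision. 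Now Proposition~\ref{prop:compat3} says that $\Leb_{\cX}^{(d)}$ and $\Leb_{\cY}^{(d)}$ agree on $\Sk^{(d)}(\cX)\cap\Sk^{(d)}(\cY)$; multiplying this equality of measures by the fixed Borel function $\tdeg_K$ gives $\Leb_{\cX}^{(d),\st}=\tdeg_K\cdot\Leb_{\cX}^{(d)}=\tdeg_K\cdot\Leb_{\cY}^{(d)}=\Leb_{\cY}^{(d),\st}$ there, which is the assertion.

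Since the argument rests almost entirely on results already in hand, there is no genuinely hard step; the one place to be careful — and in that sense the main obstacle — is the bookkeeping around the two polyhedral structures: one must make sure that multiplying $\Leb_{\cX}^{(d)}$ by $\tdeg_K$ using the face structure of $\cX$ produces, on the overlap, the same measure as multiplying $\Leb_{\cY}^{(d)}$ by $\tdeg_K$ using the face structure of $\cY$. This is exactly what the model-independence of $\tdeg_K$, combined with its face-wise constancy for each complex (Proposition~\ref{prop:tamedeg}\eqref{it:const}), takes care of, so no new compatibility statement beyond Proposition~\ref{prop:compat3} is needed. Alternatively, one can mirror the structure of the proof of Proposition~\ref{prop:compat3} and first replace $\cX$ and $\cY$ by $snc$-models via toroidal resolutions, observing that $\Leb^{(d)}$ is unchanged under toroidal modification (as in Example~\ref{exam:torres}) while $\tdeg_K$ is unchanged because it does not see the model; but the direct route above is shorter and uses nothing beyond Propositions~\ref{prop:compat3} and~\ref{prop:tamedeg}.
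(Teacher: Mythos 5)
Your argument is correct and matches the paper's own proof essentially verbatim: the inequality is immediate from $\tdeg_K\geq 1$, the change-of-model compatibility follows from Proposition~\ref{prop:compat3} together with the model-independence (intrinsic nature) of $\tdeg_K$, and the last assertion follows from Proposition~\ref{prop:tamedeg}. Your added remarks on Borel measurability and face-wise constancy of $\tdeg_K$ are just a more careful spelling-out of the same route.
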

\begin{proof}
 The inequality $\Leb_{\cX}^{(d)} \leq \Leb_{\cX}^{(d),\st}$ is obvious from the definition of the stable measure.
The compatibility of the stable measure with a change of model immediately follows from Proposition~\ref{prop:compat3} and the intrinsic nature of the tame degree function $\tdeg_K$ on $X^{\an}$.  If the strata of $\cX_k$ are geometrically connected over $k$, and the multiplicity of each component in $\cX_k$ is a power of $p$, then $\tdeg=1$ on $\Sk(\cX)$ by Proposition~\ref{prop:tamedeg}, so that $\Leb_{\cX}^{(d),\st}=\Leb_{\cX}^{(d)}$.
\end{proof}
\begin{prop}\label{prop:stmeasure2}
 Let $K'$ be a finite extension of $K$ of ramification degree $e$, and let $R'$ be the integral closure of $R$ in $K'$. Denote by $\cX'$ the normalization of $\cX\otimes_R R'$.
 Assume either that $\cX$ is log smooth over $S=\Spec\,R$ or that $K'$ is a tame extension of $K$. Then $\Leb^{(d),\st}_{\cX}$ is the pushforward of the measure $(1/e)^d\Leb^{(d),\st}_{\cX'}$ on the $d$-skeleton of $\Sk(\cX')$. If $k$ is separably closed and $e$ is divisible by the prime-to-$p$ parts of the multiplicities of all the components in $\cX_k$, then $\Leb^{(d),\st}_{\cX}$ is the pushforward of  $(1/e)^d\Leb^{(d)}_{\cX'}$.
\end{prop}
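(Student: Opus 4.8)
The plan is to reduce the statement to a face-by-face computation and then assemble the compatibility results already at our disposal. First I would record the setup: by Proposition~\ref{prop:logreg} the normalized base change $\cX'$ is again log regular, and by Proposition~\ref{prop:compat2} we have $\pr^{-1}(\Sk(\cX))=\Sk(\cX')$, with $\pr$ restricting to a homeomorphism from each face of $\Sk(\cX')$ onto a face of $\Sk(\cX)$ of the same dimension. Since the measures $\Leb^{(d)}_{\cX}$ and $\Leb^{(d),\st}_{\cX}$ are concentrated on the union of the $d$-dimensional faces, and faces of lower dimension carry no mass for them, it is enough to compare both measures on the relative interior of each $d$-dimensional face $\tau$ of $\Sk(\cX)$.

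Next I would fix such a $\tau$, choose a point $x$ in its relative interior, and list the faces $\tau'_1,\dots,\tau'_s$ of $\Sk(\cX')$ with $\pr(\tau'_j)=\tau$; each is necessarily $d$-dimensional, and by Proposition~\ref{prop:compat2} the fiber $F_x:=\pr^{-1}(x)$ meets the relative interior of each $\tau'_j$ in exactly one point $x'_j$, these points being precisely the elements of $F_x$. Three facts then combine. By Proposition~\ref{prop:tamedeg}\eqref{it:const}, $\tdeg_K$ is finite and constant on the relative interior of $\tau$, and $\tdeg_{K'}$ is finite and constant on the relative interior of each $\tau'_j$. By Proposition~\ref{prop:tamedeg}\eqref{it:fib}, $\tdeg_K(x)=\sum_{j=1}^{s}\tdeg_{K'}(x'_j)$. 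By Proposition~\ref{prop:compat4}, $\pr_*(\Leb^{(d)}_{\cX'}|_{\tau'_j})=e^d\,\Leb^{(d)}_{\cX}|_\tau$ for each $j$. Feeding the last identity into the definition $\Leb^{(d),\st}_{\cX'}=\tdeg_{K'}\cdot\Leb^{(d)}_{\cX'}$ and using the other two,
\[
\pr_*\bigl(\Leb^{(d),\st}_{\cX'}|_{\pr^{-1}(\tau)}\bigr)
=\sum_{j=1}^{s}\tdeg_{K'}(x'_j)\,\pr_*\bigl(\Leb^{(d)}_{\cX'}|_{\tau'_j}\bigr)
=e^d\,\tdeg_K(x)\,\Leb^{(d)}_{\cX}|_\tau
=e^d\,\Leb^{(d),\st}_{\cX}|_\tau.
\]
Summing over all $d$-dimensional faces $\tau$ of $\Sk(\cX)$ gives $\pr_*\Leb^{(d),\st}_{\cX'}=e^d\,\Leb^{(d),\st}_{\cX}$, which is the first assertion.

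For the second assertion I would show that, under the extra hypotheses, $\tdeg_{K'}\equiv 1$ on $\Sk(\cX')$; then $\Leb^{(d),\st}_{\cX'}=\Leb^{(d)}_{\cX'}$ and the claim follows from the first part. Indeed, since $e$ is divisible by the prime-to-$p$ part of the multiplicity of every component of $\cX_k$, Proposition~\ref{prop:saturated} shows that the multiplicity of $\cX'_k$ along each of its irreducible components is a power of $p$. Moreover, the residue field $k'$ of $R'$ is finite and purely inseparable over the separably closed field $k$, hence itself separably closed, so every connected finite-type $k'$-scheme is geometrically connected; in particular each stratum of $\cX'_k$ is geometrically connected over $k'$. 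Proposition~\ref{prop:tamedeg}\eqref{it:one} (equivalently, the last sentence of Proposition~\ref{prop:stmeasure1}) now yields $\tdeg_{K'}=1$ on $\Sk(\cX')$.

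The computation itself is routine; the point that needs care is the matching in the second paragraph between the fiber $F_x$ over a generic point of $\tau$ and the $d$-dimensional faces of $\Sk(\cX')$ lying over $\tau$ — this bijection, a consequence of Proposition~\ref{prop:compat2}, is exactly what lets Proposition~\ref{prop:tamedeg}\eqref{it:fib} be combined with the face-wise pushforward formula of Proposition~\ref{prop:compat4}. I expect that (together with the verification that $k'$ remains separably closed in the second part) to be the only genuinely delicate point.
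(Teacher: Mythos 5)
Your proposal is correct and follows essentially the same route as the paper: the first assertion is exactly the combination of Propositions~\ref{prop:compat2}, \ref{prop:compat4} and \ref{prop:tamedeg} (parts \eqref{it:fib} and \eqref{it:const}) that the paper invokes, with the face-by-face bookkeeping spelled out, and the second assertion uses Proposition~\ref{prop:saturated} plus the observation that separably closed $k$ forces $\tdeg_{K'}=1$ on $\Sk(\cX')$, just as in the paper. The only difference is that you make explicit the identification of the fiber $F_x$ with the faces of $\Sk(\cX')$ over $\tau$, which the paper leaves implicit.
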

\begin{proof}
The compatibility of the stable measure with base change follows from Propositions~\ref{prop:compat4} and~\ref{prop:tamedeg}.
If $e$ is divisible by the prime-to-$p$ parts of the multiplicities of all the components in $\cX_k$, then the multiplicities of the components of $\cX'_k$  are powers of $p$,
 by Proposition~\ref{prop:saturated}. Thus if we also assume that $k$ is separably closed, then $\Leb^{(d),\st}_{\cX'}=\Leb^{(d)}_{\cX'}$, and
  $\Leb^{(d),\st}_{\cX}$ is the pushforward of  $(1/e)^d\Leb^{(d)}_{\cX'}$.
\end{proof}

\subsection{Example: faces of dimension one}\label{sec:curve}
Let $X$ be a smooth and proper $K$-scheme, and let $\cX$ be a log regular proper $R$-model of $X$.
 An equivalent way of describing the integral Lebesgue measure $\Leb^{(1)}_{\cX}$ on $\Sk(\cX)$
 is to consider the corresponding metric on each one-dimensional face $\tau$. Let $\xi$ be the point in the fan $F(\cX)$ corresponding to the face $\tau$; in other words,
 $\xi$ is the generic point of the stratum of $\cX_k$ corresponding to $\tau$. We denote by $\overline{\varpi}$ the image of the uniformizer $\unif\in R$ in the characteristic monoid $\mathcal{C}_{\cX,\xi}$.
Let $\rho$ be the largest positive integer that divides $\overline{\varpi}$; this is the root index of $\cX$ at $\xi$ defined in Section \ref{ss:tamepart}.
  Let $v_1$, $v_2$ be primitive generators of the rays of the two-dimensional real cone $$\Hom(\mathcal{C}_{\cX,\xi},\R_{\geq 0}).$$
  They correspond to the irreducible components $E_1$ and $E_2$ of $\cX_k$ that pass through the point $\xi$, and the multiplicity of $E_i$ in $\cX_k$ is equal to
  $N_i=\langle \overline{\varpi},v_i \rangle$ for $i=1,2$. Finally, we define the determinant of the monoid $\mathcal{C}_{\cX,\xi}$ to be the absolute value of the determinant $\det(v_1,v_2)$.
  A straightforward computation shows that the
 lattice length of the face $\tau$ is equal to
 $$\ell(\tau)= \frac{\rho\cdot \det(\mathcal{C}_{\cX,x})}{N_1\cdot N_2}.$$
 If $\cX$ is an $snc$-model, then $\mathcal{C}_{\cX,x}\cong \N^2$, and the formula simplifies to
 $$\ell(\tau)= \frac{\gcd(N_1,N_2)}{N_1\cdot N_2}=\frac{1}{\lcm(N_1,N_2)}.$$

If $X$ is a curve, then, as a topological space, $\Sk(\cX)$ is nothing but the dual graph of $\cX_k$, and our calculation shows that the metric on $\Sk(\cX)$ corresponding to the integral Lebesgue measure $\Leb^{(1)}_{\cX}$ coincides with the {\em stable metric} from~\cite[A.1]{BaNi}.

\subsection{Measures on Kontsevich--Soibelman skeleta}
Let $X$ be a smooth and proper $K$-scheme of pure dimension.
 If $A$ is a subset of $X^{\an}$ that can be realized as a union of faces in $\Sk(\cX)$, for some log regular proper $R$-model $\cX$ of $X$, then the set $A$ carries two natural measures: if we denote by $d$ the dimension of $A$, then we can consider the integral Lebesgue measure $\Leb_A=\Leb^{(d)}_{\cX}|_A$ and the stable measure $\Leb^{\st}_A=\Leb^{(d),\st}_{\cX}|_A$. These measures are independent of the choice of $\cX$, by Proposition~\ref{prop:compat3}. We apply this construction in the following cases.

 \begin{definition}\label{defi:skmeasures}
   Let $\form$ be a pluricanonical form on $X$ that is not identically zero on any connected component of $X$.
 Assume that $X$ has a log regular proper $R$-model $\cX$.

 \begin{enumerate}
\item The Lebesgue measure $\Leb_{\Sk(X,\form)}$ and the stable measure $\Leb^{\st}_{\Sk(X,\form)}$ are
 the restrictions of the measures $\Leb_{\cX}^{(d)}$ and $\Leb^{(d),\st}_{\cX}$ to $\Sk(X,\form)$, where $d$ is the dimension of $\Sk(X,\form)$.

\item
 Likewise, the Lebesgue measure $\Leb_{\Sk(X)}$ and the stable measure $\Leb^{\st}_{\Sk(X)}$ are
 the restrictions of the measures $\Leb_{\cX}^{(d)}$ and $\Leb^{(d),\st}_{\cX}$ to $\Sk(X)$, where $d$ is the  dimension of $\Sk(X)$.

\item  We define the temperate parts $\Sk^t(X)$ and $\Sk^t(X,\form)$ as the closures of the sets of tame divisorial points in $\Sk(X)$ and $\Sk^t(X,\form)$, respectively. These are unions of faces of $\Sk(\cX)$, by Proposition~\ref{prop:tamecrit}.
     The Lebesgue measure $\Leb_{\Sk^t(X)}$ and the stable measure $\Leb^{\st}_{\Sk^t(X)}$ are
 the restrictions of the measures $\Leb_{\cX}^{(d)}$ and $\Leb^{(d),\st}_{\cX}$ to $\Sk(X)$, where $d$ is the  dimension of $\Sk^t(X)$. The measures $\Leb_{\Sk^t(X)}$ and $\Leb^{\st}_{\Sk^t(X,\form)}$ are defined analogously.
  \end{enumerate}
 \end{definition}
 These definitions are independent of the choice of $\cX$.
  If $k$ has characteristic zero, $X$ is a geometrically connected, smooth and projective $K$-scheme, and the canonical bundle
  $\omega_X$ is trivial, then $\Sk(X)$ is a connected pseudo-manifold with boundary, by~\cite[4.2.4]{NiXu}. In particular, if we set $d=\dim(\Sk(X))$ and if $\cX$ is a log regular proper $R$-model of $X$, then $\Sk(X)$ is a union of $d$-dimensional faces of $\Sk(\cX)$. Thus, in this case, $\Sk(X)$ is equal to the support of the measures $\Leb_{\Sk(X)}$ and $\Leb^{\st}_{\Sk(X)}$. In the general case, the measures in Definition~\ref{defi:skmeasures} only detect the top dimensional parts of $\Sk(X,\form)$ and $\Sk(X)$.

\begin{remark}
We have shown in Proposition \ref{prop:tamesk} that $\Sk^t(\cX)=\Sk(\cX)\cap X^t$ when $\cX$ is a log regular proper $R$-model of $X$. However, $\Sk^t(X,\form)$ can be strictly smaller than $\Sk(X,\form)\cap X^t$, and $\Sk^t(X)$ can be strictly smaller than $\Sk(X)\cap X^t$.
This happens, for instance, when $k$ has characteristic $2$ and $E$ is an  elliptic curve of reduction type $I_r^{\ast}$, $r\geq 0$ (see Examples \ref{exam:temperate} and \ref{exam:wild}):  we have $\Sk(E)=\Sk(E,\form)$ for any volume form $\form$ on $E$, and $\Sk(E)$ does not contain any tame divisorial points, so that $\Sk^t(E)=\Sk^t(E,\form)=\emptyset$. On the other hand, $\Sk(E)\cap E^t=\Sk(E)$ when $n=0$, and $\Sk(E)\cap E^t$ consists of the two endpoints of $\Sk(E)$ when $n>0$.
\end{remark}

\section{Convergence of non-archimedean pluricanonical measures}\label{sec:padic}
In this section we prove a precise version of the Main Theorem in the Introduction.

\subsection{Toy example: the Lang--Weil estimates}\label{sec:langweil}
 As a toy model for our results over non-archimedean local fields, we first explain how the asymptotic Lang--Weil estimates can be viewed as a version of our convergence results over finite fields. Let $F$ be a finite field of cardinality $q$, and let $Z$ be an integral $F$-scheme of finite type, of dimension $n$. We endow $F$ with its trivial absolute value, and denote by $Z^{\mathrm{an}}$ the $F$-analytic space associated with $Z$.
 We fix an algebraic closure $F^a$ of $F$ and we denote by $\mathcal{E}^a_F$ the set of finite field extensions of $F$ in $F^a$, ordered by inclusion. By the Lang--Weil estimates, we know that
 $$|Z(F')-c_Zq^{n[F'\colon F]}|= O(q^{n[F'\colon F]-1/2})$$
 as $F'$ ranges through $\mathcal{E}^a_F$, where $c_Z$ denotes the number of geometric irreducible components of $Z$. Note that $c_Z$ is equal to the degree of the algebraic closure of $F$ in the function field of $Z$; this invariant plays the role of the tame degree of $Z$ at its generic point.

 \begin{prop}\label{prop:langweil}
 Let $X$ be an integral proper $F$-scheme, of dimension $n$.
  Denote by $\eta$ the point of $X^{\mathrm{an}}$ corresponding to the trivial absolute value on the function field of $X$, and by $c_X$ the number of geometric irreducible components of $X$.
 As $F'$ ranges through $\mathcal{E}_F^a$, the pushforward to $X^{\mathrm{an}}$ of the normalized counting measure
 $$q^{-n[F'\colon F]}\sum_{x\in X(F')}\delta_x$$ on $X(F')$
 converges to $c_X\delta_{\eta}$. Here
 $\delta_x$ and $\delta_\eta$ denote the Dirac measures at $x$ and $\eta$, respectively.
\end{prop}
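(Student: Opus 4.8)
The plan is to test the measures $m_{F'}$ of the statement against continuous functions, splitting each integral into a piece near $\eta$ and a piece far from $\eta$. Since $X$ is proper, $X^{\mathrm{an}}$ is compact, so it suffices to prove that $\int_{X^{\mathrm{an}}}\phi\,dm_{F'}\to c_X\phi(\eta)$ for every $\phi\in C(X^{\mathrm{an}})$. The total mass $m_{F'}(X^{\mathrm{an}})=q^{-n[F'\colon F]}|X(F')|$ tends to $c_X$ directly by the Lang--Weil estimate recalled above. For any open neighbourhood $U$ of $\eta$ we have
\[
\int\phi\,dm_{F'}-c_X\phi(\eta)=\int_U\bigl(\phi-\phi(\eta)\bigr)dm_{F'}+\int_{X^{\mathrm{an}}\setminus U}\bigl(\phi-\phi(\eta)\bigr)dm_{F'}+\phi(\eta)\bigl(m_{F'}(X^{\mathrm{an}})-c_X\bigr),
\]
and, given $\varepsilon>0$, choosing $U$ so small that $|\phi-\phi(\eta)|<\varepsilon$ on $U$ bounds the first term by $\varepsilon\,m_{F'}(X^{\mathrm{an}})=O(\varepsilon)$, makes the third term tend to $0$, and bounds the second by $2\|\phi\|_\infty\,m_{F'}(X^{\mathrm{an}}\setminus U)$. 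So everything reduces to the concentration estimate $m_{F'}(X^{\mathrm{an}}\setminus U)\to 0$ for each fixed open $U\ni\eta$.

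This concentration estimate is the main point, and I would establish it by producing, for a given $U$, a proper closed subscheme of $X$ carrying all the mass of $m_{F'}$ outside $U$. For a closed point $x_0\in X$ let $\widehat{x}_0\in X^{\mathrm{an}}$ denote the corresponding point, namely the trivial absolute value on $\kappa(x_0)$; the measure $m_{F'}$ is supported on those $\widehat{x}_0$ with $\kappa(x_0)$ embeddable in $F'$. Fix a nonempty affine open $\Spec A\subseteq X$. Since $A$ is a domain with fraction field $\kappa(X)$, the point $\eta$ restricts to the trivial seminorm on $A$ and lies in the open subset $(\Spec A)^{\mathrm{an}}$ of $X^{\mathrm{an}}$; hence, replacing $U$ by a smaller neighbourhood, we may assume $U\subseteq(\Spec A)^{\mathrm{an}}$ is of the form $\bigcap_i\{p\colon|a_i(p)|<t_i\}\cap\bigcap_j\{p\colon|b_j(p)|>r_j\}$ with $a_i,b_1,\dots,b_l\in A\setminus\{0\}$, $t_i>1$ and $0\le r_j<1$. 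For a closed point $x_0\in\Spec A$ the values $|a(\widehat{x}_0)|$ lie in $\{0,1\}$, so the upper bounds $|a_i|<t_i$ hold automatically, and $\widehat{x}_0\notin U$ forces $b_j\in\mathfrak m_{x_0}$ for some $j$, i.e.\ $x_0\in V(b_1\cdots b_l)$. As closed points of $X$ outside $\Spec A$ also lie outside $U$, every closed point $x_0$ with $\widehat{x}_0\notin U$ is contained in the reduced closed subscheme $Z$ of $X$ supported on $(X\setminus\Spec A)\cup\overline{V(b_1\cdots b_l)}$; since $\Spec A$ is a nonempty open of the integral scheme $X$ and $b_1\cdots b_l\neq 0$, the subscheme $Z$ is proper in $X$, so $\dim Z\le n-1$. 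Consequently every $F'$-point of $X$ whose image in $X^{\mathrm{an}}$ lies outside $U$ factors through $Z$, whence $m_{F'}(X^{\mathrm{an}}\setminus U)\le q^{-n[F'\colon F]}|Z(F')|$; applying the Lang--Weil bound to the irreducible components of $Z$ gives $|Z(F')|=O\bigl(q^{(n-1)[F'\colon F]}\bigr)$, and therefore $m_{F'}(X^{\mathrm{an}}\setminus U)=O\bigl(q^{-[F'\colon F]}\bigr)\to 0$.

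The genuinely delicate step is the reduction to a basic neighbourhood of $\eta$ inside a single affine chart, combined with the observation that closed points outside the chart contribute negligibly because $X\setminus\Spec A$ has dimension $\le n-1$; the rest is formal. One should also treat the degenerate case $n=0$ directly: there $X=\Spec L$ for a finite extension $L$ of $F$, $X^{\mathrm{an}}=\{\eta\}$, $m_{F'}=|X(F')|\,\delta_\eta$, and $|X(F')|=[L\colon F]=c_X$ as soon as $[L\colon F]\mid[F'\colon F]$, which is cofinal in $\mathcal{E}^a_F$. Finally, since $[F'\colon F]\to\infty$ along the directed set $\mathcal{E}^a_F$, all the error terms of size $O(q^{-[F'\colon F]/2})$ and $O(q^{-[F'\colon F]})$ vanish in the net limit, yielding $m_{F'}\to c_X\delta_\eta$.
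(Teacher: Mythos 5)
Your proof is correct, but it follows a genuinely different route from the paper. The paper's argument is functional-analytic: it introduces \emph{model functions} $\phi_{Y,D}$ attached to proper birational modifications $Y\to X$ and effective Cartier divisors $D$ with $Y\setminus D\to X$ an open immersion, checks that these form a point-separating, multiplicatively closed family containing $1$, invokes Stone--Weierstrass, and then observes that for such a test function the integral against the normalized counting measure is literally $q^{-n[F':F]}|(Y\setminus D)(F')|$, so Lang--Weil gives the limit $c_X=c_X\phi(\eta)$ in one stroke. You instead prove weak convergence directly: the total mass tends to $c_X$ by Lang--Weil, and the key point becomes a concentration estimate, namely that for any neighbourhood $U$ of $\eta$ the mass of $X^{\an}\setminus U$ is carried by $F'$-points of a proper closed subscheme $Z\subset X$ (obtained from a basic neighbourhood $\bigcap_i\{|a_i|<t_i\}\cap\bigcap_j\{|b_j|>r_j\}$ of $\eta$ in an affine chart, using that seminorms coming from closed points only take the values $0$ and $1$), so that $m_{F'}(X^{\an}\setminus U)\le q^{-n[F':F]}|Z(F')|=O(q^{-[F':F]})$ since $\dim Z\le n-1$. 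Your approach is more elementary and even quantitative (an explicit $O(q^{-[F':F]})$ rate for the mass outside a fixed neighbourhood), and it isolates nicely why the limit concentrates at $\eta$; the paper's approach buys the model-function formalism, which is precisely the class of test functions reused in the non-archimedean local field setting of Section 6, where a direct description of neighbourhoods and of the support would be much less convenient. Two harmless caveats: the asymptotic $|Z(F')|\sim c_Zq^{\dim Z\,[F':F]}$ for integral but not geometrically irreducible $Z$ only holds once $F'$ contains the algebraic closure of $F$ in $\kappa(Z)$ (for smaller $F'$ the count can vanish), which suffices for net convergence and is the same convention the paper uses; and your separate discussion of $n=0$ is not actually needed, since in that case your closed subset $Z$ is empty and the main argument already applies.
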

\begin{proof}
In order to prove the convergence of the measures, we first define a suitable class of test functions. Let $h\colon Y\to X$ be a proper birational morphism and let $D$ be an effective Cartier divisor on $Y$ such that the restriction of $h$ to $Y\setminus D$ is an open immersion. With such a pair $(Y,D)$ we associate a continuous function
$$\phi_{Y,D}\colon X^{\an}\to \R$$ that is defined in the following way.
If $x$ is a point of the image of $D^{\an}$ under $h^{\an}$, then $\phi_{Y,D}(x)=0$.
 Otherwise, the preimage of $x$ under $h^{\an}$ consists of a unique point $y$ in $(Y\setminus D)^{\an}$, and we set  $\phi_{Y,D}(x)=|f(y)|$ where $f$ is a local equation for $D$ at the point $\spe_{Y}(y)$ (the center of $y$ on $Y$).

 We call a function of the form $\phi_{Y,D}$ a {\em model function} for $X$.
 We denote by $\mathcal{T}(X)$ the real sub-vector space of $\mathcal{C}^0(X^{\an},\R)$ generated by the model functions.
  The set of model functions is closed under multiplication: if $\phi_{Y,D}$ and $\phi_{Y',D'}$ are two model functions for $X$, then $\phi_{Y,D}\cdot \phi_{Y',D'}$ is the model function associated with the sum of the pullbacks of $D$ and $D'$ to $Y\times_X Y'$.
Moreover, the constant function $1$ on $X^{\an}$ is the model function associated with the zero divisor on $X$.
  Thus $\mathcal{T}(X)$ is a sub-$\R$-algebra of $\mathcal{C}^0(X^{\an},\R)$.

 We will now show that the model functions for $X$ separate points on $X^{\an}$. Let $x$ and $x'$ be distinct points of $X^{\an}$, and denote by $\spe_X\colon X^{\an}\to X$ the specialization map that sends a point of $X^{\an}$ to its center on $X$.
 If $\spe_X(x)\neq \spe_X(x')$ then, swapping $x$ and $x'$ if necessary, we may assume that
 $\spe_X(x')$ does not lie in the closure of $\{\spe_X(x)\}$.
 Then we can separate $x$ and $x'$ by the model function $\phi_{Y,D}$ where $Y$ is the blow-up of $X$ along the closure of $\{\spe_X(x)\}$ (with its reduced induced structure) and $D$ is the inverse image of the center of the blow-up. Indeed, $\phi_{Y,D}(x)<1$ whereas $\phi_{Y,D}(x')=1$. If $\spe_X(x)= \spe_X(x')$ then we can find an open subscheme $U$ of $X$ and a regular function $f$ on $U$ such that $\spe_X(x)$ lies in $U$ and
 $|f(x')|\neq |f(x)|$. We denote by $D_0$ the schematic closure in $X$ of the closed subscheme of $U$ defined by $f=0$. Let $Y$ be the blow-up of $X$ along $D_0$ and denote by $D$ the inverse image of $D_0$ on $Y$. Then $\phi_{Y,D}(x)=|f(x)|$ and $\phi_{Y,D}(x')=|f(x')|$, so that $\phi_{(X,D)}$ separates $x$ and $x'$.

 The Stone-Weierstrass theorem now implies that $\mathcal{T}(X)$ is dense in
  $\mathcal{C}^0(X^{\an},\R)$ for the topology of uniform convergence. Thus it suffices to show that for every model function $\phi=\phi_{Y,D}$ of $X$, the sum
   $$q^{-n[F'\colon F]}\sum_{x\in X(F')}\phi(x)$$ converges to $c_X\phi(\eta)=c_X$
  as $F'$ ranges through $\mathcal{E}^a_F$ (here we made a small abuse of notation by writing $\phi(x)$ for the value of $\phi$ at the image of $x$ in $X^{\an}$).
By definition, $\phi(x)=0$ if $x$ lies in $D(F')$, and $\phi(x)=1$ otherwise.
  Thus, writing $Z=Y\setminus D$, we have
  $$q^{-n[F'\colon F]}\sum_{x\in X(F')}\phi(x)=q^{-n[F'\colon F]}|Z(F')|$$
  and this expression converges to $c_Z=c_X$ by the Lang--Weil estimates for $Z$.
\end{proof}

\begin{remark}
One can also formulate a variant of Proposition \ref{prop:langweil} that involves pluricanonical forms on $X$, to strengthen the analogy with the local field case that will be discussed below. Let $X$ be a connected smooth and proper $F$-scheme, and let $\theta$ be a non-zero pluricanonical form on $X$. For every finite extension $F'$ of $F$, this form induces a discrete measure on $X(F')$ that gives mass zero to every point in the zero locus $\mathrm{div}(\form)$ of $\theta$, and mass one to every other point in $X(F')$.
 We denote by $\nu_{\form,F'}$
the pushforward to $X^{\mathrm{an}}$ of the normalized measure
 $$q^{-n[F'\colon F]}\sum_{x\in (X\setminus \mathrm{div}(\theta))(F')}\delta_x$$ on $X(F')$.
 The same arguments as in the proof of Proposition \ref{prop:langweil} shows that this measure  converges to $c_X\delta_{\eta}$ as $F'$ ranges through $\mathcal{E}_F^a$.
\end{remark}

\subsection{Model functions}
Now we return to our original setting, where $K$ is a complete discretely valued field.
 Let $X$ be a smooth and proper $K$-scheme of pure dimension $n$.

In order to study convergence of measures on $X^{\an}$, we again need a suitable class of test functions.
Let $\cX$ be a normal proper $R$-model for $X$ and let $D$ be an effective Cartier divisor on $\cX$. Then $D$ defines a continuous function on $X^{\an}$:
 $$\phi_D:X^{\an}\to [0,1]:x\mapsto |f(x)|$$ where $f$ is a local equation for $D$ at the point $\spe_{\cX}(x)$. We call such a function a {\em model function} for $X$.
  We call $D$ a {\em vertical divisor} if it is supported on $\cX_k$;
 in that case, we also say that the model function $\phi_D$ is vertical (this is equivalent to the property that $\phi_D$ is nowhere vanishing).
 Note that $\phi_D$ remains invariant under pullback of $D$ to a proper $R$-model dominating $\cX$, and that $\phi_{D+D'}=\phi_D\cdot \phi_{D'}$ for all
 effective Cartier divisors supported on $\cX_k$. It follows that the class of vertical model functions for $X$ is closed under multiplication.

 \begin{prop}\label{prop:test}
 For every pair of distinct points $x$, $x'$ on $X^{\an}$, we can find a proper $R$-model $\cX$ and a vertical effective Cartier divisor $D$ on $\cX$ such that $\phi_D(x)\neq \phi_D(x')$.
 \end{prop}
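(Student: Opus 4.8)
The plan is to adapt the proof of Proposition~\ref{prop:langweil}, the new difficulty being that the divisor separating $x$ and $x'$ must now be \emph{vertical}. I would start by fixing a normal proper $R$-model $\cX$ of $X$, so that $X^{\an}=\widehat{\cX}_\eta$ and every point has a well-defined center in $\cX_k$; write $\xi=\spe_{\cX}(x)$ and $\xi'=\spe_{\cX}(x')$, and distinguish two cases. If $\xi\neq\xi'$, then, after interchanging $x$ and $x'$ if necessary, one may assume that $\xi'$ does not lie in the Zariski closure $\overline{\{\xi\}}$, which is contained in $\cX_k$ because $\cX_k$ is closed. I would let $\cX'\to\cX$ be the blow-up along the reduced subscheme $\overline{\{\xi\}}$, with exceptional divisor $E$, and replace $\cX'$ by its normalization (this changes neither $E$, since pullbacks of Cartier divisors are Cartier, nor the associated model function). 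Since the blow-up center lies in $\cX_k$, the scheme $\cX'$ is again a proper $R$-model of $X$ and $E$ is an effective vertical Cartier divisor. Then $\spe_{\cX'}(x)$ lies over $\xi$, hence in $E$, so a local equation for $E$ there lies in the maximal ideal and $\phi_E(x)<1$; whereas $\cX'\to\cX$ is an isomorphism near $\xi'$, so $\spe_{\cX'}(x')\notin E$ and $\phi_E(x')=1$. Hence $\phi_E$ separates $x$ and $x'$.

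It remains to treat the case $\xi=\xi'$. I would choose an affine open $W=\Spec A\subseteq\cX$ containing $\xi$; then $x,x'\in\widehat{W}_\eta$, and since $A$ is dense in its $\unif$-adic completion $\widehat A$, of which $\widehat W_\eta$ is the Berkovich generic fiber, a point of $\widehat W_\eta$ is determined by the restriction of its seminorm to $A$. Hence there is $f\in A$ with $|f(x)|\neq|f(x')|$; moreover $|f(x)|,|f(x')|\leq1$ (the images of $f$ in $\mathscr{H}(x)$ and $\mathscr{H}(x')$ have norm $\leq1$), and not both of these values vanish. With $\varepsilon=|\unif|_K$, I would fix an integer $m\geq1$ such that $\varepsilon^{m}<\max(|f(x)|,|f(x')|)$, extend the coherent ideal $(f,\unif^{m})\subseteq\mathcal{O}_W$ to a coherent ideal $\cI\subseteq\mathcal{O}_{\cX}$ with $\unif^{m}\mathcal{O}_{\cX}\subseteq\cI$ and $\cI|_W=(f,\unif^{m})$ (take any coherent extension and add $\unif^{m}\mathcal{O}_{\cX}$), and form the blow-up $\cX'\to\cX$ of $\cI$; since the vanishing locus of $\cI$ is contained in $\cX_k$, this $\cX'$ is again a proper $R$-model of $X$ with exceptional vertical Cartier divisor $E$ (one may normalize $\cX'$ if desired). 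Over $W$ the blow-up is $\mathrm{Bl}_{(f,\unif^{m})}W$, and a direct computation on its two standard charts $\Spec A[\unif^{m}/f]$ and $\Spec A[f/\unif^{m}]$ shows $\phi_E(y)=\max(|f(y)|,\varepsilon^{m})$ for every $y\in\widehat W_\eta$. Assuming say $|f(x')|>|f(x)|$, the choice of $m$ gives $\phi_E(x')=|f(x')|$ while $\phi_E(x)=\max(|f(x)|,\varepsilon^{m})<|f(x')|$, so again $\phi_E(x)\neq\phi_E(x')$.

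The step I expect to be the main obstacle is precisely this second case: the naive divisor detecting $f$, namely the closure of the zero scheme of $f$, is horizontal, not vertical, so it does not give a vertical model function; the device of blowing up the ideal $(f,\unif^{m})$ with $m$ chosen large enough that $\varepsilon^{m}$ falls below the larger of $|f(x)|$ and $|f(x')|$ produces instead a vertical exceptional divisor whose model function agrees with $|f(\cdot)|$ on the region $\{|f|\geq\varepsilon^{m}\}$, which is enough. The remaining ingredients—the behavior of centers under blow-ups, finiteness of type and $R$-flatness of blow-ups of $R$-models, invariance of $\phi_D$ under normalization and under passing to a dominating model, and the separation of points of $\widehat W_\eta$ by elements of $A$—are routine and I would not spell them out in detail.
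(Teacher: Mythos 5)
Your proof is correct, but it reaches the conclusion by a partly different route than the paper. The paper's own argument is two lines: it invokes the known fact (\cite[2.3.2]{MuNi}) that for distinct points $x,x'$ of $X^{\an}$ one can always find a proper $R$-model $\cY$ with $\spe_{\cY}(x)\neq\spe_{\cY}(x')$, and then performs exactly your first case, blowing up the closure of $\spe_{\cY}(x)$ (which is automatically vertical) to get $\phi_D(x)<1=\phi_D(x')$; the equal-center case never has to be treated. You instead fix a single model and dispose of the equal-center case directly, by choosing $f\in A$ with $|f(x)|\neq|f(x')|$ and blowing up the vertical ideal $(f,\unif^m)$ with $\varepsilon^m<\max(|f(x)|,|f(x')|)$, so that the exceptional divisor computes $\max(|f(\cdot)|,\varepsilon^m)$. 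This is precisely the device of vertical ideal sheaves and their functions $\phi_{\mathscr{I}}$ that the paper only brings in later, in the proof of Proposition~\ref{prop:adapted} (following \cite{BFJ16}); your chart computation and the coherent-extension step are fine, as is the observation that blowing up a center contained in $\cX_k$ again yields a flat proper $R$-model with vertical Cartier exceptional divisor. What your version buys is self-containedness: it replaces the citation of the separation-of-centers result by an explicit construction; the cost is the extra bookkeeping, which you handle correctly. Your aside about normalizing the blow-up is harmless and even slightly more careful than the paper, whose definition of model functions is stated for normal models while its proof blows up without normalizing (immaterial, since $\phi_D$ is unchanged under pullback to a dominating model).
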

 \begin{proof}
 It is well-known that we can find a proper $R$-model $\cY$ of $X$ such that $\spe_{\cY}(x)\neq \spe_{\cY}(x')$; see for instance~\cite[2.3.2]{MuNi}.
  By symmetry, we may assume that $\spe_{\cY}(x')$ is not contained in the Zariski-closure of $\{\spe_{\cY}(x)\}$. Denote by $Z$ the schematic closure of $\{\spe_{\cY}(x)\}$ in $\cY$.  Then we can take for $\cX$ the blow-up of $\cY$ at $Z$, and for $D$ the schematic inverse image of $Z$ in $\cX$. Indeed, $\phi_D(x')=1$ whereas $\phi_D(x)<1$.
 \end{proof}
\begin{cor}\label{cor:stone}
 The real sub-vector space of $\mathcal{C}^0(X^{\an},\R)$ generated by the vertical model functions
 is dense in the topology of uniform convergence.
\end{cor}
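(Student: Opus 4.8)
The plan is to deduce this from the Stone--Weierstrass theorem. First I would record that $X^{\an}$ is a compact Hausdorff space: it is Hausdorff because $X$ is separated over $K$, and it is compact because $X$ is proper over $K$, so that $\widehat{\cX}_\eta = X^{\an}$ for every proper $R$-model $\cX$. Hence $\mathcal{C}^0(X^{\an},\R)$, equipped with the topology of uniform convergence, is precisely the setting in which Stone--Weierstrass applies.

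Next I would verify that the real subspace $\mathcal{T}\subset\mathcal{C}^0(X^{\an},\R)$ spanned by the vertical model functions is a subalgebra containing the constants that separates points. It is a subalgebra because the class of vertical model functions is closed under multiplication (given $\phi_D$ and $\phi_{D'}$, pull $D$ and $D'$ back to a common proper $R$-model dominating both and add them, using $\phi_{D+D'}=\phi_D\cdot\phi_{D'}$, as noted just before Proposition~\ref{prop:test}); distributing products then shows that $\mathcal{T}\cdot\mathcal{T}\subseteq\mathcal{T}$. It contains the constant function $1$, which is the model function $\phi_D$ associated with the zero divisor $D$ on any proper $R$-model of $X$. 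And by Proposition~\ref{prop:test}, for any two distinct points $x,x'$ of $X^{\an}$ there is a vertical model function $\phi_D$ with $\phi_D(x)\ne\phi_D(x')$, so $\mathcal{T}$ separates points.

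With these three properties in hand, the Stone--Weierstrass theorem yields immediately that $\mathcal{T}$ is dense in $\mathcal{C}^0(X^{\an},\R)$ for the topology of uniform convergence, which is the desired statement. There is no real obstacle here: all the substance has been isolated in Proposition~\ref{prop:test} (point separation) and in the elementary multiplicativity of vertical model functions, so the corollary is a formal consequence. The only hypothesis one must be careful to use is the properness of $X$, which guarantees the compactness of $X^{\an}$ and is in force throughout this subsection.
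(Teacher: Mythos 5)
Your proof is correct and follows essentially the same route as the paper: multiplicativity of vertical model functions, point separation via Proposition~\ref{prop:test}, a nonzero constant in the algebra, and the Stone--Weierstrass theorem on the compact Hausdorff space $X^{\an}$. The only cosmetic difference is that the paper exhibits the constant $\phi_{\cX_k}=|\unif|$ rather than the constant $1$ coming from the zero divisor, which changes nothing.
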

\begin{proof}
The class of vertical model functions is closed under multiplication, and it separates points by
 Proposition~\ref{prop:test}. It also contains the constant function $\phi_{\cX_k}$ with value $|\unif|$.
  Thus the result follows from the Stone-Weierstrass theorem.
\end{proof}

In our calculations below, we will need the following useful property.

\begin{prop}\label{prop:adapted}
Let $\cX$ be a log regular proper $R$-model of $X$. Let $\phi$ be a vertical model function on $X^{\an}$. Then there exist a toroidal resolution $\cY\to \cX$ and a closed subset $A$ of $\cY_k$ such that $A$ does not contain any strata of $\cY_k$ and $\phi=\phi\circ \rho_{\cY}$ on $X^{\an}\setminus \spe^{-1}_{\cY}(A)$.
\end{prop}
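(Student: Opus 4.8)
The plan is to transport $\phi$ onto a suitable toroidal resolution $\cY\to\cX$, up to an error supported over a closed subset of $\cY_k$ that meets no stratum, and then to invoke the characterization of the retraction $\rho_\cY$ in Proposition~\ref{prop:maxmon}. Write $\phi=\phi_{D_0}$ for a vertical effective Cartier divisor $D_0$ on some normal proper $R$-model $\cX_0$ of $X$. I split the proof into two parts: (a) the construction of a toroidal resolution $\cY\to\cX$ (so $\cY$ is an $snc$-model, hence regular) together with a proper $R$-model $\cZ$ of $X$ dominating $\cY$ and $\cX_0$, say by $p\colon\cZ\to\cY$ and $q\colon\cZ\to\cX_0$, such that the non-isomorphism locus $A\subseteq\cY_k$ of $p$ contains no stratum of $\cY_k$; and (b) the verification of the identity once such $\cY,\cZ$ are given. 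I would treat (b) first.

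For (b), set $D_\cZ=q^{\ast}D_0$ and let $D_\cY$ be the Weil divisor on $\cY$ obtained from $D_\cZ$ by discarding its $p$-exceptional prime components; as $\cY$ is regular and $D_\cY$ is supported on $\cY_k$, it is a vertical Cartier divisor, and $D_\cZ$ and $p^{\ast}D_\cY$ differ only in $p$-exceptional prime components, all of which lie over $A$. Hence, since model functions are invariant under pullback, $\phi=\phi_{D_\cZ}$, and for $x$ with $\spe_\cY(x)\notin A$ the point $\spe_\cZ(x)$ lies on no $p$-exceptional divisor, so a local equation for $D_\cZ$ there agrees up to a unit with one for $p^{\ast}D_\cY$, giving $\phi(x)=\phi_{D_\cY}(x)$. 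I would also record the elementary fact that, \emph{for a vertical Cartier divisor $D$ on a log regular $R$-model $\cY$, one has $\phi_D=\phi_D\circ\rho_\cY$}: with $\xi=\spe_\cY(x)$, $w=\rho_\cY(x)$ and $\zeta=\spe_\cY(w)$ the generic point of the stratum of $\cY_k$ through $\xi$, a local equation $f$ for $D$ at $\xi$ lies in $\mathcal{M}_{\cY,\xi}$ (because $D$ is vertical), its image in $\mathcal{O}_{\cY,\zeta}$ is a local equation for $D$ at $\zeta$, and $|f(x)|=|f(w)|$ by Proposition~\ref{prop:maxmon}, so $\phi_D(x)=|f(x)|=|f(w)|=\phi_D(w)$.

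Combining these, for $x\in X^{\an}$ with $\spe_\cY(x)\notin A$ the point $\zeta=\spe_\cY(\rho_\cY(x))$ is the generic point of the stratum of $\cY_k$ through $\spe_\cY(x)$; since $A$ is closed and avoids $\spe_\cY(x)$, it cannot contain that stratum, so $\zeta\notin A$ and therefore $\phi(\rho_\cY(x))=\phi_{D_\cY}(\rho_\cY(x))$ by the case already treated; hence
\[\phi(x)=\phi_{D_\cY}(x)=\phi_{D_\cY}(\rho_\cY(x))=\phi(\rho_\cY(x))\qquad\text{for every }x\in X^{\an}\setminus\spe_\cY^{-1}(A).\]
Note that $A$ containing no stratum forces $\spe_\cY^{-1}(A)$ to be disjoint from $\Sk(\cY)$, so the exceptional set is genuinely thin.

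The substance of the proof, and the step I expect to be the main obstacle, is (a). For any toroidal resolution $\cY\to\cX$ and any common resolution $\cZ$, Zariski's Main Theorem and the normality of $\cY$ already prevent $A$ from containing an irreducible component of $\cY_k$: a $p$-exceptional prime divisor of $\cZ$ dominating such a component would map a prime divisor onto a variety of the same dimension with positive-dimensional general fibres. To also exclude the lower-dimensional strata, I would argue by Noetherian induction: starting from any pair $(\cY,\cZ)$, if a stratum $\Sigma$ of $\cY_k$ has $\overline{\Sigma}\subseteq A$, replace $\cY$ by the blow-up of $\cY$ along $\overline{\Sigma}$ --- still a toroidal resolution of $\cX$, since blowing up a closed stratum amounts to a subdivision of the fan of $\cX$ (cf.\ \cite[\S9--10]{kato}) --- and $\cZ$ by a common resolution of the new model and $\cX_0$, so that an offending stratum is removed from the non-isomorphism locus without introducing new ones; making this induction terminate --- or, alternatively, invoking toroidalization of birational morphisms --- is the delicate point.
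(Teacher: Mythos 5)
Your step (b) is fine: once you have a toroidal resolution $\cY\to\cX$ and a normal model $\cZ$ dominating $\cY$ and $\cX_0$ whose non-isomorphism locus $A\subset\cY_k$ contains no stratum, the bookkeeping with $D_{\cZ}=q^{*}D_0$, the divisor $D_{\cY}$ obtained by discarding $p$-exceptional components, and the identity $\phi_{D}=\phi_{D}\circ\rho_{\cY}$ for vertical Cartier divisors (via Proposition~\ref{prop:maxmon}) is correct, and the closedness argument showing $\spe_{\cY}(\rho_{\cY}(x))\notin A$ is also correct. The problem is step (a), which you yourself flag as ``the delicate point'': it is not proved, and it is in fact the entire content of the proposition in your approach. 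Your statement (a) amounts to finding a toroidal resolution $\cY\to\cX$ such that the birational map $\cY\dashrightarrow\cX_0$ is defined at the generic point of every stratum of $\cY_k$. The proposed Noetherian induction does not work as stated: blowing up the closure of an offending stratum replaces it by several new strata lying over it, and you give no argument that these new stratum generic points leave the indeterminacy locus, that no other strata become offending, or that the process terminates --- there is no Noetherian or numerical invariant exhibited that decreases. The fallback you mention, toroidalization of birational morphisms, is characteristic-zero technology (and already delicate there); the paper works over an arbitrary complete discrete valuation ring, with the main applications in residue characteristic $p>0$ (local fields), so it cannot be invoked. Thus the proof has a genuine gap exactly at its load-bearing step.

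For comparison, the paper avoids any resolution of indeterminacy: using \cite[Prop.~2.2]{BFJ16} it writes $\phi=\phi_{\mathscr{I}}/\phi_{\mathscr{J}}$ for vertical ideal sheaves on $\cX$ itself, reduces to $\phi=\phi_{\mathscr{I}}$, notes that $\log\phi$ is piecewise $K$-affine on $\Sk(\cX)$ and, after a subdivision of the Kato fan (a toroidal modification in the sense of \cite{kato}), affine on each face; it then factors $\mathscr{I}=\mathscr{J}\cdot\mathcal{O}_{\cX}(-D)$ with $D$ vertical and $\mathscr{J}$ trivial at the generic points of the components, observes $\phi_D=\phi_D\circ\rho_{\cX}$, and deduces from affinity on faces and $\phi_{\mathscr{J}}=1$ at the vertices that $\phi_{\mathscr{J}}=1$ on all of $\Sk(\cX)$, hence that the zero locus $A$ of $\mathscr{J}$ contains no stratum (at a stratum's generic point the maximal ideal is generated by the branches of $\cX_k$, so a nontrivial stalk would force $\phi_{\mathscr{J}}<1$ on the interior of the corresponding face). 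This argument is elementary, purely on the fixed model, and valid in all residue characteristics; if you want to salvage your route, you would need to either prove your statement (a) by a genuinely new argument or replace it by a weaker, function-level statement of this kind.
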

\begin{proof}
 We say that a coherent ideal sheaf $\mathscr{I}$ on $\cX$ is vertical if it contains $\unif^\ell$ for some $\ell>0$. We can then define a function
 $$\phi_{\mathscr{I}}\colon X^{\an}\to (0,1]$$ by taking $\phi_{\mathscr{I}}(x)$ to be the maximum of the absolute values $|f(x)|$ where $f$ runs through the stalk of $\mathscr{I}$ at $\spe_{\cX}(x)$. When $\mathscr{I}$ is the defining ideal sheaf of a vertical effective divisor $D$, we have $\phi_{\mathscr{I}}=\phi_D$.

 It follows from Proposition 2.2 in~\cite{BFJ16} that we can find vertical ideal sheaves $\mathscr{I}$ and $\mathscr{J}$ on $\cX$ such that $\phi=\phi_{\mathscr{I}}/\phi_{\mathscr{J}}$ (the overall assumption in~\cite{BFJ16} that $k$ has characteristic zero was not used in the proof of that proposition). Thus it suffices to prove the statement for $\phi=\phi_{\mathscr{I}}$, where $\mathscr{I}$ is any vertical ideal sheaf on $\cX$.

 Replacing $\cX$ by a toroidal resolution, we may assume that $\cX$ is an $snc$-model of $X$.
 On each face of the skeleton $\Sk(\cX)$, the function $\log(\phi)$ is the maximum of finitely many piecewise $K$-affine functions, by~\cite[3.2.2]{MuNi}. Thus $\log(\phi)$ is itself piecewise $K$-affine. Subdividing the skeleton $\Sk(\cX)$ and taking the corresponding toroidal modification of $\cX$, we can reduce to the case where $\log(\phi)$ is affine on every face of $\Sk(\cX)$. By means of a further subdivision, we can arrange that $\cX$ is still an $snc$-model.

 Now we decompose $\mathscr{I}$ as $\mathscr{J}\cdot \mathcal{O}_{\cX}(-D)$ where $D$ is a vertical effective divisor on $\cX$ and $\mathscr{J}$ is a vertical ideal sheaf
  satisfying $\mathscr{J}_{\xi}=\mathcal{O}_{\cX,\xi}$ for every generic point $\xi$ of $\cX_k$. Then $\phi=\phi_{\mathscr{J}}\cdot \phi_D$, and it follows directly from the definition of the retraction map $\rho_{\cX}$ that $\phi_D=\phi_D\circ \rho_{\cX}$. Let $A$ be the zero locus of the ideal sheaf $\mathscr{J}$ on $\cX$. Since $\phi_{\mathscr{J}}=1$, and thus $\phi=\phi_D$, on the complement of $\spe^{-1}_{\cX}(A)$, it is enough to show that $A$ does not contain any stratum of $\cX_k$.

 The functions $\log(\phi)$ and $\log(\phi_D)$ are affine on every face of $\Sk(\cX)$, so that the same holds for $\phi_{\mathscr{J}}$. Since  $\phi_{\mathscr{J}}$ takes the value $1$ at every vertex of $\Sk(\cX)$, this implies that $\phi_{\mathscr{J}}=1$ on the whole of $\Sk(\cX)$. This  means that at the generic point $\xi$ of every stratum of $\cX_k$, we have $\mathscr{J}_{\xi}=\mathcal{O}_{\cX,\xi}$ (otherwise, $\phi_{\mathscr{J}}$ would be strictly smaller than $1$ on the interior of the face of $\Sk(\cX)$ corresponding to this stratum). Thus $A$ does not contain any stratum of $\cX_k$.
\end{proof}

\subsection{The measure associated with a pluricanonical form}\label{ss:pluri}
For the remainder of this section, we assume that $k$ is finite. We denote its characteristic by $p$ and its cardinality by $q=p^a$. We normalize the absolute value on $K$ by $|\unif|_K=q^{-1}$, where $\unif\in R$ is any uniformizer.
The valued field $K$ is either a finite extension of $\Q_p$, or a Laurent series field over the finite field $k$. In both cases, $K$ is locally compact,
 and thus carries a Haar measure $\mu_{\mathrm{Haar}}$, which we normalize by requiring that $\mu_{\mathrm{Haar}}(R)=1$.

As $X$ is a smooth and proper $K$-scheme $X$ of pure dimension $n$, the set $X(K)$ of rational points  can be turned into a compact $n$-dimensional $K$-analytic manifold in a canonical way.
  Moreover, every canonical form $\form$ on $X$ gives rise to a canonical form on $X(K)$, which at its turn induces a measure $|\form|$ on $X(K)$; see for instance~\cite[\S1.1]{ChNiSe}. This construction can be immediately generalized to pluricanonical forms; we will briefly review the definitions and basic properties. Let $\form$ be an $m$-canonical form on $X$, for some positive integer $m$. We can partition $X(K)$ into finitely many compact open sets $U_1,\ldots,U_r$ such that, for every $i$ in $\{1,\ldots,r\}$, there exists a $K$-analytic open embedding
  $\phi_i:U_i\to K^n$. We set $V_i=\phi_i(U_i)$ and we denote by $\psi_i:V_i\to U_i$ the inverse of $\phi_i$. Then we can write $\psi_i^* \form$ as
  $g_i dx^{\otimes m}$ for a unique $K$-analytic function $g_i$ on $V_i$, where $dx$ denotes the standard volume form on $K^n$.
    The measure $|\form|$ is now characterized by the property that, for every continuous function $f:X(K)\to \R$, we have
  $$\int_{X(K)} f |\form|=\sum_{i=1}^r \int_{V_i}(f\circ \psi_i)|g_i|^{1/m}_K d\mu_{\mathrm{Haar}}.$$  The change of variables formula guarantees that this definition does not depend on the choice of the charts $\phi_i$.

\subsection{Convergence of the measure under unramified extensions}
Now let
$\form$ be an $m$-canonical form on $X$, for some $m>0$.
Assume that $\form$ is not identically zero on any connected component of $X$.

For any finite extension $K'/K$, we have a continuous map
\begin{equation*}
  \pi_{K'}\colon X(K')\to X^{\an},
\end{equation*}
defined as the composition of the injection $X(K')\hookrightarrow(X')^{\an}$ and the projection $\pr\colon (X')^{\an}\to X^{\an}$.

Recall from Section~\ref{sec:shilov} that we denote by $\ordmin(X,\form)$ the minimum of
  $\ord_C(\form)$ over the connected components $C$ of some weak N\'eron model $\cU$ of $X$; this value does not depend on $\cU$, by Proposition~\ref{prop:indep}.


 \begin{definition}\label{def:unram-measure}
   For every finite unramified extension $K'$ of $K$, we set
   \begin{equation*}
     \nu_{\theta,K'}:=q^{\ordmin(X,\form)[K'\colon K]}(\pi_{K'})_*|\form\otimes_K K'|;
   \end{equation*}
   this is a positive Radon measure on $(X')^{\an}$.
 \end{definition}

   Let $\mathcal{E}^{\ur}_K$ be the set of finite extensions $K'$ of $K$ in $K^{\ur}$, ordered by inclusion.
Our next result refines the unramified case of the Main Theorem in the introduction. It is valid unconditionally in all characteristics; in particular, it does not require the existence of an snc-model of $X$.
 \begin{theorem}\label{thm:shi}
  As $K'$ ranges through $\mathcal{E}^{\ur}_K$, the measures $\nu_{\form,K'}$ converge to the discrete measure
 $$\mu_{X,\form}=\sum_{x\in \Sh(X,\form)}\tdeg_{K}(x)\delta_x$$ on $X^{\an}$, where $\tdeg_K$ is the tame degree function from Section \ref{ss:tamedeg}, and $\Sh(X,\form)$ is the Shilov boundary of $(X,\form)$ defined in Section \ref{sec:shilov}.
\end{theorem}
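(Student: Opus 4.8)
The strategy is to test against a dense family of functions. By Corollary~\ref{cor:stone} it suffices to prove that, for every vertical model function $\phi=\phi_D$ on $X^{\an}$, one has $\int\phi\,d\nu_{\form,K'}\to\int\phi\,d\mu_{X,\form}$ as $K'$ ranges over $\mathcal{E}^{\ur}_K$, and in addition that the total masses $\nu_{\form,K'}(X^{\an})$ stay bounded; a standard approximation argument then upgrades this to weak convergence of positive Radon measures. The case $X(K^{\ur})=\emptyset$ is trivial, since then every $\nu_{\form,K'}$ and $\mu_{X,\form}$ vanish, so assume $X(K^{\ur})\neq\emptyset$.

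Fix $\phi=\phi_D$, with $D$ an effective vertical Cartier divisor on a proper $R$-model $\cX_0$ of $X$. Choosing a N\'eron smoothening $\cY_0\to\cX_0$ and setting $\cU=\Sm(\cY_0)$ produces a weak N\'eron model of $X$ equipped with a morphism $\cU\to\cX_0$. Write $\cU_k=\bigsqcup_{C}C$; since $\cU_k$ is smooth over $k$, the $C$ are pairwise disjoint and integral, and of dimension $n=\dim X$ because $\cU$ is $R$-flat of relative dimension $n$. Let $x_C\in X^{\an}$ be the divisorial point attached to $C$ and $\gamma_C$ the number of geometric irreducible components of $C$. By Proposition~\ref{prop:indep}, $\ordmin(X,\form)=\min_C\ord_C(\form)$ and $\Sh(X,\form)=\{x_C:\ord_C(\form)=\ordmin(X,\form)\}$. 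Since $\cU$ is regular with $\cU_k$ reduced and its components disjoint, the pullback of $D$ to $\cU$ is of the form $\sum_Ca_CC$; consequently $\phi_D$ is constant equal to $\phi_D(x_C)=q^{-a_C}$ on the tube $\spe_{\cU}^{-1}(C)$, so that $\phi_D\circ\pi_{K'}$ is constant on $\cU(R')_C:=\{u\in\cU(R'):\spe_{\cU}(u)\in C\}$.

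Now let $K'/K$ be a finite unramified extension with residue field $k'$, and put $q'=\#k'=q^{[K':K]}$. The weak N\'eron property together with Galois descent gives $X(K')=\cU(R')=\bigsqcup_C\cU(R')_C$, and $\cU(R')_C\to C(k')$ is surjective with fibres of Haar volume $(q')^{-n}$ (normalizing so that $\mu((R')^n)=1$). Locally along $C$ one has $\form=\unif^{m\,\ord_C(\form)}\eta$ with $\eta$ a local generator of $\omega^{\otimes m}_{\cU/R}$, so by the construction of the measure recalled in Section~\ref{ss:pluri}, the density of $|\form\otimes_KK'|$ on $\cU(R')_C$ equals $(q')^{-\ord_C(\form)}$ outside $\spe_{\cU}^{-1}(Z_C)$ — where $Z_C\subsetneq C$ is the proper closed locus where $\eta$ fails to generate — and is bounded by $(q')^{-\ord_C(\form)}$ everywhere on the tube. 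Combining this with the Lang--Weil estimates $\#C(k')=\gamma_C(q')^{n}+O((q')^{n-1/2})$ and $\#Z_C(k')=O((q')^{n-1})$ from Section~\ref{sec:langweil} yields
\begin{equation*}
  \int_{\cU(R')_C}(\phi_D\circ\pi_{K'})\,|\form\otimes_KK'|
  =\phi_D(x_C)\,(q')^{-\ord_C(\form)}\bigl(\gamma_C+O((q')^{-1/2})\bigr).
\end{equation*}
Summing over $C$ and multiplying by the normalizing constant $q^{\ordmin(X,\form)[K':K]}=(q')^{\ordmin(X,\form)}$,
\begin{equation*}
  \int_{X^{\an}}\phi_D\,d\nu_{\form,K'}
  =\sum_{C}\phi_D(x_C)\,\gamma_C\,(q')^{\ordmin(X,\form)-\ord_C(\form)}\bigl(1+O((q')^{-1/2})\bigr).
\end{equation*}
As $K'$ grows in $\mathcal{E}^{\ur}_K$ we have $q'\to\infty$; the summands with $\ord_C(\form)>\ordmin(X,\form)$ tend to $0$, while those with $x_C\in\Sh(X,\form)$ tend to $\phi_D(x_C)\gamma_C$. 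Since $C$ has multiplicity one in $\cU_k$, the extension $\mathscr{H}(x_C)/K$ has ramification index $1$ and residue field $k(C)$, so the tame closure of $K$ in $\mathscr{H}(x_C)$ is the unramified extension whose residue field is the separable algebraic closure of $k$ in $k(C)$, whence $\tdeg_K(x_C)=\gamma_C$. Therefore $\int_{X^{\an}}\phi_D\,d\nu_{\form,K'}\to\sum_{x\in\Sh(X,\form)}\tdeg_K(x)\,\phi_D(x)=\int_{X^{\an}}\phi_D\,d\mu_{X,\form}$; taking $D=0$ shows in particular that $\nu_{\form,K'}(X^{\an})\to\mu_{X,\form}(X^{\an})<\infty$, so the masses are uniformly bounded, and together with Corollary~\ref{cor:stone} this gives the asserted weak convergence.

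The heart of the argument, and the step most likely to require care, is the explicit evaluation of $|\form\otimes_KK'|$ over each component of the weak N\'eron model and the control of the error terms via Lang--Weil, together with the bookkeeping showing that the normalization by $q^{\ordmin(X,\form)[K':K]}$ is precisely what makes the limit finite, nonzero, and concentrated on $\Sh(X,\form)$; the identification $\tdeg_K(x_C)=\gamma_C$ is the other essential input, reflecting that over increasingly large unramified extensions a geometrically disconnected component of the special fibre splits into $\gamma_C$ pieces each carrying unit mass.
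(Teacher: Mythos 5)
Your argument is correct and follows essentially the same route as the paper: reduction to vertical model functions via Corollary~\ref{cor:stone}, passage to a weak N\'eron model by N\'eron smoothening, a component-by-component evaluation of the pluricanonical measure on tubes (your local density computation is an inline re-derivation of Proposition~\ref{prop:int}), and the Lang--Weil estimates combined with the identification of $\tdeg_K(x_C)$ with the number of geometric components of $C$. The only differences are presentational (explicit treatment of the empty case and of uniform mass bounds), not mathematical.
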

 If $\cU$ is a smooth $R$-model of $X$,  $C$ is a connected component of $\cU_k$, and $x\in X^{\an}$ is the divisorial point associated with  $(\cU,C)$, then
 $\mathscr{H}(x)$ is a discretely valued field of ramification index one over $K$ with residue field $k(C)$. Since $C$ is smooth over $k$, the field $k(C)$ is separable over $k$.
 It follows that the tame closure of $K$ in $\mathscr{H}(x)$ is algebraically closed, and that
  $\tdeg_K(x)$ is equal to the number of geometric connected components of $C$. In particular, it is finite.

The proof of Theorem~\ref{thm:shi} relies on the following estimates.
\begin{prop}\label{prop:int}
  Let $\cU$ be a smooth $R$-model of $X$, and write
  $$\mathrm{div}_{\cU}(\form)=\sum_{C\in \pi_0(\cU_k)}\ord_C(\form)C+H$$ where $H$ is an effective Cartier divisor on $\cU$ that is horizontal (i.e., flat over $R$).
   Let $D$ be a vertical effective Cartier divisor on $\cU$. For every connected component $C$ of $\cU_k$, we denote by $\ord_C(D)$ the multiplicity of $C$ in $D$.

Let $K'$ be a finite unramified extension of $K$, with valuation ring $R'$ and residue field $k'$.
Then we have
\begin{multline*}
  \sum_{C\in \pi_0(\cU_k)}q^{-(\ord_C(\form)+\ord_C(D))[K'\colon K]}|(C\setminus H)(k')|\\
  \leq q^{\dim(X)[K'\colon K]} \int_{\cU(R')}\phi_D\circ\pi_{K'} |\form\otimes K'| \\
  \leq \sum_{C\in \pi_0(\cU_k)}q^{-(\ord_C(\form)+\ord_C(D))[K'\colon K]}|C(k')|.
\end{multline*}
In particular, if $\cU$ is a weak N\'eron model of $X$, then
\begin{multline*}
  \sum_{C\in \pi_0(\cU_k)}q^{-(\ord_C(\form)+\ord_C(D))[K'\colon K]}|(C\setminus H)(k')|\\
  \leq q^{\dim(X)[K'\colon K]} \int_{X(K')}\phi_D\circ\pi_{K'} |\form\otimes K'| \\
\leq \sum_{C\in \pi_0(\cU_k)}q^{-(\ord_C(\form)+\ord_C(D))[K'\colon K]}|C(k')|.
\end{multline*}
\end{prop}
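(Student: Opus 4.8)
The plan is to exploit that $\cU$ is \emph{smooth} over $R$, so that $\cU_k$ is smooth over $k$ and its connected components $C\in\pi_0(\cU_k)$ are pairwise disjoint and irreducible; this disjointness is preserved by the unramified base change to $R'$. This reduces the integral to a sum of local contributions indexed by $\pi_0(\cU_k)$, each of which is then evaluated on a residue disc using the description of $|\form\otimes_KK'|$ recalled in Section~\ref{ss:pluri}.

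First I would set up the fibration. By smoothness the reduction map $\cU(R')\to\cU_{k'}(k')$ is surjective, and its fibre $B_{\overline x}$ over a closed point $\overline x$ is analytically isomorphic to $(\mathfrak m_{R'})^n$; since $K'/K$ is unramified, $\cU_{k'}=\cU_k\otimes_kk'$ is the disjoint union of the $C\otimes_kk'$, with $(C\otimes_kk')(k')=C(k')$. Hence $\cU(R')=\bigsqcup_C\cU(R')_C$ with $\cU(R')_C=\bigsqcup_{\overline x\in C(k')}B_{\overline x}$. Moreover $\phi_D\circ\pi_{K'}$ is constant on each $\cU(R')_C$: near a closed point of $C$ the only component of $\cU_k$ is $C$, which is reduced, so $\unif$ is a local equation for $C$, and since $D$ is vertical and effective it equals $\ord_C(D)\cdot C$ there with local equation $\unif^{\ord_C(D)}$; thus $\phi_D\circ\pi_{K'}$ takes the value $q^{-\ord_C(D)[K':K]}$ on all of $\cU(R')_C$ and can be pulled out of the integral.

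The main step is the computation of $\int_{B_{\overline x}}|\form\otimes_KK'|$. Here I would choose $t_1,\dots,t_n\in\mathcal O_{\cU,\overline x}$ whose differentials freely generate $\Omega_{\cU/R}$ near $\overline x$, so that $dt_1\wedge\dots\wedge dt_n$ is a local generator of $\omega_{\cU/R}$, and write $\form=g\cdot(dt_1\wedge\dots\wedge dt_n)^{\otimes m}$. By the defining property of $\ord_C(\form)$, the function $g/\unif^{m\,\ord_C(\form)}$ is regular at $\overline x$ with divisor supported on the horizontal part $H$ of $\mathrm{div}_{\cU}(\form)$, hence a unit at $\overline x$ exactly when $\overline x\notin H$; therefore $v_{K'}(g)\ge m\,\ord_C(\form)$ everywhere on $B_{\overline x}$, with equality throughout when $\overline x\notin H$. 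Reading off the density of $|\form\otimes_KK'|$ through the chart $B_{\overline x}\cong(\mathfrak m_{R'})^n$, whose normalized Haar volume is $q^{-n[K':K]}$, one obtains
$$q^{n[K':K]}\int_{B_{\overline x}}|\form\otimes_KK'|\le q^{-\ord_C(\form)[K':K]},$$
with equality when $\overline x\notin H$.

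Combining these observations gives
$$q^{n[K':K]}\int_{\cU(R')}\phi_D\circ\pi_{K'}\,|\form\otimes_KK'|=\sum_{C\in\pi_0(\cU_k)}q^{-\ord_C(D)[K':K]}\sum_{\overline x\in C(k')}q^{n[K':K]}\int_{B_{\overline x}}|\form\otimes_KK'|;$$
discarding the residue discs over $H(k')$ yields the lower bound with $|(C\setminus H)(k')|$, and keeping all of $C(k')$ the upper bound, which proves the first pair of inequalities. If $\cU$ is a weak N\'eron model of $X$, then by definition $\cU(R^{\ur})=X(K^{\ur})$, so $\cU(R')$ is the set of $\Gal(K^{\ur}/K')$-fixed points of $X(K^{\ur})$, namely $X(K')$, and the second pair follows by replacing $\int_{\cU(R')}$ with $\int_{X(K')}$. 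I expect the only genuine difficulty to be the local density computation of the third paragraph — correctly normalizing $|\form\otimes_KK'|$ on a residue disc and checking that the horizontal divisor $H$ can only lower the density; everything else is bookkeeping made possible by the disjointness of the components of $\cU_k$.
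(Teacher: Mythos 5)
Your proof is correct and follows essentially the same route as the paper's: decompose $\cU(R')$ into residue discs over $k'$-points of the components, identify each disc with $(\mathfrak{m}_{R'})^n$ via local (\'etale) coordinates, analyze the density of $|\form\otimes_K K'|$ through a unit times a local equation of the horizontal part $H$, and use that the Haar volume of the disc is $q^{-\dim(X)[K'\colon K]}$; the only cosmetic difference is that the paper first reduces to $K'=K$ by unramified base change before doing this local computation. One small remark: your evaluation $\phi_D\circ\pi_{K'}=q^{-\ord_C(D)[K'\colon K]}$ on the discs over $C$ tacitly reads $\phi_D$ through the normalized absolute value of $K'$ (uniformizer of absolute value $q^{-[K'\colon K]}$), which is precisely the convention implicit in the statement and in the paper's own reduction to $K'=K$, so you are in the same position as the paper's proof on this normalization point.
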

  \begin{proof}
  Since the property of being a weak N\'eron model is preserved by unramified base change, and $q^{[K'\colon K]}$ is the cardinality of the residue field of $K'$, we may assume that $K=K'$.
   Let $C$ be a connected component of $\cU_k$, and let $\xi$ be a point of $C(k)$.
    We set $A=X(K)\cap \spe_{\cU}^{-1}(\xi)$.
   Then it suffices to show that
   $$\int_{A}\phi_D|\form| \leq q^{-\dim(X)-\ord_C(\form)-\ord_C(D)},$$
   and that equality holds when $\xi$ is not contained in $H$. Multiplying $\form$ with $\unif^{-\ord_C(\form)m}$, we can reduce to the case where
   $\ord_C(\form)=0$. Since $\phi_D$ is constant on $A$ with value $q^{-\ord_C(D)}$, we may also assume that $D=0$, and hence $\phi=1$.

 Let $h$ be a local equation for the Cartier divisor $H$ at the point $\xi$.
We denote by $\mathfrak{m}$ the maximal ideal in $R$.
  Locally at $\xi$, we can find an \'etale morphism of $R$-schemes $\cU\to \A^n_R$ that maps $\xi$ to the origin of $\A^n_k$.
  This morphism induces a $K$-analytic isomorphism $\phi:A\to \mathfrak{m}^n$.  Moreover,
   the form induced by $\form$ on $S$ can be written as $uh\phi^*(dx)^{\otimes m}$ where $dx$ is the standard volume form on $K^n$ and $u$ is a unit in $\mathcal{O}_{\cU,\xi}$.
We have $|u|=1$ and $|h|\leq 1$ on $S$, and $|h|=1$ if $\xi$ is not contained in $H$. Now the result follows from the fact that the Haar measure of $\mathfrak{m}^n$ is equal to  $q^{-\dim(X)}$.
  \end{proof}

\begin{proof}[Proof of Theorem~\ref{thm:shi}]
 By Corollary~\ref{cor:stone},  it is enough to show that
$$\int_{X^{\an}}\phi_D\, d\nu_{\form,K'}=\int_{X(K')}\phi_D\circ \pi_{K'}|\form\otimes K'|$$ converges to $$\sum_{x\in \Sh(X,\form)}\deg_K(x)\phi_D(x)$$ for
 every proper $R$-model $\cX$ of $X$ and every effective Cartier divisor $D$ supported on $\cX_k$.
  Applying a N\'eron smoothening to $\cX$ and pulling back $D$, we may assume that the $R$-smooth locus $\cU=\Sm(\cX)$ of $\cX$ is a weak N\'eron model of $X$.
  Since the property of being a weak N\'eron model is preserved by unramified base change, the measures $\nu_{\form,K'}$ are all supported on the compact analytic domain $\widehat{\cU}_{\eta}$ in $X^{\an}$. For every connected component $C$ of $\cU_k$, we denote by $\ord_C(D)$ the multiplicity of $D$ along $C$. Then
 the function $\phi_D$ is constant on $\spe_{\cX}^{-1}(C)$ with value $q^{-\ord_C(D)}$. The result now follows from Proposition~\ref{prop:int} and the Lang--Weil estimates:
 if we denote by $x\in X^{\an}$ the divisorial point associated with $(\cU,C)$, then
 we have
 \begin{equation*}
   |C(k')|
   \sim |\pi_0(C\otimes_k k^s)|q^{\mathrm{dim}(X)[K'\colon K]}
   =    \tdeg_K(x)q^{\mathrm{dim}(X)[K'\colon K]}
 \end{equation*}
 as $[K':K]\to\infty$, and the same holds for $|(C\setminus H)(k')|$.
\end{proof}

\subsection{A convergence lemma}
In order to study convergence of pluricanonical measures under ramified ground field extension, we will use the following result.
\begin{lemma}\label{lemm:int}
Let $P$ be a convex polytope in $\R^n$ with $\Q$-rational vertices. Let $\phi\colon P\to \R$ be a continuous function, and let $$\alpha\colon \R^{n}\to \R$$ be an affine function.
Assume that $\alpha$ is positive on the relative interior $\mathring{P}$ of $P$, and denote by $d$ the dimension of the maximal face $\tau$ of $P$ on which $\alpha$ vanishes; if $\tau$ is empty, we set $d=-1$.
We write $\lambda_{\tau}$ for the integral Lebesgue measure on the affine space spanned by $\tau$.
 We fix a real number $r>1$. For all positive integers $e$ and $f$, we set
$$S(e,f)=e^{-d}\sum_{x\in P\cap (1/e)\Z^n}\phi(x)r^{-ef\alpha(x)}.$$

\begin{enumerate}
\item \label{it:convwild} The value $S(e,f)$ converges to $$I_{\tau}(\phi)=\int_{\tau}\phi\, d\lambda_{\tau}$$ as $e$ and $f$ run through the positive integers ordered by divisibility. More explicitly, for every $\varepsilon>0$, there exist positive integers $e_0$ and $f_0$ such that
$$|S(e,f)-I_{\tau}(\phi)|<\varepsilon$$ whenever $e_0|e$ and $f_0|f$.

\item \label{it:convtame} Let $p$ be a prime number. Denote by $\N'$ the set of positive integers that are prime to $p$, ordered by divisibility. If $\tau$ has a $\Z_{(p)}$-integral point, then
 $S(e,f)$ converges to $I_{\tau}(\phi)$ as $e$ and $f$ run through the directed set $\N'$. If $\tau$ does not have a $\Z_{(p)}$-integral point, then $S(e,f)$ converges to $0$.
 \end{enumerate}
\end{lemma}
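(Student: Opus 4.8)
The plan is to reduce the statement to a Riemann‑sum estimate by slicing $P$ along $\tau$, the exponential weight $r^{-ef\alpha}$ supplying the decay transverse to $\tau$. I begin by disposing of the case $\tau=\emptyset$. Since $\alpha$ is affine and positive on the dense set $\mathring P$, it is $\geq 0$ on $P$, and as $\tau=\emptyset$ it is bounded below by some $\delta>0$ on the compact polytope $P$. Because $\#\bigl(P\cap(1/e)\Z^n\bigr)=O(e^n)$ and $\phi$ is bounded, $|S(e,f)|\leq e^{-d}\cdot O(e^n)\cdot\sup|\phi|\cdot r^{-ef\delta}=O(e^{\,n+1}r^{-ef\delta})\to 0=I_\tau(\phi)$ as $ef\to\infty$. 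So from now on $\tau\neq\emptyset$ and $d=\dim\tau\geq 0$.

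Next I normalize coordinates. Applying an element of $GL_n(\Z)$ — which preserves every lattice $(1/e)\Z^n$, the subset $\Z_{(p)}^n$, and the integral Lebesgue measures — I may assume the affine span of $\tau$ is $\{x_{d+1}=c_{d+1},\dots,x_n=c_n\}$ with $c_i\in\Q$; writing $x=(x',x'')\in\R^d\times\R^{n-d}$ and $c=(c_{d+1},\dots,c_n)$ we get $\tau=\tau_0\times\{c\}$ for a $d$‑dimensional rational polytope $\tau_0\subset\R^d$. Since $\tau$ is a proper face of $P$ on which the affine function $\alpha\geq 0$ vanishes, one checks routinely that: $\alpha$ depends only on $x''$, say $\alpha(x)=\ell(x'')-\ell(c)$ with $\ell$ linear; $c$ is the vertex of $\bar P:=\pi(P)\subset\R^{n-d}$ minimizing $\ell$, whence by compactness and linearity $\alpha(x)\geq\kappa\,|x''-c|$ on $P$ for some $\kappa>0$; and $P_c:=\{x':(x',c)\in P\}$ equals $\tau_0$. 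For part \eqref{it:convwild} I may also translate $x''\mapsto x''-c$ (legitimate once $e$ is divisible by the common denominator of $c$) to assume $c=0$.

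Now I use the Fubini decomposition $S(e,f)=\sum_{m\in\Z^{n-d}}F_e(m/e)\,r^{-ef\alpha(m/e)}$, where $F_e(x''):=e^{-d}\sum_{x'\in P_{x''}\cap(1/e)\Z^d}\phi(x',x'')$; this inner sum is empty unless $x''\in\bar P$, and a uniform bound $|F_e|\leq B$ holds because each slice $P_{x''}$ lies in a fixed bounded region and contains $O(e^d)$ lattice points. Linearity of $\alpha$ is the key: $ef\,\alpha(m/e)$ is a linear function of $m-ec$, so $r^{-ef\alpha(m/e)}\leq r^{-f\kappa|m-ec|}$. The single term $m=ec$ (present exactly when $ec\in\Z^{n-d}$) contributes $F_e(c)=e^{-d}\sum_{x'\in\tau_0\cap(1/e)\Z^d}\phi(x',c)$, which by a standard Riemann‑sum estimate (this is the description of the integral Lebesgue measure in Section \ref{ss:lebesgue}) converges to $\int_{\tau_0}\phi(x',c)\,dx'=I_\tau(\phi)$ along any sequence $e\to\infty$; all other terms are dominated by $B\sum_{m\in\Z^{n-d}\setminus\{0\}}r^{-f\kappa|m-ec|}$. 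In part \eqref{it:convwild}, $c=0$ and this remainder is $B\sum_{m\neq 0}r^{-f\kappa|m|}\to 0$ as $f\to\infty$, uniformly in $e$; picking $f_0$ making it $<\varepsilon/2$ and then $e_0$ (a multiple of the denominator of the original $c$) large enough that $|F_e(c)-I_\tau(\phi)|<\varepsilon/2$ gives the claim. In part \eqref{it:convtame}, if $\tau$ has a $\Z_{(p)}$‑integral point then $c\in\Z_{(p)}^{n-d}$, so its common denominator $N$ is prime to $p$; restricting $e$ to be divisible by $N$ (compatible with $e$ prime to $p$) makes $ec\in\Z^{n-d}$ and the same argument applies verbatim, yielding $S(e,f)\to I_\tau(\phi)$ along $\N'$.

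Finally I treat part \eqref{it:convtame} when $\tau$ has no $\Z_{(p)}$‑integral point; then $c\notin\Z_{(p)}^{n-d}$, and the decisive observation — the only place this hypothesis is used — is that for every $e$ prime to $p$ the vector $ec$ has a coordinate of negative $p$‑adic valuation, so $\mathrm{dist}(ec,\Z^{n-d})\geq\delta_0$ for some $\delta_0>0$ independent of $e$ and $f$ (an elementary valuation computation). Then $|S(e,f)|\leq B\sum_{m\in\Z^{n-d}}r^{-f\kappa|m-ec|}$, and since every point of $\Z^{n-d}$ is at distance $\geq\delta_0$ from $ec$ this is at most $B\bigl(r^{-f\kappa\delta_0}+C\sum_{t\geq 1}t^{\,n-d-1}r^{-f\kappa t/2}\bigr)$, which tends to $0$ as $f\to\infty$ uniformly in $e$; hence $S(e,f)\to 0$ along $\N'$ (with $e_0=1$). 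The main obstacle throughout is that the number of summands in $S(e,f)$ grows with $e$, so one cannot pass to a termwise limit; this is circumvented by the linearity of $\alpha$ (which absorbs the $e$‑scaling into the harmless shift $m\mapsto m-ec$), the uniform bound on the slice‑sums $F_e$, and the geometric decay furnished by the weight $r^{-ef\alpha}$.
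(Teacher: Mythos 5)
Your proof is correct and follows essentially the same route as the paper's: both isolate the lattice points lying on $\tau$ (whose normalized sums are Riemann sums converging to $I_{\tau}(\phi)$, with the $\Z_{(p)}$-point condition controlling whether such points exist for $e$ prime to $p$) and dispose of all remaining terms by exponential decay in $f$, uniformly in $e$, using the same arithmetic key point that $e\cdot c$ stays at a distance bounded below by the reciprocal of a fixed denominator from the integer lattice. The only differences are organizational: you package the remainder as a Fubini/slice sum bounded via $\alpha\geq\kappa|x''-c|$ and a lattice geometric series, whereas the paper bounds $\alpha$ below by a linear form with positive coefficients and computes an explicit product of geometric series over a box.
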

\begin{proof}
 We can reduce to the case where $\phi$ is nonnegative. Let $M$ be the maximum of $\phi$ on $P$.
 If $\tau $ is empty, we can bound $r^{-ef\alpha(x)}$ from above on $P$ by $c^e$ for some positive constant $c<1$; thus $S(e,f)$ converges to $0$ as $e\to \infty$, uniformly in $f$. Therefore, we may assume that $\tau$ is non-empty.

  For all positive integers $e$, we set
  $$S_{\tau}(e)=e^{-d}\sum_{\tau\cap (1/e)\Z^n}\phi(x).$$
 Since $\alpha$ vanishes on $\tau$, we have, for all $e$ and $f$,
 $$S_{\tau}(e) \leq S(e,f)\leq S_{\tau}(e)+ Me^{-d}\sum_{(P\setminus \tau)\cap (1/e)\Z^n}r^{-ef\alpha(x)}.$$
 The expression $S_{\tau}(e)$ converges to the integral $I_{\tau}(\phi)$ as $e$ runs through the positive integers ordered by divisibility.  If $\tau$ does not have a $\Z_{(p)}$-integral point,
 then $S_{\tau}(e)=0$ for every $e$ in $\N'$.
 If $\tau$ has a $\Z_{(p)}$-rational point, then we can find an element $e$ in $\N'$
and an integral $(1/e)\Z$-affine isomorphism between $\tau$ and a $d$-dimensional polytope in $\R^d$.
 It follows that $S_{\tau}(e)$ converges to $I_{\tau}(\phi)$ as $e$ runs through the directed set $\N'$.

Thus it suffices to show that  the expression
\begin{equation}\label{eq:zerosum}
e^{-d}\sum_{x\in (P\setminus \tau)\cap (1/e)\Z^n}r^{-ef\alpha(x)}
\end{equation}
 lies arbitrarily close to $0$ when $e$ and $f$ are sufficiently large.
  After a $\Z$-linear coordinate transformation on $\R^n$, we may assume that the affine span of $\tau$ is the affine subspace defined by $$x_{d+1}=q_{d+1},\ldots,x_n=q_n$$
 where $q_{d+1},\ldots,q_n$ are rational numbers. Subdividing $P$ into finitely many polytopes, we may also assume that $P$ is contained in the domain of $\R^n$ defined by
 $$x_1\geq 0,\,\ldots,\,x_d\geq 0,\,x_{d+1}\geq q_{d+1},\,\ldots,\,x_n\geq q_n.$$
 Since $\alpha$ vanishes on $\tau$ and is positive on the relative interior of $P$, it is of the form
 $$\alpha(x_1,\ldots,x_n)=a_{d+1}(x_{d+1}-q_{d+1})+\ldots+ a_{n}(x_{n}-q_{n})$$ for some nonnegative real numbers $a_{d+1},\ldots,a_n$, not all zero.
 In fact, we may assume that $a_j>0$ for all $j>d$. Indeed, we can pick $b_j>0$, $d<j\le n$, such that if we set $\beta(x_1,\dots,x_n)=\sum_{d+1}^nb_j(x_j-q_j)$, then $\beta\le\alpha$ on $P$ (it suffices to verify this inequality on the vertices of $P$), and it then suffices to show that the expression in~\eqref{eq:zerosum} tends to zero when $\alpha$ is replaced by $\beta$.

 We choose a positive integer $N$ such that
$P\setminus \tau$ is contained in $$Q=[0,N]^{d}\times \left(\,([q_{d+1},N]\times \cdots \times [q_{n},N])\setminus \{(q_{d+1},\ldots,q_n)\}\,\right).$$ Then we can bound~\eqref{eq:zerosum} from above by
\begin{equation}\label{eq:zerosum2}
e^{-d}\sum_{x\in Q\cap (1/e)\Z^n}r^{-ef\alpha(x)}.
\end{equation}
 This sum can be computed explicitly.  For every $i$ in $\{d+1,\ldots,n\}$, we denote by $m_i$ the smallest integer such that
 $m_i\geq eq_i$. We set $\delta_{e,q}=1$ if $eq_i$ is an integer for every $i$ in $\{d+1,\ldots,n\}$, and $\delta_{e,q}=0$ otherwise. Then
  $$\eqref{eq:zerosum2}=\frac{(1+eN)^d}{e^d}\left( \prod_{i=d+1}^{n}\frac{r^{-fa_i(eN-eq_i+1)}- r^{-fa_i(m_i-eq_i)}}{r^{-fa_i}-1} - \delta_{e,q}\right).$$
 By definition, $\delta_{e,q}=1$ if and only if $m_i-eq_i=0$ for all $i$; otherwise, $\delta_{e,q}=0$, and $m_i-eq_i$ is nonnegative for all $i$ and positive for some $i$. Moreover, if $m_i-eq_i$ is positive and we write $q_i=u_i/v_i$ with $u_i$, $v_i$ integers and $v_i>0$, then $m_i-eq_i$ is bounded below by $1/v_i$, which is independent of $e$.
 It follows that
  $$ \prod_{i=d+1}^{n}\frac{r^{-fa_i(Ne+1-eq_i)}- r^{-fa_i(m_i-eq_i)}}{r^{-fa_i}-1} - \delta_{e,q} \to 0$$ as $f\to \infty$, uniformly in $e$. This concludes the proof.
\end{proof}

\subsection{Convergence of the measure under ramified extensions}
 Let  $\mathcal{E}^a_K$ be the set of finite extensions $K'$ of $K$ in the algebraic closure $K^a$, ordered by inclusion. We denote by
   $\mathcal{E}^t_K$ the subset of $\mathcal{E}^a_K$ consisting of tamely ramified extensions.
   For every finite extension $K'$ of $K$, we denote by $e(K'/K)$ its ramification degree over $K$, and by $f(K'/K)$ the degree of the residue field of $K'$ over the residue field $k$ of $K$. Since $K$ is complete, we have
   $$[K'\colon K]=e(K'/K)\cdot f(K'/K).$$
   As above, $X$ is a smooth and proper $K$-scheme of pure dimension, and $\form$ a non-zero $m$-canonical form on $X$, for some $m>0$, such that
  $\form$ is not identically zero on any connected component of $X$.

  The following definition is a version of Definition~\ref{def:unram-measure} for ramified extensions.
 \begin{definition}\label{def:ram-measure}
   For every finite extension $K'$ of $K$, we set
   \begin{equation*}
     \nu_{\form,K'}:=\frac{q^{\wt_{\min}(X,\form)[K' \colon K]}}{e(K'/K)^{\dim(\Sk(X,\form))}}
     (\pi_{K'})_*|\form\otimes_K K'|
   \end{equation*}
   and
   \begin{equation*}
     \nu^t_{\form,K'}:=\frac{q^{\wt_{\min}(X,\form)[K' \colon K]}}{e(K'/K)^{\dim(\Sk^t(X,\form))}}
     (\pi_{K'})_*|\form\otimes_K K'|,
   \end{equation*}
 where $\pi_{K'}\colon X(K')\to X^{\an}$ is the canonical map and $\Sk^t(X,\form)$ is the temperate part of $\Sk(X,\form)$ (see Definition \ref{defi:skmeasures}). If $\Sk^t(X,\form)$ is empty, we set $\dim(\Sk^t(X,\form))=-1$.
 \end{definition}
 Thus $\nu_{\form,K'}$ and $\nu^t_{\form,K'}$ are positive Radon measures on $X^{\an}$.
\begin{theorem}\label{thm:padic}
Assume that $X$ has an $snc$-model.
\begin{enumerate}
\item \label{it:maintame} As $K'$ ranges through $\mathcal{E}^t_K$, the measure $\nu^t_{\form,K'}$
 converges to the stable Lebesgue measure $\Leb^{\st}_{\Sk^t(X,\form)}$
 supported on the temperate part of the Kontsevich--Soibelman skeleton of the pair $(X,\form)$.
\item \label{it:mainwild} If $X$ has a log smooth proper $R$-model, then as $K'$ ranges through $\mathcal{E}^a_K$, the measure $\nu_{\form,K'}$
 converges to the stable Lebesgue measure $\Leb^{\st}_{\Sk(X,\form)}$
 supported on the Kontsevich--Soibelman skeleton of the pair $(X,\form)$.
 \end{enumerate}
\end{theorem}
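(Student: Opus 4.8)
Both statements are proved together. As the measures in play are positive Radon measures on the compact space $X^{\an}$, Corollary~\ref{cor:stone} reduces everything to showing, for each vertical model function $\phi_D$ on $X$, that $\int_{X^{\an}}\phi_D\,d\nu^t_{\form,K'}\to\int\phi_D\,d\Leb^{\st}_{\Sk^t(X,\form)}$ along $\mathcal{E}^t_K$ in case~\eqref{it:maintame}, and $\int_{X^{\an}}\phi_D\,d\nu_{\form,K'}\to\int\phi_D\,d\Leb^{\st}_{\Sk(X,\form)}$ along $\mathcal{E}^a_K$ in case~\eqref{it:mainwild}. Fix $\phi_D$ and choose an $snc$-model $\cX$ of $X$ on which $D$ is defined; by Proposition~\ref{prop:adapted}, after passing to a toroidal resolution we may assume $\phi_D=\phi_D\circ\rho_{\cX}$ off $\spe_{\cX}^{-1}(A)$ for a closed $A\subseteq\cX_k$ containing no stratum. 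Write $\cX_k=\sum_iN_iE_i$ and $\mathrm{div}_{\cX}(\form)=\sum_iw_iE_i+H$ as in Section~\ref{ss:KS}, so $\wt_{\min}(X,\form)=\min_iw_i/(mN_i)$ by Proposition~\ref{prop:sklogmodel}. Below $K'/K$ is finite of ramification index $e$ and residue degree $f$; in case~\eqref{it:maintame} it is tame, and in case~\eqref{it:mainwild} we invoke the hypothesis that $\cX$ is log smooth over $R$.

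\textbf{From the pluricanonical integral to a twisted lattice sum.}
Let $\cX'$ be the normalization of $\cX\otimes_RR'$, where $R'$ is the integral closure of $R$ in $K'$; by Proposition~\ref{prop:logreg} this is a log regular (in case~\eqref{it:mainwild}, log smooth) proper $R'$-model of $X':=X\otimes_KK'$, with $\mathrm{pr}^{-1}(\Sk(\cX))=\Sk(\cX')$ and $\mathrm{pr}$ a face-wise homeomorphism rescaling the integral affine structures by $e$ (Propositions~\ref{prop:compat2} and~\ref{prop:compat4}). Choosing a N\'eron smoothening $\cU'\to\cX'$ we have $X(K')=\cU'(R')$, and since this persists under unramified extension, all the measures $\nu^t_{\form,K'}$, $\nu_{\form,K'}$ are supported on $\widehat{\cU'}_{\eta}$. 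Proposition~\ref{prop:int}, applied over the base field $K'$ to the weak N\'eron model $\cU'$ and the trivial extension, combined with the Lang--Weil estimates $|C(k')|\sim|\pi_0(C\otimes_kk^s)|\,q^{f\dim X}$, gives as $f\to\infty$
\[
\int_{X(K')}\phi_D\circ\pi_{K'}\,|\form\otimes_KK'|\ \sim\ \sum_{C\in\pi_0(\cU'_k)}q^{-f(\ord_C(\form')+\ord_C(D'))}\,|\pi_0(C\otimes_kk^s)|,
\]
where $\form',D'$ denote pullbacks to $\cX'$. For a component $C$ of $\cU'_k$ with divisorial point $x_C$ one has $\ord_C(\form')=\wt_{\form'}(x_C)\ge e\wt_{\min}(X,\form)$; for the components attaining equality, $\bar x_C:=\mathrm{pr}(\rho_{\cX'}(x_C))$ is a $(1/e)\Z$-integral point of $\Sk(\cX)$ lying on $\Sk(X,\form)$ with $\ord_C(\form')=e\wt_{\form}(\bar x_C)$ (Propositions~\ref{prop:maxmon},~\ref{prop:logweight} and~\ref{prop:skbasech}), $q^{-f\ord_C(D')}=\phi_D(\bar x_C)$ by our choice of $\cX$, and $\sum_{C:\,\bar x_C=x}|\pi_0(C\otimes_kk^s)|=\tdeg_K(x)$ by Proposition~\ref{prop:tamedeg} and the description of $\tdeg_K$ recalled after Theorem~\ref{thm:shi}. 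The remaining components --- those contained in $H'$, and those with $\ord_C(\form')>e\wt_{\min}(X,\form)$ --- give, after multiplication by the normalizing factor $q^{\wt_{\min}(X,\form)[K':K]}$, lower-order contributions: for the first kind by the Lang--Weil bound on $|C\cap H'|$ together with the $\form$-essentiality condition defining $\Sk(X,\form)$, and for the second because the relevant exponent stays bounded away from $0$ uniformly in $K'$ (recall $\tdeg_K$ is bounded on $\Sk(\cX)$, by Proposition~\ref{prop:tamedeg}\eqref{it:const}). Feeding this, together with the explicit description of the special fiber of a N\'eron smoothening of $\cX'$ identifying the surviving components with the $(1/e)\Z$-integral points of $\Sk(\cX)$ on $\Sk(X,\form)$ up to a multiplicative error vanishing in the limit, into Definition~\ref{def:ram-measure}, we obtain
\[
\int_{X^{\an}}\phi_D\,d\nu^t_{\form,K'}\ \sim\ \frac{1}{e^{d}}\sum_{x}\tdeg_K(x)\,\phi_D(x)\,q^{-ef\,\alpha(x)},\qquad \alpha:=\wt_{\form}-\wt_{\min}(X,\form),
\]
the sum being over the $(1/e)\Z$-integral points $x$ of $\Sk(\cX)$ and $d=\dim\Sk^t(X,\form)$; likewise for $\nu_{\form,K'}$ with $d=\dim\Sk(X,\form)$ in case~\eqref{it:mainwild}.

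\textbf{Passage to the limit.}
Here $\alpha\ge0$ on $\Sk(\cX)$, is affine on each face, and vanishes exactly on the $\form$-essential faces, whose union is $\Sk(X,\form)$. Split the sum according to whether $\alpha(x)=0$ or $\alpha(x)>0$. The part with $\alpha(x)>0$ tends to $0$: the argument in the proof of Lemma~\ref{lemm:int} applies verbatim (using that $\tdeg_K$ is bounded on $\Sk(\cX)$), using in case~\eqref{it:maintame} that $e$ and $f$ are prime to $p$ and in case~\eqref{it:mainwild} that they are arbitrary, ordered by divisibility. In the part with $\alpha(x)=0$, which is supported on the $\form$-essential faces, $\tdeg_K$ is constant (say $=c_\tau$) on the interior of each such face $\tau$, and $e^{-\dim\tau}\sum_{x\in\tau((1/e)\Z)}\phi_D(x)$ converges to $\int_\tau\phi_D\,d\lambda_\tau$ by the defining property of the integral Lebesgue measure (Section~\ref{ss:lebesgue}); faces with $\dim\tau<d$ are killed by the $e^{-d}$ scaling. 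In case~\eqref{it:maintame}, moreover, a $d$-dimensional $\form$-essential face contributes a nonzero limit only when it contains a $\Z_{(p)}$-integral point, which by Proposition~\ref{prop:tamecrit} happens exactly when it lies in the temperate part. Hence the limit is $\sum_\tau c_\tau\int_\tau\phi_D\,d\lambda_\tau$, the sum over the top-dimensional $\form$-essential faces --- the temperate ones in case~\eqref{it:maintame} --- which by Definition~\ref{defi:skmeasures} equals $\int\phi_D\,d\Leb^{\st}_{\Sk(X,\form)}$ (resp.\ $\int\phi_D\,d\Leb^{\st}_{\Sk^t(X,\form)}$). This proves the theorem.

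\textbf{The main obstacle.}
Apart from the middle step, the argument is bookkeeping with the base-change results of Sections~\ref{sec:skeleta}--\ref{sec:lebesgue}, Proposition~\ref{prop:int}, the Lang--Weil estimates and Lemma~\ref{lemm:int}. The real difficulty lies in the combinatorial dictionary used in the middle step: one must control, \emph{uniformly in the extension $K'$}, the correspondence between the connected components of the special fiber of a N\'eron smoothening of $\cX\otimes_RR'$ and the $(1/e)\Z$-integral points of $\Sk(\cX)$ --- the precise matching of $\ord_C(\form')$, $\ord_C(D')$ and $|\pi_0(C\otimes_kk^s)|$ with $\wt_{\form}$, $\phi_D$ and $\tdeg_K$ at the corresponding point --- and prove that the error terms (components in the horizontal locus $H'$, components of non-minimal weight, and the exceptional components of the smoothening lying over the thin set $A$) are uniformly negligible after normalization, so that they provably drop out in the limit along $\mathcal{E}^t_K$, respectively $\mathcal{E}^a_K$.
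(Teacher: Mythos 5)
Your overall architecture is the same as the paper's (Stone--Weierstrass reduction to vertical model functions via Corollary~\ref{cor:stone}, Proposition~\ref{prop:adapted}, weak N\'eron models plus Proposition~\ref{prop:int}, Lang--Weil, and Lemma~\ref{lemm:int}), but the step you yourself label ``the main obstacle'' is precisely the technical core of the theorem, and your proposal asserts it rather than proves it. Concretely: (a) you apply Lang--Weil componentwise to the special fiber of a N\'eron smoothening $\cU'$ of $\cX\otimes_R R'$, ``as $f\to\infty$'' at fixed $K'$; but $\cU'$ and its components vary with $K'$ (in particular with $e$), the implied constants in Lang--Weil depend on those varieties, and along the nets $\mathcal{E}^t_K$, $\mathcal{E}^a_K$ the parameters $e$ and $f$ grow together, so one cannot first let $f\to\infty$ at fixed $e$ and then sum; (b) the identification of the minimal-order components of an \emph{arbitrary} N\'eron smoothening with the $(1/e)\Z$-integral points of $\Sk(X,\form)$, together with the identity $\sum_{C\mapsto x}|\pi_0(C\otimes_k k^s)|=\tdeg_K(x)$, is not a consequence of the results you cite --- it is essentially Theorem~\ref{thm:main}\eqref{it:maina} plus a component count, and in the paper it is obtained only for a \emph{specific} weak N\'eron model, namely the $R$-smooth locus of a toroidal resolution $\cY\to\cX$, whose components $F^o_y$ are $\G_m^{\dim X-\dim E_y}$-torsors over the strata $E^o_y$; (c) your claim $q^{-f\ord_C(D')}=\phi_D(\bar x_C)$ fails for components meeting $\spe^{-1}(A)$, and the assertion that the non-minimal components are negligible ``because the exponent stays bounded away from $0$ uniformly in $K'$'' is not by itself sufficient: their number grows like $e^{\dim\Sk(\cX)}$ and the whole two-parameter sum has to be fed into Lemma~\ref{lemm:int}, not estimated term by term.

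The paper resolves exactly these points by a different organization of the middle step: it first performs a preliminary finite (tame) base change using Corollary~\ref{coro:ppower} (resp.\ Corollary~\ref{coro:reduced}) so that the strata are geometrically connected and $\tdeg_K\equiv 1$ on $\Sk(\cX)$, the stable-measure bookkeeping being absorbed by Propositions~\ref{prop:skbasech} and~\ref{prop:stmeasure2}; it then proves the two-sided ``sandwich'' inequality bounding $\int_{X(K')}\phi\circ\pi_{K'}|\form\otimes K'|$ between lattice sums over $\Sk(X,\form)((1/e)\Z)$ and $\Sk(\cX)((1/e)\Z)$ whose point counts involve only the finitely many \emph{fixed} strata $E^o_x$ of $\cX_k$ (base-changed through $k'$), by reducing to $K'=K$ via the isomorphisms $E^o_{x'}\to E^o_{\pr(x')}\otimes_k k'$ on the normalized base change and then integrating over the pieces $\cU_y(R)$ of the explicit weak N\'eron model above, with the factor $((q^f-1)/q^f)^{\dim X}$ coming from the torus-torsor structure. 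This makes the Lang--Weil input uniform in $K'$ for free, and the limit is then taken once, by applying Lemma~\ref{lemm:int} to the full sum together with Propositions~\ref{prop:tame} and~\ref{prop:tamecrit}. To turn your proposal into a proof you would need to supply precisely this uniform comparison (or an equivalent substitute), so as it stands the argument has a genuine gap.
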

Together with Theorem~\ref{thm:shi}, this proves the Main Theorem in the introduction.
\begin{proof}
\eqref{it:maintame}
 Let $\cX$ be a log regular proper $R$-model of $X$.
 For every point $x$ of $\Sk(\cX)$, we denote by $E_x$ the unique stratum of $\cX_k$ with generic point $\spe_{\cX}(x)$; this is the stratum corresponding to the unique face of $\Sk(\cX)$ whose relative interior contains $x$. We denote by $E_x^o$ the complement in $E_x$ of the intersections with all the strata in $\cX_k$ that do not contain $E_x$.
  As $x$ runs through $\Sk(\cX)$, the sets $E_x^o$ form a partition of $\cX_k$ into locally closed subsets.

Let $\phi=\phi_D$ be a vertical model function on $X^{\an}$. By Proposition~\ref{prop:adapted}, we may assume that there exists a closed subset $A$ of $\cX_k$ that does not contain any stratum of $\cX_k$ and such that
$\phi=\phi\circ \rho_{\cX}$ on the complement of $\spe^{-1}_{\cX}(A)$.
  Let $L$ be a tame finite extension of $K$. It suffices to prove the theorem for the pair $(X\otimes_K L,\form\otimes_K L)$ over $L$ instead of $(X,\form)$: on the one hand,
 $$\wt_{\min}(X\otimes_K L,\form\otimes_K L)=e(L/K)\wt_{\min}(X,\form)$$ and $\dim(\Sk^t(X\otimes_K L,\form\otimes_K L)=\dim(\Sk^t(X,\form))$, by Proposition~\ref{prop:skbasech} and the definition of the temperate part of the skeleton; on the other hand,
$\Leb^{\st}_{\Sk^t(X,\form)}$ is the pushforward of $$\frac{1}{e(L/K)^{\dim(\Sk^t(X,\form))}}\Leb^{\st}_{\Sk^t(X\otimes_K L,\form\otimes_K L)}$$ to $X^{\an}$, by Propositions~~\ref{prop:skbasech} and \ref{prop:stmeasure2}. Thus, replacing $\cX$ by its normalized base change to the valuation ring of a suitable tame finite extension of $K$, and $A$ by its inverse image, we can reduce to the case where the multiplicities of the components of $\cX_k$ are powers of $p$, and all the strata of $\cX_k$ are geometrically connected, by Corollary~\ref{coro:ppower}.

  We denote by $H$ the Zariski closure in $\cX$ of the zero locus of $\form$ on $X$ (this is the support of the horizontal part of the divisor $\mathrm{div}_{\cX}(\form)$).
 Let $K'$ be a tame finite extension of $K$, and set $e=e(K'/K)$ and $f=f(K'/K)$; then $[K'\colon K]=ef$. We denote by $R'$ the integral closure of $R$ in $K'$, and by $k'$ its residue field.
 We claim that the following inequalities hold:
\begin{align*}
 & \left(\frac{q^f-1}{q^f}\right)^{\dim(X)} \sum_{x\in \Sk(X,\form)((1/e)\Z)}\phi(x)q^{-\wt_{\form}(x)ef-\dim(E_x)f}|(E_x^o\setminus (A\cup H))(k')|
 \\ & \leq \int_{X(K')}\phi\circ\pi_{K'}\,|\form\otimes_K K'|
 \\
  & \leq \sum_{x\in \Sk(\cX)((1/e)\Z)} \phi(x)q^{-\wt_{\form}(x)ef-\dim(E_x)f}|E_x^o(k')|.
   \end{align*}
   Proposition~\ref{prop:sklogmodel} implies that
    $E^o_x\setminus H$, and thus $E^o_x\setminus (A\cup H)$,  are non-empty whenever $x$ lies in $\Sk(X,\form)$.
 By the Lang--Weil estimates, $q^{-\dim(E_x)f}|E_x^o(k')|$ tends to $1$ as $f\to \infty$, and the same holds for $q^{-\dim(E_x)f}|(E_x^o\setminus (A\cup H))(k')|$ if $E_x^o\setminus H$ is non-empty. The weight function $\wt_{\form}$ is piecewise rational affine on $\Sk(\cX)$, and achieves  its minimal value $\wt_{\min}(X,\form)$ precisely on $\Sk(X,\form)$.
  Now Propositions~\ref{prop:tame} and~\ref{prop:tamecrit}, together with Lemma~\ref{lemm:int}, imply  that
$$\frac{q^{\wt_{\min}(X,\form)ef}}{e^{\dim(\Sk^t(X,\form))}}\int_{X(K')}\phi\circ\pi_{K'}\,|\form\otimes_K K'|$$
lies arbitrarily close to
$$\int_{\Sk^t(X,\form)}\phi\,d\Leb^{\st}_{\Sk^t(X,\form)}$$ when $K'$ is sufficiently large; this is what we wanted to prove.

 So let us prove our claim.  We write $X'=X\otimes_K K'$ and we denote by $\form'=\form \otimes_K K'$ the pullback of $\form$ to $X'$.
 Let $\cX'$ be the normalization of $\cX\otimes_R R'$.  Then the projection morphism $$\mathrm{pr}\colon (X')^{\an}\to X^{\an}$$ induces a bijection
 $$\Sk(\cX')(\Z)\to \Sk(\cX)((1/e)\Z),$$ by Proposition~\ref{prop:compat2}. For every point $x'$ in $\Sk(\cX')$, we can define a locally closed subset $E_{x'}^o$ in the special fiber of $\cX'$, in the same way as for the model $\cX$.
 By the same calculation as in the proof of Proposition~4.1.2 in \cite{logzeta}, the morphism $\cX'\to \cX$
 maps $E^o_{x'}$ isomorphically onto $E_{\mathrm{pr}(x')}^o\otimes_k k'$. Since the residue field of $K'$ has cardinality $q^f$, and
 we have $\wt_{\form'}=e\cdot \wt_{\form}\circ \mathrm{pr}$ by Proposition~\ref{prop:skbasech}, it is enough to prove the inequalities for $K'=K$; then $e=f=1$.

Let $h\colon \cY\to \cX$ be a toroidal resolution of $\cX$, associated with a regular proper subdivision of the fan $F(\cX)$.
  Then $\Sk(\cY)$ is a subdivision of the polyhedral complex $\Sk(\cX)$; both are equal as subspaces of $X^{\an}$, and they have the same piecewise $\Z$-integral structure (in particular, the same set of $\Z$-integral points).
   For every $y$ in $\Sk(\cY)$, we define a locally closed subset $F^o_y$ in $\cY_k$ in the same way as above (we use the letter $F$ to distinguish it from the members of the partition $\{E^o_x\}$ of $\cX_k$).
 Since $\cY$ is a regular proper $R$-model for $X$, its $R$-smooth locus is a weak N\'eron model for $X$, by~\cite[3.1/2]{BLR}; we denote it by $\cU$.
 The connected components of $\cU_k$ are exactly the sets $F_y^o$ for $\Z$-integral points $y$ on $\Sk(\cY)$ (note that all the $\Z$-integral points of $\Sk(\cY)$ are vertices of $\Sk(\cY)$, by regularity of the fan $F(\cY)$; moreover, a vertex of $\Sk(\cY)$ is $\Z$-integral if and only if the corresponding component of $\cY_k$ has multiplicity one).
 The morphism $h$ induces a morphism of $k$-schemes $F^o_y\to E^o_y$, and it follows from  the construction of the morphism $h$ in the
proof of~\cite[9.9]{kato} that $F^o_y$ is a torsor over $E^o_y$ with translation group
 $\mathbb{G}^{\dim(X)-\dim(E_x^o)}_{m,k}$. In particular, the map $F^o_y(k)\to E^o_y(k)$ is surjective, and its fibers all have cardinality $(q-1)^{\dim(X)-\dim(E_x^o)}$.

 We can use the weak N\'eron model $\cU$ to get the bounds we need for the integral $\int_{X(K)}\phi|\form|$.
  In fact, we will prove a stronger property: for every $y$ in $\Sk(\cY)(\Z)$, we denote by $\cU_y$ the open subscheme of $\cU$
 obtained by deleting all the components in the special fiber except for $F^o_y$. Since $\cU$ is a weak N\'eron model for $X$, the set $X(K)$ is the union of the compact open subsets $\cU_y(R)$ over all the points $y$ in  $\Sk(\cY)(\Z)$. We fix such a point $y$; then it is enough to prove that
  \begin{align*}
 &  \left(\frac{q-1}{q}\right)^{\dim(X)} \phi(y) q^{-\wt_{\form}(y)-\dim(E_y)}|(E_y^o\setminus (A\cup H))(k)|
 \\ & \leq \int_{\cU_y(R)}\phi|\form|
 \\
  & \leq \phi(y)q^{-\wt_{\form}(y)-\dim(E_y)}|E_y^o(k)|
   \end{align*}

 We first deduce the upper bound. It follows easily from the definition of the vertical model function $\phi=\phi_D$ that,
  for every $y$ in $\Sk(\cY)(\Z)$, the restriction of $\phi$ to $\cU_y(R)$ is bounded above by $\phi(y)$.
  Thus we may assume that $D=0$ and $\phi=1$; then Proposition~\ref{prop:int} tells us that
  $$\int_{\cU_y(R)}|\form|\leq q^{-\wt_{\form}(y)-\dim(X)}|F^o_y(k)|= q^{-\wt_{\form}(y)-\dim(E^o_y)}|E^o_y(k)|.$$
 Finally, we prove the lower bound. We have constructed the set $A$ in such a way that $\phi(u)=\phi(y)$ for all the points
 $u$ in $(\cU_y\setminus h^{-1}(A))(R)$. Thus, we may again assume that $D=0$ and $\phi=1$. Since the horizontal part of $\mathrm{div}_{\cY}(\form)$ is contained in $h^{-1}(H)$, it follows from Proposition~\ref{prop:int} that
 $$q^{-\wt_{\form}(y)-\dim(X)}|(F_y^o\setminus h^{-1}(A\cup H))(k)| \leq \int_{\cU_y(R)}|\form|.$$
Now it suffices to observe that
$$|(F_y^o\setminus h^{-1}(A\cup H))(k)|=(q-1)^{\dim(X)-\dim(E_y)}|(E_y^o\setminus (A\cup H))(k)|.$$

  The proof of~\eqref{it:mainwild} is almost identical: by base change to a finite separable extension of $K$, we can reduce to the case where $X$ has a log smooth proper $R$-model $\cX$ such that $\cX_k$ is reduced and all its strata are geometrically connected, see Corollary~\ref{coro:reduced}. Then we can use exactly the same argument as before.
\end{proof}

\section{Convergence of Shilov measures}\label{sec:convshilov}
We now prove a convergence theorem that is valid also for non-local fields.
Let $K$ be a complete discretely valued field with perfect residue field $k$.
Let $X$ be a smooth and proper $K$-scheme of pure dimension, and let $\form$ be an $m$-canonical form on $X$, for some $m>0$, such that $\form$ is not identically zero on any connected component of $X$.

\subsection{Shilov measures}\label{ss:shimeas}
 For every $x$ in $\Sh(X,\form)$, we denote by $\tdeg_K(x)$ the tame degree of $X^{\an}$ at $x$.
  We have already explained after the statement of Theorem \ref{thm:shi} that $\tdeg(x)$ is equal to the degree of the algebraic closure of $K$ in $\mathscr{H}(x)$; moreover,
if $\cU$ is a weak N\'eron model of $X$ and  $C$ is the connected component of $\cU_k$ corresponding to $x$, then $\tdeg_K(x)$ is equal to the number of geometric connected components of $C$. In particular, it is finite.

 The \emph{Shilov measure} of $(X,\theta)$ is now defined as
 \begin{equation*}
   \mu_{X,\form}=\sum_{x\in \Sh(X,\form)}\deg_K(x) \delta_x,
 \end{equation*}
 in the same way as in Theorem \ref{thm:shi}.
 It only depends on the pair $(X,\form)$.
 Note that $\mu_{X,\form}=0$ when $\Sh(X,\theta)=\emptyset$.

\subsection{Base change of ramification index one}
Let $R'$ be a complete discretely valued extension of $R$, with quotient field $K'$ and residue field $k'$. Assume that $K'$ has ramification index one over $K$ and that $k'$ is separable over $k$.  Then the property of being a weak N\'eron model is preserved under base change from $R$ to $R'$, by~\cite[3.6/7]{BLR}.
  It follows immediately that $\Sh(X\otimes_K K',\form\otimes_K K')$ is the inverse image of $\Sh(X,K)$ under the projection map
  $(X\otimes_K K')^{\an}\to X^{\an}$, and that $\mu_{X,\form}$ is the pushforward of the measure $\mu_{X\otimes_K K',\form\otimes_K K'}$.
  The behavior under ramified extensions is more subtle, and will be discussed next.

\subsection{Convergence theorem}
Now assume that $X$ has an $snc$-model $\cX$ over $R$.
 By Proposition~\ref{prop:sklogmodel}, the Kontsevich--Soibelman skeleton $\Sk(X,\form)$ is a   union of faces of $\Sk(\cX)$, so that its dimension is well-defined.
 Let $K'$ be a finite extension of $K$. Set $X'=X\otimes_K K'$ and denote by $\form'$ the pullback of $\form$ to $X'$. We write $\mathrm{pr}\colon (X')^{\an}\to X^{\an}$ for the natural projection morphism. We define a measure $\lambda_{\form,K'}$ on $X^{\an}$ by
 $$\lambda_{\form,K'}=\frac{\mathrm{pr}_\ast\mu_{X',\form'}}{e(K'/K)^{\dim(\Sk(X,\form))}}$$
 where $\mu_{X',\form'}$ is the Shilov measure of $(X',\theta')$.
 If the temperate part $\Sk^t(X,\form)$ of $\Sk(X,\form)$ is non-empty (see Definition~\ref{defi:skmeasures}), we also define a measure $\lambda^t_{\form,K'}$ on $X^{\an}$ by
 $$\lambda^t_{\form,K'}=\frac{\mathrm{pr}_\ast\mu_{X',\form'}}{e(K'/K)^{\dim(\Sk^t(X,\form))}}.$$


 Let  $\mathcal{E}^a_K$ be the set of finite extensions $K'$ of $K$ in the algebraic closure $K^a$, ordered by inclusion. We denote by
   $\mathcal{E}^t_K$ the subset of $\mathcal{E}^a_K$ consisting of tamely ramified extensions.

\begin{theorem}\label{thm:main}
 Assume that $k$ is perfect, and that $X$ has an $snc$-model over $R$.
 \begin{enumerate}
 \item \label{it:maina} Let $K'$ be a  finite tame extension of $K$ of ramification index  $e=e(K'/K)$.
   Assume that there exists a $(1/e)\Z$-integral point on $\Sk(X,\form)$.
   Then the Shilov boundary $\Sh(X',\form')$ is the preimage in $(X')^{\an}$ of the set of $(1/e)\Z$-integral points in $\Sk(X,\form)$.

\item \label{it:tame}
 Assume that the temperate part $\Sk^t(X,\form)$ of the Kontsevich--Soibelman skeleton of the pair $(X,\form)$ is non-empty.
 As $K'$ ranges through $\mathcal{E}^t_K$, the measure $\lambda^t_{\form,K'}$ converges to the stable Lebesgue measure $\Leb^{\st}_{\Sk^t(X,\form)}$
 supported on $\Sk^t(X,\form)$.

 \item \label{it:wild} Assume that $X$ has a log smooth proper $R$-model.
  As $K'$ ranges through $\mathcal{E}^a_K$, the measure $\lambda_{\form,K'}$ converges to the stable Lebesgue measure $\Leb^{\st}_{\Sk(X,\form)}$
 supported on the Kontsevich--Soibelman skeleton of the pair $(X,\form)$.
 \end{enumerate}
\end{theorem}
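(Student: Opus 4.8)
The plan is to deduce all three parts from an explicit combinatorial description of the Shilov boundary after base change, combined with the lattice-point asymptotics of Lemma~\ref{lemm:int}.

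\emph{Part~\eqref{it:maina}.} Fix an $snc$-model $\cX$ of $X$; since a purely inseparable base change is a homeomorphism on analytifications I may assume $K'/K$ separable. Let $\cX'$ be the normalization of $\cX\otimes_R R'$, a log regular proper $R'$-model of $X'$ by Proposition~\ref{prop:logreg}. By Proposition~\ref{prop:compat2} the map $\pr$ identifies $\Sk(\cX')$ with $\pr^{-1}(\Sk(\cX))$, carries each face homeomorphically onto a face, and (by the characteristic-monoid computation in that proof, together with the remark that $\pr_{\xi'}$ is division by $e$) sends the $\Z$-integral points of $\Sk(\cX')$ onto the $(1/e)\Z$-integral points of $\Sk(\cX)$. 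I then pass to a toroidal resolution $\cY'\to\cX'$ that is an $snc$-model of $X'$ and refines $\Sk(\cX')$ through all of its finitely many $\Z$-integral points, so that $\Sm(\cY')$ is a weak N\'eron model of $X'$ whose connected components are exactly the strata $F^{o}_{y'}$ attached to the multiplicity-one, equivalently $\Z$-integral, vertices $y'$ of $\Sk(\cY')$, as in the proof of Theorem~\ref{thm:padic}. For each such vertex $\ord_{F^{o}_{y'}}(\form')=\wt_{\form'}(y')=e\cdot\wt_{\form}(\pr(y'))$ by Proposition~\ref{prop:skbasech}\eqref{it:skbasech-tame}, hence $\ordmin(X',\form')=e\cdot\min\{\wt_{\form}(x)\mid x\in\Sk(\cX)((1/e)\Z)\}$, and under the standing hypothesis that $\Sk(X,\form)$ carries a $(1/e)\Z$-integral point this minimum is $\wt_{\min}(X,\form)$. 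Since $\wt_{\form}$ takes the constant value $\wt_{\min}(X,\form)$ on $\Sk(X,\form)$ (Proposition~\ref{prop:sklogmodel}) and $\Sh(X',\form')\subseteq\Sk(X',\form')=\pr^{-1}(\Sk(X,\form))$ (Proposition~\ref{prop:skbasech}\eqref{it:skbasech-tame}), it follows that $\Sh(X',\form')$ is precisely the preimage of the set of $(1/e)\Z$-integral points lying on $\Sk(X,\form)$, which is the assertion.

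\emph{Parts~\eqref{it:tame} and~\eqref{it:wild}.} In case~\eqref{it:wild} I first replace $K$ by a finite extension $L$ and $\cX$ by the normalized base change furnished by Corollary~\ref{coro:reduced}, so that $\cX$ becomes log smooth with $\cX_k$ reduced and with geometrically connected strata; this is harmless since $\lambda_{\form,-}$ and $\Leb^{\st}_{\Sk(X,\form)}$ are both compatible with pushforward from $X_L^{\an}$ after dividing by $e(L/K)^{\dim\Sk(X,\form)}$, by Propositions~\ref{prop:skbasech} and~\ref{prop:stmeasure2}. In either case Part~\eqref{it:maina}, or for~\eqref{it:wild} its word-for-word repetition using the log smooth variants of Propositions~\ref{prop:logreg}, \ref{prop:compat2} and~\ref{prop:skbasech}, identifies $\Sh(X',\form')$ with $\pr^{-1}\bigl(\Sk(X,\form)\cap\Sk(\cX)((1/e)\Z)\bigr)$, and Proposition~\ref{prop:tamedeg}\eqref{it:fib} collapses the fibrewise tame degrees to $\sum_{x'\in\pr^{-1}(x)}\tdeg_{K'}(x')=\tdeg_{K}(x)$; therefore, with $d=\dim\Sk^{t}(X,\form)$,
\begin{equation*}
\lambda^{t}_{\form,K'}=\frac{1}{e^{d}}\sum_{x\in\Sk(X,\form)\cap\Sk(\cX)((1/e)\Z)}\tdeg_{K}(x)\,\delta_{x},
\end{equation*}
and similarly for $\lambda_{\form,K'}$ with $d$ replaced by $\dim\Sk(X,\form)$. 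By Corollary~\ref{cor:stone} it suffices to test these measures against a vertical model function $\phi$; decomposing $\Sk(X,\form)$ into the open faces of $\cX$ it contains, on each of which $\tdeg_{K}$ is a finite constant by Proposition~\ref{prop:tamedeg}\eqref{it:const}, I reduce to the behaviour of $e^{-d}\sum_{x\in\mathring\tau((1/e)\Z)}\phi(x)$ as $e$ grows, which is Lemma~\ref{lemm:int}\eqref{it:convtame} (resp.\ \eqref{it:convwild}) applied with the affine function $\alpha$ taken to be identically zero. The decisive point in the temperate case~\eqref{it:tame} is that, for $e$ prime to $p$, a face of $\Sk(X,\form)$ of dimension $>d$, or of dimension $d$ but not contained in $\Sk^{t}(X,\form)$, has root index divisible by $p$ by Proposition~\ref{prop:tamecrit} — here perfectness of $k$ is used, so the residue-field hypothesis in that proposition is automatic — and so carries no $(1/e)\Z$-integral point; this also bounds the total masses by $O(e^{d})$, whence weak convergence to $\tdeg_{K}\cdot\Leb^{(d)}_{\cX}|_{\Sk^{t}(X,\form)}=\Leb^{\st}_{\Sk^{t}(X,\form)}$ follows by a routine bounded-mass density argument. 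In case~\eqref{it:wild} the reduced model has all root indices equal to $1$ and $\tdeg_{K}=1$ on $\Sk(\cX)$ (Proposition~\ref{prop:tamedeg}\eqref{it:one}), so every top-dimensional face contributes its integral Lebesgue measure for all $e$, and Lemma~\ref{lemm:int}\eqref{it:convwild} yields convergence to $\Leb^{\st}_{\Sk(X,\form)}=\Leb^{(\dim\Sk(X,\form))}_{\cX}|_{\Sk(X,\form)}$ as $e\to\infty$ over all positive integers.

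The main obstacle is Part~\eqref{it:maina}: correctly matching the Shilov boundary, which is defined through arbitrary weak N\'eron models via Proposition~\ref{prop:indep}, with the integral points of $\Sk(X,\form)$, while keeping track of the exact role of the hypothesis (it is precisely what forces $\ordmin(X',\form')=e\cdot\wt_{\min}(X,\form)$, so that $\Sh(X',\form')$ sits over $\Sk(X,\form)$ rather than over a strictly higher level set of $\wt_{\form}$), of the passage to a sufficiently fine toroidal $snc$-resolution of $\cX'$, and of the bookkeeping with root indices in the temperate setting. Once Part~\eqref{it:maina} is available, Parts~\eqref{it:tame} and~\eqref{it:wild} are a routine adaptation of the counting argument in the proof of Theorem~\ref{thm:padic}, with the pluricanonical integrand replaced by the purely atomic Shilov measure.
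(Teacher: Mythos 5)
Your argument follows the paper's own proof in all essentials: for part~\eqref{it:maina} you use exactly the same chain (normalized base change $\cX'$, Proposition~\ref{prop:compat2} to identify $\Sk(\cX')$ and its $\Z$-integral points with the $(1/e)\Z$-integral points of $\Sk(\cX)$, a toroidal $snc$-resolution $\cY'$ whose smooth locus is a weak N\'eron model, and Proposition~\ref{prop:skbasech} to compare weights), and for parts~\eqref{it:tame} and~\eqref{it:wild} you use the same translation of the Shilov measure into a weighted sum of Dirac masses at $(1/e)\Z$-integral points via part~\eqref{it:maina} and Proposition~\ref{prop:tamedeg}\eqref{it:fib}, followed by lattice-point equidistribution on the faces; the paper phrases the last step directly as convergence of normalized counting measures to Lebesgue measure after an integral $(1/e)\Z$-affine chart (whose existence, for suitable $e$ prime to $p$, comes from the proof of Proposition~\ref{prop:tamecrit}), while you test against vertical model functions via Corollary~\ref{cor:stone}.

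Two points in your write-up need tightening, both of which the paper handles explicitly. First, in parts~\eqref{it:tame} and~\eqref{it:wild} you invoke part~\eqref{it:maina} for \emph{every} tame (resp.\ every finite) $K'$, but its hypothesis --- the existence of a $(1/e)\Z$-integral point on $\Sk(X,\form)$ --- can fail for individual $e$, and then your displayed formula for $\lambda^t_{\form,K'}$ is not justified (the Shilov boundary may then lie over a strictly higher level set of $\wt_\form$). The correct fix is the paper's: since $\Sk^t(X,\form)\neq\emptyset$, Proposition~\ref{prop:tame} gives an $e_0$ prime to $p$ with $\Sk(X,\form)((1/e_0)\Z)\neq\emptyset$, and one restricts to the cofinal family of $K'$ with $e_0\mid e(K'/K)$ (in case~\eqref{it:wild}, to $K'$ containing the auxiliary extension furnished by Corollary~\ref{coro:reduced}), which is all that net convergence over $\mathcal{E}^t_K$ or $\mathcal{E}^a_K$ requires. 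Second, Lemma~\ref{lemm:int} cannot literally be applied with $\alpha\equiv 0$, since its hypotheses require $\alpha>0$ on the relative interior of $P$; what you actually need is only the elementary statement $S_\tau(e)\to I_\tau(\phi)$ established inside its proof (equidistribution of $(1/e)\Z$-points in a rational polytope), together with the remark from the proof of Proposition~\ref{prop:tamecrit} that each top-dimensional face of $\Sk^t(X,\form)$ admits an integral $(1/e)\Z$-affine isomorphism onto a polytope in $\R^d$ for suitable $e$ prime to $p$ --- this last point is what makes convergence along $\N'$, rather than along all integers ordered by divisibility, legitimate in the temperate case. With these repairs your proposal coincides with the paper's proof.
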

\begin{proof}
\eqref{it:maina} We denote by $R'$ the valuation ring in $K'$.
  Let $\cX'$ be the normalization of $\cX\otimes_R R'$. Then the skeleton $\Sk(\cX')$ is the inverse image of $\Sk(\cX)$ in $(X')^{\an}$, by Proposition~\ref{prop:compat2}.  Moreover, the proof of Proposition~\ref{prop:compat2} shows that the inverse image of the set of $(1/e)\Z$-integral points on $\Sk(\cX)$ is the set of $\Z$-integral points on $\Sk(\cX')$.  Let $\cY'\to \cX'$ be a toroidal resolution of singularities, induced by a regular proper refinement of the Kato fan of $\cX$~\cite[\S10]{kato}.
 Then $\Sk(\cY')=\Sk(\cX')$, and the piecewise integral structure is preserved. By the definition of the integral structure, the $\Z$-integral points
 on $\Sk(\cY')$ are precisely the vertices corresponding to components of multiplicity one of the special fiber, or, equivalently, to the connected components of
 the special fiber of the $R$-smooth locus $\mathrm{Sm}(\cY')$. Since $\cY'$ is regular and proper over $R'$, its $R'$-smooth locus is a weak N\'eron model for $X'$, by~\cite[3.1/2]{BLR}. It follows from Proposition~\ref{prop:skbasech} that $\wt_{\form'}$ reaches its minimal value at a divisorial point $x'$ in $\Sk(\cY')$ if and only if
 $\wt_{\form}$ is minimal at the point $\mathrm{pr}(x')$ in $\Sk(\cX)$. Thus the Shilov boundary $\Sh(X',\form')$ is the preimage in $(X')^{\an}$ of the set of $(1/e)\Z$-integral points in $\Sk(X,\form)$, provided that this set is non-empty.

 ~\eqref{it:tame}
By Proposition \ref{prop:tame} and our assumption that $\Sk^t(X,\form)$ is non-empty, we know that $\Sk(X,\form)$ contains a $(1/e_0)\Z$-integral point, for some positive integer $e_0$ prime to $p$.
   Let $K'$ be a tame finite extension of $K$, of ramification index $e$ divisibile by $e_0$.
%
 Using point~\eqref{it:maina} and Proposition~\ref{prop:tamedeg}\eqref{it:fib}, we can write
$$\mathrm{pr}_\ast\mu_{X',\form'}=\sum_{x\in \Sk(X,\form)((1/e)\Z)} \tdeg_K(x)\delta_x$$ where the sum is taken over the $(1/e)\Z$-integral points in $\Sk(X,\form)$.
  We set $d=\dim \Sk^t(X,\form)$.
  By Proposition~\ref{prop:tame}, all  $(1/e)\Z$-integral points in $\Sk(X,\form)$ are contained in $\Sk^t(X,\form)$.
 The proof of Proposition~\ref{prop:tamecrit} implies that,
 when $e$ is sufficiently divisible, every
  $d$-dimensional face of $\Sk^t(X,\form)$ admits an integral $(1/e)\Z$-affine isomorphism
  to a polytope in $\R^d$.
  The Lebesgue measure on $\R^d$ is the limit of the discrete measures
$$\frac{1}{e^d}\sum_{x\in (1/e)\Z^d}\delta_x,$$
 where $e$ runs through the positive integers that are prime to $p$, ordered by divisibility.
  Therefore,
 the limit of the measures $$\frac{\mathrm{pr}_\ast\mu_{X',\form'}}{e(K'/K)^{d}}$$ over all $K'$ in $\mathcal{E}^t_K$ is precisely
$\Leb^{\st}_{\Sk^t(X,\form)}$.

\eqref{it:wild} The proof is entirely similar to that of~\eqref{it:tame}.
\end{proof}

\section{Further questions}
 Theorems~\ref{thm:padic} and~\ref{thm:main} leave some interesting open questions in the cases where log smooth proper $R$-models do not exist. Let us look at an explicit example.
 Assume that $k$ is a finite field of characteristic $3$, and let $E$ be an elliptic curve of Kodaira-N\'eron reduction type $IV$. 
  This curve is wildly ramified, so that it does not have a log smooth proper $R$-model. Let $\cE$ be the minimal snc-model of $E$ over $R$. The skeleton $\Sk(\cE)$ is depicted in Figure~\ref{fig:typeIV}. If $\form$ is any volume form on $E$, then $\Sk(E,\form)$ consists of the unique vertex of multiplicity $3$ in $\Sk(\cE)$. This vertex is a wild divisorial point, so that $\Sk^t(E,\form)$ is empty (even though $\Sk(E,\form)\cap E^t=\Sk(E,\form)$ by Proposition \ref{prop:tamecrit}). Thus the limit of the measures $\nu^t_{\form, K'}$ in Theorem~\ref{thm:padic}\eqref{it:maintame} is zero, and Theorem \ref{thm:main}\eqref{it:tame} says nothing about this example.

 It is easy to compute the Shilov measures $\mu_{E\otimes_K K',\form\otimes_K K'}$ from Section \ref{ss:shimeas}, for any tame finite extension $K'$ of $K$. We denote by $e=e(K'/K)$ the ramification index of $K'$ over $K$. By Proposition~\ref{prop:tamedeg}\eqref{it:one}, the tame degree function $\tdeg_K$ is equal to $1$ on $\Sk(\cE)$.
 Multiplying $\form$ with a suitable element in $K^{\times}$, we may assume that it extends to a generator of the relative canonical bundle on the minimal regular model of $E$.
 Then $\wt_{\form}$ vanishes at the three vertices of multiplicity $1$ in $\Sk(\cE)$, and it takes the value $-1/3$ at the unique vertex of multiplicity $3$. Since $\wt_{\form}$ is affine along the edges of $\Sk(\cE)$, we find that the image in $E^{\an}$ of the Shilov boundary $\Sh(E\otimes_K K',\form\otimes_K K')$ consists of precisely three points, namely, the three $(1/e)\Z$-integral points on $\Sk(\cE)$ that lie closest to the unique point in $\Sk(E,\form)$ (see Figure~\ref{fig:typeIV}).
  Thus, as $K'$ ranges through $\mathcal{E}^t_K$, the pushforward of $\mu_{E\otimes_K K',\form\otimes_K K'}$ to $E^{\an}$ converges to the measure $3\Leb_{\Sk(E,\form)}$, that is, the measure of mass $3$ supported at the unique point of $\Sk(E,\form)$.
 In the setting of Theorem \ref{thm:padic}\eqref{it:maintame}, we find the same limit measure $3\Leb_{\Sk(E,\form)}$ by 
   replacing the exponent $\wt_{\min}(E,\form)=-1/3$ by 
 $$\ord_{\min}(X\otimes_K K',\form\otimes_K K')=-\frac{\lfloor e/3 \rfloor}{e}$$    
    in the definition of the measure $\nu^t_{\form,K'}$ (Definition \ref{def:ram-measure}).

\begin{figure}[ht]
  \includegraphics[width=4cm]{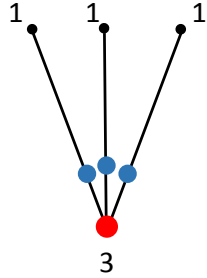}
  \caption{The skeleton of the minimal snc-model $\mathscr{E}$ of an elliptic curve $E$ of type $IV$ over $K$.
  The vertices are labelled with the multiplicities of the corresponding components in $\mathscr{E}_k$. The lattice length of each edge is equal to $1/3$.
   For any volume form $\form$ on $E$, the skeleton $\Sk(E,\form)$ consists of the unique vertex of multiplicity $3$, which is wild if $p=3$. The three marked points in the interiors of the edges form the image of the Shilov boundary $\Sh(E\otimes_K K',\form\otimes_K K')$ where $K'$ is a tame finite extension of $K$. If we set $e=e(K'/K)$, then the lattice distance from each point in $\Sh(E\otimes_K K',\form\otimes_K K')$ to the vertex of multipicity $3$ equals 
  $$\frac{1}{3}- \frac{\lfloor e/3 \rfloor}{e}.$$  }
  \label{fig:typeIV}
\end{figure}

 In view of this example, it is natural to ask if, in the settings of Theorems~\ref{thm:padic}\eqref{it:maintame} and~\ref{thm:main}\eqref{it:tame}, one can recover an interesting limit measure supported on $\Sk(X,\form)\cap X^t$, rather than the smaller set $\Sk^t(X,\form)$, by
 renormalizing the measures $\nu^t_{\form,K'}$ and by studying the asymptotic behaviour of the Shilov measures $\mu_{X\otimes_K K',\form\otimes_K K'}$ over tame finite extensions $K'$ of $K$. 
   However, the general picture seems to be more intricate than this example suggests.   It is difficult to construct examples beyond the case of elliptic curves because it is not well understood which weighted graphs may occur as skeleta of wildly ramified curves of genus $\geq 2$. Nevertheless, basic calculations on abstract graphs indicate that one can get different limit measures as $e(K'/K)$ ranges over different residue classes modulo a suitable power of  $p$. We believe that this phenomenon is related to fundamental ramification invariants introduced by Edixhoven~\cite{edix} and Chai and Yu~\cite{chai,chai-yu}  and further studied in~\cite{logjumps}. 

 Another interesting question is whether one can formulate a convergence result similar to Theorems~\ref{thm:padic} and~\ref{thm:main} without assuming the existence of an snc-model for $X$, using de Jong's results on the existence of alterations \cite{dJ}. We do not know how to use alterations to prove that the weight function $\wt_{\form}$ is bounded below on the set of divisorial points in $X^{\an}$.
 However, one can hope to retrieve the image of the skeleton of the pullback of $\form$ to a semistable alteration of $X$ by studying the asymptotic properties of Shilov measures.  All of these questions will be investigated in future work.

\end{document}